\numberwithin{equation}{section}
\newtheorem{theorem}{Theorem}[section]
\newtheorem{lemma}[theorem]{Lemma}
\newtheorem{proposition}[theorem]{Proposition}
\newtheorem{remark}[theorem]{Remark}
\newtheorem{definition}[theorem]{Definition}
\newcommand{\bbE}{{\ensuremath{\mathbb E}} }
\newcommand{\bbN}{{\ensuremath{\mathbb N}} }
\newcommand{\bbP}{{\ensuremath{\mathbb P}} }
\newcommand{\bbR}{{\ensuremath{\mathbb R}} }
\newcommand{\bbZ}{{\ensuremath{\mathbb Z}} }
\newcommand{\bbVar}{{\ensuremath{\mathbb{V}\mathrm{ar}}} }
\newcommand{\cA}{{\ensuremath{\mathcal A}} }
\newcommand{\cB}{{\ensuremath{\mathcal B}} }
\newcommand{\cC}{{\ensuremath{\mathcal C}} }
\newcommand{\cF}{{\ensuremath{\mathcal F}} }
\newcommand{\cG}{{\ensuremath{\mathcal G}} }
\newcommand{\cH}{{\ensuremath{\mathcal H}} }
\newcommand{\cK}{{\ensuremath{\mathcal K}} }
\newcommand{\cL}{{\ensuremath{\mathcal L}} }
\newcommand{\cM}{{\ensuremath{\mathcal M}} }
\newcommand{\cN}{{\ensuremath{\mathcal N}} }
\newcommand{\cP}{{\ensuremath{\mathcal P}} }
\newcommand{\cQ}{{\ensuremath{\mathcal Q}} }
\newcommand{\cS}{{\ensuremath{\mathcal S}} }
\newcommand{\cT}{{\ensuremath{\mathcal T}} }
\newcommand{\cV}{{\ensuremath{\mathcal V}} }
\newcommand{\cZ}{{\ensuremath{\mathcal Z}} }
\newcommand{\ga}{\alpha}
\newcommand{\gb}{\beta}
\newcommand{\gd}{\delta}
\newcommand{\gD}{\Delta}
\newcommand{\gep}{\varepsilon}
\newcommand{\gl}{\lambda}
\newcommand{\gL}{\Lambda}
\newcommand{\gs}{\sigma}
\newcommand{\go}{\omega}
\newcommand{\gO}{\Omega}
\renewcommand{\tilde}{\widetilde}          
\DeclareMathSymbol{\leqslant}{\mathalpha}{AMSa}{"36} 
\DeclareMathSymbol{\geqslant}{\mathalpha}{AMSa}{"3E} 
\DeclareMathSymbol{\eset}{\mathalpha}{AMSb}{"3F}     
\newcommand{\dd}{\text{\rm d}}             
\newcommand{\suptwo}[2]{\sup_{\substack{#1 \\ #2}}} 
\newcommand{\sumtwo}[2]{\sum_{\substack{#1 \\ #2}}} 
\newcommand{\R}{\mathbb{R}}
\newcommand{\Z}{\mathbb{Z}}
\newcommand{\N}{\mathbb{N}}
\newcommand\bP{\ensuremath{\mathrm{P}}}
\newcommand\bE{\ensuremath{\mathrm{E}}}
\newcommand{\intr}{{\rm int}}
\newcommand{\ind}{{\sf 1}}
\newcommand{\spr}{{\rm sp}}
\newcommand{\epi}[1]{\stackrel{\mathrm{(epi)}}{#1}}
\renewcommand{\epsilon}{\varepsilon}
\renewcommand{\phi}{\varphi}
\newcommand{\tW}{\mathrm{\widetilde{W}}}
\newenvironment{myenumerate}{
\renewcommand{\theenumi}{\arabic{enumi}}
\renewcommand{\labelenumi}{{\rm(\theenumi)}}
\begin{list}{\labelenumi}
{
\setlength{\itemsep}{0.4em}
\setlength{\topsep}{0.5em}
\setlength\leftmargin{2.45em}
\setlength\labelwidth{2.05em}
\setlength{\labelsep}{0.4em}
\usecounter{enumi}
}
}
{\end{list}
}
\renewenvironment{enumerate}{
\begin{myenumerate}}
{\end{myenumerate}}
\newcommand{\Geo}{\mathrm{Geo}}
\begin{document}

\title{Annealed scaling for a charged polymer}

\author{F.\ Caravenna}
\address{Dipartimento di Matematica e Applicazioni,
Universit\`a degli Studi di Milano-Bicocca,
Via Cozzi 53, 20125 Milano,  Italy.}
\email{francesco.caravenna@unimib.it}

\author{F.\ den Hollander}
\address{Mathematical Institute, Leiden University, P.O.\ Box 9512,
2300 RA Leiden, The Netherlands.}
\email{denholla@math.leidenuniv.nl}

\author{N.\ P\'etr\'elis}
\address{Laboratoire de Math\'ematiques Jean Leray UMR 6629,
Universit\'e de Nantes, 2 Rue de la Houssini\`ere,
BP 92208, F-44322 Nantes Cedex 03, France.}
\email{nicolas.petrelis@univ-nantes.fr}

\author{J.\ Poisat}
\address{CEREMADE, Universit\'e Paris-Dauphine, PSL Research University, UMR 7534, 
Place du Mar\'echal de Lattre de Tassigny,
75775 Paris Cedex 16 - France.}
\email{poisat@ceremade.dauphine.fr}

\begin{abstract}
This paper studies an undirected polymer chain living on the one-dimensional integer lattice 
and carrying i.i.d.\ random charges. Each self-intersection of the polymer chain contributes 
to the interaction Hamiltonian an energy that is equal to the product of the charges of the 
two monomers that meet. The joint probability distribution for the polymer chain and the 
charges is given by the Gibbs distribution associated with the interaction Hamiltonian. 
The focus is on the \emph{annealed free energy} per monomer in the limit as the length 
of the polymer chain tends to infinity. 

We derive a \emph{spectral representation} for the free energy and use this to prove that there 
is a critical curve in the parameter plane of charge bias versus inverse temperature separating 
a \emph{ballistic phase} from a \emph{subballistic phase}. We show that the phase transition 
is \emph{first order}. We prove large deviation principles for the laws of the empirical speed 
and the empirical charge, and derive a spectral representation for the associated rate 
functions. Interestingly, in both phases both rate functions exhibit \emph{flat pieces}, which 
correspond to an inhomogeneous strategy for the polymer to realise a large deviation. The 
large deviation principles in turn lead to laws of large numbers and central limit theorems. 
We identify the scaling behaviour of the critical curve for small and for large charge bias. In 
addition, we identify the scaling behaviour of the free energy for small charge bias and small 
inverse temperature. Both are linked to an associated \emph{Sturm-Liouville eigenvalue 
problem}. 

A key tool in our analysis is the Ray-Knight formula for the local times of the one-dimensional 
simple random walk. This formula is exploited to derive a \emph{closed form expression} 
for the generating function of the annealed partition function, and for several related quantities. 
This expression in turn serves as the starting point for the derivation of the spectral 
representation for the free energy, and for the scaling theorems.

What happens for the \emph{quenched free energy} per monomer remains open. We state 
two modest results and raise a few questions.
\end{abstract}

\keywords{Charged polymer, quenched vs.\ annealed free energy, large deviations, phase 
transition, ballistic vs.\ subballistic phase, scaling.}
\subjclass[2010]{60K37; 82B41; 82B44}
\thanks{The research in this paper was supported by ERC Advanced Grant 267356-VARIS.
JP held a postdoc-position at the Mathematical Institute of Leiden University from September
2012 until August 2014, FC and NP made extended visits in the same period. FC acknowledges 
the support of GNAMPA-INdAM. The authors also thank the University of Nantes and the University 
of Milano-Bicocca for hospitality.}

\date{\today}

\maketitle

\newpage

\tableofcontents


\section{Introduction}
\label{s:intro}


\subsection{Motivation}
\label{ss:motivation}

DNA and proteins are polyelectrolytes whose monomers are in a charged state 
that depends on the pH of the solution in which they are immersed. The charges 
may fluctuate in space (`quenched') and in time (`annealed').   

In this paper we consider the charged polymer chain introduced in Kantor 
and Kardar~\cite{KaKa91}. The polymer chain is modelled by the path of a 
simple random walk on $\Z^d$, $d \geq 1$. Each monomer in the polymer 
chain carries a \emph{random electric charge}, drawn in an i.i.d.\ fashion 
from $\R$. Each self-intersection of the polymer chain contributes an energy 
that is equal to the product of the charges of the two monomers that meet (i.e., 
a negative energy when the charges have opposite sign and a positive energy 
when the charges have the same sign). The polymer chain has a probability 
distribution on path space that is given by the \emph{Gibbs measure} associated 
with the energy. Our goal is to study the scaling properties of the polymer as its 
length tends to infinity.

Very little is known mathematically about the \emph{quenched} version of the 
model, where the charges are frozen. The two main questions of interest are:
\begin{itemize} 
\item[(1)]
Is the free energy self-averaging in the disorder?
\item[(2)] 
Is there a phase transition from a `collapsed phase' to an `extended phase' at 
some critical value of the temperature? 
\end{itemize}
We expect that the answer to (1) is yes and the answer to (2) is no. All we are able 
to show is the following (see Appendix~\ref{AppB}):
\begin{itemize}
\item[(3)] 
If the average charge is non-zero, then the number of different sites visited by the 
polymer is proportional to its length.
\item[(4)] 
In $d=1$, if the average charge is sufficiently positive or negative and the temperature 
is sufficiently low, then the polymer behaves ballistically. 
\end{itemize}
We expect that in any $d \geq 1$ the scaling of the polymer is similar to that of 
the \emph{self-avoiding walk} when the average charge is non-zero. We further 
expect that the polymer is subdiffusive when the average charge is zero. All these 
problems remains open.

In the present paper we focus on the \emph{annealed} version of the model, where 
the charges are averaged out. This version, which we study in $d=1$ only, is easier 
to deal with, yet turns out to exhibit a very rich scaling behavior. The answer to (2) is 
yes for the annealed model. We will obtain a detailed description of the phase transition 
curve separating a \emph{subballistic phase} from a \emph{ballistic phase}. Moreover, 
we show that the phase transition is first order, and show that the empirical speed and 
the empirical charge satisfy a law of large numbers, a central limit theorem, as well 
as a large deviation principle with a rate function that exhibits flat pieces. The latter 
corresponds to an inhomogeneous strategy for the polymer to realise a large deviation. 
We identify the scaling of the free energy in the limit of small average charge and small 
inverse temperature, which exhibits anomalous behaviour. 

A key tool in our analysis is the Ray-Knight formula for the local times of the one-dimensional 
simple random walk. This tool, which has been used extensively in the literature, is 
exploited in full throughout the paper in order to obtain the fine details of the phase 
diagram of the charged polymer. The Ray-Knight formula is no longer available in 
$d \geq 2$. In Berger, den Hollander and Poisat~\cite{BdHPpr} it is shown that the 
phase diagram is qualitatively similar, but no detailed description of the scaling behaviour 
in the two phases is obtained.

The outline of the paper is as follows. In Section~\ref{ss:model} we define the model.
In Section~\ref{ss:theoremsgeneral} we state six theorems with general properties 
and in Section~\ref{ss:theoremsasymptotics} three theorems with asymptotic properties. 
In Section~\ref{ss:discussion} we discuss these theorems. Proofs are given in 
Sections~\ref{freeenergy}--\ref{s:asymptotics}. Appendices~\ref{AppA}--\ref{AppC} 
contain a few technical computations, while Appendix~\ref{AppD} states two modest 
results for the quenched version of the model. 


\subsection{Model and assumptions}
\label{ss:model}

Throughout the paper we use the notation $\N=\{1,2,\dots\}$ and $\N_0=\N\cup\{0\}$.
  
Let $S=(S_i)_{i\in\N_0}$ be a simple random walk on $\Z^d$, $d \geq 1$, i.e.,  $S_0=0$ and
$S_i = \sum_{j=1}^i X_j$, $i\in\N$, with $X=(X_j)_{j\in\N}$ i.i.d.\ random variables such that 
$\bP(X_1=x)=\tfrac{1}{2d}$ for $x\in \Z^d$ with $\|x\|=1$ and zero otherwise ($\|\cdot\|$ 
denotes the lattice norm). The path $S$ models the configuration of the polymer chain, i.e., 
$S_i$ is the location of monomer $i$. We use the letters $\bP$ and $\bE$ for probability and 
expectation with respect to $S$. 

Let $\omega=(\omega_i)_{i\in\N}$ be i.i.d.\ random variables taking values in $\R$. The 
sequence $\omega$ models the electric charges along the polymer chain, i.e., $\omega_i$ 
is the charge of monomer $i$ (see Fig.~\ref{fig-charpol}). We use the letters $\bbP$ and 
$\bbE$ for probability and expectation with respect to $\omega$. Throughout the paper 
we assume that 
\begin{equation}
\label{Mdeltacond}
M(t) = \bbE(e^{t\omega_1}) < \infty \qquad \forall\,t \in \R.
\end{equation} 
Without loss of generality we may take (see \eqref{eq:def.polmeasure}--\eqref{eq:def.hamiltonian} 
below) 
\begin{equation}
\label{eq:omegacond}
\bbE(\omega_1)=0, \qquad \bbVar(\omega_1)=1.
\end{equation}
To allow for biased charges, we use a tilting parameter $\delta \in \R$ and write $\bbP^\delta$ 
for the i.i.d.\ law of $\omega$ with marginal
\begin{equation}
\label{eq:Pdeltadef}
\bbP^\delta(\dd \omega_1) = \frac{e^{\delta \omega_1}\,\bbP(\dd \omega_1)}{M(\delta)}.
\end{equation} 
Note that $\bbE^\delta(\omega_1)=M'(\delta)/M(\delta)$. In what follows we may, without loss 
of generality, take $\delta \in [0,\infty)$.

\medskip\noindent
{\bf Example:} The special case where the charges are $+1$ with probability $p$ and $-1$ 
with probability $1-p$ for some $p\in (0,1)$ corresponds to $\bbP=[\tfrac12(\delta_{-1}
+\delta_{+1})]^{\otimes\N}$ and $\delta=\tfrac12\log(\frac{p}{1-p})$.
  
\medskip 
Let $\Pi$ denote the set of nearest-neighbor paths starting at $0$. Given $n\in\N$, we associate with 
each $(\omega,S) \in \R^\N \times \Pi$ an energy given by the Hamiltonian (see Fig.~\ref{fig-charpol})
\begin{equation}
\label{eq:def.ham}
H_n^\omega(S) = \sum_{1 \leq i < j \leq n} \omega_i \omega_j\, \ind_{\{S_i=S_j\}}.
\end{equation} 
Let $\beta$ denote the inverse temperature. Throughout the sequel the relevant space for the 
pair of parameters $(\delta,\beta)$ is the quadrant
\begin{equation}
\cQ = [0,\infty) \times (0,\infty).
\end{equation}
Given $(\delta,\beta) \in \cQ$, the \emph{annealed polymer measure of length} $n$ is the
Gibbs measure $\bbP_n^{\delta,\beta}$ defined as
\begin{equation}
\label{eq:def.polmeasure}
\frac{\dd\bbP_n^{\delta,\beta}}{\dd(\bbP^\delta \times \bP)}(\omega,S) 
= \frac{1}{\bbZ_n^{\delta,\beta}}\,e^{-\beta H_n^\omega(S)},
\qquad (\omega,S) \in \R^\N \times \Pi,
\end{equation} 
where
\begin{equation}
\label{eq:def.annpartfunc}
\Z_n^{\delta,\beta} = (\bbE^\delta \times \bE)\left[e^{-\beta H_n^\omega(S)}\right]
\end{equation}
is the \emph{annealed partition function of length} $n$. The measure $\bbP_n^{\delta,\beta}$ 
is the joint probability distribution for the polymer chain and the charges at charge bias $\delta$
and inverse temperature $\beta$ when the polymer chain has length $n$.

\begin{figure}[htbp]
\vspace{0.5cm}
\begin{center}
\includegraphics[scale = 0.4]{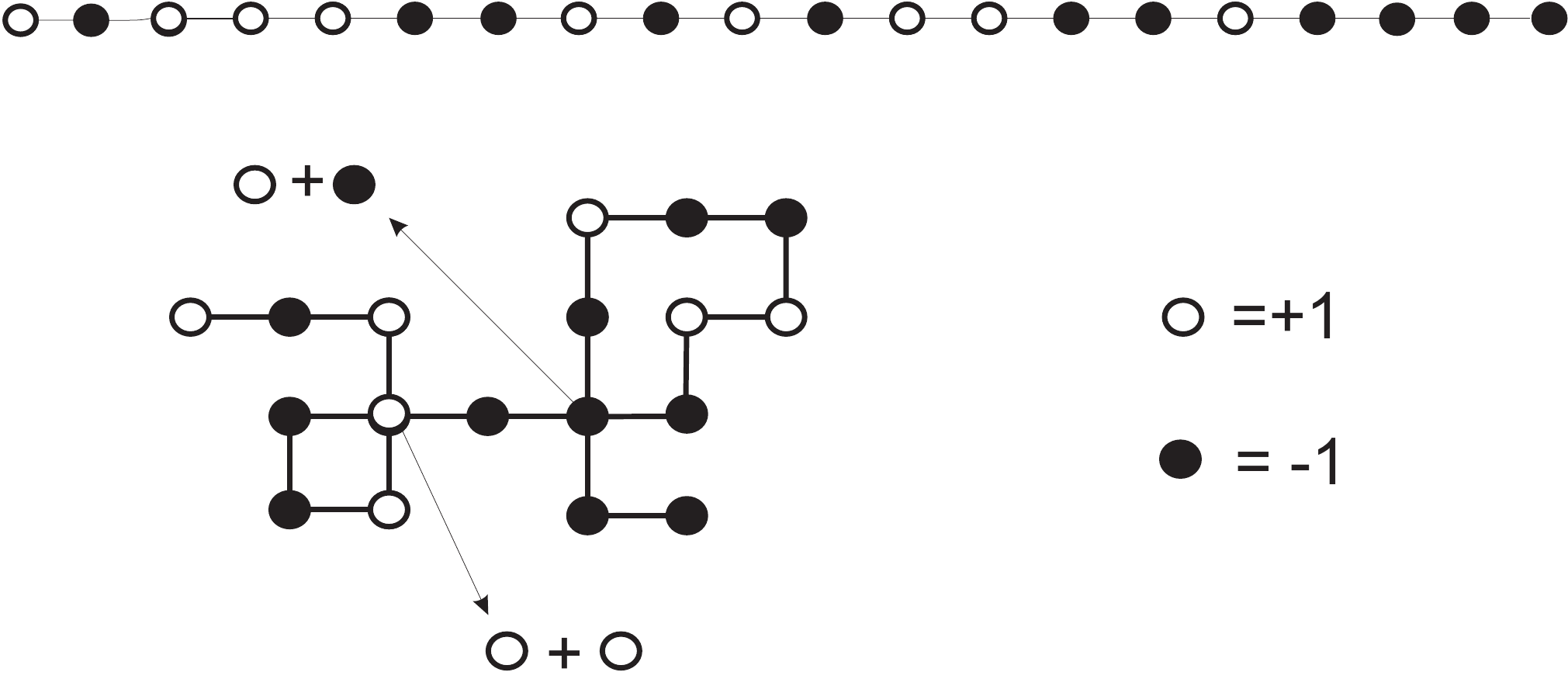}
\end{center}
\caption{\emph{Top:} A polymer chain carrying $(\pm 1)$-valued random charges. \emph{Bottom:} 
The path may or may not be self-avoiding. The charges only interact at self-intersections.}
\label{fig-charpol}
\end{figure}

In what follows, instead of \eqref{eq:def.ham} we will work with the Hamiltonian 
\begin{equation}
\label{eq:def.hamiltonian}
H_n^\omega(S) 
= \sum_{1 \leq i,j \leq n} \omega_i\omega_j\,\ind_{\{S_i=S_j\}}
= \sum_{x\in \Z^d} \left(\sum_{i=1}^n \omega_i\, \ind_{\{S_i=x\}}\right)^2.
\end{equation} 
The sum under the square is the local time of $S$ at site $x$ weighted by the charges that are 
encountered in $\omega$. The change from \eqref{eq:def.ham} to \eqref{eq:def.hamiltonian} 
amounts to replacing $\beta$ by $2\beta$ and adding a charge bias (see Section~\ref{ss:loctimerepr} 
for more details).


\subsection{Theorems: general properties}
\label{ss:theoremsgeneral}

Let $Q(i,j)$ be the probability matrix defined by
\begin{equation}
\label{eq:def.Qij}
Q(i,j) = \begin{cases}
\ind_{\{j = 0\}}, & \text{if } i=0, \ j \in \N_0, \\
\rule{0pt}{2em}
\displaystyle \binom{i+j-1}{i-1}
\left( \frac12 \right)^{i+j},
& \text{if }
i \in \N, \ j \in \N_0,
\end{cases}
\end{equation}
which is the transition kernel of a critical Galton-Watson branching process with a geometric 
offspring distribution (of parameter $\frac{1}{2}$). For $(\delta,\beta) \in \cQ$, let 
$G^*_{\delta,\beta}$ be the function defined by
\begin{equation}
\label{eq:rel.G.star}
G^*_{\delta,\beta}(\ell) 
= \log \bbE\left[e^{\delta \Omega_\ell-\beta \Omega_\ell^2}\right]
\quad \text{ with} \quad \Omega_\ell = \sum_{k=1}^\ell \omega_k, \qquad \ell \in \N_0.
\end{equation}
($\Omega_0 = 0$.) For $(\mu,\delta,\beta) \in [0,\infty)\times \cQ$, define the $\N_0 \times \N_0$ 
matrices ${A}_{\mu,\delta,\beta}$ and $\tilde{A}_{\mu,\delta,\beta}$ by
\begin{align}
\label{eq:def.Abeta.p.r}
A_{\mu,\delta,\beta}(i,j) 
&= e^{- \mu(i+j+1) + G^*_{\delta,\beta}(i+j+1)}\, Q(i+1,j),
\qquad i,j\in\N_0, \\
\label{eq:def.Abeta.p.r2}
\tilde{A}_{\mu,\delta,\beta}(i,j) 
&=
\begin{cases}
0, & \text{ if } i = 0, \, j \in \N_0, \\
A_{\mu,\delta,\beta}(i-1,j), & \text{ if } i\in \N, \, j\in\N_0.
\end{cases}
\end{align} 
Note that ${A}_{\mu,\delta,\beta}$ is symmetric while $\tilde{A}_{\mu,\delta,\beta}$ is not.

Let $\lambda_{\delta,\beta}(\mu)$ and $\tilde{\lambda}_{\delta,\beta}(\mu)$ be the spectral 
radius of $A_{\mu,\delta,\beta}$, respectively, $\tilde{A}_{\mu,\delta,\beta}$ in $\ell^2(\N_0)$.
We will see in Section~\ref{subsec:eigenvalue} that, for every $(\delta,\beta) \in \cQ$, both 
$\mu \mapsto \lambda_{\delta,\beta}(\mu)$ and $\mu \mapsto \tilde{\lambda}_{\delta,\beta}
(\mu)$ are continuous, strictly decreasing and log-convex on $[0,\infty)$, tend to zero at infinity, 
and satisfy $\tilde{\lambda}_{\delta,\beta}(\mu) < \lambda_{\delta,\beta}(\mu)$ for all $\mu 
\in [0,\infty)$. Let
\begin{equation}
\label{mudeltabetadef}
\begin{array}{ll}
&\bullet\,\, \mu(\delta,\beta) \text{ be the unique solution of the equation } 
\lambda_{\delta,\beta}(\mu)=1\\ 
&\text{when it exists and } \mu(\delta,\beta) = 0 \text{ otherwise},\\
&\bullet\,\, \tilde{\mu}(\delta,\beta) \text{ be the unique solution of the equation } 
\tilde{\lambda}_{\delta,\beta}(\mu)=1\\ 
&\text{when it exists and } \tilde{\mu}(\delta,\beta) = 0 \text{ otherwise},
\end{array}
\end{equation}
which satisfy $\tilde{\mu}(\delta,\beta) \leq \mu(\delta,\beta)$, with strict inequality as soon as 
$\mu(\delta,\beta)>0$. We will also see that, for every $(\delta,\beta) \in \cQ$, $\mu \mapsto 
\lambda_{\delta,\beta}(\mu)$ is analytic and strictly log-convex on $(0,\infty)$, and has a finite 
strictly negative right-slope at $0$ (see Fig.~\ref{fig-spectral}).

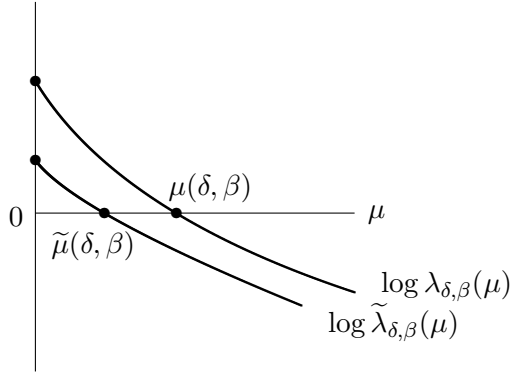
\begin{figure}[htbp]
\vspace{-.5cm}
\begin{center}
\setlength{\unitlength}{0.35cm}
\begin{picture}(12,12)(0,-3)
\put(0,0){\line(1,0){12}}
\put(0,-6){\line(0,1){14}}
{\thicklines
\qbezier(0,5)(3,0)(12,-3)
\qbezier(0,2)(1.5,0)(10,-3.5)
}
\put(-1,-.5){$0$}
\put(12.5,-0.2){$\mu$}
\put(5,.7){$\mu(\delta,\beta)$}
\put(.6,-1.5){$\tilde{\mu}(\delta,\beta)$}
\put(13,-3){$\log\lambda_{\delta,\beta}(\mu)$}
\put(11,-4.5){$\log\tilde{\lambda}_{\delta,\beta}(\mu)$}
\put(0,5){\circle*{.4}}
\put(0,2){\circle*{.4}}
\put(5.3,0){\circle*{.4}}
\put(2.6,0){\circle*{.4}}
\end{picture}
\end{center}
\vspace{1cm}
\caption{\small Qualitative plot of $\mu \mapsto \log \lambda_{\delta,\beta}(\mu)$ (top curve) 
and $\mu \mapsto \log \tilde{\lambda}_{\delta,\beta}(\mu)$ (bottom curve) for fixed $(\delta,
\beta) \in \cQ$. Only the case $\lambda_{\delta,\beta}(0) > \tilde{\lambda}_{\delta,\beta}(0)>1$ 
is shown. The interior of the ballistic phase $\intr(\cB)$ corresponds to $\lambda_{\delta,\beta}
(0) > 1$, the subballistic phase $\cS$ corresponds to $\lambda_{\delta,\beta}(0) < 1$, the critical 
curve corresponds to $\lambda_{\delta,\beta}(0)=1$ (see \eqref{BSdef}).}
\label{fig-spectral}
\end{figure}

We begin with a \emph{spectral representation} for the annealed free energy.
Abbreviate
\begin{equation}
\label{eq:fdeltadef}
f(\delta) = -\log M(\delta) \in (-\infty,0].
\end{equation}

\begin{theorem}
\label{thm:free.energy}
For all $(\delta,\beta) \in \cQ$, the annealed free energy per monomer
\begin{equation}
\label{eq:Fdef}
F(\delta,\beta) = \lim_{n\to\infty} \frac{1}{n}\,\log \bbZ_n^{\delta,\beta}
\end{equation}
exists, takes values in $(-\infty,0]$, and satisfies the inequality 
\begin{equation}
\label{eq:Fineq}
F(\delta,\beta) \geq f(\delta).
\end{equation} 
Moreover, the excess free energy 
\begin{equation}
F^*(\delta,\beta) = F(\delta,\beta) - f(\delta)
\end{equation}
is convex in $(\delta,\beta)$ and has the spectral representation 
\begin{equation}\label{varc}
F^*(\delta,\beta) = \mu(\delta,\beta).
\end{equation}
\end{theorem}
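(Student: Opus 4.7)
The plan is to tilt out the charge bias, integrate out the charges to reduce $\bbZ_n^{\delta,\beta}$ to a local-time functional of the simple random walk, and then identify its exponential growth rate via a generating-function identity built from the Ray-Knight theorem, matching this growth rate to the critical $\mu$ at which the spectral radius of $A_{\mu,\delta,\beta}$ crosses $1$.

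First I would tilt the charge law back to $\bbP$ via \eqref{eq:Pdeltadef} and exploit the fact that, conditional on $S$, the partial sums $\Omega(x):=\sum_{i\le n:\,S_i=x}\omega_i$ are independent across $x$ with $\Omega(x)\stackrel{d}{=}\Omega_{L_n(x)}$. Since $\Omega_n=\sum_x \Omega(x)$ and $H_n^\omega(S)=\sum_x \Omega(x)^2$, integrating out $\omega$ site by site yields
\[
\bbZ_n^{\delta,\beta} \;=\; e^{nf(\delta)}\,\bE\!\left[\exp\!\Big(\sum_{x\in\Z}G^*_{\delta,\beta}(L_n(x))\Big)\right].
\]
The upper bound $F\le 0$ is then immediate: $e^{-\beta\Omega_\ell^2}\le 1$ gives $G^*_{\delta,\beta}(\ell)\le -\ell f(\delta)$, and $\sum_x L_n(x)=n$ forces $\bbZ_n^{\delta,\beta}\le 1$. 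Convexity of $F^*=F-f$ in $(\delta,\beta)$ is also immediate: the exponent above is linear in $(\delta,\beta)$, so by H\"older each $g_n(\delta,\beta):=\log\bE[\exp\sum_x G^*_{\delta,\beta}(L_n(x))]$ is convex, and $F^*$ is the limit $\lim_n g_n/n$.

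Next I would compute the generating function
\[
\cZ(\mu;\delta,\beta) \;:=\; \sum_{n\ge 1}e^{-\mu n}\,e^{-nf(\delta)}\,\bbZ_n^{\delta,\beta}
\]
in closed form by decomposing the walk into its successive excursions away from $0$: by the one-dimensional Ray-Knight theorem, the local-time profile of a right-excursion is a killed Markov chain on $\N_0$ with transition kernel $Q$ of \eqref{eq:def.Qij}, started from the current local time at $0^+$, and similarly for left-excursions. Since the length of an excursion equals the sum of its local times, an excursion carrying local times $i$ and $j$ at two adjacent sites contributes a weight
\[
e^{-\mu(i+j+1)+G^*_{\delta,\beta}(i+j+1)}\,Q(i+1,j)\;=\;A_{\mu,\delta,\beta}(i,j),
\]
while the shifted variant $\tilde A_{\mu,\delta,\beta}$ accounts for the step leaving $0$ and the attached local-time index shift at the first visited site. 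Summing over the number of excursions and over the last incomplete piece writes $\cZ(\mu;\delta,\beta)$ as a matrix element of a Neumann series built from $A_{\mu,\delta,\beta}$ and $\tilde A_{\mu,\delta,\beta}$ on $\ell^2(\N_0)$. Since the entries of $A_{\mu,\delta,\beta}$ decay rapidly in $i+j$, the operator is Hilbert-Schmidt, hence compact and positivity preserving; Perron-Frobenius produces a genuine eigenvector for the spectral radius $\lambda_{\delta,\beta}(\mu)$, and the Neumann series converges in operator norm iff $\lambda_{\delta,\beta}(\mu)<1$, i.e.\ iff $\mu>\mu(\delta,\beta)$. Matching this threshold with the Cauchy-Hadamard abscissa of $\cZ(\mu;\delta,\beta)$, and upgrading $\limsup$ to $\lim$ via a standard sharp renewal argument, gives
\[
\lim_{n\to\infty}\tfrac 1n \log\bigl(e^{-nf(\delta)}\bbZ_n^{\delta,\beta}\bigr) \;=\; \mu(\delta,\beta),
\]
which yields both the existence of $F$ and the spectral representation $F^*(\delta,\beta)=\mu(\delta,\beta)$. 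In particular $F^*\ge 0$, supplying the remaining lower bound $F\ge f(\delta)$ in \eqref{eq:Fineq}.

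The main obstacle is the Ray-Knight bookkeeping that produces the closed-form matrix expression: one has to split the walk at every visit to $0$, handle separately the right and left excursions (each contributing an independent copy of the $Q$-chain but sharing the boundary local time at $0$), treat the last incomplete excursion with its own boundary vector, and correctly match the outgoing local time of one excursion with the starting value of the next --- it is precisely this shift that produces the distinction between $A_{\mu,\delta,\beta}$ and $\tilde A_{\mu,\delta,\beta}$ in \eqref{eq:def.Abeta.p.r2}. A secondary but subtle point is upgrading the equality of convergence thresholds into the actual existence of the limit in \eqref{eq:Fdef}, for which the strict log-convexity and analyticity of $\lambda_{\delta,\beta}(\cdot)$ announced just before the statement play a central role.
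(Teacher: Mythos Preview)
Your overall strategy---tilt out the charge bias, reduce to a local-time functional, compute the generating function via Ray--Knight, and identify its abscissa of convergence with $\mu(\delta,\beta)$---is the paper's strategy. Two points of execution differ.

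First, the Ray--Knight bookkeeping. You describe decomposing the walk into successive excursions from~$0$ and gluing them via boundary local times. The paper does something different and cleaner: it conditions on $\{S_n=x,\,L_n(0)=\ell\}$ and uses that the \emph{edge-crossing numbers} $(M_y^+)_{y\ge 0}$, $(M_y^-)_{y\ge 0}$ form, in the spatial variable~$y$, a two-type critical branching process with kernel~$Q$ (with immigration on the stretch $[0,x]$). The site local times are then two-block functionals $L_n(y)=M_{y-1}^++M_y^+(+1)$, which is precisely why the weight $e^{-\mu(i+j+1)+G^*(i+j+1)}Q(i+1,j)=A(i,j)$ appears. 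The matrix $\tilde A$ arises not from ``the step leaving~$0$'' as you suggest, but from the part of the profile \emph{without} immigration (the negative side, and the positive side beyond~$x$): there one has kernel $Q(i,j)$ rather than $Q(i+1,j)$, and zeroing the first row encodes the absorption at~$0$. Your excursion-from-$0$ picture runs into the difficulty that $G^*_{\delta,\beta}(L_n(0))$ couples all excursions through their common visit count at the origin, so the functional does not factor over excursions; the paper's spatial decomposition sidesteps this entirely.

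Second, and more substantively, the upgrade from $\limsup$ to $\lim$. You invoke a ``standard sharp renewal argument'' together with analyticity and strict log-convexity of $\lambda_{\delta,\beta}(\cdot)$. The paper does neither: it introduces the \emph{bridge} partition function $\bbZ_{n,\mathrm{bridge}}^{*,\delta,\beta}$ (paths with $0<S_i\le S_n$), observes that concatenation of bridges is a bridge so that $\log \bbZ_{n,\mathrm{bridge}}^{*,\delta,\beta}$ is superadditive and hence has a limit, computes its generating function as $[A/(1-A)](0,0)$ to identify that limit as $\mu(\delta,\beta)$, and then uses $\bbZ_n^{*,\delta,\beta}\ge \bbZ_{n,\mathrm{bridge}}^{*,\delta,\beta}$ for the matching $\liminf$. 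This is shorter and more robust than a renewal-theoretic argument, and it does not require any smoothness of the eigenvalue in~$\mu$.
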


The inequality in \eqref{eq:Fineq} leads us to define two phases:
\begin{equation}
\label{phasesdef}
\begin{aligned}
\cQ^{>} &= \big\{(\delta,\beta) \in \cQ \colon\,F^*(\delta,\beta) > 0\big\},\\
\cQ^{=} &= \{(\delta,\beta) \in \cQ \colon\, F^*(\delta,\beta) = 0\big\}.
\end{aligned}
\end{equation}
We next show that these phases are separated by a single \emph{critical curve} (see 
Fig.~{\rm \ref{fig-critcurve}}) and that there are no further subphases.

\begin{figure}[htbp]
\begin{center}
\setlength{\unitlength}{0.35cm}
\begin{picture}(12,12)(0,-2)
\put(0,0){\line(12,0){12}}
\put(0,0){\line(0,8){8}}
{\thicklines
\qbezier(0,0)(5,0.5)(9,6.5)
}
\put(-1,-.5){$0$}
\put(12.5,-0.2){$\delta$}
\put(-0.3,8.5){$\beta$}
\put(8.5,2){$\cQ^{>}$}
\put(3,4){$\cQ^{=}$}
\put(8.5,7.3){$\beta_c(\delta)$}
\end{picture}
\end{center}
\vspace{0cm}
\caption{\small Qualitative plot of the critical curve $\delta \mapsto \beta_c(\delta)$ where 
the \emph{excess free energy} $F^*(\delta,\beta)$ changes from being zero to being strictly 
positive (see \eqref{phasesdef}). The critical curve is part of $\cQ^{=}$.}
\label{fig-critcurve}
\end{figure}
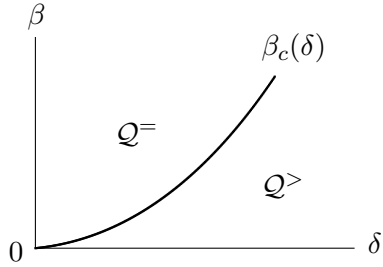
 
\begin{theorem}
\label{thm:critcurve}
There exists a critical curve $\delta \mapsto \beta_c(\delta)$  such that
\begin{equation}
\begin{aligned}
\cQ^{>} &= \big\{(\delta,\beta) \in \cQ \colon\,0 < \beta < \beta_c(\delta)\big\},\\
\cQ^{=} &= \big\{(\delta,\beta) \in \cQ \colon\, \beta \geq \beta_c(\delta)\big\}. 
\end{aligned}
\end{equation}
For every $\delta\in [0,\infty)$, $\beta_c(\delta)$ is the unique solution of the equation
$\lambda_{\delta,\beta}(0) = 1$. Moreover, $\delta \mapsto \beta_c(\delta)$ is continuous, 
strictly increasing and convex on $[0,\infty)$, analytic on $(0,\infty)$, and satisfies 
$\beta_c(0)=0$. In addition, $(\delta,\beta) \mapsto F^*(\delta,\beta)$ is analytic on
$\cQ^{>}$.
\end{theorem}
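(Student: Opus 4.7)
The plan is to reduce every assertion to properties of the spectral radius $\lambda_{\delta,\beta}(0)$ and then invoke analytic perturbation theory at the end. By Theorem~\ref{thm:free.energy} we have $F^*(\delta,\beta) = \mu(\delta,\beta)$, and the listed properties of $\mu \mapsto \lambda_{\delta,\beta}(\mu)$ (continuity, strict decrease to $0$) give at once the clean dichotomy
\begin{equation*}
\cQ^{>} = \{(\delta,\beta)\in\cQ \colon \lambda_{\delta,\beta}(0) > 1\},
\qquad
\cQ^{=} = \{(\delta,\beta)\in\cQ \colon \lambda_{\delta,\beta}(0) \leq 1\}.
\end{equation*}
So everything reduces to studying the function $(\delta,\beta) \mapsto \lambda_{\delta,\beta}(0)$.

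First I would use the variational (Rayleigh quotient) characterisation of the spectral radius of the symmetric non-negative operator $A_{0,\delta,\beta}$ on $\ell^2(\N_0)$ to show that $(\delta,\beta)\mapsto \lambda_{\delta,\beta}(0)$ is jointly continuous, strictly increasing in $\delta$, and strictly decreasing in $\beta$. These monotonicities follow because $G^*_{\delta,\beta}(\ell) = \log \bbE[e^{\delta\Omega_\ell - \beta\Omega_\ell^2}]$ is strictly increasing in $\delta$ and strictly decreasing in $\beta$. For fixed $\delta > 0$ the entries of $A_{0,\delta,\beta}$ contain the factor $M(\delta)^{i+j+1}$ as $\beta\downarrow 0$ (with $M(\delta) > 1$), while they vanish as $\beta\to\infty$; by monotone convergence $\lambda_{\delta,\beta}(0)\to\infty$ and $\to 0$ respectively, producing a unique solution $\beta_c(\delta) \in (0,\infty)$ to $\lambda_{\delta,\beta_c(\delta)}(0) = 1$. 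For $\delta = 0$ I would observe $A_{0,0,\beta}(i,j) \leq Q(i+1,j)$ with $Q$ a (sub-)stochastic kernel, and estimate the $\ell^2$ operator norm by a Schur test against the all-ones vector, obtaining $\lambda_{0,\beta}(0) \leq 1$ for all $\beta \geq 0$; hence $\beta_c(0) = 0$.

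Continuity and strict monotonicity of $\beta_c$ follow from these joint continuity/strict monotonicity properties of $\lambda_{\delta,\beta}(0)$ and a standard implicit-function argument. Convexity is cheaper: Theorem~\ref{thm:free.energy} states that $F^*$ is convex on $\cQ$, so $\cQ^{=} = \{F^* \leq 0\}$ is a sublevel set and therefore convex; since $\cQ^{=} = \{(\delta,\beta)\colon \beta \geq \beta_c(\delta)\}$ is the epigraph of $\beta_c$, the function $\beta_c$ is convex.

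The main obstacle is the analyticity assertions, which I would handle by analytic perturbation theory à la Kato. The matrix entries $A_{\mu,\delta,\beta}(i,j) = e^{-\mu(i+j+1) + G^*_{\delta,\beta}(i+j+1)} Q(i+1,j)$ extend holomorphically to a complex neighbourhood of any point with $\mu > 0$; the Gaussian-type decay of $G^*_{\delta,\beta}(\ell)$ combined with the factor $e^{-\mu(i+j+1)}$ should make $A_{\mu,\delta,\beta}$ a Hilbert-Schmidt operator on $\ell^2(\N_0)$, depending holomorphically on the three complex parameters. Since all entries are strictly positive, $A_{\mu,\delta,\beta}$ is symmetric, compact and irreducible, so Perron-Frobenius/Krein-Rutman gives that $\lambda_{\delta,\beta}(\mu)$ is a simple isolated eigenvalue. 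Kato's analytic perturbation theorem then yields joint real-analyticity of $(\mu,\delta,\beta) \mapsto \lambda_{\delta,\beta}(\mu)$ wherever this eigenvalue stays away from the rest of the spectrum. Applying the implicit function theorem to $\lambda_{\delta,\beta}(\mu) = 1$ gives analyticity of $\mu(\delta,\beta) = F^*(\delta,\beta)$ on $\cQ^{>}$ (using $\partial_\mu \lambda < 0$ from strict log-convexity in $\mu$) and analyticity of $\beta_c(\delta)$ on $(0,\infty)$ (using $\partial_\beta \lambda < 0$). The delicate point is verifying the Hilbert-Schmidt / compactness hypothesis and the gap between the top eigenvalue and the remainder of the spectrum uniformly in small complex perturbations of the parameters; everything else is bookkeeping once these are in place.
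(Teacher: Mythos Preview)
Your approach is essentially the paper's own: reduce to the spectral radius $\lambda_{\delta,\beta}(0)$, get convexity of $\beta_c$ from convexity of the sublevel set $\{F^*\le 0\}$, and obtain analyticity of both $\beta_c$ and $F^*$ from analytic perturbation theory (simple isolated Perron eigenvalue of a Hilbert--Schmidt family) combined with the implicit function theorem. The paper packages the operator-theoretic input as Propositions~\ref{pr:prop.A.lambda.mu}--\ref{prop:regl} and then runs exactly your implicit-function argument in Section~\ref{ss:critcurve}; your Schur-test argument for $\lambda_{0,\beta}(0)\le 1$ is a valid alternative to the paper's direct observation that $F^*(0,\beta)=\mu(0,\beta)=0$.

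There is one genuine flaw. You claim that $G^*_{\delta,\beta}(\ell)=\log\bbE[e^{\delta\Omega_\ell-\beta\Omega_\ell^2}]$ is strictly increasing in $\delta$ on $[0,\infty)$, and deduce strict monotonicity of $\lambda_{\delta,\beta}(0)$ and hence of $\beta_c$. But $\partial_\delta G^*_{\delta,\beta}(\ell)$ at $\delta=0$ equals $\bbE[\Omega_\ell e^{-\beta\Omega_\ell^2}]/\bbE[e^{-\beta\Omega_\ell^2}]$, which can be strictly negative for asymmetric charge laws (e.g.\ $\bbP(\omega_1=2)=\tfrac15$, $\bbP(\omega_1=-\tfrac12)=\tfrac45$, which has mean $0$ and variance $1$). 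So entrywise monotonicity of $A_{0,\delta,\beta}$ in $\delta$ fails in general, and your route to strict monotonicity of $\beta_c$ breaks down. The paper avoids this entirely: it first establishes $\beta_c(0)=0$ and $\beta_c(\delta)>0$ for $\delta>0$, and then observes that a convex function on $[0,\infty)$ vanishing at $0$ and strictly positive elsewhere must be strictly increasing. You should replace your monotonicity-in-$\delta$ argument by this convexity argument.
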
 

Let
\begin{equation}
\label{BSdef}
\cB = \big\{(\delta,\beta) \in \cQ\colon\,0<\beta\leq\beta_c(\delta)\big\},
\qquad \cS = \cQ \backslash \cB.
\end{equation}
The set $\cB$ will be referred to as the \emph{ballistic phase}, the set $\cS$ as the 
\emph{subballistic phase}, for reasons we explain next. Namely, we proceed by stating 
a \emph{law of large numbers} for the empirical speed $n^{-1}S_n$ and the empirical 
charge $n^{-1} \Omega_n$, respectively, with 
\begin{equation}
S_n = \sum_{i=1}^n X_i, \qquad \Omega_n = \sum_{i=1}^n \omega_i.
\end{equation} 
In the statement below the condition $S_n > 0$ is put in to choose a direction for the 
endpoint of the polymer chain. 

\begin{theorem}
\label{thm:speed}
For every $(\delta,\beta) \in \cQ$ there exists a $v(\delta,\beta) \in [0,1]$ such that
\begin{equation}
\label{LLNspeed}
\lim_{n\to\infty} \bbP_n^{\delta,\beta}\Big(\big|n^{-1}S_n-v(\delta,\beta)\big|>\gep
\,\Big|\, S_n > 0 \Big) = 0 \qquad \forall\,\gep>0,
\end{equation}
where
\begin{equation}
\label{vident}
v(\delta,\beta) \left\{\begin{array}{ll}
> 0, &(\delta,\beta) \in \cB,\\
= 0, &(\delta,\beta) \in \cS.
\end{array}
\right.
\end{equation} 
For every $(\delta,\beta) \in \cB$,
\begin{equation}
\label{eq:speedspec}
\frac{1}{v(\delta,\beta)} 
= \left[-\frac{\partial}{\partial\mu} \log \lambda_{\delta,\beta}(\mu)\right]_{\mu=\mu(\delta,\beta)}
= \left[-\frac{\partial}{\partial\mu}\lambda_{\delta,\beta}(\mu)\right]_{\mu=\mu(\delta,\beta)}.
\end{equation}
(Take the right-derivative when $\mu(\delta,\beta)=0$; see Fig.~{\rm \ref{fig-spectral}}.)
Moreover, $(\delta,\beta) \mapsto v(\delta,\beta)$ is analytic on $\intr(\cB)$. 
\end{theorem}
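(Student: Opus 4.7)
The plan is to introduce a Laplace-tilted partition function and extract the LLN by convex analysis. For $h \ge 0$, I would consider
\begin{equation*}
Z_n^{\delta,\beta,h} = (\bbE^\delta \times \bE)\Big[ e^{-\beta H_n^\omega(S) + h M_n}\Big],
\qquad M_n := \max_{0 \le i \le n} S_i,
\end{equation*}
and re-run the Ray-Knight generating-function analysis used to prove Theorem~\ref{thm:free.energy}. In the transfer-matrix picture developed there, each unit of rightward maximum displacement contributes exactly one application of $A_{\mu,\delta,\beta}$; hence the weight $e^{h M_n}$ should correspond, at the spectral level, to the substitution $A_{\mu,\delta,\beta} \mapsto e^h A_{\mu,\delta,\beta}$. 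This is expected to yield the representation
\begin{equation*}
\phi(\delta,\beta,h) := \lim_{n \to \infty} \frac{1}{n} \log Z_n^{\delta,\beta,h} = f(\delta) + \mu_h(\delta,\beta),
\end{equation*}
with $\mu_h(\delta,\beta)$ the unique $\mu \ge 0$ solving $\lambda_{\delta,\beta}(\mu) = e^{-h}$ when such a root exists, and $\mu_h(\delta,\beta) = 0$ otherwise.

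From here, the LLN for $n^{-1}M_n$ follows by standard convex analysis: $\phi$ is convex in $h$ as a limit of log-moment generating functions, and when $(\delta,\beta) \in \intr(\cB)$ it is analytic on a right-neighborhood of $0$ by the implicit function theorem applied to $\lambda_{\delta,\beta}(\mu_h) = e^{-h}$, using the negativity and analyticity of $\partial_\mu \lambda_{\delta,\beta}$ on $(0,\infty)$ stated just below \eqref{mudeltabetadef}. This yields $n^{-1}M_n \to \partial_h \phi(\delta,\beta,0^+)$ in $\bbP_n^{\delta,\beta}$-probability. To upgrade this to the stated LLN for $n^{-1}S_n$ conditional on $\{S_n>0\}$, I would use the $S \leftrightarrow -S$ symmetry of the annealed Gibbs measure (the Hamiltonian depends only on self-intersection pairs) together with a parallel tilting argument applied to $I_n := \min_{0 \le i \le n} S_i$, showing that $|I_n| = o(n)$ in $\bbP_n^{\delta,\beta}(\,\cdot \mid S_n>0)$-probability, so that $M_n = S_n + o(n)$ on this event. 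This gives \eqref{LLNspeed} with $v(\delta,\beta) = \partial_h \phi(\delta,\beta,0^+)$.

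The spectral formula \eqref{eq:speedspec} then follows by implicit differentiation of $\lambda_{\delta,\beta}(\mu_h) = e^{-h}$ at $h = 0^+$, using $\lambda_{\delta,\beta}(\mu(\delta,\beta)) = 1$ throughout $\cB$; the two expressions in \eqref{eq:speedspec} coincide because $\log\lambda$ and $\lambda$ share the same derivative when $\lambda = 1$. On $\cS$ one has $\lambda_{\delta,\beta}(0) < 1$, hence $\mu_h \equiv 0$ on a right-neighborhood of $h = 0$, giving $\phi'(\delta,\beta,0^+) = 0$ and thus $v(\delta,\beta) = 0$. Analyticity of $v$ on $\intr(\cB)$ is then read off the formula: $\mu(\delta,\beta)$ is analytic on $\intr(\cB)$ by Theorem~\ref{thm:critcurve}, and $\partial_\mu \lambda_{\delta,\beta}$ is nonvanishing and jointly analytic in $(\mu,\delta,\beta)$. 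The main obstacle is the first step: justifying that the endpoint tilt amounts to the simple substitution $\lambda \mapsto e^h \lambda$ requires careful bookkeeping of the bijection between Ray-Knight excursion decompositions and the transfer matrix underlying Theorem~\ref{thm:free.energy}, in particular controlling the contribution of leftward excursions (which contribute to $n$ but not to $M_n$) and verifying that no boundary corrections arise.
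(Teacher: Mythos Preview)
Your strategy is essentially the paper's: tilt the partition function by an exponential in the displacement, push the tilt through the Ray-Knight/transfer-matrix machinery to obtain a modified spectral problem, and read off the speed as the derivative of the resulting free energy at zero tilt. The paper does this in Section~\ref{ss:jointLDP}, tilting by $e^{\gamma S_n}$ for $\gamma\in\R$, obtaining the cumulant generating function \eqref{Lambdagammaidalt}, applying G\"artner-Ellis to get the LDP (Theorem~\ref{thm:jointLDP}), and then deducing the LLN in Section~\ref{ss:LLN}.

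Your sketch has three gaps relative to this. First, in the decomposition of Proposition~\ref{prop:grcanspec} the number of $A_{\mu,\delta,\beta}$-applications equals the \emph{endpoint} $S_n=x$, not the maximum: the levels between $x$ and the maximum are encoded by $\tilde A_{\mu,\delta,\beta}$ (see \eqref{eq:long3b}). Hence tilting by $M_n$ does not reduce to $A\mapsto e^h A$ alone---it also sends the upper $\tilde A$ to $e^h\tilde A$. Tilting by $S_n$, as the paper does, gives exactly $A\mapsto e^\gamma A$ and removes the need for the $M_n\to S_n$ transfer via time reversal. Second, the restriction $h\ge 0$ yields only the upper bound $\limsup n^{-1}M_n\le\partial_h\phi(0^+)$; the matching lower bound requires negative tilts. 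The paper works with $\gamma\in\R$ and finds $\Lambda_{\delta,\beta}(\gamma,0)=[\mu(\delta,\beta,\gamma)\vee\tilde\mu(\delta,\beta)]-\mu(\delta,\beta)$; the $\vee\,\tilde\mu$ term, absent from your formula, becomes active precisely for $\gamma$ sufficiently negative and is what produces the linear piece of $I^v_{\delta,\beta}$. Third, on the critical curve $\mu(\delta,\beta)=0$ and the rate function vanishes on the whole interval $[0,\tilde v(\delta,\beta)]$, so the LDP alone does not pin down the speed; the paper handles this case separately in Section~\ref{ss:LLN} via the Markov-chain representation from the CLT proof together with the law of large numbers for stationary Markov chains (using Lemma~\ref{lem:slopecrit1} for finiteness of the mean).
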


\begin{theorem}
\label{thm:charge}
For every $(\delta,\beta) \in \cQ$, there exists a $\rho(\delta,\beta) \in [0,\infty)$ such that
\begin{equation}
\lim_{n\to\infty} \bbP_n^{\delta,\beta}\left(\left| n^{-1} \Omega_n 
- \rho(\delta,\beta) \right| > \epsilon\right) = 0
\qquad \forall\,\epsilon>0,
\end{equation}
where
\begin{equation}
\label{rhoident}
\rho(\delta,\beta)
\left\{\begin{array}{ll}
>0, &(\delta,\beta) \in \cB,\\
=0, &(\delta,\beta) \in \cS. 
\end{array}
\right.
\end{equation}
For every $(\delta,\beta) \in \cB$,
\begin{equation}
\label{eq:chargespec}
\rho(\delta,\beta)
= \left[\frac{\frac{\partial}{\partial\delta} \log \lambda_{\delta,\beta}(\mu)}
{-\frac{\partial}{\partial\mu} \log \lambda_{\delta,\beta}(\mu)}\right]_{\mu=\mu(\delta,\beta)}
= \frac{\partial}{\partial\delta}\,\mu\big(\delta,\beta\big).
\end{equation}
Moreover, $(\delta,\beta) \mapsto \rho(\delta,\beta)$ is analytic on $\intr(\cB)$.
\end{theorem}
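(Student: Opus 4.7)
The plan is to derive the limiting logarithmic moment generating function of $n^{-1}\Omega_n$ under $\bbP_n^{\delta,\beta}$ and to read off both the law of large numbers and the identity $\rho = \partial_\delta \mu$ from it. The only inputs I would use are Theorems~\ref{thm:free.energy} and~\ref{thm:critcurve}, together with a standard exponential Chebyshev argument.

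The first ingredient is a tilting identity. From \eqref{eq:Pdeltadef} one checks that $\bbE^\delta[e^{h\Omega_n}g(\omega)] = (M(\delta+h)/M(\delta))^n \bbE^{\delta+h}[g(\omega)]$ for any $h$ with $\delta + h \geq 0$; applied to $g(\omega) = \bE[e^{-\beta H_n^\omega(S)}]$ this gives
\[
\bbE_n^{\delta,\beta}\!\left[e^{h\Omega_n}\right] = \Big(\frac{M(\delta+h)}{M(\delta)}\Big)^{\!n} \frac{\bbZ_n^{\delta+h,\beta}}{\bbZ_n^{\delta,\beta}}.
\]
Taking $\frac{1}{n}\log$ and letting $n\to\infty$, Theorem~\ref{thm:free.energy} together with $F = -\log M + F^*$ produces a cancellation of the $M$-factors and yields
\[
\Lambda(h) := \lim_{n\to\infty}\frac{1}{n}\log \bbE_n^{\delta,\beta}\!\left[e^{h\Omega_n}\right] = F^*(\delta+h,\beta) - F^*(\delta,\beta) = \mu(\delta+h,\beta) - \mu(\delta,\beta).
\]

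To identify $\rho(\delta,\beta)$ I would distinguish two cases. On $\intr(\cB) = \cQ^{>}$, Theorem~\ref{thm:critcurve} delivers analyticity of $\mu$, hence $\Lambda$ is analytic in a neighbourhood of $0$ and I set $\rho := \Lambda'(0) = \partial_\delta \mu(\delta,\beta)$. Implicit differentiation of $\lambda_{\delta,\beta}(\mu(\delta,\beta)) = 1$ in $\delta$, using that $\partial_\mu \lambda_{\delta,\beta}$ is strictly negative because $\lambda_{\delta,\beta}$ is strictly decreasing in $\mu$, produces the two equivalent formulas in \eqref{eq:chargespec}, and analyticity of $\rho$ on $\intr(\cB)$ is inherited from $\mu$. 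Positivity of $\rho$ on $\cB$ then reduces to positivity of $\partial_\delta \lambda_{\delta,\beta}$, which in turn follows from the strict monotonicity of $\beta_c$ supplied by Theorem~\ref{thm:critcurve}. On $\intr(\cS)$, continuity of $\beta_c$ keeps $(\delta+h,\beta)$ in $\cS$ for all sufficiently small $h$, so $\mu(\delta+h,\beta)\equiv 0$ locally, forcing $\rho = 0$. Convergence in probability in both regimes is then a routine exponential Chebyshev argument: for $h>0$,
\[
\frac{1}{n}\log \bbP_n^{\delta,\beta}\!\left(n^{-1}\Omega_n > \rho + \epsilon\right) \leq -h(\rho+\epsilon) + \frac{1}{n}\log \bbE_n^{\delta,\beta}\!\left[e^{h\Omega_n}\right],
\]
whose limsup equals $\Lambda(h) - h(\rho+\epsilon)$, which convexity of $\Lambda$ with $\Lambda'(0) = \rho$ renders strictly negative for sufficiently small $h > 0$; the symmetric argument with $h<0$ handles the lower tail.

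The hard part will be the critical curve $\{\beta = \beta_c(\delta)\}\subset \cB$, where the first-order character of the transition comes to life. At such a point $\mu(\delta,\beta) = 0$, but $\mu$ admits only a one-sided partial derivative in $\delta$: it vanishes as one moves into $\cS$ (that is, for $h<0$) and is strictly positive as one moves into $\intr(\cB)$ (for $h>0$). The upper-tail Chebyshev bound still closes using $h>0$, but for the lower tail $\Lambda(h) \equiv 0$ on $h\leq 0$, so exponential Chebyshev alone cannot pin $n^{-1}\Omega_n$ to $\rho > 0$ rather than to the whole interval $[0,\rho]$---precisely the flat piece of the LDP rate function announced in the abstract. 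Overcoming this will require a finer analysis of $\bbZ_n^{\delta,\beta}$ via the Ray-Knight representation and the spectral structure of $A_{\mu,\delta,\beta}$ at $\mu = 0$, or equivalently an appeal to the joint large deviation principle for the empirical charge and speed, to rule out the subballistic strategy that realises an atypically small empirical charge.
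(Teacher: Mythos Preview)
Your approach is essentially the paper's. Both compute the logarithmic moment generating function via the same tilting identity, obtaining $\Lambda(h)=\mu(\delta+h,\beta)-\mu(\delta,\beta)$ (this is \eqref{Lambdagammaidaltbridge} with $\gamma=0$, $\gamma'=h$), and both deduce the law of large numbers off the critical curve from it---the paper by invoking the full Theorem~\ref{thm:jointLDP} and reading off the unique zero of $I^\rho_{\delta,\beta}$, you by exponential Chebyshev, which is the same upper bound. The spectral formula \eqref{eq:chargespec} via implicit differentiation of $\lambda_{\delta,\beta}(\mu(\delta,\beta))=1$ is also exactly how the paper obtains it.

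Two remarks. First, your argument for $\rho>0$ on $\intr(\cB)$ via strict monotonicity of $\beta_c$ is indirect: monotonicity of $\beta_c$ concerns $\lambda_{\delta,\beta}(0)$, whereas you need $\partial_\delta\lambda_{\delta,\beta}(\mu)>0$ at $\mu=\mu(\delta,\beta)>0$. The cleaner route (used implicitly in Section~\ref{ss:rfshape}) is that $\delta\mapsto G^*_{\delta,\beta}(\ell)$ is strictly convex (Proposition~\ref{pr:Gbetadelta.monotonicity}), hence so are the entries of $A_{\mu,\delta,\beta}$ and thus $\delta\mapsto\mu(\delta,\beta)=F^*(\delta,\beta)$; since $\mu$ vanishes on $[0,\delta_c(\beta)]$ and is strictly convex on $(\delta_c(\beta),\infty)$, its derivative is strictly positive there.

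Second, your diagnosis of the critical curve is accurate, including the fact that the rate function is identically zero on $[0,\rho]$ there, so no soft argument closes the lower tail. The paper's resolution (Section~\ref{ss:LLN}) is \emph{not} via the joint LDP---that rate function has the same flat piece---but via the Ray--Knight/Markov-chain representation from Section~\ref{CLTproof}: one shows the relevant stationary chain has finite mean at criticality (Lemma~\ref{lem:slopecrit1}) and applies the ordinary (sub-exponential) law of large numbers for Markov chains. The paper in fact carries this out explicitly only for the speed; the charge is left to an analogous argument through the framework of Appendix~\ref{AppB}. Your identification of the needed machinery is therefore on target, though the mechanism is an ergodic-theorem input rather than a refined LDP.
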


\begin{remark}\rm
Since $\gd \mapsto (\gd, \gb_c(\gd))$ lies in the ballistic phase
$\cB$ (recall \eqref{BSdef}),
Theorems~\ref{thm:speed}--\ref{thm:charge} imply that $(\gd,\gb) \mapsto v(\gd,\gb)$ 
and $(\gd,\gb) \mapsto\rho(\gd,\gb)$ are \emph{discontinuous at criticality}. This
means that the \emph{phase transition is first order}.
See Fig.~\ref{fig-speedcharge} 
below for numerical plots of $(\delta,\beta) \mapsto v(\delta,\beta)$ and $(\delta,\beta) 
\mapsto \rho(\delta,\beta)$. 
\end{remark}

In fact, \emph{large deviation principles} holds for the laws of the empirical speed and the 
empirical charge. Let
\begin{equation}
\label{mugammaid}
\begin{array}{ll} 
&\mu(\delta,\beta,\gamma) \text{ be the solution of the equation } \lambda_{\delta,\beta}(\mu)
= e^{-\gamma}\\
&\text{when it exists and } \mu(\delta,\beta,\gamma)=0 \text{ otherwise}
\end{array}
\end{equation}
and note that $\mu(\delta,\beta,0)=\mu(\delta,\beta)$. 

\begin{theorem}
\label{thm:jointLDP}
For every $(\delta,\beta) \in \cQ$:\\ 
{\rm (1)} The sequence $(n^{-1}S_n)_{n\in\N}$ conditionally on $\{S_n>0\}_{n\in\N}$ satisfies 
the large deviation principle on $[0,\infty)$ with rate function $I^v_{\delta,\beta}$ given by
\begin{equation}
\label{Ivdef}
I^v_{\delta,\beta}(\theta) = \mu(\delta,\beta) + \sup_{\gamma \in \R} 
\big[ \theta\gamma - \{\mu(\delta,\beta,\gamma) \vee \tilde{\mu}(\delta,\beta)\}\big],
\qquad \theta \in [0,\infty).
\end{equation}
{\rm (2)} The sequence $(n^{-1}\Omega_n)_{n\in\N}$ satisfies the large deviation principle on 
$[0,\infty)$ with rate function $I^\rho_{\delta,\beta}$ given by
\begin{equation}
\label{Irhodef}
I^\rho_{\delta,\beta}(\theta') = \mu(\delta,\beta) + \sup_{\gamma' \in \R} 
\big[ \theta'\gamma' - \mu(\delta+\gamma',\beta)\big], \qquad \theta' \in [0,\infty).
\end{equation}
(The large deviation principle on $(-\infty,0)$ is obtained from that on $(0,\infty)$ after reflection 
of the charge distribution.)
\end{theorem}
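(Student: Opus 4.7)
The plan is to deduce both LDPs from the Gärtner--Ellis theorem, by computing the limiting logarithmic moment generating functions of $n^{-1}\Omega_n$ and $n^{-1}S_n$ under $\bbP_n^{\delta,\beta}$ and matching them with the Fenchel--Legendre conjugates of the rate functions in \eqref{Ivdef}--\eqref{Irhodef}. Both computations rest on the spectral identity $F^*(\delta,\beta)=\mu(\delta,\beta)$ of Theorem~\ref{thm:free.energy}; the difference is that in part~(2) the tilt couples neatly with the charge law, while in part~(1) it must be propagated through the Ray--Knight spectral representation of $\bbZ_n^{\delta,\beta}$ obtained earlier in the paper.

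For part~(2), absorbing $e^{\gamma'\Omega_n}$ into $\bbP^\delta$ via \eqref{eq:Pdeltadef} gives the exact identity
\begin{equation}
\bbE_n^{\delta,\beta}\!\left[e^{\gamma'\Omega_n}\right]
= \left(\frac{M(\delta+\gamma')}{M(\delta)}\right)^{\!n}\frac{\bbZ_n^{\delta+\gamma',\beta}}{\bbZ_n^{\delta,\beta}}.
\end{equation}
Taking $\tfrac1n\log$ and $n\to\infty$, and using $F(\cdot,\beta)=-\log M(\cdot)+\mu(\cdot,\beta)$, yields
\begin{equation}
\Lambda^\rho(\gamma') \;=\; \lim_{n\to\infty}\tfrac1n \log\bbE_n^{\delta,\beta}\!\left[e^{\gamma'\Omega_n}\right] \;=\; \mu(\delta+\gamma',\beta)-\mu(\delta,\beta),\qquad\gamma'\in\R.
\end{equation}
Convexity of $F^*$ in $\delta$ makes $\Lambda^\rho$ convex and finite on all of $\R$; its Legendre transform is precisely \eqref{Irhodef}. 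Essential smoothness on the interior of the effective domain follows from the analyticity of $\mu(\cdot,\beta)$ in the ballistic phase, so Gärtner--Ellis produces the LDP. The unilateral statement on $[0,\infty)$ reflects the sign bias $\delta\geq 0$; the negative-$\theta'$ half is handled by reflection symmetry in $\omega$.

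For part~(1), introduce the tilted partition function
\begin{equation}
Z_n^{\delta,\beta}(\gamma) = (\bbE^\delta\times\bE)\!\left[e^{\gamma S_n}\,\ind_{\{S_n>0\}}\,e^{-\beta H_n^\omega(S)}\right],
\end{equation}
and decompose each path with $S_n>0$ according to its upward ladder epochs and the excursions attached to them, with one final incomplete excursion from the running maximum. The closed-form matrix expression of $\sum_n z^n \bbZ_n^{\delta,\beta}$ established earlier in the paper features $A_{\mu,\delta,\beta}$ (with $z=e^{-\mu}$) as the transfer operator of complete bulk excursions, and $\tilde A_{\mu,\delta,\beta}$ as the boundary block for the final incomplete excursion. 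The factor $e^{\gamma S_n}$ contributes $e^\gamma$ per completed upward ladder step but does not touch the boundary block, hence the bulk transfer operator becomes $e^\gamma A_{\mu,\delta,\beta}$ while $\tilde A_{\mu,\delta,\beta}$ is unchanged. The resulting generating function $\sum_n z^n Z_n^{\delta,\beta}(\gamma)$ therefore has singularities at the $\mu$ solving $e^\gamma\lambda_{\delta,\beta}(\mu)=1$ (i.e.\ $\mu=\mu(\delta,\beta,\gamma)$) and $\tilde\lambda_{\delta,\beta}(\mu)=1$ (i.e.\ $\mu=\tilde\mu(\delta,\beta)$), and the exponential rate is fixed by the rightmost (largest) pole:
\begin{equation}
\lim_{n\to\infty}\tfrac1n \log Z_n^{\delta,\beta}(\gamma) \;=\; \mu(\delta,\beta,\gamma)\vee\tilde\mu(\delta,\beta).
\end{equation}
Dividing by $\bbZ_n^{\delta,\beta}\sim e^{n\mu(\delta,\beta)}$, and noting that $\bbP_n^{\delta,\beta}(S_n>0)$ is bounded away from $0$ (by a symmetry argument) and thus has exponential rate $0$, one obtains $\Lambda^v(\gamma)=\{\mu(\delta,\beta,\gamma)\vee\tilde\mu(\delta,\beta)\}-\mu(\delta,\beta)$; the Fenchel--Legendre transform is exactly \eqref{Ivdef}.

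The main obstacle is the spectral bookkeeping in part~(1): one must verify rigorously that the multiplicative factor $e^{\gamma S_n}$ is absorbed only by the bulk-excursion operator $A_{\mu,\delta,\beta}$ and not by the boundary block $\tilde A_{\mu,\delta,\beta}$. This dichotomy is the source of the $\vee$ in the limiting log-MGF and hence of the flat pieces of the rate function: the inhomogeneous strategy where the polymer realises a deviation by concentrating atypical behaviour in an $o(n)$-long boundary excursion at bulk cost $\tilde\mu(\delta,\beta)$ competes with the homogeneous strategy of tilting every bulk excursion. A secondary technical point is the possible loss of differentiability of $\Lambda^v$ at the crossover where $\mu(\delta,\beta,\gamma)=\tilde\mu(\delta,\beta)$; the Gärtner--Ellis theorem applies on the smooth pieces, and the Legendre transform of a convex function with a kink produces the flat piece of $I^v_{\delta,\beta}$ automatically, so no additional work is required beyond ensuring lower semicontinuity and coercivity of the rate function, both standard.
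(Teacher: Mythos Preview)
Your overall architecture---computing the limiting log-moment generating functions $\Lambda^v$ and $\Lambda^\rho$ via the spectral representation and then invoking G\"artner--Ellis---matches the paper's Step~1--2. The identification of $\Lambda^v(\gamma)=\{\mu(\delta,\beta,\gamma)\vee\tilde\mu(\delta,\beta)\}-\mu(\delta,\beta)$ and $\Lambda^\rho(\gamma')=\mu(\delta+\gamma',\beta)-\mu(\delta,\beta)$ is correct, and the mechanism you describe (the tilt $e^{\gamma S_n}$ multiplies only $A_{\mu,\delta,\beta}$, not $\tilde A_{\mu,\delta,\beta}$) is exactly how the $\vee$ arises.

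However, there is a genuine gap in your final paragraph, and it is the heart of the proof. You assert that the kink in $\Lambda^v$ ``produces the flat piece of $I^v_{\delta,\beta}$ automatically, so no additional work is required.'' This is false. The G\"artner--Ellis theorem requires essential smoothness of $\Lambda$ to yield the full LDP; when $\Lambda$ has a kink, the theorem gives the LDP upper bound everywhere but the lower bound only at \emph{exposed points} of $\Lambda^*$. The flat piece of $I^v_{\delta,\beta}$ on $[0,\tilde v(\delta,\beta)]$ consists entirely of non-exposed points, so G\"artner--Ellis gives you only that the rate function is \emph{at least} the Legendre transform there---you still owe a matching probability lower bound. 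The same objection applies to part~(2): your claim that $\gamma'\mapsto\mu(\delta+\gamma',\beta)$ is essentially smooth is wrong, since it vanishes identically for $\gamma'\le\delta_c(\beta)-\delta$ and has strictly positive right-slope $\tilde\rho(\beta)>0$ at that point, producing a kink and a flat piece in $I^\rho_{\delta,\beta}$ on $[0,\tilde\rho(\beta)]$.

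The paper devotes its Steps~3--6 precisely to this missing half. For the speed, it constructs an explicit strategy: run as a bridge at speed close to $\tilde v(\delta,\beta)$ for a fraction $\theta/\tilde v(\delta,\beta)$ of the time, then make a right-loop for the remainder. This requires separate control of the bridge partition function (Proposition~3.1) and the loop partition function (Proposition~3.2, showing $\lim_n n^{-1}\log\bbZ^{*,\delta,\beta}_{n,\mathrm{loop}}=\tilde\mu(\delta,\beta)$ exists). For the charge, the analogous splitting is in the charge variable, and requires the nontrivial estimate \eqref{eq:liminf.loop}. Your heuristic about an ``$o(n)$-long boundary excursion at bulk cost $\tilde\mu(\delta,\beta)$'' is not the right picture: the optimal inhomogeneous strategy uses a macroscopic (order-$n$) loop, not a boundary layer.
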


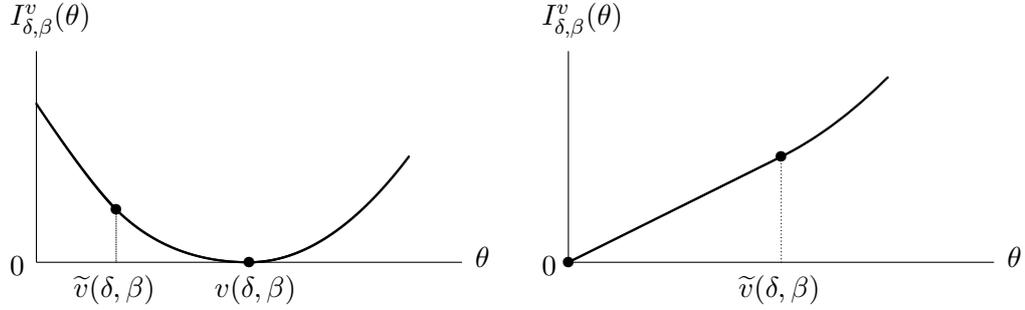
\begin{figure}[htbp]
\vspace{1.5cm}
\begin{center}
\setlength{\unitlength}{0.35cm}
\begin{picture}(12,8)(10,-2)
\put(0,0){\line(16,0){16}}
\put(0,0){\line(0,8){8}}
{\thicklines
\qbezier(0,6)(2,3)(3,2)
\qbezier(3,2)(5,0)(8,0)
\qbezier(8,0)(11,0)(14,4)
}
\qbezier[30](3,2)(3,1)(3,0)
\put(-1,-.5){$0$}
\put(16.5,-0.2){$\theta$}
\put(8,0){\circle*{.4}}
\put(3,2){\circle*{.4}}
\put(-1,9){$I^v_{\delta,\beta}(\theta)$}
\put(6.7,-1.3){$v(\delta,\beta)$}
\put(1.4,-1.3){$\tilde{v}(\delta,\beta)$}
\put(20,0){\line(16,0){16}}
\put(20,0){\line(0,8){8}}
{\thicklines
\qbezier(20,0)(24,2)(28,4)
\qbezier(28,4)(30,5)(32,7)
}
\qbezier[30](28,0)(28,2)(28,4)
\put(19,-.5){$0$}
\put(36.5,-0.2){$\theta$}
\put(20,0){\circle*{.4}}
\put(28,4){\circle*{.4}}
\put(19,9){$I^v_{\delta,\beta}(\theta)$}
\put(26.4,-1.3){$\tilde{v}(\delta,\beta)$}
\end{picture}
\end{center}
\vspace{0cm}
\caption{\small Qualitative plot of $\theta \mapsto I^v_{\delta,\beta}(\theta)$ 
for $(\delta,\beta) \in \intr(\cB)$ (left) and $(\delta,\beta) \in \cS$ (right). The 
slope of the flat piece on the left and on the right equals $-\log \lambda_{\delta,\beta}
(\tilde{\mu}(\delta,\beta))$. For $(\delta,\beta)$ on the critical curve, the two pictures 
merge and the flat piece becomes horizontal because $\lambda_{\delta,\beta_c(\delta)}
(0)=1$, $\mu(\delta,\beta_c(\delta))=\tilde{\mu}(\delta,\beta_c(\delta))=0$ and $v(\delta,
\beta_c(\delta))=\tilde{v}(\delta,\beta_c(\delta))$. The boundary value is $I^v_{\delta,\beta}
(0) = \mu(\delta,\beta)-\tilde{\mu}(\delta,\beta)$, while $I^v_{\delta,\beta}(\theta)=\infty$ 
for $\theta \in (1,\infty)$.}
\label{fig-rfspeed}
\end{figure}

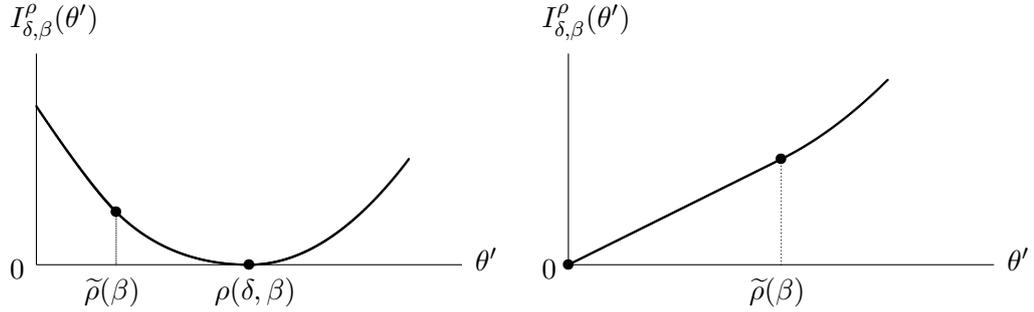
\begin{figure}[htbp]
\vspace{1.5cm}
\begin{center}
\setlength{\unitlength}{0.35cm}
\begin{picture}(12,8)(10,-2)
\put(0,0){\line(16,0){16}}
\put(0,0){\line(0,8){8}}
{\thicklines
\qbezier(0,6)(2,3)(3,2)
\qbezier(3,2)(5,0)(8,0)
\qbezier(8,0)(11,0)(14,4)
}
\qbezier[30](3,2)(3,1)(3,0)
\put(-1,-.5){$0$}
\put(16.5,-0.2){$\theta'$}
\put(8,0){\circle*{.4}}
\put(3,2){\circle*{.4}}
\put(-1,9){$I^\rho_{\delta,\beta}(\theta')$}
\put(6.7,-1.3){$\rho(\delta,\beta)$}
\put(1.8,-1.3){$\tilde{\rho}(\beta)$}
\put(20,0){\line(16,0){16}}
\put(20,0){\line(0,8){8}}
{\thicklines
\qbezier(20,0)(24,2)(28,4)
\qbezier(28,4)(30,5)(32,7)
}
\qbezier[30](28,0)(28,2)(28,4)
\put(19,-.5){$0$}
\put(36.5,-0.2){$\theta'$}
\put(20,0){\circle*{.4}}
\put(28,4){\circle*{.4}}
\put(19,9){$I^\rho_{\delta,\beta}(\theta')$}
\put(26.8,-1.3){$\tilde{\rho}(\beta)$}
\end{picture}
\end{center}
\vspace{0cm}
\caption{\small Qualitative plot of $\theta' \mapsto I^\rho_{\delta,\beta}(\theta')$ for
$(\delta,\beta) \in \intr(\cB)$ (left) and $(\delta,\beta) \in \cS$ (right). The slope of the
flat piece on the left and on the right equals $\delta_c(\beta)-\delta$. For $(\delta,\beta)$
on the critical curve, the two pictures merge and the flat piece becomes horizontal because
$\tilde{\rho}(\beta)=\rho(\delta,\beta_c(\delta))$. The boundary value is $I^\rho_{\delta,\beta}(0) 
= \mu(\delta,\beta)$, while $I^\rho_{\delta,\beta}(\theta')=\infty$ for $\theta' \in (m,\infty)$
with $m \in (0,\infty]$ the essential supremum of the law of $\omega_1$.}
\label{fig-rfcharge}
\end{figure}

The two rate functions are depicted in Figs.~\ref{fig-rfspeed}--\ref{fig-rfcharge}. They are strictly 
convex, except for linear pieces on $[0,\tilde{v}(\delta,\beta)]$ and $[0,\tilde{\rho}(\beta)]$ with
\begin{equation}
\label{eq:speedspec*}
\frac{1}{\tilde{v}(\delta,\beta)} = \left[-\frac{\partial}{\partial\mu} 
\log \lambda_{\delta,\beta}(\mu)\right]_{\mu=\tilde\mu(\delta,\beta)},
\qquad \tilde{\rho}(\beta) = \rho(\delta_c(\beta),\beta),
\end{equation}
where $\beta \mapsto \delta_c(\beta)$ is the inverse of $\delta \mapsto \beta_c(\delta)$
(recall Fig.~\ref{fig-critcurve}). Note that, whereas $v(\delta,\beta)$ in \eqref{vident} and 
$\rho(\delta,\beta)$ in \eqref{rhoident} jump from a strictly positive value to zero when 
$(\delta,\beta)$ moves from $\cB$ to $\cS$ inside $\intr(\cQ)$, $\tilde{v}(\delta,\beta)$ 
and $\tilde\rho(\beta)$ in \eqref{eq:speedspec*} are strictly positive throughout $\intr(\cQ)$.

\medskip
The large deviation principles in turn yield \emph{central limit theorems}:
\begin{theorem}
\label{thm:CLTspeedcharge}
For every $(\delta,\beta) \in \intr(\cB)$,
\begin{equation}
\frac{S_n - nv(\delta,\beta)}{\sigma_v(\delta,\beta)\sqrt{n}},
\qquad \frac{\Omega_n - n\rho(\delta,\beta)}{\sigma_\rho(\delta,\beta)\sqrt{n}},
\end{equation}
converge in distribution to the standard normal law, with $\sigma_v(\delta,\beta),
\sigma_\rho(\delta,\beta) \in (0,\infty)$ given by
\begin{equation}
\label{eq:formula.variance}
\begin{aligned}
\sigma_v(\delta,\beta)^2 
&= \left[\frac{\partial^2}{\partial\theta^2}\, I^v_{\delta,\beta}(\theta)\right]^{-1}_{\theta=v(\delta,\beta)}
= \left[\frac{\partial^2}{\partial\gamma^2}\,\mu(\gd,\gb,\gamma)\right]_{\gamma = 0}\\
&= v(\delta,\beta)^3 \left[\frac{\partial^2}{\partial\mu^2}
\log\lambda_{\delta,\beta}(\mu)\right]_{\mu=\mu(\delta,\beta)},\\
\sigma_\rho(\delta,\beta)^2 
&= \left[\frac{\partial^2}{\partial\theta'^2}\, I^\rho_{\delta,\beta}(\theta')\right]^{-1}_{\theta'=\rho(\delta,\beta)}
= \frac{\partial^2}{\partial\delta^2}\,\mu(\gd,\gb)  = \frac{\partial}{\partial\delta}\,\rho(\gd,\gb).
\end{aligned}
\end{equation}
\end{theorem}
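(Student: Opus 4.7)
The plan is to lift the large deviation principles of Theorem~\ref{thm:jointLDP} to central limit theorems by strengthening the convergence of the corresponding log-moment-generating functions. For a real-valued additive functional $X_n$ of the polymer (either $X_n = S_n$ or $X_n = \Omega_n$), the classical recipe is: show that $\Lambda_n(\gamma) = n^{-1} \log \bbE_n^{\delta,\beta}[e^{\gamma X_n}]$ converges, for $\gamma$ in a complex neighborhood of $0$, to an analytic function $\Lambda(\gamma)$ with $\Lambda'(0) = \mu_*$ (the LLN value) and $\Lambda''(0) = \sigma^2 \in (0,\infty)$. Combined with uniform control of the correction term, this yields $\bbE_n^{\delta,\beta}[\exp(t(X_n - n\mu_*)/\sqrt{n})] \to e^{t^2 \sigma^2/2}$ for all real $t$ in a neighborhood of $0$, whence the CLT by Curtiss' theorem.

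For the charge, the tilt $e^{\gamma'\Omega_n}$ simply replaces the charge law $\bbP^\delta$ by $\bbP^{\delta+\gamma'}$ (cf.~\eqref{eq:Pdeltadef}), with a multiplicative factor $(M(\delta+\gamma')/M(\delta))^n$ absorbing the normalization. Theorem~\ref{thm:free.energy} then yields
\begin{equation}
\lim_{n\to\infty} \frac{1}{n} \log \bbE_n^{\delta,\beta}\bigl[e^{\gamma'\Omega_n}\bigr]
= F^*(\delta+\gamma',\beta) - F^*(\delta,\beta)
= \mu(\delta+\gamma',\beta) - \mu(\delta,\beta),
\end{equation}
which by Theorem~\ref{thm:critcurve} is analytic in $\gamma'$ near $0$ for $(\delta,\beta) \in \intr(\cB)$. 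One differentiation at $\gamma' = 0$ recovers $\rho(\delta,\beta) = \partial_\delta \mu(\delta,\beta)$ from Theorem~\ref{thm:charge}; a second differentiation produces $\sigma_\rho^2 = \partial_\delta^2 \mu(\delta,\beta) = \partial_\delta \rho(\delta,\beta)$ in~\eqref{eq:formula.variance}. For the speed, the tilt $e^{\gamma S_n}$ converts the simple random walk into an asymmetric one with drift $\tanh(\gamma)$, whose Ray-Knight description must be carried through the derivation leading to~\eqref{varc}. The upshot is that the tilted growth rate is governed by $\mu(\delta,\beta,\gamma)$ from~\eqref{mugammaid}, and for $\gamma$ close to $0$ with $(\delta,\beta) \in \intr(\cB)$ one has $\mu(\delta,\beta,\gamma) > \tilde{\mu}(\delta,\beta)$, so that
\begin{equation}
\lim_{n\to\infty} \frac{1}{n} \log \bbE_n^{\delta,\beta}\bigl[e^{\gamma S_n} \,\big|\, S_n > 0\bigr]
= \mu(\delta,\beta,\gamma) - \mu(\delta,\beta).
\end{equation}
Implicit differentiation of $\lambda_{\delta,\beta}(\mu(\delta,\beta,\gamma)) = e^{-\gamma}$ at $\gamma = 0$ reproduces $v(\delta,\beta) = \partial_\gamma \mu(\delta,\beta,\gamma)|_{\gamma=0}$ (consistent with~\eqref{eq:speedspec}), and differentiating once more yields $\sigma_v^2 = \partial_\gamma^2 \mu(\delta,\beta,\gamma)|_{\gamma=0} = v(\delta,\beta)^3 \, \partial_\mu^2 \log \lambda_{\delta,\beta}(\mu(\delta,\beta))$, matching~\eqref{eq:formula.variance}.

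The hard part will be the promotion of the pointwise convergence of $\Lambda_n$ to \emph{uniform analytic} convergence in a complex disk around $\gamma = 0$, which is what the Curtiss-style argument requires. This is where the spectral input of Theorems~\ref{thm:free.energy}--\ref{thm:critcurve} is indispensable: one needs the leading eigenvalue of the tilted operator to remain isolated under complex perturbation, so that its Perron projector depends analytically on $\gamma$ and controls the subleading correction uniformly in $n$. Strict positivity of $\sigma^2$ reduces to the strict log-convexity of $\mu \mapsto \log \lambda_{\delta,\beta}(\mu)$ on $(0,\infty)$ recorded after~\eqref{mudeltabetadef}, and the $C^2$-smoothness of the rate functions at their minima (yielding the reciprocal-second-derivative identity in~\eqref{eq:formula.variance}) comes from Legendre duality applied to $\Lambda$.
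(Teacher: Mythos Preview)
Your proposal outlines a Curtiss-style approach via complex-analytic extension of the log-moment-generating function. This is a reasonable strategy in principle, but it differs substantially from the paper's proof and leaves its own crucial step unresolved.

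For the \emph{speed}, the paper does not pass through complex tilts at all. Instead, following K\"onig~\cite{K96}, it exploits the Markovian structure of the edge-crossing numbers directly: one writes $e^{-\mu(\delta,\beta)n}\bbZ_n^{*,\delta,\beta}(S_n = x)$ as an expectation over a positive-recurrent Markov chain $(M_y^+)$ with transition kernel $Q_{\delta,\beta}(i,j) = A(i,j)\nu(j)/\nu(i)$ (the Doob transform by the Perron eigenvector $\nu$). The event $\{S_n = x\}$ is then encoded by the additive functional $Y_x = \sum_{y<x}(2M_y^+ + 1)$ hitting level $n$. The CLT for $S_n$ is thereby reduced to the CLT for additive functionals of a positive-recurrent Markov chain, which follows from Chung~\cite[Theorem 16.1]{Chung}. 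The mean and variance of $Y$ are identified by differentiating in $\mu$ the recurrence identity $\bP^{\delta,\beta}(T_1 < \infty) = 1$, yielding exactly the formulas in~\eqref{eq:formula.variance}. This sidesteps entirely the need to control the spectrum under complex perturbation; substantial work is still required (tail estimates on the eigenvector, control of boundary terms), but it is of a different, real-variable nature.

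For the \emph{charge}, the paper's approach (Appendix~\ref{AppB}) is closer to yours, but with a key twist: it uses \emph{real} tilts $\gamma' = \lambda/\sqrt{n}$ that depend on $n$. The factor $\exp\{n[\mu(\delta+\lambda/\sqrt{n},\beta) - \mu(\delta,\beta)] - \lambda\rho\sqrt{n}\}$ converges to the Gaussian moment by a second-order Taylor expansion of $\delta \mapsto \mu(\delta,\beta)$; the residual ratio of normalized partition functions must be shown to converge to $1$. Because the parameter $\delta+\lambda/\sqrt{n}$ moves with $n$, this requires a \emph{uniform} renewal theorem (Lemma~\ref{lem:appr.ren.thm}) together with exponential tail bounds on the inter-arrival times of the underlying Markov renewal process, obtained via sharp control of the decay rate of the Perron eigenvector (Lemma~\ref{lem:nurate}).

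The genuine gap in your proposal is that the ``hard part'' you flag---uniform analytic convergence of $\Lambda_n$ on a complex disk---is left entirely unaddressed. Invoking analyticity of the Perron projector is not enough by itself: you would still need to show that the subleading contributions in~\eqref{Z+form} (from $\tilde A$ and the boundary operator $\widehat A$) remain uniformly bounded in $n$ after complex perturbation, and that the spectral gap in Proposition~\ref{prop:gap} persists. This may well be doable, but it is not automatic, and you have not supplied the mechanism. The paper's routes---Markov-chain CLT for the speed, uniform renewal theory for the charge---are concrete and avoid complex analysis altogether.
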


\noindent
The proof of Theorem~\ref{thm:CLTspeedcharge}, that we give
in Section~\ref{CLTproof}, is inspired by K\"onig~\cite{K96}.

The expression in the second line of \eqref{eq:formula.variance} can be written out in terms 
of $v(\gd,\gb)$, $\rho(\gd,\gb)$ and second order derivatives with respect to $\delta$ and 
$\mu$ of $\log\lambda_{\delta,\beta}(\mu)$ at $\mu=\mu(\gd,\gb)$, but the resulting expression 
is not particularly illuminating.


\subsection{Theorems: asymptotic properties}
\label{ss:theoremsasymptotics}

In Theorem \ref{thm:asymp.cc}(1) and Theorem \ref{thm:Fscalbeta} below we need to make an additional assumption on the charge distribution, namely, 
we require that one of the following properties holds: 
\begin{equation}
\label{intcond}
\begin{array}{ll}
& (a) \text{ $\omega_1$ is discrete with a distribution that is lattice}.\\
& (b) \text{ $\omega_1$ is continuous with a density that is in $L^p$ for some $p>1$}. 
\end{array}
\end{equation}

For $a \in \R$ and $b \in (0,\infty)$, let $\cL^{a,b}$ be the \emph{Sturm-Liouville operator} 
defined by
\begin{equation}
\label{SL}
(\cL^{a,b}g)(x) = (2ax-4bx^2)g(x) + g'(x) + xg''(x), \qquad g \in C^2((0,\infty)).
\end{equation}
This is a two-parameter version of a one-parameter family of operators considered in van 
der Hofstad and den Hollander~\cite{vdHdH95}. Let
\begin{equation}
\label{Csetdef}
\cC = \left\{g \in L^2\big((0,\infty)) \cap C^\infty((0,\infty)\big)\colon\,
\|g\|_2 = 1,\,g>0,\,\int_0^\infty \Big[x^{\tfrac92}g(x)^2 + xg'(x)^2\Big]\, \dd x < \infty\right\}.
\end{equation}
The largest eigenvalue problem
\begin{equation}
\label{SLeig}
\cL^{a,b}g = \chi g, \qquad \chi \in \R,\,g \in \cC, 
\end{equation}
has a unique solution $(g^{a,b},\chi(a,b))$ with the following properties: For every
$b \in (0,\infty)$,
\begin{equation}
\label{SLprop}
\begin{aligned}
&a \mapsto \chi(a,b) \text{ is analytic, strictly increasing and strictly convex on } \R,\\
&\chi(0,b) < 0,\, \lim_{a\to\infty} \chi(a,b) = \infty,\,\lim_{a\to-\infty} \chi(a,b) = -\infty,\\
&a \mapsto g^{a,b} \text{ is analytic as a map from } \R \text{ to } L^2((0,\infty)).  
\end{aligned}
\end{equation}
(See Coddington and Levinson~\cite{CoLe55} for general background on Sturm-Liouville 
theory.) 

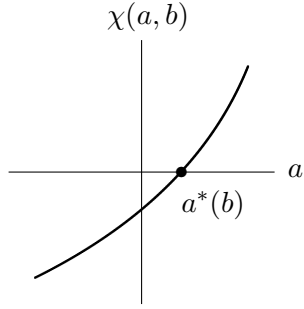
\begin{figure}[htbp]
\vspace{0.5cm}
\begin{center}
\setlength{\unitlength}{0.35cm}
\begin{picture}(8,8)(0,-3)
\put(-5,0){\line(10,0){10}}
\put(0,-5){\line(0,10){10}}
{\thicklines
\qbezier(-4,-4)(2,-1)(4,4)
}
\put(5.5,-.2){$a$}
\put(-1.3,5.6){$\chi(a,b)$}
\put(1.5,-1.5){$a^*(b)$}
\put(1.5,0){\circle*{.45}}
\end{picture}
\end{center}
\vspace{0.5cm}
\caption{\small Qualitative plot of $a \mapsto \chi(a,b)$ for fixed $b \in (0,\infty)$.}
\label{fig-chiab}
\end{figure}

Let $a^*=a^*(b)$ denote the unique solution of the equation $\chi(a,b)=0$ (see Fig.~\ref{fig-chiab}). 
The critical curve has the following scaling behaviour for small and for large charge bias.

\begin{theorem} 
\label{thm:asymp.cc} 
{\rm (1)} As $\delta \downarrow 0$,
\begin{equation}
\label{eq:betacasympzero}
\beta_c(\delta) - \tfrac12 \delta^2 \sim - a^*(1)(\tfrac12 \delta^2)^{\tfrac43}.
\end{equation}
{\rm (2)} As $\delta \to \infty$,
\begin{equation} 
\label{eq:betacasympinf}
\beta_c(\delta) \sim \frac{\delta}{T}
\end{equation}
with 
\begin{equation} 
\label{eq:Tlat}
T = \sup\big\{t > 0\colon\,\bbP(\omega_1 \in t\Z) = 1\big\}
\end{equation}
(with the convention $\sup\emptyset = 0$). Either $T>0$ (`lattice case') or $T=0$ (`non-lattice case'). 
If $T=0$ and $\omega_1$ has a bounded density (with respect to the Lebesgue measure), then
\begin{equation}
\beta_c(\delta) \sim \frac{1}{4} \frac{\delta^2}{\log \delta}.
\end{equation}
\end{theorem}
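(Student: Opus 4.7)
The plan is to analyze the critical equation $\lambda_{\delta,\beta_c(\delta)}(0)=1$ from Theorem~\ref{thm:critcurve}, where $\lambda_{\delta,\beta}(0)$ is the $\ell^2(\N_0)$-spectral radius of the matrix $A_{0,\delta,\beta}(i,j) = e^{G^*_{\delta,\beta}(i+j+1)}\,Q(i+1,j)$. The strategy differs substantially between small and large $\delta$, so I treat the two regimes separately. For part~(1), I would verify the ansatz $\beta = \tfrac12\delta^2 - a^*(1)(\tfrac12\delta^2)^{4/3} + o(\delta^{8/3})$ by passing to a continuum limit. Under assumption \eqref{intcond}, a Gaussian approximation of $\Omega_\ell$ (via a local CLT) yields
\[
G^*_{\delta,\beta}(\ell) \approx -\tfrac12\log(1+2\beta\ell) + \frac{\delta^2\ell/2}{1+2\beta\ell},
\]
while the Brownian scaling of the critical geometric Galton-Watson kernel $Q$ provides a diffusion approximation whose formal generator acts on smooth rescaled functions as $xh''(x)+h'(x)$. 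Rescaling $i = c\xi$ with $c \asymp \delta^{-4/3}$ and writing $g(i) = h(i/c)$, the eigenvalue relation $A_{0,\delta,\beta}g = g$ becomes, after matching the constants fixed by $\bbVar(\omega_1)=1$, the ODE $xh''(x)+h'(x) + (2ax-4bx^2)h(x) = 0$ with $b=1$, that is, $\cL^{a,1}h=0$. The criticality condition $\lambda_{\delta,\beta}(0)=1$ then translates into $\chi(a,1)=0$, whose unique solution is $a = a^*(1)$, giving \eqref{eq:betacasympzero}. Rigorously, two-sided bounds on the top eigenvalue are obtained by a variational Rayleigh-quotient upper bound using a truncated $g^{a,1}$ as a test function, and by constructing a near-eigenfunction for $A_{0,\delta,\beta}$ from $g^{a,1}$ for the lower bound, in the spirit of van der Hofstad and den Hollander~\cite{vdHdH95}.

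For part~(2), I would analyze
\[
\bbE\bigl[e^{\delta\Omega_\ell - \beta\Omega_\ell^2}\bigr] = \int e^{\delta s - \beta s^2}\,\bbP(\Omega_\ell \in \dd s)
\]
by a saddle-point argument. The exponent $\delta s - \beta s^2$ is maximized at $s^* = \delta/(2\beta)$ with value $\delta^2/(4\beta)$. In the lattice case ($T>0$), the support $\Omega_\ell \in T\Z$ forces the effective maximum to the lattice point(s) nearest $s^*$; when $\beta = \delta/T$ we have $s^* = T/2$, equidistant from $0$ and $T$, and $\delta s - \beta s^2$ vanishes at both. The precise balance between the energetic gain $\delta kT - \beta k^2T^2$ at $s = kT$ and the entropic cost $-\log\bbP(\Omega_\ell = kT)$ then pinpoints $\beta_c(\delta) \sim \delta/T$. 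In the non-lattice case ($T=0$) with a bounded density, a local CLT gives $\bbP(\Omega_\ell \approx s^*) \asymp 1/\sqrt{\ell}$, hence
\[
G^*_{\delta,\beta}(\ell) \approx \frac{\delta^2}{4\beta} - \tfrac12\log\ell + O(1).
\]
Combined with the polynomial decay of $Q$, the requirement $\lambda_{\delta,\beta}(0)=1$, after optimizing over the dominant scale of $\ell$, yields $\delta^2/(4\beta) \sim \log\delta$, i.e.\ $\beta_c(\delta) \sim \delta^2/(4\log\delta)$.

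The main obstacle is part~(1): rigorously identifying $\cL^{a,1}$ as the scaling limit of $A_{0,\delta,\beta}$ with sufficient sharpness on the top eigenvalue. One must simultaneously control the convergence of the nonlocal kernel $Q$ to a second-order differential operator and of the exponential weight $e^{G^*_{\delta,\beta}}$ to a quadratic polynomial perturbation in the rescaled variable, with constants matching \eqref{SL}. Extracting the precise constant $a^*(1)$ requires a two-sided matching bound: a variational upper bound via a truncated $g^{a,1}$ and the construction of a near-eigenfunction for the discrete operator on the lower side, with the discrete-to-continuum error controlled uniformly in a neighborhood of the critical curve. The assumption \eqref{intcond} enters precisely through the local CLT needed for the Gaussian approximation of $G^*_{\delta,\beta}$ to hold with the required uniformity; in part~(2) it plays the analogous role of guaranteeing enough regularity of the law of $\Omega_\ell$ for the saddle-point estimate.
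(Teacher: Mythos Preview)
Your treatment of part~(1) is on the right track and matches the paper's strategy: both rescale the Rayleigh quotient for $A_{0,\delta,\beta}$ under $\beta=\tfrac12\delta^2-C(\tfrac12\delta^2)^{4/3}$, identify the continuum limit as the Sturm--Liouville operator $\cL^{C,1}$, and read off $C=a^*(1)$ from $\chi(C,1)=0$. The paper packages the two-sided eigenvalue matching via \emph{epi-convergence} (Proposition~\ref{prop:epi} and Lemma~\ref{lem:epiconvergence}), which is exactly the variational upper/lower scheme you sketch, following \cite{vdHdH95}.

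For part~(2), however, your route differs from the paper's and contains a gap. The paper does \emph{not} go through the eigenvalue equation $\lambda_{\delta,\beta}(0)=1$ at all; it uses the elementary sandwich of Lemma~\ref{SBcond}: $G^*_{\delta,\beta}(\ell)\le 0$ for all $\ell$ implies $\beta\ge\beta_c(\delta)$, while $\ell^{-1}G^*_{\delta,\beta}(\ell)>\log 2$ for some $\ell$ implies $\beta<\beta_c(\delta)$. In the lattice case this gives $\beta_c(\delta)\le\delta/T$ immediately, since $\delta kT(1-k)\le 0$ for every integer $k$; the lower bound follows by picking $\ell$ with $\bbP(\Omega_\ell=T)>0$. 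In the bounded-density case the upper bound uses only $\bbP(\Omega_\ell\in(0,\delta/\beta))\le\|g_1\|_\infty\,\delta/\beta$, uniformly in $\ell$---no local CLT.

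Your saddle-point computation misses the Gaussian-width factor: the Laplace method gives
\[
\bbE\big[e^{\delta\Omega_\ell-\beta\Omega_\ell^2}\big]\approx e^{\delta^2/(4\beta)}\,g_\ell(s^*)\,\sqrt{\pi/\beta},
\]
so $G^*_{\delta,\beta}(\ell)\approx \delta^2/(4\beta)-\tfrac12\log\ell-\tfrac12\log\beta+O(1)$. The term $-\tfrac12\log\beta\sim-\log\delta$ (when $\beta\asymp\delta^2/\log\delta$) is precisely what produces the balance $\delta^2/(4\beta)\sim\log\delta$; with your formula $G^*\approx\delta^2/(4\beta)-\tfrac12\log\ell$ there is no mechanism that introduces $\log\delta$, and the sentence ``optimizing over the dominant scale of $\ell$'' does not yield it. Even after the correction, your approach would still need a criterion linking $G^*$ to criticality; note also that the subballistic direction requires control of $G^*_{\delta,\beta}(\ell)$ for \emph{all} $\ell$, including small $\ell$ where the local CLT is unavailable---this is why the paper's argument uses the bounded density directly rather than any CLT.
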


The proof of \eqref{eq:betacasympzero}, given in Section~\ref{ss:WIL},
follows
van der Hofstad and den Hollander~\cite{vdHdH95}, but we have to address additional
difficulties, due to our more complicated Hamiltonian.

The scaling behaviour of the excess free energy near the critical curve shows that the phase
transition is first order.

\begin{theorem}
\label{thm:orderphtr}
For every $\delta \in (0,\infty)$,
\begin{equation}
\label{Cidscal}
F^*(\delta, \beta) \sim K_\delta [\beta_c(\delta)-\beta],
\quad \mbox{ as } \beta \uparrow \beta_c(\delta),
\end{equation}
where $K_\delta \in (0,\infty)$ is given by
\begin{equation}
\label{Cid}
K_\delta = \left[ \frac{\frac{\partial}{\partial\beta}\log\lambda_{\delta,\beta}(\mu)}
{\frac{\partial}{\partial\mu}\log\lambda_{\delta,\beta}(\mu)}\right]_{\beta=\beta_c(\delta),\mu=0}.
\end{equation}
\end{theorem}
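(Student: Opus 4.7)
The plan is to combine the spectral representation $F^*(\delta,\beta) = \mu(\delta,\beta)$ from Theorem~\ref{thm:free.energy} with the implicit function theorem applied to the defining equation $\lambda_{\delta,\beta}(\mu) = 1$ at the boundary point $(\mu,\beta) = (0, \beta_c(\delta))$. Fix $\delta \in (0,\infty)$ and set $\Phi(\mu,\beta) = \log \lambda_{\delta,\beta}(\mu)$; by Theorem~\ref{thm:critcurve}, $\Phi(0, \beta_c(\delta)) = 0$.

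First I would establish the two non-degeneracy conditions needed for the IFT. The properties of $\mu \mapsto \lambda_{\delta,\beta}(\mu)$ stated just after \eqref{mudeltabetadef} (analytic and strictly log-convex on $(0,\infty)$ with finite strictly negative right-slope at $0$) give
\[
\left[\tfrac{\partial}{\partial\mu}\log\lambda_{\delta,\beta}(\mu)\right]_{\mu=0,\,\beta=\beta_c(\delta)} \in (-\infty,0).
\]
For the $\beta$-derivative, differentiation under the expectation in \eqref{eq:rel.G.star} shows that $\beta \mapsto G^*_{\delta,\beta}(\ell)$ is strictly decreasing for every $\ell \in \N$, so the entries $A_{\mu,\delta,\beta}(i,j)$ in \eqref{eq:def.Abeta.p.r} are strictly decreasing in $\beta$, and hence so is the Perron eigenvalue $\lambda_{\delta,\beta}(\mu)$. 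This yields $\partial_\beta \Phi|_{(0,\beta_c(\delta))} < 0$. The two partial derivatives being finite and strictly negative forces $K_\delta \in (0,\infty)$.

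Next I would invoke joint analyticity of $(\mu,\beta) \mapsto \lambda_{\delta,\beta}(\mu)$ in a neighbourhood of $(0,\beta_c(\delta))$, which comes from regular perturbation theory for the simple isolated Perron eigenvalue of the compact symmetric operator $A_{\mu,\delta,\beta}$ on $\ell^2(\N_0)$---this simplicity is precisely the spectral structure underpinning the analyticity statement in Theorem~\ref{thm:critcurve}. The implicit function theorem then produces a unique smooth solution $\beta \mapsto \mu(\delta,\beta)$ of $\Phi(\mu,\beta) = 0$ in a neighbourhood of $\beta_c(\delta)$, with
\[
\frac{d\mu}{d\beta}\bigg|_{\beta=\beta_c(\delta)} = -\frac{\partial_\beta \Phi}{\partial_\mu \Phi}\bigg|_{(0,\beta_c(\delta))} = -K_\delta.
\]
A first-order Taylor expansion yields $\mu(\delta,\beta) = K_\delta\,[\beta_c(\delta)-\beta] + o(\beta_c(\delta)-\beta)$ as $\beta \uparrow \beta_c(\delta)$; the sign $-K_\delta<0$ makes the smooth branch positive for $\beta < \beta_c(\delta)$, consistent with the definition in \eqref{mudeltabetadef}. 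Combining with \eqref{varc} gives \eqref{Cidscal}.

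The main obstacle is ensuring that the joint analyticity of the spectral radius, known on $\cQ^{>}$ by Theorem~\ref{thm:critcurve}, genuinely extends up to the boundary point $(\mu,\beta)=(0,\beta_c(\delta))$, so that both $\partial_\mu \Phi$ and $\partial_\beta \Phi$ are continuous there with the values above. This reduces to controlling the Perron eigenfunction of $A_{\mu,\delta,\beta}$ uniformly in a two-sided neighbourhood of $(0,\beta_c(\delta))$, a task that fits directly into the Krein-Rutman/perturbation machinery developed in the spectral analysis sections; once it is in hand, the implicit function theorem argument above proceeds routinely.
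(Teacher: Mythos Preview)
Your overall strategy---implicit function theorem applied to $\log\lambda_{\delta,\beta}(\mu)=0$ at $(\mu,\beta)=(0,\beta_c(\delta))$---is exactly what the paper does. The gap is in how you dispose of the two partial derivatives.

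First, you cite the ``finite strictly negative right-slope at $0$'' from the paragraph after \eqref{mudeltabetadef} as an established fact. That sentence in the introduction is an \emph{announcement}; its proof is precisely Lemma~\ref{lem:slopecrit1}, which is part of the proof of this theorem. So invoking it here is circular. Second, your proposed fix---Kato perturbation theory in a two-sided neighbourhood of $(0,\beta_c(\delta))$---does not work: the operator $A_{\mu,\delta,\beta}$ is not even bounded on $\ell^2(\N_0)$ for $\mu<0$ (the entries pick up a factor $e^{|\mu|(i+j+1)}$ while $e^{G^*_{\delta,\beta}(i+j+1)}\asymp (i+j+1)^{-1/2}$ by Proposition~\ref{pr:asymptGstar}, so already $A_{\mu,\delta,\beta}(i,i)\to\infty$). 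Hence Proposition~\ref{prop:regl} only gives analyticity on $(0,\infty)^3$, and you are genuinely stuck at the one-sided boundary $\mu=0$. The paper confronts this head-on: at $\mu=0$, $\beta=\beta_c(\delta)$ one has the eigenvector formula
\[
\left[-\partial_\mu\lambda_{\delta,\beta_c(\delta)}(\mu)\right]_{\mu=0}=\sum_{i,j\in\N_0}\chi(i)\,(i+j+1)\,A_{0,\delta,\beta_c(\delta)}(i,j)\,\chi(j),
\]
and the entire content of Lemma~\ref{lem:slopecrit1} is a five-step bootstrap on the tail of the Perron eigenvector $\chi$ (from $\chi\in\ell^2$ to $\sum_i\sqrt{i}\,\chi(i)^2<\infty$) to show this sum converges. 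This is not routine perturbation theory; it is an a priori estimate specific to the kernel $A_{0,\delta,\beta_c(\delta)}$.

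For the $\beta$-derivative you only argue strict negativity, not finiteness. Here Kato in $\beta$ alone (with $\mu=0$ fixed) \emph{can} be made to work, but it still requires the uniform bound $\sup_{\ell\in\N}|\partial_\beta G^*_{\delta,\beta}(\ell)|<\infty$, which is the content of Lemma~\ref{lem:der.beta.G} and is what guarantees that $\beta\mapsto A_{0,\delta,\beta}$ is differentiable in Hilbert--Schmidt norm. You would need to supply that estimate as well.
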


We close by identifying the scaling behaviour of the free energy for small charge bias and small 
inverse temperature. The proof also follows der Hofstad and den Hollander~\cite{vdHdH95}.

\begin{theorem}
\label{thm:Fscalbeta}
{\rm (1)} For every $\delta \in (0, \infty)$,
\begin{equation}
\label{Fvrhoscal}
F(\delta,\beta) \sim - A_\delta \beta^{\tfrac23},
\qquad v(\delta,\beta) \sim B_\delta \beta^{\tfrac13},
\qquad \rho(\delta,\beta) - \rho_\delta \sim C_\delta\beta^{\tfrac23},
\qquad \mbox{ as } \beta \downarrow 0,
\end{equation}
where $\rho_\delta = \bbE^\delta(\omega_1)=-f'(\delta)$, and $A_\delta, B_\delta,C_\delta 
\in (0,\infty)$ are given by
\begin{equation}
\label{eq:scalconst}
A_\delta = a^*(\rho_\delta), \qquad 
\frac{1}{B_\delta} = \left[\frac{\partial}{\partial a}\, \chi(a,b)\right]_{a = a^*(\rho_\delta),
\,b=\rho_\delta}, \qquad C_\delta = -\frac{\dd}{\dd\delta}\,a^*(\rho_\delta).
\end{equation}
The third statement in \eqref{Fvrhoscal} holds under the assumption that
\begin{equation}
\label{assvars}
\limsup_{\gb \downarrow 0} \gb^{-\tfrac23}
|\sigma^2_\rho(\gd,\gb) - \sigma^2_\rho(\gd,0)| < \infty
\quad \text{ uniformly on a neighbourhood of } \gd, 
\end{equation} 
with $\sigma_\rho(\gd,\gb)$ defined in \eqref{eq:formula.variance} and $\sigma^2_\rho
(\gd,0) = -f''(\gd)$. Without this assumption only the weaker result $\lim_{\beta 
\downarrow 0} \rho(\delta,\beta) = \rho_\delta$ holds.\\
{\rm (2)} For every $\epsilon>0$,
\begin{equation}
\label{eq:three}
F^*(\delta,\beta) \sim \beta_c(\delta)-\beta, \qquad \mbox{ as } \delta,\beta \downarrow 0,
\quad \text{ provided } \beta_c(\gd) - \beta \asymp \delta^{\tfrac83}.
\end{equation}
\end{theorem}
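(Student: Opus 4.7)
The plan is to follow the Sturm--Liouville approach of van der Hofstad and den Hollander~\cite{vdHdH95}, adapted to handle the charge bias $\delta > 0$. For Part~(1), the starting point is the identity $F(\delta,\beta) = f(\delta) + \mu(\delta,\beta)$ from Theorem~\ref{thm:free.energy} together with the characterisation $\lambda_{\delta,\beta}(\mu(\delta,\beta)) = 1$. Since $F(\delta,0) = 0$ forces $\mu(\delta,0) = \log M(\delta)$, the first asymptotic in \eqref{Fvrhoscal} reduces to $\mu(\delta,\beta) - \log M(\delta) \sim -A_\delta \beta^{2/3}$ as $\beta \downarrow 0$.

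The core step is to identify a scaling limit of the symmetric matrix $A_{\mu,\delta,\beta}$ from \eqref{eq:def.Abeta.p.r}. Two Gaussian inputs enter. First, combining the local CLT for $\Omega_\ell$ under $\bbP^\delta$ (with mean $\rho_\delta \ell$ and variance $-f''(\delta)\ell$) with a completion-of-the-square argument yields, for $\ell$ large,
\[
G^*_{\delta,\beta}(\ell) - \ell \log M(\delta) \;=\; -\beta \rho_\delta^2 \ell^2 + O(\beta \ell) + O(\beta^2 \ell^3),
\]
so the dominant contribution is the quadratic $-\beta \rho_\delta^2 \ell^2$. Second, the local CLT for the negative-binomial law shows that $Q(i+1,j)$ is a Gaussian kernel on $j$ with mean $i+1$ and variance $2(i+1)$, producing in the continuum limit the diffusive operator $g'(x) + x g''(x)$. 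Balancing the quadratic potential, the diffusive scale, and the shift $\mu - \log M(\delta)$ selects the unique consistent rescaling $i, j \asymp \beta^{-1/3}$ and $\mu - \log M(\delta) \asymp \beta^{2/3}$; after setting $i = x \beta^{-1/3}$, $j = y \beta^{-1/3}$, $\mu = \log M(\delta) - a \beta^{2/3}$, the rescaled operator converges to $\cL^{a,b}$ with $b = \rho_\delta$. The eigenvalue equation $\lambda_{\delta,\beta}(\mu) = 1$ then becomes $\chi(a, \rho_\delta) = 0$, whose unique solution by \eqref{SLprop} is $a = a^*(\rho_\delta) = A_\delta$.

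Convergence is made rigorous via the Rayleigh--Ritz characterisation $\lambda_{\delta,\beta}(\mu) = \sup_{\|g\|_2 = 1} \langle g, A_{\mu,\delta,\beta} g\rangle$: the upper bound on $\mu(\delta,\beta)$ uses rescaled truncations of the principal SL eigenfunction $g^{a^*,\rho_\delta}$ as trial functions, while the lower bound exploits the maximising eigenvector of $A_{\mu,\delta,\beta}$, with tails controlled by the quadratic suppression $e^{-\beta \rho_\delta^2 \ell^2}$. The main technical obstacle will be obtaining convergence \emph{uniformly} in $\ell$: the Gaussian expansion above is accurate only in a window around $\ell \asymp \beta^{-1/3}$, so the contributions from $\ell \ll \beta^{-1/3}$ (where smoothness of $G^*_{\delta,\beta}$ near the origin suffices) and from $\ell \gg \beta^{-1/3}$ (where the quadratic suppression provides decay) must be handled by separate estimates. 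Given the asymptotic for $\mu$, the scalings of $v$ and $\rho$ follow by differentiation of the relation $\lambda_{\delta,\beta}(\mu) = 1$ combined with Theorems~\ref{thm:speed}--\ref{thm:charge}: using $v^{-1} = [-\partial_\mu \log \lambda_{\delta,\beta}]_{\mu=\mu(\delta,\beta)}$ and the rescaling $\mu \leftrightarrow a \beta^{2/3}$ gives $v \sim B_\delta \beta^{1/3}$ with $B_\delta^{-1} = [\partial_a \chi(a,\rho_\delta)]_{a = A_\delta}$; using $\rho(\delta,\beta) = \partial_\delta \mu(\delta,\beta)$ together with the hypothesis \eqref{assvars}, which is precisely what is needed to exchange the $\delta$-derivative with the $\beta \downarrow 0$ limit, gives $\rho(\delta,\beta) - \rho_\delta \sim C_\delta \beta^{2/3}$.

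For Part~(2), the joint limit $\delta, \beta \downarrow 0$ with $\beta_c(\delta) - \beta \asymp \delta^{8/3}$ sits in a regime where the Part~(1) scaling and the $\beta_c$-scaling of Theorem~\ref{thm:asymp.cc}(1) interact. I would adapt the Sturm--Liouville analysis to this joint regime: since $\beta \sim \tfrac12 \delta^2$ in it, the rescaled SL parameters $(a,b)$ themselves admit expansions in $\delta$, and the leading contributions from $\log M(\delta)$ and $A_\delta \beta^{2/3}$ (both of order $\delta^2$) cancel, leaving residues of order $\delta^{8/3}$. The hypothesis $\beta_c(\delta) - \beta \asymp \delta^{8/3}$ is precisely what ensures these residues are of the same order as the target $\beta_c(\delta) - \beta$; a matched asymptotic expansion in $\delta$ then identifies the leading coefficient as $1$, yielding $F^*(\delta,\beta) \sim \beta_c(\delta) - \beta$.
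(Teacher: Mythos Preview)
Your Part~(1) outline is essentially the paper's own argument: pass to the Rayleigh formula for $\lambda_{\delta,\beta}(\mu)-1$, rescale local times by $\beta^{-1/3}$, and identify the epi-limit with the Sturm--Liouville form $\chi(\,\cdot\,,\rho_\delta)$; the differentiation arguments you sketch for $v$ and $\rho$ (including the role of \eqref{assvars} in swapping $\partial_\delta$ with $\beta\downarrow 0$) match the paper's Steps~6 and following. The only substantive difference is packaging: the paper formalises the upper/lower trial-function bounds through epi-convergence (Definition~\ref{def:epi}, Proposition~\ref{prop:epi}), which cleanly separates the ``$\limsup \le$'' direction (any sequence $f_\beta\to f$) from the ``$\liminf\ge$'' direction (existence of a good approximating sequence), and pushes the hard work into showing that the maximising eigenvector lies in a fixed relatively compact set (Lemma~\ref{lem:epiconvergence}(3), Appendix~\ref{AppC}). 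Your ``tails controlled by $e^{-\beta\rho_\delta^2\ell^2}$'' is the right intuition for that compactness step, but the actual bounds on the eigenvector (the analogues of (i)--(iv) in \eqref{eq:i-iv}) require more than the quadratic suppression alone.

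Your Part~(2) reasoning has a real gap. You propose to start from the Part~(1) asymptotic $\mu(\delta,\beta)\approx \log M(\delta) - A_\delta\beta^{2/3}$ and track the $\delta\downarrow 0$ cancellation of the two displayed terms down to order $\delta^{8/3}$. But the Part~(1) scaling $\eta=\tfrac13$ was chosen to balance the quadratic potential $-\beta\rho_\delta^2\ell^2$ (with $\rho_\delta$ of order one) against the diffusion; when $\delta\downarrow 0$ the coefficient $\rho_\delta^2\sim\delta^2$ itself vanishes, the Part~(1) balance collapses, and the $o(\beta^{2/3})$ error in Part~(1) is not uniform in $\delta$. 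The correct joint-regime scaling is $\eta=\tfrac23$ (local times $\asymp\beta^{-2/3}$), exactly the one used for Theorem~\ref{thm:asymp.cc}(1): with $\beta=\tfrac12\delta^2-C(\tfrac12\delta^2)^{4/3}$ and $\mu=B\beta^{4/3}$ one gets $\beta^{-2/3}[\lambda_{\delta,\beta}(\mu)-1]\to\chi(C-B,1)$, so $F^*(\delta,\beta)=\mu(\delta,\beta)\sim(C-a^*(1))(\tfrac12\delta^2)^{4/3}$, which equals $\beta_c(\delta)-\beta$ to leading order by Theorem~\ref{thm:asymp.cc}(1). In other words, Part~(2) is not a matched expansion off Part~(1); it falls out directly from the \emph{same} Sturm--Liouville limit that proves Theorem~\ref{thm:asymp.cc}(1), just with the free parameter $B$ retained.
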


\noindent
(The notation $f \asymp g$ means that the ratio $f/g$ stays bounded from above 
and below by finite and positive constants.)


\subsection{Discussion}
\label{ss:discussion}

We discuss the theorems stated in Sections~\ref{ss:theoremsgeneral}--\ref{ss:theoremsasymptotics} 
and place them in their proper context. 

\medskip\noindent
{\bf 1.} 
The quenched charged polymer model with $\bbP = [\tfrac12(\delta_{-1}+\delta_{+1})]^{\otimes\N}$ 
interpolates between the simple random walk ($\beta = 0$), the self-avoiding walk ($\beta=\delta=\infty$) 
and the weakly self-avoiding walk ($\beta \in (0,\infty)$, $\delta=\infty$), for which an abundant literature 
is available (see den Hollander~\cite[Chapter 2]{dH09} for references). The latter corresponds to the 
situation where all the charges are $+1$, in which case the Hamiltonian in \eqref{eq:def.hamiltonian} 
equals $H_n(S) = \sum_{x\in\Z} L_n(S,x)^2$ with 
\begin{equation}
\label{eq:deflocaltimes}
L_n(S,x) = \sum_{i=1}^n \ind_{\{S_i = x\}}
\end{equation} 
the local time of $S$ at site $x$ up to time $n$. Theorem~\ref{thm:free.energy} shows that the 
annealed excess free energy exists and has a spectral representation. The latter generalizes 
the spectral representation derived in Greven and den Hollander~\cite{GrdH93} for weakly 
self-avoiding walk (see den Hollander~\cite[Chapter IX]{dH00}). Theorem~\ref{thm:critcurve}
shows that there is a phase transition at a non-trivial critical curve and that there are no further
subphases.

\begin{figure}[htbp]
\vspace{0.5cm}
\begin{center}
\includegraphics[scale = 0.4]{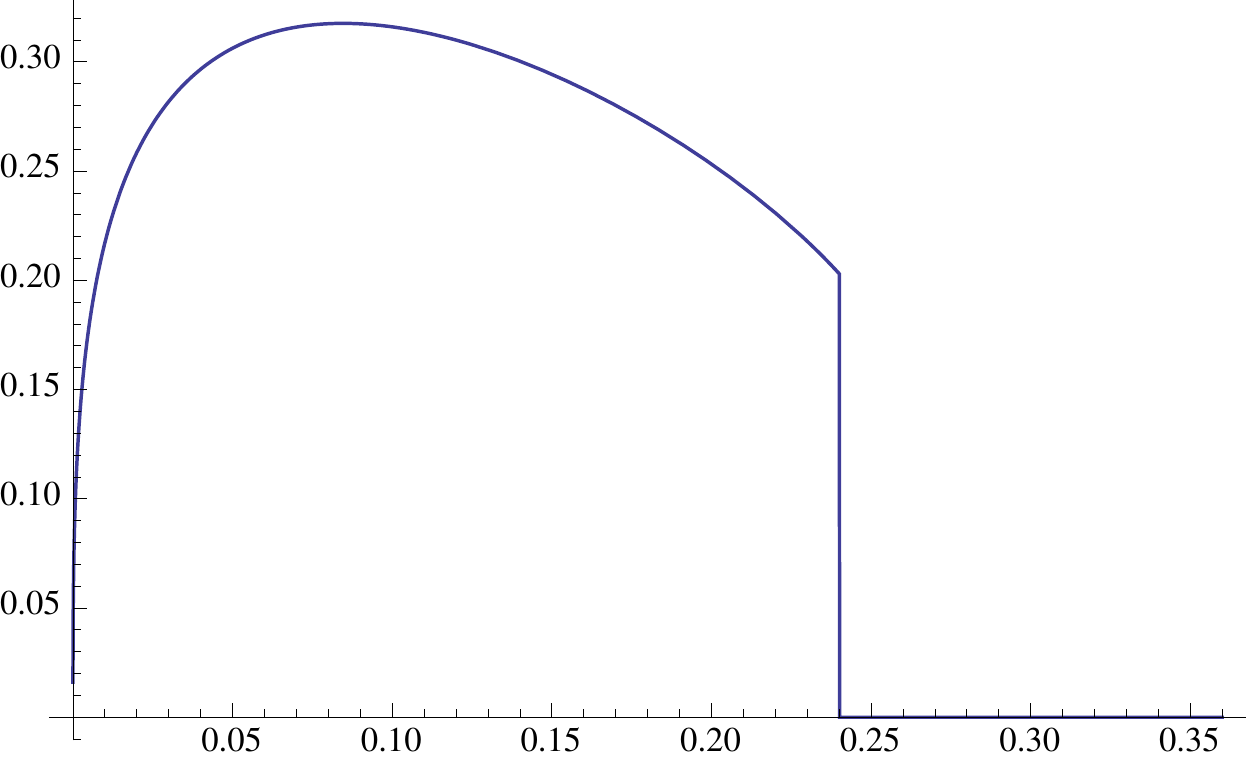}
\hspace{1cm}
\includegraphics[scale = 0.4]{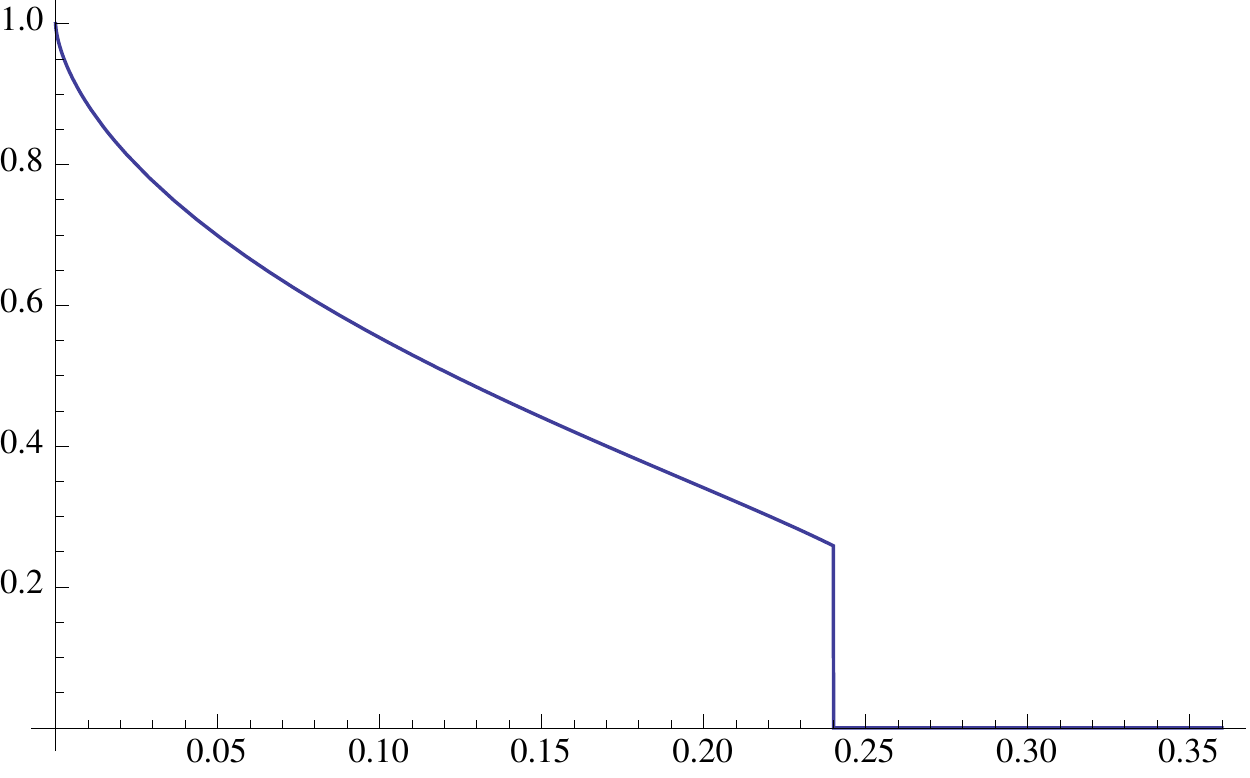}

$\mbox{}$

\includegraphics[scale = 0.25]{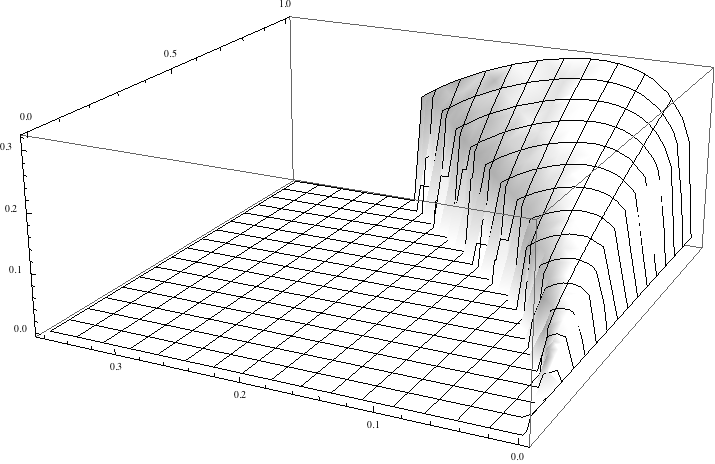}
\hspace{.15cm}
\includegraphics[scale = 0.25]{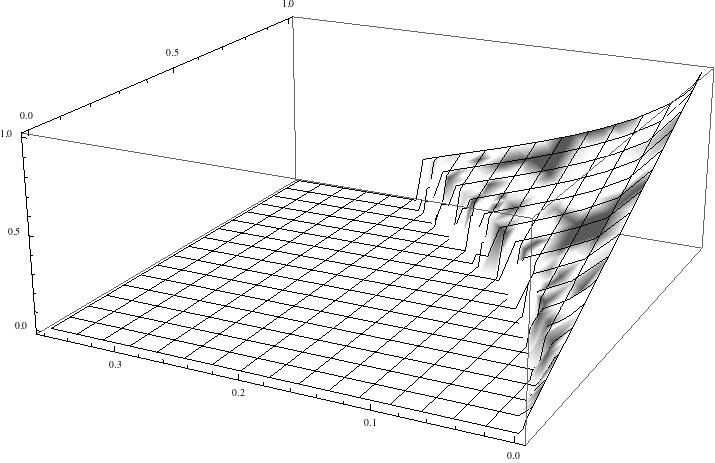}
\end{center}
\vspace{0cm}
\caption{\small Numerical plots of the typical speed $v(\delta, \beta)$
and the typical charge $\rho(\delta,\beta)$ in Theorems~\ref{thm:speed}
and~\ref{thm:charge}, based on a $100 \times 100$ truncation of the 
matrix in \eqref{eq:def.Abeta.p.r}, for the case where $\omega_1$ is 
standard normal. \emph{Above:} plot of $\beta \mapsto v(\delta,\beta)$ 
and $\beta \mapsto \rho(\delta,\beta)$ for $\delta=1$ and $\beta \in 
(0,0.36)$. \emph{Below:} same  for $\delta \in (0,1)$ and $\beta \in (0,0.36)$ 
(for graphical clarity the axes have been rotated: the $\delta$-axis runs 
from front to back, the $\beta$-axis runs from right to left).}
\label{fig-speedcharge}
\end{figure}

\medskip\noindent
{\bf 2.}
Theorems~\ref{thm:speed}--\ref{thm:charge} and \ref{thm:orderphtr} show that the 
annealed charged polymer exhibits a phase transition of first order. The speed 
$v(\delta,\beta)$ of the polymer chain is strictly positive in the ballistic phase and 
zero in the subballistic phase (which explains the names associated with these two 
phases). In the ballistic phase the speed is given by the spectral formula in 
\eqref{eq:speedspec}. The latter generalizes the spectral formula derived in Greven 
and den Hollander~\cite{GrdH93} for the speed $v(\beta)=v(\infty,\beta)$ of the
weakly self-avoiding walk. The charge $\rho(\delta,\beta)$ of the polymer chain is strictly 
positive in the ballistic phase and zero in the subballistic phase. In the ballistic phase the 
charge is given by the spectral formula in \eqref{eq:chargespec}. Fig.~\ref{fig-speedcharge} 
shows a numerical plot of $\beta \mapsto v(1,\beta)$ and $\beta \mapsto \rho(1,\beta)$ when $\go_1$ is standard normal.
Interestingly, the speed is not monotone on $(0,\beta_c(1)]$. This is in contrast with
the monotonicity that was found (but was not proven) in \cite{GrdH93} for the weakly 
self-avoiding walk (for which $\beta_c(\infty)=\infty$). Equally interesting, the charge
is monotone on $(0,\beta_c(1)]$. A rough heuristics behind the shape of $v(\delta,\beta)$
and $\rho(\delta,\beta)$ is the following. Approximating the distributions of $S_n$ and 
$\Omega_n$ by standard normal laws, we get
\begin{equation}
\begin{aligned}
F^*(\delta,\beta) 
&= \lim_{n\to\infty} \frac{1}{n} \log (\bbE^\delta \times \bE)\,
\left(\exp\left[-\beta \sum_{x \in \Z} \left(\sum_{i=1}^n \omega_i\,1_{\{S_i=x\}}\right)^2 
+ \delta \sum_{i=1}^n \omega_i \right]\right)\\ 
&\approx \sup_{ {v \in (0,\infty)} \atop {\rho \in \R} } 
\left[-\beta v \left(\frac{\rho}{v}\right)^2 + \delta \rho - \tfrac12(v^2+\rho^2)\right].
\end{aligned}
\end{equation}
Here, the supremum runs over the possible values of the empirical speed and the empirical 
charge, the first term arises from the Hamiltonian in \eqref{eq:def.hamiltonian}, the second 
term comes from the tilting of the charges in \eqref{eq:Pdeltadef}, together with the approximation
$\bbE^\delta(\omega_1) \approx \delta$, while the third term embodies the normal approximation. 
For fixed $\rho$ the supremum over $v$ is taken at $v=\beta^{1/3}\rho^{2/3}$. Substitution 
of this relation shows that the supremum over $\rho$ is taken at the solution of the equation
$\rho = \delta - 2\beta^{2/3}\rho^{1/3}$. Hence
\begin{equation}
v(\delta,\beta) \approx \beta^{\tfrac13} \rho(\delta,\beta)^{\tfrac23},
\qquad \rho(\delta,\beta) \approx \delta - 2\beta^{\tfrac23}\rho(\delta,\beta)^{\tfrac13}.
\end{equation} 
These approximations are compatible with the numerical plots in Fig.~\ref{fig-speedcharge}.  

\medskip\noindent
{\bf 3.} 
Theorem~\ref{thm:jointLDP} identifies the rate functions in the large deviation principles 
for the speed and the charge. Both rate functions exhibit flat pieces in both phases, as 
indicated in Figs.~\ref{fig-rfspeed}--\ref{fig-rfcharge}. These flat pieces correspond to 
an inhomogeneous strategy for the polymer to realise a large deviation. For instance, 
in the flat piece on the left of Fig.~\ref{fig-rfspeed}, if the speed is $\theta<\tilde{v}(\delta,
\beta)$, then the charge makes a large deviation on a stretch of the polymer of length 
$\theta/\tilde{v}(\delta,\beta)$ times the total length, so as to allow it to move at speed 
$\tilde{v}(\delta,\beta)$ along that stretch at zero cost, and then makes a large deviation 
on the remaining stretch, so as to allow it to be subballistic along that remaining stretch 
at zero cost. For the weakly self-avoiding walk the presence of a flat piece in the rate 
function for the speed was noted in den Hollander~\cite[Chapter 8]{dH00}. It is possible 
to extend Theorem~\ref{thm:jointLDP} to a joint LDP, but we refrain from doing so.

\medskip\noindent
{\bf 4.}
Theorem~\ref{thm:CLTspeedcharge} provides the central limit theorem for the speed 
and the charge in the interior of the ballistic regime. The variance is the inverse of the 
curvature of the rate function at its unique zero, as is to be expected. Numerical plots 
are given in Fig.~\ref{fig-speedchargevar}. It is hard to obtain accurate simulations for 
$\beta$ small, but the plots appear to be compatible with the assumption made in 
\eqref{assvars}. For weakly self-avoiding walk it was shown in van der Hofstad, den 
Hollander and K\"onig~\cite{vdHdHK97a,vdHdHK97b} that $\beta \mapsto \sigma_v
(\beta)^2 = \sigma_v(\infty,\beta)^2$ is discontinuous at $\beta=0$, namely, $\lim_{\beta
\downarrow 0} \sigma_v(\beta) = C_v < 1 = \sigma_v(0)$. Fig.~\ref{fig-speedcharge} 
suggests that this behaviour persists for $\delta<\infty$. The heuristics is that the 
variance of the endpoint of the polymer gets squeezed because the polymer moves 
ballistically. Apparently this squeezing does not vanish as the speeds tends to zero.   

\begin{figure}[htbp]
\vspace{0.5cm}
\begin{center}
\includegraphics[scale = 0.4]{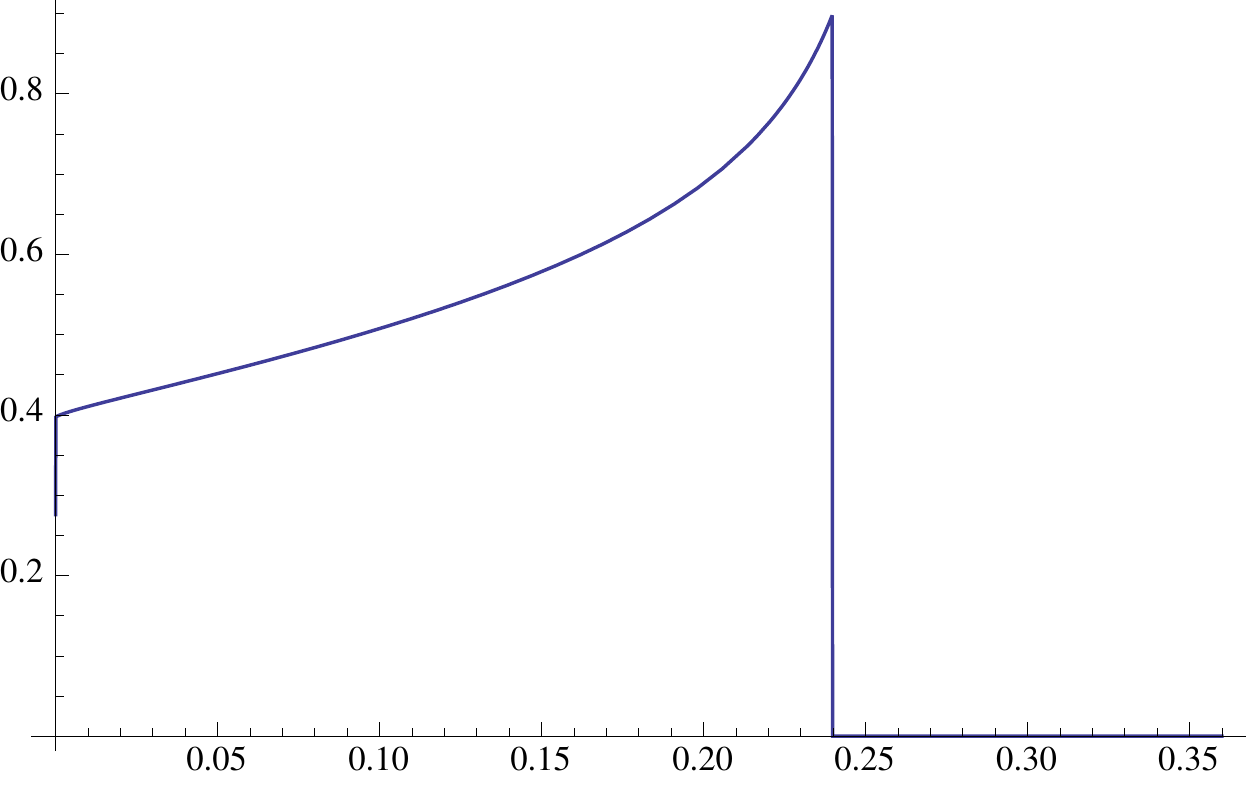}
\hspace{1cm}
\includegraphics[scale = 0.4]{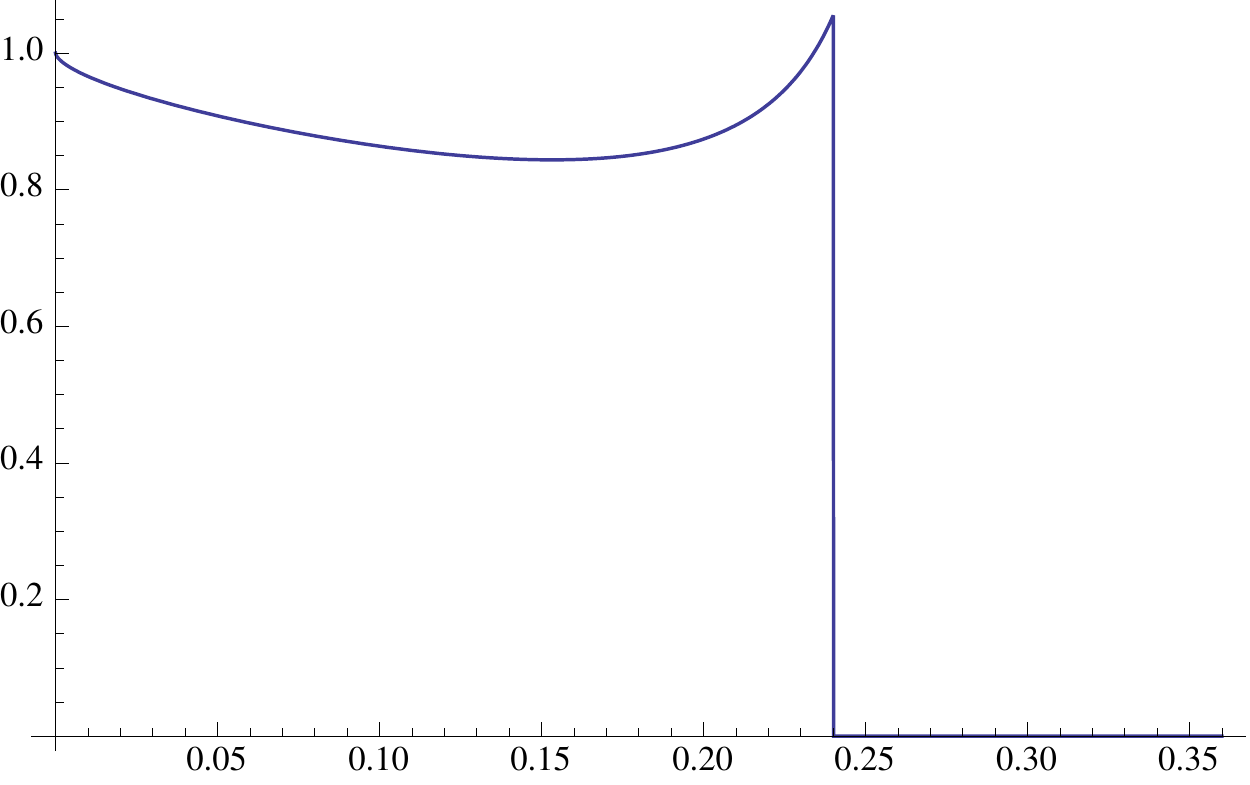}

$\mbox{}$

\includegraphics[scale = 0.25]{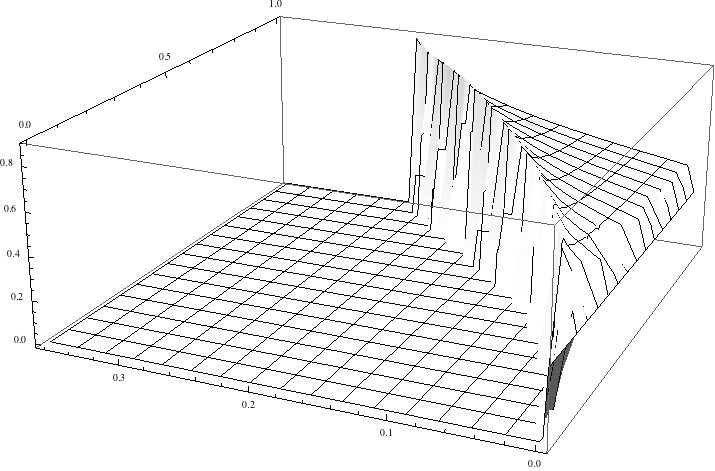}
\hspace{.15cm}
\includegraphics[scale = 0.25]{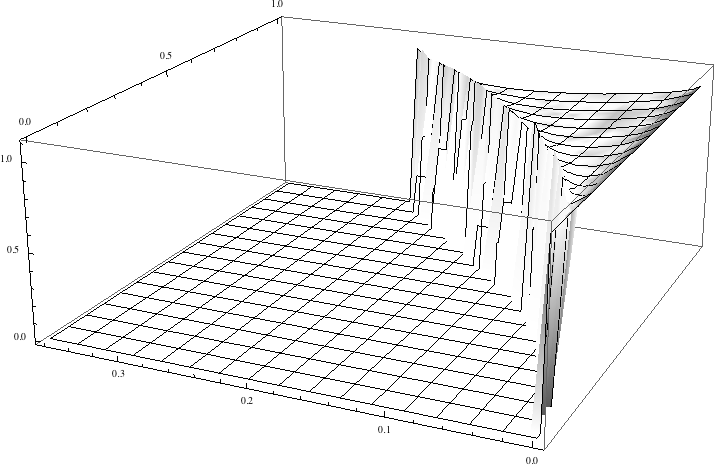}
\end{center}
\vspace{0cm}
\caption{\small Numerical plots of the variance of the speed $\sigma_v(\delta,\beta)^2$ 
and the variance of the charge $\sigma_\rho(\delta,\beta)^2$ in 
Theorem~\ref{thm:CLTspeedcharge} for the same range of $\beta$ and $\delta$ as in 
Fig.~\ref{fig-speedcharge}.}
\label{fig-speedchargevar}
\end{figure}

\noindent
We do not deduce the central limit theorem from the large deviation principle, 
but rather exploit finer properties of the spectral representation for the excess free energy. 
We have no result about the fluctuations at criticality. We expect these fluctuations to be 
of order $\sqrt{n}$ in the upward direction and of order $n^{2/3}$ in the downward direction.

\medskip\noindent
{\bf 5.}
Theorem~\ref{thm:asymp.cc} identifies the scaling behavior of the critical curve for small 
and for large charge bias. Part (1) shows that the scaling is anomalous for small charge bias,
and implies that the critical curve is not analytic at the origin. Part (2) shows that the scaling 
is also delicate for large charge bias. Heuristically, it is easier to build small absolute values 
of $\Omega_\ell=\sum_{k=1}^\ell \go_k$ for small values of $\ell$ when the charge distribution 
is non-lattice rather than lattice. Since the local times are of order one in the ballistic phase, 
we expect that the ballistic phase for the lattice case is contained in the ballistic phase for 
the non-lattice case (because smaller values of $\beta$ are needed to compensate for the 
larger absolute values of $\Omega_\ell$). 

\medskip\noindent
{\bf 6.}
Theorem~\ref{thm:Fscalbeta} deals with weak interaction limits. Part (1) shows that near 
the horizontal axis in Fig.~\ref{fig-critcurve} the free energy, the speed and the charge exhibit 
an anomalous scaling. This is a generalization of the scaling found in van der Hofstad and 
den Hollander~\cite{vdHdH95} for weakly self-avoiding walk. Part (2) shows that near the 
origin of Fig.~\ref{fig-critcurve} the free energy scales like the distance to the critical curve,
provided the latter is approached properly. The constants $A_\gd, B_\gd, C_\gd$ are expected 
to represent the free energy, speed and charge of a Brownian version of the charged polymer 
with Hamiltonian
\begin{equation}
\label{HTcont}
H_T^\tW(W[0,T]) = \int_\R L_T^\tW(x)^2\,\dd x, \quad L_T^\tW(x) = \int_0^T \dd\tW_s\,\gd(W_s - x),
\end{equation}
where $W[0,T]$ is the path of the polymer, $\dd \tW_s$ is the charge of the interval $\dd s$, 
$\tW[0,T]$ is an independent Brownian motion with drift $\gd$, and the polymer measure has 
$\gb=1$ with the Wiener measure as reference measure. The version without charges is known as 
the Edwards model (see van der Hofstad, den Hollander and K\"onig~\cite{vdHdHK97b, vdHdHK03}). 
The limit below \eqref{eq:scalconst} is expected to represent the standard deviation in the 
central limit theorem as $T\to\infty$ for the charge in the continuum model defined via \eqref{HTcont}. 

\medskip\noindent
{\bf 7.}
Theorem~\ref{thm:critcurve} corrects a mistake in den Hollander~\cite[Chapter 8]{dH09},
where it was argued that $F^* \equiv 0$ (i.e., $\cS$ covers the full quadrant, or $\beta_c 
\equiv 0$). The mistake can be traced back to a failure of convexity of the function $\ell
\mapsto G^*_{\delta,\beta}(\ell)$. Using the technique outlined in den Hollander~\cite[Chapter 8]{dH09}, 
it can be shown that for every $d \geq 1$ and every $(\delta,\beta) \in \cS$,
\begin{equation}
\label{fannlimit}
\lim_{n\to\infty} \frac{(\alpha_n)^2}{n} \, \log \bbZ_n^{*,\delta,\beta}  = -\chi,
\quad \mbox{ where } \quad \bbZ_n^{*,\delta,\beta} = e^{-f(\delta)n}\,\bbZ_n^{\delta,\beta}, 
\end{equation} 
with $\alpha_n = (n/\log n)^{1/(d+2)}$ and with $\chi \in (0,\infty)$ a constant that is explicitly 
computable. The idea behind \eqref{fannlimit} is the following. For $(\delta,\beta) \in \cS$ the 
empirical charge makes a large deviation under the disorder measure $\bbP^\delta$ so that it 
becomes zero. The price for this large deviation is 
\begin{equation}
e^{-nH(\bbP^0 \,|\, \bbP^\delta) + o(n)},
\end{equation}
where $H(\bbP^0 \,|\, \bbP^\delta)$ denotes the specific relative entropy of $\bbP^0=\bbP$ with 
respect to $\bbP^\delta$. Since the latter equals $\log M(\delta)=-f(\delta)$ (recall 
\eqref{eq:omegacond}--\eqref{eq:Pdeltadef}), this accounts for the leading term in the 
free energy. Conditional on the empirical charge being zero, the attraction between charged 
monomers with the same sign wins from the repulsion between charged monomers with 
opposite sign, making the polymer chain contract to a \emph{subdiffusive} scale $\alpha_n$. 
This accounts for the correction term in the free energy. It is shown in \cite{dH09} that under 
the annealed polymer measure,
\begin{equation}
\left(\frac{1}{\alpha_n}\,S_{\lfloor nt\rfloor}\right)_{0 \leq t \leq 1} 
\Longrightarrow (U_t)_{0 \leq t \leq 1}, \qquad n\to\infty,
\end{equation}
where $\Longrightarrow$ denotes convergence in distribution and $(U_t)_{t \geq 0}$ is a 
Brownian motion on $\R^d$ conditioned not to leave a ball with a certain radius and a certain 
randomly shifted center.

\medskip\noindent
{\bf 8.} 
Previous results on the charged polymer model include limit theorems for the Hamiltonian 
in \eqref{eq:def.ham}. Chen~\cite{Ch08} proves an annealed central limit theorem and an
annealed law of the iterated logarithm, and identifies the annealed moderate deviations (see 
also Chen and Khoshnevisan~\cite{ChK09}). Asselah~\cite{As10}, \cite{As11} derives upper 
and lower bounds for annealed large deviations. Hu and Khoshnevisan~\cite{HK10} give a 
law of the iterated logarithm and a strong approximation theorem: on an enlarged probability 
space the properly normalised Hamiltonian converges almost-surely to a reparametrised 
Brownian motion. Guillotin-Plantard and dos Santos~\cite{GPS14} prove a quenched central 
limit theorem in dimensions $d=1,2$. Hu, Khoshnevisan and Wouts~\cite{HKW11} consider 
the quenched weak interaction regime (where the Hamiltonian is multiplied by $\gb/n$ rather 
than $-\beta$) and prove a phase transition from Brownian scaling to four-point localization: 
for small $\beta$ the polymer behaves like a simple random walk, while for large $\gb$ a large 
fraction of the monomers are located on four sites.

\begin{figure}[htbp]
\begin{center}
\includegraphics[scale = 0.4]{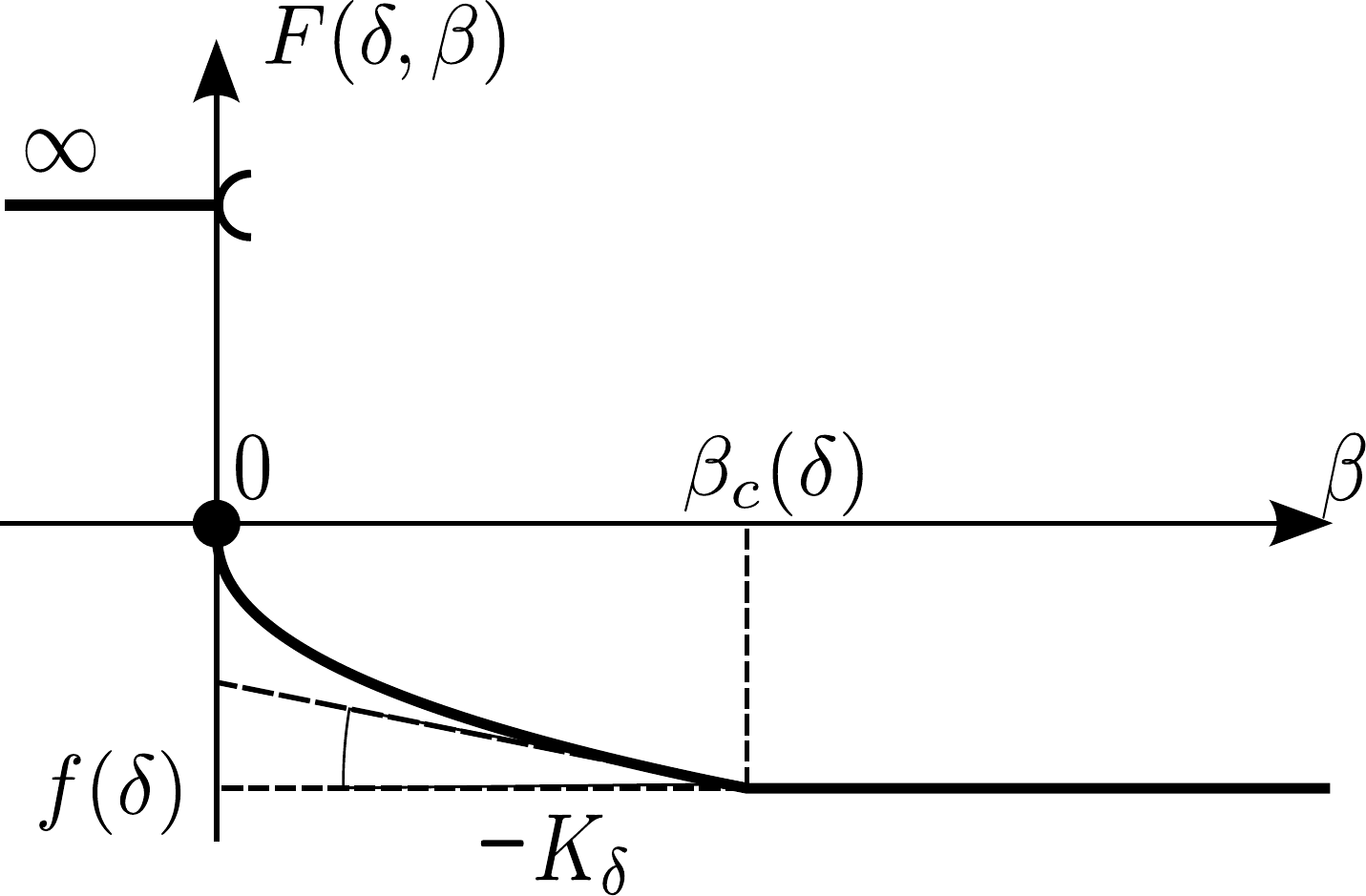}
\hspace{1cm}
\includegraphics[scale = 0.4]{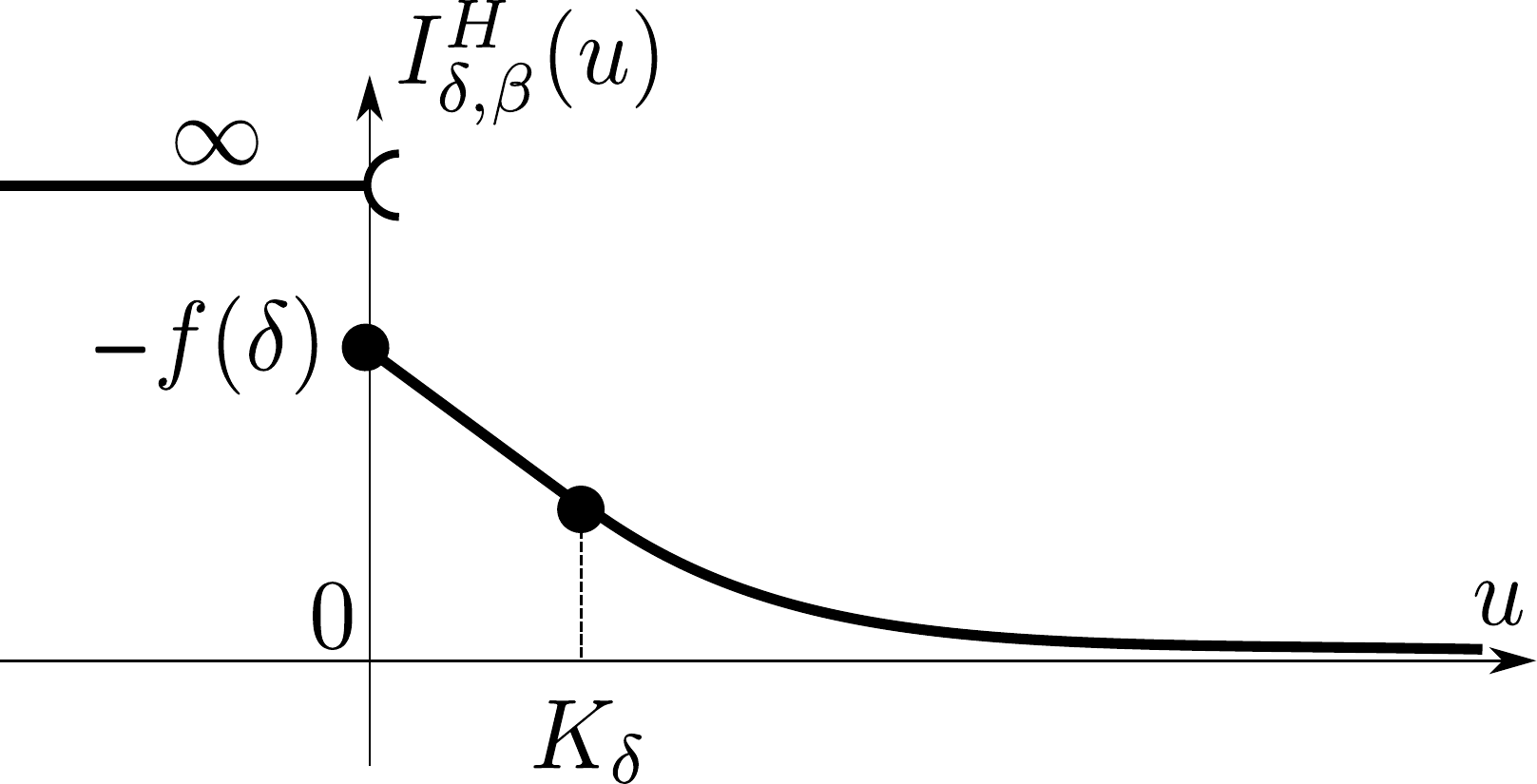}

$\mbox{}$

\end{center}
\vspace{0cm}
\caption{\small Qualitative plots of the maps $\beta \mapsto F(\gd,\gb)$ and $u \mapsto 
I_\gd^H(u)$. The latter is linear on $[0,K_\gd]$ and strictly convex on $(K_\gd,\infty)$, 
where $K_\delta$ is the constant in \eqref{Cid}, and tends to zero at infinity.}
\label{fig-ratefunctionHam}
\end{figure}

\medskip\noindent
{\bf 9.}
The large deviation bounds derived by Asselah~\cite{As10}, \cite{As11} can be completed as 
follows. Under the annealed polymer measure, the sequence $(n^{-1}H_n^\omega)_{n\in\N}$ 
(recall \eqref{eq:def.hamiltonian}) satisfies the (weak) large deviation principle on $\R$ 
with (weak) rate function $I^H_{\delta}$ given by (see Fig.~\ref{fig-ratefunctionHam}) 
\begin{equation}
\label{eq:rateH}
I^H_{\delta}(u) = \sup_{\beta \in (0,\infty)} [-u\beta - F(\delta,\beta)],
\qquad u \in \R.
\end{equation}
Here we use that $F(\delta,\beta) = \infty$ for $\beta \in (-\infty,0)$ and $F(\delta,0)=0$, to 
restrict the supremum to $\beta \in (0,\infty)$. (Indeed, the strategy where the charges are 
bounded from below by a positive constant and the walk zigzags between two consecutive 
sites has an entropic cost that is linear in the length of the polymer, whereas the positive 
energetic contribution is quadratic.) Since $F(\delta,\beta) = \mu(\delta,\beta)+f(\delta)$ by 
Theorem~\ref{thm:free.energy}, \eqref{eq:rateH} provides us with an explicit variational 
formula similar to \eqref{Ivdef}--\eqref{Irhodef}. 

\medskip\noindent
{\bf 10.} 
Here are some open problems for the \emph{quenched} version of the model (see 
Appendix~\ref{AppB}): 
\begin{itemize}
\item[(1)]
Does the quenched free energy exist for $\bbP^\delta$-a.e.\ $\omega$, and is it constant? How 
does it depend on $\delta$ and $\beta$? Trivially, it is convex in $\beta$ for all $\delta$, but what 
more can be said?
\item[(2)] 
Is the quenched charged polymer ballistic for all $\delta \in (0,\infty)$? How does the speed 
depend on $\beta$ and $\delta$?
\item[(3)]  
In the quenched model with $\delta=0$, is the polymer chain subdiffusive (like in the annealed 
model; see item 3 above)? The fluctuations of the charges are expected to push the polymer 
farther apart than in the annealed model. Is there a scaling limit for $\bbP$-a.e.\ $\omega$, or 
does the polymer chain fluctuate so much that there is a scaling limit only along $\omega$-dependent 
subsequences (``sample dependence'')? 
\end{itemize}

\medskip\noindent
{\bf 11.} 
Still looking at a quenched model, Derrida, Griffiths and Higgs~\cite{DeGrHi92} and Derrida and Higgs~\cite{DeHi94}
consider the case where the steps of the random walk are drawn from $\{0,1\}$ rather than 
$\{-1,+1\}$, which makes the model a  bit more tractable, both theoretically and numerically. 
In \cite{DeGrHi92} the charge disorder is \emph{binary}, and numerical evidence is found for the 
free energy to be self-averaging and to exhibit a \emph{freezing transition} at a critical threshold 
$\beta_c \in (0,\infty)$, i.e., the quenched charged polymer is ballistic when $0\leq\beta<\beta_c$ 
and subballistic when $\beta>\beta_c$. In the latter phase numerical simulation shows that 
the end-to-end distance scales like $n^\nu$, with $\nu=\nu(\beta)$ an exponent that depends 
on $\beta$. In this phase, long and rare stretches of the polymer that are globally neutral find 
it energetically favorable to collapse onto single sites. Numerical simulation indicates that 
$\beta_c \geq 0.48$. In \cite{DeHi94} the charge disorder is \emph{standard normal} and the 
total charge $\sum_{i=1}^n \omega_i$ is \emph{conditioned} to grow like $n^\xi$, $\xi \in [-\frac12,1]$. 
It is found numerically that the end-to-end distance scales like $n^\nu$, with $\nu=\nu(\xi)$ 
an exponent that depends on $\xi$ and grows roughly linearly from $\nu(-\frac12)=0$ to 
$\nu(1)=1$, with $\nu(\frac12) \approx 0.574$. The latter is the exponent for the quenched 
charged polymer when the charges are typical.

\medskip\noindent
{\bf 12.}
It would be interesting to deal with charges whose interaction extends beyond the `on-site' 
interaction in (\ref{eq:def.ham}), like a Coulomb potential (polynomial decay) or a Yukawa 
potential (exponential decay). A Yukawa potential arises from a Coulomb potential via 
screening of the charges when the polymer chain is immersed in an ionic fluid. 
 
\medskip\noindent
{\bf 13.}
Biskup and K\"onig~\cite{BiKo01}, Ioffe and Velenik~\cite{IoVe12}, Kosygina and 
Mountford~\cite{KoMo13} deal with annealed versions of various models of simple
random walk in a random potential. In all these models the interaction is either 
\emph{attractive} or \emph{repulsive}, meaning that the annealed partition function 
is the expectation of the exponential of a functional of the local times of simple random 
walk that is either \emph{subadditive} or \emph{superadditive}. As we will see in 
Section~\ref{freeenergy}, our annealed charged polymer model is neither attractive
nor repulsive. However, our spectral representation is flexible so as to include such
models.


\section{Spectral representation for the free energy}
\label{freeenergy}

Our goal in this section is to prove Theorem~\ref{thm:free.energy}, i.e., the existence 
of the annealed free energy and its characterization in terms of an eigenvalue problem. 
In Section~\ref{sec:markov_edge_local_times} we show that the edge-crossing numbers 
of the simple random walk have a Markovian structure. In Section~\ref{ss:loctimerepr} we 
rewrite the annealed partition function as the expectation of a functional of the local times 
of the simple random walk, which are a two-block functional of the edge-crossing numbers. 
In Section~\ref{ss:grcanrepr} we introduce the generating function of the annealed excess 
partition function, and show that this can be expressed in terms of the matrices defined in
\eqref{eq:def.Abeta.p.r}--\eqref{eq:def.Abeta.p.r2}. The annealed excess free energy is the 
radius of convergence of this generating function. In Section~\ref{subsec:eigenvalue} we 
analyze the spectral radii of the matrices. In Section~\ref{ss:spec} we identify the annealed 
excess free energy in terms of these spectral radii. In Section~\ref{ss:concl} we put everything 
together to prove Theorem~\ref{thm:free.energy}.

This section is the cornerstone of the following sections, since the representation of the partition function developed here will be used throughout the paper.


\subsection{Markov property of the edge-crossing numbers}
\label{sec:markov_edge_local_times}

The observation that the edge-crossing numbers of the simple random walk have a Markovian 
property goes back at least to Knight~\cite{Kn63}. This property can be formulated in various 
ways. In this section we present a version that holds for a fixed time horizon, which is based 
on the well-known link between random walk excursions and rooted planar trees (see 
Remark~\ref{rem:tree} below).

We work conditionally on the event $\{S_n = x\}$ for fixed $n \in \N_0$ and $x \in \Z$, and 
w.l.o.g.\ we assume that $x \in \N_0$. Then all edges are crossed the same number of times 
upwards and downwards, except for the edges in the stretch $\{0, \ldots, x\}$, which have 
one extra upward crossing. We define the \emph{edge-crossing number} $M^+_y$, $y\in\N_0$, 
as the number or upward crossing of the edge $(y,y+1)$ that are eventually followed by a 
downward crossing (i.e., we disregard the last upward crossing for $0 \leq y < x$). To keep 
the notation symmetric, we define $M^-_y$, $y\in\N_0$, as the number of downward crossings 
of the edge $(-y-1,-y)$, each of which necessarily is eventually followed by an upward crossing. 
In formulas,
\begin{equation}
\label{eq:edge}
M^+_y = \Bigg\lfloor \frac{1}{2} \sum_{k=1}^n 
\ind_{\big\{\{S_{k-1},S_k\} = \{y, y + 1\}\big\}} \Bigg\rfloor,
\quad 
M^-_y = \Bigg\lfloor \frac{1}{2} \sum_{k=1}^n 
\ind_{\big\{\{S_{k-1},S_k\} = \{-y,-1-y\}\big\}}
\Bigg\rfloor, \quad y \in \N_0.
\end{equation}
For ease of notation we suppress the dependence on $n$.

\begin{remark}
\rm In what follows we will work with the \emph{local times} of the random walk,
i.e., the \emph{site visit numbers} defined by
\begin{equation}
\label{eq:localtime}
L_n(x) = \sum_{i=1}^n \ind_{\{S_i = x\}} , \qquad n \in \N, \, x \in \Z.
\end{equation}
These can be expressed in terms of the edge-crossing numbers as follows:
\begin{equation} 
\label{eq:LM}
\text{On the event } \{S_n = x\} \text{ with } x \in\N_0\colon \quad
L_n(y) = \begin{cases}
M^+_{y-1} + M^+_y, & \text{if } y > x, \\
M^+_{y-1} + M^+_y + 1, & \text{if } 1 \le y \le x, \\
M^+_0 + M^-_0, & \text{if } y = 0, \\
M^-_{-y-1} + M^-_{-y}, & \text{if } y < 0. 
\end{cases}
\end{equation}
\end{remark}

\medskip
We next define a specific \emph{branching process}, which will be shown to be closely
linked to the edge-crossing numbers $M^\pm_y$, $y\in\N_0$.

\begin{definition}
\rm
\label{def:proc}
Fix $\ell, x \in \N_0$. Define a two-species \emph{branching process}
\begin{equation}
(M^+, M^-) = (M^+_y, M^-_y)_{y\in\N_0}
\end{equation} 
with law $\cP_{\ell, x}$ as follows:
\begin{itemize}
\item 
At generation 0 there are $\ell$ individuals, which are divided by fair 
coin tossing into two subpopulations, labelled $+$ and $-$.
\item 
Each subpopulation evolves independently as a \emph{critical Galton-Watson 
branching process with a geometric offspring distribution}, denoted by $\Geo_0
(\frac{1}{2})$ and given by $\Geo_0(\frac{1}{2})(i)=2^{-(i+1)}$, $i\in\N_0$.
\item 
If $x\in\N$, then there is additional immigration of a $\Geo_0(\frac{1}{2})$-distributed 
number of individuals in the $+$ subpopulation, at each generation $1,\dots,x$ 
(equivalently, the generations $0,\dots,x-1$ have an additional ``hidden'' individual, 
which is not counted but produces offspring).
\item 
Define $M^\pm_y$ as the size of the $\pm$ subpopulation in the $y$-th generation.
\end{itemize}
Define the total population size
\begin{equation} 
\label{eq:Xi}
\Xi = \sum_{y\in\N_0} (M^+_y + M^-_y)
\end{equation}
and note that $\Xi < \infty$ a.s.\ because a critical Galton-Watson process eventually 
dies out.
\end{definition}

We can now state the main result of this section. Abbreviate $L_0=L_n(0)$. 

\begin{theorem} 
\label{th:bra}
Fix $\ell, n , x \in \N_0$ such that $0 \le \ell \le \frac{1}{2} n$, $0 \le x \le n$ and $x-n$ is even.
The edge-crossing numbers $(M^+, M^-)$ of the simple random walk defined in \eqref{eq:edge} 
conditionally on $\{L_0 = \ell, S_n = x\}$ have the same joint distribution as the branching process 
with law $\cP_{\ell,x}$ defined in Definition~{\rm \ref{def:proc}} conditionally on $\{\Xi = \frac{1}{2}(n-x)\}$. 
In formulas,
\begin{equation} 
\label{eq:topr}
\begin{aligned}
&\bP \Big( (M^+, M^-) = (m^+, m^-),\, L_0 = \ell, \, S_n = x \Big) \\
&\qquad = \cP_{\ell, x} \Big( (M^+, M^-)=(m^+, m^-),\, \Xi = \tfrac{1}{2}(n-x) \Big) ,
\end{aligned}
\end{equation}
for all sequences $(m^+, m^-) = (m^+_y, m^-_y)_{y\in\N_0} \in (\N_0 \times \N_0)^{\N_0}$.
\end{theorem}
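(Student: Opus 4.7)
The plan is to prove \eqref{eq:topr} by decomposing the walk trajectory conditional on $\{L_0 = \ell, S_n = x\}$ with $x \geq 0$ into $\ell$ successive excursions from $0$, based on the return times $0 = \tau_0 < \tau_1 < \cdots < \tau_\ell \leq n$, followed by a \emph{meander} segment from $0$ to $x$ of length $n - \tau_\ell$ that stays $\geq 1$ in its interior (and is empty when $x = 0$). By the strong Markov property and the reflection symmetry of the simple random walk, the $\ell$ excursions are i.i.d., each being \emph{positive} (contained in $\N$) or \emph{negative} (contained in $-\N$) with probability $\tfrac{1}{2}$ independently. Let $K^+$ and $K^- = \ell - K^+$ denote the number of each type. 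A direct check shows $K^+ = M^+_0 = m^+_0$ and $K^- = m^-_0$: each positive excursion crosses $(0,1)$ once upward and once downward, contributing $1$ to $M^+_0$, whereas the meander contributes $0$ (its only crossing of $(0,1)$ is its initial upward step, which is floored away). This reproduces the ``fair-coin split'' of the $\ell$ generation-$0$ individuals in Definition~\ref{def:proc}.

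Next I would identify the combinatorial structure of the edge-crossings with the generation sizes of the branching process. Via the classical contour bijection between random-walk excursions and rooted plane trees, each positive excursion corresponds to a rooted plane tree with root out-degree $1$, and the up-crossing number $N_y$ of the edge $(y, y+1)$ equals the number of depth-$(y+1)$ vertices. A short induction using the strong Markov property at each upward visit to level $y+1$ then shows that $(N_y)_{y \geq 0}$ is a critical Galton-Watson process with $\Geo_0(\tfrac12)$ offspring, started from $N_0 = 1$. Summing over the $K^+$ independent positive excursions yields a GW process started from $K^+$; the negative side is analogous. For the meander (when $x \geq 1$), I would use the last-passage decomposition $\sigma_0 = 0 < \sigma_1 < \cdots < \sigma_x = n - \tau_\ell$, where $\sigma_y$ is the last time the walk visits level $y$: on each interval $(\sigma_{y-1}, \sigma_y]$ the walk takes one forced upward step from $y-1$ to $y$ (it cannot revisit $y-1$ after $\sigma_{y-1}$) and then performs a concatenation of independent positive excursions above $y$, each itself a $\Geo_0(\tfrac12)$-GW tree. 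This injects $x$ ``hidden roots'' at depths $0, 1, \ldots, x-1$, exactly matching the immigration mechanism of Definition~\ref{def:proc}.

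To close, a direct edge-crossing count gives $n = 2\Xi + x$ (each edge on the positive side contributes $2M^+_y + \mathbf{1}_{\{y<x\}}$ steps and each edge on the negative side contributes $2M^-_y$ steps), so conditioning on $\{S_n = x\}$ at length $n$ on the walk side corresponds to conditioning $\{\Xi = (n-x)/2\}$ on the branching-process side. The number of SRW trajectories with prescribed $(M^+, M^-, L_0, S_n)$ factorises as
\[
\binom{\ell}{m^+_0} \cdot \prod_{y \geq 1} \binom{m^+_y + a^+_{y-1} - 1}{a^+_{y-1} - 1} \cdot \prod_{y \geq 1} \binom{m^-_y + m^-_{y-1} - 1}{m^-_{y-1} - 1},
\]
where $a^+_y := m^+_y + \mathbf{1}_{\{0 \leq y \leq x-1\}}$ accounts for the hidden roots contributed by the meander spine; this is the standard composition count for distributing ordered children among ordered parents in a plane forest. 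Multiplying by $2^{-n}$ yields the left-hand side of \eqref{eq:topr}. On the right-hand side, expanding the branching-process probability as a product of negative-binomial transitions (the probability that a sum of $k$ iid $\Geo_0(\tfrac12)$ variables equals $j$ is $\binom{j+k-1}{k-1} 2^{-(j+k)}$) produces exactly the same binomial factors, and the accumulated powers of $\tfrac12$ combine with the $2^{-\ell}$ sign-split weight to give $2^{-n}$ after using $\ell = m^+_0 + m^-_0$ and $n = 2\Xi + x$. The main obstacle is the careful bookkeeping in the meander analysis---rigorously matching the last-passage subexcursion structure with the $x$ independent hidden-root/immigration mechanism---which can alternatively be handled by a direct inductive enumeration on $x$ avoiding the probabilistic bijection entirely.
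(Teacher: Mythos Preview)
Your proposal is correct and follows essentially the same route as the paper: both reduce \eqref{eq:topr} to the path-counting identity
\[
\#\{\text{paths}\} \;=\; \binom{\ell}{m_0^+}\,\prod_{y=0}^{x-1} C(m_y^+ +1, m_{y+1}^+)\,\prod_{y\ge x} C(m_y^+, m_{y+1}^+)\,\prod_{y\ge 0} C(m_y^-, m_{y+1}^-),
\]
divide by $2^n$, and match the factors with the negative-binomial transitions of the branching process (your display with $a_y^+ = m_y^+ + \mathbf{1}_{\{0\le y\le x-1\}}$ is exactly this formula reindexed). The only difference is presentational: you front-load the probabilistic picture (contour bijection, last-passage spine decomposition of the meander) that the paper compresses into a two-line ``allocation of sub-excursions to parent excursions'' argument and the tree remark afterward.
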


\begin{remark}\rm
Taking the scaling limit of \eqref{eq:topr} we obtain the famous Ray-Knight relation between Brownian 
motion local time and squared Bessel processes (see Revuz and Yor~ \cite{RY91}). We refer to 
T\'oth \cite{T95,T96} for analogous relations involving more general processes, arising in the context 
of self-interacting random walks.
\end{remark}

Before proving Theorem~\ref{th:bra}, 
we note that the transition kernel of a critical Galton-Watson 
branching process with geometric offspring distibution is given by the matrix $Q(i,j)$, 
$i,j \in \N_0$, defined in \eqref{eq:def.Qij}. In fact, if $(\xi_n)_{n\in\N}$ are i.i.d.\ $\Geo_0(\frac{1}{2})$ 
random variables, then 
\begin{equation}
\label{eq:Q}
Q(i,j) = \begin{cases}
\ind_{\{j = 0\}}, & \text{if } i=0, \ j \in \N_0, \\
\rule{0pt}{2em}
\displaystyle \binom{i+j-1}{i-1}
\left( \frac12 \right)^{i+j}= P(\xi_1 + \ldots + \xi_i = j),
& \text{if }
i \in \N, \ j \in \N_0,
\end{cases}
\end{equation}
In the presence of immigration, the transition kernel becomes $Q(i+1, j)$.

By Definition~\ref{def:proc}, $(M^+, M^-)$ is a Markov chain on $\N_0 \times \N_0$
that is not time-homogeneous whenever $x \ne 0$ (due to the immigration). The initial 
distribution of this Markov chain is
\begin{equation}
\label{eq:start}
\cP_{\ell,x} \big( (M^+_0, M^-_0) = (a,b) \big) = \rho_\ell(a,b)
\quad \text{ with } \quad \rho_\ell(a,b) = \binom{\ell}{a} \, \frac{1}{2^\ell} 
\, \ind_{\{a+b = \ell\}}, \quad a,b \in \N_0,
\end{equation}
while the transition kernel factorizes, i.e., it is the product of its marginals, because conditionally 
on $(M^+_0, M^-_0)$ the two components $(M^+_y)_{y\in\N}$ and $(M^-_y)_{y\in\N}$ evolve 
independently, with marginal transition kernels
\begin{gather}
\label{eq:trans}
\begin{split}
\cP_{\ell,x} \big( M^+_{y+1}  = j \mid M^+_{y} = i \big) 
&= Q(i+1,j) \, \ind_{\{y < x\}} + Q(i,j) \, \ind_{\{y \ge x\}},  \\
\cP_{\ell,x} \big( M^-_{y+1} = j \mid  M^-_{y} = i \big) 
&= Q(i,j), \qquad i,j\in\N_0.
\end{split}
\end{gather}

We are now ready to give the proof of Theorem~\ref{th:bra}.

\begin{proof}
Note that both sides of \eqref{eq:topr} vanish, unless the sequences $m^\pm$ satisfy the conditions
\begin{equation} 
\label{eq:condmpm}
m_0^+ + m_0^- = \ell \,, \qquad
\sum_{y\in\N_0} (m_y^+ + m_y^-) = \frac{1}{2}(n-x).
\end{equation}
The first condition holds because $L_0 = M_0^+ + M_0^-$ for the random walk (each visit to zero 
is preceded by a crossing of either $(0,1)$ or $(-1,0)$), while $\cP_{\ell, x}(M_0^+ + M_0^- = \ell) = 1$ 
for the branching process by construction. Analogously, the second condition in \eqref{eq:condmpm} 
holds for the branching process by the definition of $\Xi$ in \eqref{eq:Xi}, while it holds for the random 
walk, because the total number of steps $n$ equals the total number of upward or downward crossings, 
which is given by $2 \sum_{y\in\N_0} (M^+_y + M^-_y) + x$ (recall that the last upward crossing of a 
bond in the stretch $\{0, \ldots, x\}$ is not counted in $M^+_y$).

Henceforth we fix two sequences $(m^+, m^-) = (m^+_y, m^-_y)_{y\ge 0} \in (\N_0 \times \N_0)^{\N_0}$ 
that satisfy \eqref{eq:condmpm}. Below we will show that the number of simple random walk paths $(S_1, 
\ldots, S_n)$ contributing to the event $\{(M^+, M^-) = (m^+, m^-), \, S_n = x, \, L_0 = \ell\}$ equals
\begin{equation} 
\label{eq:combin}
\binom{\ell}{m_0^+} \, \prod_{y=0}^{x-1} C(m_y^+ + 1, m_{y+1}^+)
\, \prod_{y \ge x} C(m_y^+, m_{y+1}^+) \, \prod_{y \ge 0} C(m_y^-, m_{y+1}^-) \,,
\end{equation}
where
\begin{equation} 
\label{eq:C}
C(0,j) = \ind_{\{j=0\}}, \quad j \in \N_0, \qquad
C(i,j) = \binom{i+j-1}{i-1} \quad i \in\N,\,j\in \N_0.
\end{equation}
The first product in \eqref{eq:combin} is $1$ when $x=0$, by convention. Note that $m^\pm$ have 
a finite sum by \eqref{eq:condmpm}, and hence are eventually zero: 
$m^\pm_y = 0$ for large enough $y$. Since $C(0,0)=1$, this means that the products in 
\eqref{eq:combin} are finite.

We can now prove \eqref{eq:topr}. The probability in the left-hand side of \eqref{eq:topr} is 
obtained after dividing \eqref{eq:combin} by $2^n$, which is the total number of random walk 
paths. Recalling \eqref{eq:def.Qij}, \eqref{eq:start} and \eqref{eq:condmpm}, we obtain
\begin{equation} 
\label{eq:coin}
\begin{split}
&\bP \Big( (M^+, M^-) = (m^+, m^-),\, L_0 = \ell, \, S_n = x \Big) \\
& \qquad = \rho_\ell(m_0^+, m_0^-) 
\, \prod_{y=0}^{x-1} Q (m_y^++1, m_{y+1}^+)
\, \prod_{y \ge x} Q(m_y^+, m_{y+1}^+) \, \prod_{y \ge 0} Q(m_y^-, m_{y+1}^-),
\end{split}
\end{equation}
which is precisely the probability in the right-hand side of \eqref{eq:topr}, by the Markov property 
of the process $(M^+, M^-)$ under the law $\cP_{\ell,x}$ (recall \eqref{eq:start}-\eqref{eq:trans}).

It remains to prove \eqref{eq:combin}. Observe that $C(i,j)$ in \eqref{eq:C} equals the number 
of ways in which $j$ objects can be allocated to $i$ boxes, i.e., the number of sequences
$(a_1, \ldots, a_i) \in (\N_0)^i$ satisfying $a_1 + \ldots + a_i = j$. As to the random walk, the 
crossing number $m_0^-$ counts the number of excursions below $0$, while the crossing number 
$m_1^-$ counts the number of excursions below $-1$. The key observation is that each excursion 
below $-1$ is included in precisely one excursion below $0$. Therefore the number of ways in 
which the $m_1^-$ excursions below $-1$ can be ``allocated'' to the $m_0^-$ excursions below 
$0$ equals $C(m_0^-, m_1^-)$. Iterating this argument, we see that the last product in \eqref{eq:combin} 
counts the number of random walk paths in the negative half-plane that are compatible with the 
given bond crossing-numbers $(m^-_y)_{y\in\N_0}$.

For the positive part of the random walk path there is one difference. When  $x\in\N$, each of the 
$m^+_1$ excursions above $+1$ can be allocated not only to the $m^+_0$ excursions above $0$, 
but also to the last incomplete excursion leading to $x$. This explains the presence of ``$+1$'' in 
the combinatorial factor $C(m_0^+ + 1, m_1^+)$ in \eqref{eq:combin}. This holds until level $x$, 
while above level $x$ the combinatorial factor $C(m^+_y, m^+_{y+1})$ applies. The first two products 
in \eqref{eq:combin} therefore count the number of random walk paths in the positive half-plane that 
lead to $x$ and are compatible with the given bond crossing-numbers $(m^+_y)_{y\in\N_0}$.

It remains to combine the positive and the negative parts of the random walk path we have just built.
This can be done by alternating the $m_0^+$ positive excursions and the $m^-_0$ negative excursions 
in an arbitrary way, while preserving their relative order. Since $m^+_0 + m^-_0 = \ell$, this can be 
done in $\binom{\ell}{m_0^+}$ ways, which leads to the first factor in \eqref{eq:combin}.
\end{proof}

\begin{remark}
\rm
\label{rem:tree}
One way to visualize \eqref{eq:combin} is to identify a random walk excursion with a planar rooted 
tree (the random walk path traces out the ``external boundary'' of the tree). With this identification, 
the bond-crossing numbers $(m_y)_{y\in\N_0}$ represent the number of branches of the tree at 
level $y\in\N_0$, and the number of such trees is given by $\prod_{i\in\N_0} C(m_i,m_{i+1})$.
\end{remark}


\subsection{From the annealed partition function to functionals of local times}
\label{ss:loctimerepr}

The first step in our analysis  of the free energy is to rewrite the annealed partition function 
as the partition function of the simple random walk weighted by a functional of its local times 
$L_n(y)$ defined in \eqref{eq:localtime}. To that end we define, for $\ell\in\N_0$ and 
$(\delta,\beta) \in \cQ$,
\begin{equation}
\label{eq:def.Gbeta.p}
G_{\delta,\beta}(\ell) 
= \log \bbE^{\delta}\left[e^{-\beta \Omega_\ell^2}\right].
\end{equation}
Note that $\Omega_0 = 0$, so that $G_{\gd,\gb}(0)=0$.

\begin{lemma}
\label{lem:id_part_fct} 
For $n\in\N$ and $(\delta,\beta) \in \cQ$,
\begin{equation}
\label{eq:identity}
\bbZ_n^{\delta,\beta} = \bE\left[\exp\left\{\sum_{x\in\bbZ} G_{\delta,\beta}(L_n(x))\right\}\right].
\end{equation}
\end{lemma}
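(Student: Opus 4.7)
The plan is to condition on the random walk path $S$, reorganize the Hamiltonian by site, and then integrate out the i.i.d.\ charges using the fact that disjoint blocks of an i.i.d.\ sequence give independent block sums.

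First I would fix a nearest-neighbor path $S$ and group the monomer indices by their location: set $I_n(x) = \{i\in\{1,\dots,n\}\colon S_i = x\}$ for each $x\in\Z$. Then $|I_n(x)| = L_n(x)$ and the family $\{I_n(x)\}_{x\in\Z}$ is a partition of $\{1,\dots,n\}$ into finitely many nonempty blocks. Using the second form of the Hamiltonian in \eqref{eq:def.hamiltonian},
\begin{equation*}
H_n^\omega(S) = \sum_{x\in\Z}\bigg(\sum_{i\in I_n(x)}\omega_i\bigg)^2.
\end{equation*}

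Next I would exploit the i.i.d.\ structure of $(\omega_i)_{i\in\N}$ under $\bbP^\delta$. For each $x$, conditionally on $S$, the block sum $Y_x := \sum_{i\in I_n(x)}\omega_i$ has the same law as $\Omega_{L_n(x)} = \sum_{k=1}^{L_n(x)}\omega_k$, and the $Y_x$ for distinct $x$ are independent because the blocks $I_n(x)$ are disjoint subsets of an i.i.d.\ sequence. Consequently,
\begin{equation*}
\bbE^\delta\bigl[e^{-\beta H_n^\omega(S)}\bigm| S\bigr]
= \prod_{x\in\Z}\bbE^\delta\bigl[e^{-\beta\Omega_{L_n(x)}^2}\bigr]
= \prod_{x\in\Z} e^{G_{\delta,\beta}(L_n(x))}
= \exp\!\bigg\{\sum_{x\in\Z}G_{\delta,\beta}(L_n(x))\bigg\},
\end{equation*}
where I used that $\Omega_0 = 0$ forces $G_{\delta,\beta}(0) = 0$, so only the finitely many sites $x$ with $L_n(x)\geq 1$ contribute to the product and the sum. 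Taking $\bE$ on both sides and invoking the tower property then gives \eqref{eq:identity}.

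I do not expect any substantive obstacle: the argument is a routine Fubini-style reorganization. The only points deserving care are the combinatorial book-keeping of the partition $\{I_n(x)\}$ and the observation that the marginal law of $Y_x$ conditional on $S$ depends on $S$ only through $L_n(x)$. Finiteness of all quantities is guaranteed by the moment condition \eqref{Mdeltacond}, which ensures $G_{\delta,\beta}(\ell) \in (-\infty,0]$ for every $\ell\in\N_0$.
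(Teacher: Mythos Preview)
Your proposal is correct and follows essentially the same approach as the paper's proof: factor the Hamiltonian over sites, use independence of the disjoint charge blocks conditionally on $S$, and identify each factor with $e^{G_{\delta,\beta}(L_n(x))}$. The paper compresses this into two lines, while you spell out the partition $\{I_n(x)\}$ and the distributional identity $Y_x \stackrel{d}{=} \Omega_{L_n(x)}$ explicitly, but the content is identical.
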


\begin{proof}
Rewrite \eqref{eq:def.annpartfunc} as
\begin{equation}
\bbZ_{n}^{\delta,\beta} 
= (\bbE^{\delta} \times \bE) \left[ \prod_{x\in \bbZ} 
\exp \left\{ -\beta \left( \sum_{i=1}^n \omega_i \ind_{\{S_i = x\}} \right)^2 \right\}  \right] 
= \bE \left[ \,\prod_{x\in \bbZ} \bbE^{\delta} \left[ e^{ -\beta \Omega_{L_n(x)}^2} \right] \right],
\end{equation}
and use \eqref{eq:def.Gbeta.p}. 
\end{proof}

Depending on which phase we are working in, it will be convenient to also use the function 
$G^*_{\delta,\beta}(\ell)$ defined by
\begin{equation}
\label{eq:rel.G.staralt}
G^*_{\delta,\beta}(\ell) = G_{\delta,\beta}(\ell) - f(\delta)\ell, \qquad \ell \in \N_0,
\end{equation}
which equals \eqref{eq:rel.G.star} by \eqref{eq:def.Gbeta.p} (recall \eqref{eq:Pdeltadef} 
and \eqref{eq:fdeltadef}), and to rewrite Lemma~\ref{lem:id_part_fct} as 
\begin{equation}
\label{eq:identity*}
\bbZ_n^{*,\delta,\beta}  
= (\bbE \times \bE) \left[ e^{\sum_{x\in\Z} \big(\delta \Omega_{L_n(x)} 
- \beta \Omega_{L_n(x)}^2\big)} \right]
= \bE\left[\exp\left\{\sum_{x\in\bbZ} G^*_{\delta,\beta}(L_n(x))\right\}\right]
\end{equation}
with (recall \eqref{fannlimit})
\begin{equation}
\label{eq:ZnZn*rel}
\bbZ_n^{*,\delta,\beta} = e^{-f(\delta)n}\,\bbZ_n^{\delta,\beta}.
\end{equation}
In Appendix~\ref{AppA} we collect some properties of $G^*_{\delta,\beta}$ that will be needed
along the way.

\medskip\noindent
{\bf Example:} If the marginal of $\bbP$ is standard normal, then by direct computation
\begin{equation}
\label{eq:Gstar.gaussian}
G^*_{\delta,\beta}(\ell) = -\tfrac12 \log(1+2\beta\ell) 
+ \tfrac12\,\delta^2\,\frac{\ell}{1+2\beta\ell}.
\end{equation}

\medskip\noindent
{\bf Remark:}
We close this section with the following observation. In Section~{\rm \ref{ss:model}} we argued 
that working with \eqref{eq:def.hamiltonian} rather than \eqref{eq:def.ham} as the interaction 
Hamiltonian amounts to replacing $\beta$ by $2\beta$ and adding a charge bias. Indeed, this 
is immediate from the relation
\begin{equation}
\label{eq:Hamrel}
2  \sum_{1\leq i <  j\leq n} \omega_i \omega_j\, \ind_{\{S_i=S_j\}}
=  \sum_{x\in \Z^d}  \left(\sum_{i=1}^n \omega_i\, \ind_{\{S_i=x\}}\, \right)^2 
- \sum_{i=1}^n \omega_i^2.
\end{equation}
For the annealed model, the last sum is not constant (unless $\omega_i = \pm 1$). To handle this, 
define $\bar{\bbP}^\beta$ as the product law with marginal given by
\begin{equation}
\bar{\bbP}^\beta(d\omega_1) 
= \frac{e^{\beta\omega_1^2}\,\bbP(\dd\omega_1)}{\bar{M}(\beta)},
\qquad \bar{M}(\beta)=\bbE(e^{\beta\omega_1^2}),
\end{equation}
where we need to assume that $\bar{M}(\beta)<\infty$ for all $\beta \in (0,\infty)$. We put
\begin{equation}
\bar{G}_{\delta,\beta}(\ell) = \log \bar{\bbE}^\beta 
\left[e^{\delta \Omega_\ell -\beta\Omega_\ell^2}\right],
\end{equation}
which is the same as \eqref{eq:rel.G.star} but with $\bar{\bbE}^\beta$ instead of 
$\bbE$, and we define the partition function
\begin{equation}
\bar{\bbZ}_n^{\delta,\beta} = \bE\left[ e^{\sum_{x\in \Z^d} \bar{G}_{\delta,\beta} (L_n(x))}\right].
\end{equation}
Including the last sum in \eqref{eq:Hamrel} amounts to switching from $\bbZ_n^{*,\delta,\beta}$ 
to $\bar{\bbZ}_n^{\delta,\beta}$. As mentioned in Section~{\rm \ref{ss:model}}, in this paper 
we work with the Hamiltonian without the last sum. The reader may check that $\bar{G}_{\delta,
\beta}$ has the same qualitative properties as $G^*_{\delta,\beta}$, so that all the computations 
carried out below can be easily transferred.


\subsection{Grand-canonical representation}
\label{ss:grcanrepr}

To compute the annealed free energy we use the generating function associated with 
the sequence of excess annealed partition functions, i.e.,
\begin{equation}
\label{eq:grand_canonical_trick}
\cZ(\mu,\delta,\beta) = \sum_{n\in\N_0} e^{-\mu n}\,\bbZ_{n}^{*,\delta,\beta},
\qquad \mu \in [0,\infty),
\end{equation}
where $\bbZ_{0}^{*,\delta,\beta} = 1$. The main result of this section is the following matrix 
representation of $\cZ(\mu,\delta,\beta)$. Recall the matrices $A_{\mu,\delta,\beta}(i,j)$ and 
$\tilde{A}_{\mu,\delta,\beta}(i,j)$ defined in \eqref{eq:def.Abeta.p.r}--\eqref{eq:def.Abeta.p.r2}, 
and introduce an extra matrix
\begin{equation}
\label{eq:hatA}
\widehat{A}_{\mu,\delta,\beta}(i,j) 
= e^{-\mu(i+j)+G^*_{\delta,\beta}(i+j)}\, Q(i+1,j), \qquad i, j\in\N_0.
\end{equation}

\begin{proposition}
\label{prop:grcanspec}
For $\mu \in [0,\infty)$ and $(\delta,\beta) \in \cQ$,
\footnote{Given a non-negative matrix $B = B(i,j)$, $i,j \in \N_0$, we define
$\frac{1}{1-B}$ by putting $(\frac{1}{1-B})(i,j) = \sum_{k\in\N_0} B^k(i,j)$, $i,j \in \N_0$, 
with $B^0(i,j) = \ind_{\{i=j\}}$. This is well-defined as a matrix with entries in $[0,\infty]$, 
and if all the entries are finite, then $\frac{1}{1-B}$ is the inverse of $1-B$ (where $1$ 
denotes the identity matrix) and commutes with $B$. Hence we can write $\frac{1+B}{1-B} 
= (1+B)\frac{1}{1-B} = \frac{1}{1-B} (1+B)$ without ambiguity.}
\begin{equation}
\label{Z+form}
\cZ(\mu, \delta, \beta) = \left[\frac{1}{1 - \tilde{A}^{\intercal}_{\mu,\delta,\beta}}\,
\widehat{A}_{\mu,\delta,\beta}\, 
\frac{1 + A_{\mu,\delta,\beta}}{1 - A_{\mu,\delta,\beta}}\,
\frac{1}{1 - \tilde{A}_{\mu,\delta,\beta}}\right](0,0).
\end{equation}
\end{proposition}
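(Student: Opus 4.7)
The plan is to unfold the generating function $\cZ(\mu,\delta,\beta)$ via the Markov structure of the edge-crossing numbers developed in Section~\ref{sec:markov_edge_local_times}, then recognise the resulting sum as the $(0,0)$-entry of a matrix product. Starting from \eqref{eq:identity*}, I would partition the expectation over the simple random walk according to $\{S_n = x, L_0 = \ell\}$, treating $x \ge 0$ first and recovering $x \le 0$ by the $+ \leftrightarrow -$ reflection symmetry. Conditionally on this event, Theorem~\ref{th:bra}---in the explicit form \eqref{eq:coin}---gives the joint law of $(M^+, M^-)$ as a product of the initial distribution $\rho_\ell$ and transition kernels $Q$, while \eqref{eq:LM} expresses each $L_n(y)$ in terms of $M^\pm$ with an additional $+1$ precisely on the stretch $\{1,\ldots,x\}$.

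Using $n = \sum_y L_n(y)$ to distribute $e^{-\mu n}$ across sites and combining with $e^{G^*_{\delta,\beta}(L_n(y))}$ produces site weights $e^{-\mu L_n(y) + G^*_{\delta,\beta}(L_n(y))}$. Pairing each such weight with the adjacent transition factor in \eqref{eq:coin} reproduces exactly the matrices of \eqref{eq:def.Abeta.p.r}--\eqref{eq:def.Abeta.p.r2} and \eqref{eq:hatA}: the $+1$-shifted site in the stretch combined with the immigration-shifted transition $Q(i+1,j)$ assembles into $A_{\mu,\delta,\beta}(m^+_{y-1}, m^+_y)$ for $1 \le y \le x$; beyond the stretch and on the negative side the plain transitions assemble into $\tilde A_{\mu,\delta,\beta}$; and the origin weight $\rho_\ell(m^+_0, m^-_0)\,e^{-\mu\ell + G^*_{\delta,\beta}(\ell)}$ with $\ell = m^+_0 + m^-_0$ matches $\widehat A_{\mu,\delta,\beta}(m^+_0, m^-_0)$ up to a combinatorial constant, via the identity $\binom{i+j}{i} 2^{-(i+j)} = 2\,Q(i+1,j)$.

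Summing then telescopes matrix-wise: the positive tail $\sum_{m^+_{x+1}, m^+_{x+2},\ldots}$ collapses to $[(1 - \tilde A_{\mu,\delta,\beta})^{-1}](m^+_x, 0)$---the series converges because $\tilde A(0,\cdot) = 0$ makes $0$ absorbing, in agreement with extinction of the critical Galton--Watson process; the intermediate sum yields $[A_{\mu,\delta,\beta}^{\,x}](m^+_0, m^+_x)$, whose sum over $x \ge 1$ gives the geometric series $A_{\mu,\delta,\beta}/(1 - A_{\mu,\delta,\beta})$; and the negative side analogously gives $[(1 - \tilde A_{\mu,\delta,\beta})^{-1}](m^-_0, 0)$, which by transposition fits as $[(1 - \tilde A_{\mu,\delta,\beta}^\intercal)^{-1}](0, m^-_0)$ and thereby closes the matrix product at the $(0,0)$-entry. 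Combining the $x \ge 0$ and $x \le 0$ contributions (equal by reflection symmetry) while correcting for the $x = 0$ boundary, then applying the algebraic identity $\frac{1+A}{1-A} = \frac{2}{1-A} - 1$, packages $2\cZ_{x > 0} + \cZ_{x = 0}$ into exactly \eqref{Z+form}.

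The main obstacle will be bookkeeping at the origin: ensuring that the $x = 0$ contribution is not double-counted when merging positive- and negative-endpoint pieces, and correctly matching the combinatorial prefactor $\rho_\ell$ with $\widehat A_{\mu,\delta,\beta}$ (so that the symmetric matrix $\widehat A$ absorbs both the splitting of $\ell$ particles into $(m^+_0, m^-_0)$ and the origin site weight). A secondary issue is the finiteness of the $(0,0)$-entry in \eqref{Z+form}: this requires the spectral radii of $A_{\mu,\delta,\beta}$ and $\tilde A_{\mu,\delta,\beta}$ on $\ell^2(\N_0)$ to be strictly less than $1$, which is precisely the content of the spectral analysis in Section~\ref{subsec:eigenvalue} and which in turn pins down the radius of convergence of $\cZ$ and hence the excess free energy.
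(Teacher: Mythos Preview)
Your proposal is correct and follows essentially the same route as the paper's proof: partition by $(x,\ell)$, invoke Theorem~\ref{th:bra} in the form \eqref{eq:coin}, distribute the site weights $e^{-\mu L_n(y)+G^*_{\delta,\beta}(L_n(y))}$ across the transition factors to assemble $A$, $\tilde A$, $\widehat A$, and collapse the sums into matrix products. The paper organises this by introducing an auxiliary matrix $\tilde A'$ (equal to $\tilde A$ except $\tilde A'(0,0)=1$) and the first-zero indices $t,s$ for $m^+,m^-$, then argues that $\tilde A'$ may be replaced by $\tilde A$ because the intermediate indices lie in $\N$; your absorbing-state remark ``$\tilde A(0,\cdot)=0$'' is exactly the mechanism behind this replacement. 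The origin bookkeeping you flag is handled in the paper via the identity $\tfrac12\,e^{G^*(\ell)-\mu\ell}\rho_\ell(m_0^+,m_0^-)=\widehat A(m_0^-,m_0^+)$, which is your $\binom{i+j}{i}2^{-(i+j)}=2\,Q(i+1,j)$; note that $\widehat A$ is symmetric, so your argument order $(m_0^+,m_0^-)$ causes no harm. One small correction: finiteness of the $(0,0)$-entry is \emph{not} part of this proposition---the footnote makes \eqref{Z+form} an identity in $[0,\infty]$, and the spectral analysis you allude to is deferred to Sections~\ref{subsec:eigenvalue}--\ref{ss:spec}.
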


\begin{proof}
To lighten the notation, we \emph{suppress the dependence on} $\mu,\delta,\beta$.
Recalling \eqref{eq:identity*}, we can write
\begin{equation}
\cZ = \sum_{n\in\N_0}
e^{-\mu n} \, \sum_{(\ell, x) \in (\N_0)^2} \,
\big(1+\ind_{\{x>0\}}\big) \bE\Bigg[\exp\Bigg\{\sum_{y\in\bbZ} G^*(L_n(y))\Bigg\}
\ind_{\{L_0 = \ell, S_n = x\}} \Bigg] \,,
\end{equation}
where $\ind_{\{x>0\}}$ accounts for the contribution of $\{S_n = -x\}$, which is the same 
as the contribution of $\{S_n = x\}$. Recalling \eqref{eq:LM}, we can write
\begin{equation}
\label{eq:long}
\begin{split}
\cZ &= \sum_{(\ell, x) \in (\N_0)^2} \, \big(1+\ind_{\{x>0\}}\big) \,
\sum_{n\in\N_0} \, \sum_{(m^+,m^-) \in (\N_0 \times \N_0)^{\N_0}} \\
&\bP \big( (M^+, M^-) = (m^+, m^-), \, L_0 = \ell, S_n = x \big) \\
& \qquad \times e^{-\mu n} \, e^{G^*(m_0^+ + m_0^-)} 
\prod_{y=0}^{x-1} e^{G^*(m^+_y + m^+_{y+1} + 1)}
\prod_{y\ge x} e^{G^*(m^+_y + m^+_{y+1})}
\prod_{y\ge 0} e^{G^*(m^-_y + m^-_{y+1})} \,.
\end{split}
\end{equation}
Next we apply Theorem~\ref{th:bra} to rewrite the probability in the right-hand side of \eqref{eq:long} 
as a product of $Q$ matrices, as in \eqref{eq:coin}. In order to do this, we must restrict to sequences 
$(m^+,m^-)$ satisfying the two conditions in \eqref{eq:condmpm}. Thus, we define
\begin{equation}
\begin{split}
\cS_{\ell,x,n} = \bigg\{ (m^+_y, m^-_y)_{y\ge 0} \in (\N_0 \times \N_0)^{\N_0}\colon\, 
m^+_0 + m^-_0 = \ell, \, \sum_{y\in\N_0} (m^+_y + m^-_y) = \tfrac{1}{2}(n-x) \bigg\},
\end{split}
\end{equation}
and restrict the sum over $(m^+,m^-)$ in \eqref{eq:long} to the set $\cS_{\ell,x,n}$. At this point we 
combine the definitions of the matrix $Q$, the function $G^*$ and the factor $\mu$ into the matrix 
$A$ defined in \eqref{eq:def.Abeta.p.r}:
\begin{equation}
A(i,j) = e^{G^*(i+j+1) - \mu(i+j+1)} Q(i+1,j) , \qquad i,j \in \N_0.
\end{equation}
We also introduce an extra matrix $\tilde A'$ by
\begin{equation}
\tilde A'(i,j) = e^{G^*(i+j) - \mu(i+j)} Q(i,j) , \qquad i,j \in \N_0,
\end{equation}
which almost coincides with the matrix $\tilde A$ introduced in \eqref{eq:def.Abeta.p.r2}, the 
difference being that $\tilde A'(0,0)= 1$ while $\tilde A(0,0)= 0$. Altogether, we can rewrite 
\eqref{eq:long} as (recall \eqref{eq:start})
\begin{equation} 
\label{eq:long2}
\begin{split}
\cZ = & \sum_{(\ell, x) \in (\N_0)^2} \, \big(1+\ind_{\{x>0\}}\big)
\sum_{n\in\N_0} \, \sum_{(m^+,m^-) \in \cS_{\ell,x,n}} \\
& \qquad \ e^{G^*(\ell) - \mu \ell} \, \rho_\ell(m_0^+, m_0^-) 
\prod_{y=0}^{x-1} A(m^+_y, m^+_{y+1})
\prod_{y\ge x} \tilde A'(m^+_y, m^+_{y+1})
\prod_{y\ge 0} \tilde A'(m^-_y, m^-_{y+1}).
\end{split}
\end{equation}

Next we introduce the set of sequences $m^\pm$ that are \emph{eventually zero} and satisfy 
$m^+_0 + m^-_0 = \ell$:
\begin{equation}
\cT_{\ell} = \bigg\{ (m^+_y, m^-_y)_{y\in\N_0} \in (\N_0 \times \N_0)^{\N_0}\colon\,
m^+_0 + m^-_0 = \ell,\, m^\pm_y = 0 \text{ for large enough $y$} \bigg\},
\end{equation}
and observe that, for every fixed $x \in \N_0$,
\begin{equation}
\bigcup_{n\in\N_0} \cS_{\ell,x,n} = \cT_\ell.
\end{equation}
The inclusion $\cS_{\ell,x,n} \subseteq \cT_\ell$ is obvious because sequences in $\cS_{\ell, x, n}$ 
are integer-valued with a finite sum and hence are eventually zero. Conversely, if $m^\pm$ are 
eventually zero, then $\sum_{y\in\N_0} (m^+_y + m^-_y) = \tfrac{1}{2}(n-x)$ for some $n\in\N_0$.
This means that
\begin{equation} 
\label{eq:sums}
\begin{split}
\sum_{n\in\N_0} \,
& \sum_{(m^+,m^-) \in \cS_{\ell,x,n}} \big(\ldots\big)
= \sum_{(m^+,m^-) \in \cT_{\ell}} \big(\ldots\big) \\
& = \sum_{(m^+_0, \ldots, m^+_{x-1}) \in (\N_0)^{x-1}} \;
\sumtwo{(t, s) \in (\N_0)^2}{t \ge x} \;
\sumtwo{(m^+_{x}, \ldots, m^+_{t-1}) \in \N^{t-x}}{(m^-_0, \ldots, m^-_{s-1}) \in \N^{s}}
\ind_{\{m_0^+ + m_0^- = \ell\}} \big(\ldots\big),
\end{split}
\end{equation}
where the last line provides a convenient parametrization of the set $\cT_\ell$:
\begin{itemize}
\item 
Sum over all possible values of $(m^+_0, \ldots, m^+_{x-1})$ (when $x \in\N$).
\item 
Denote by $t$ the smallest value of $y \geq x$ for which $m^+_y = 0$. Likewise denote 
by $s$ the smallest value of $y \geq 0$ for which $m^-_y=0$, so that by construction 
$(m^+_{x}, \ldots, m^+_{t-1})$ and $(m^-_0, \ldots, m^-_{s-1})$ can only take values 
in $\N$.
\item 
It is implicit that $m^+_y = 0$ for $y \ge t$ and $m^-_y = 0$ for $y \ge s$.
\end{itemize}

We now apply \eqref{eq:sums} to \eqref{eq:long2}. We further split $\cZ = \cZ_0 + \cZ_+$, 
where $\cZ_0$ is the contribution of the single term $x=0$, while $\cZ_+$ is the sum 
$\sum_{x\in\N} (\cdots)$. Observing that
\begin{equation}
\tfrac12 \sum_{\ell\in\N_0} e^{G^*(\ell) - \mu \ell} \, \rho_\ell(m_0^+, m_0^-)
= \widehat A(m_0^-, m_0^+)
\end{equation}
by \eqref{eq:start} and \eqref{eq:hatA}, we have (with $m^+_t = m^-_s = 0$ by convention)
\begin{equation} 
\label{eq:long3a}
\begin{split}
\cZ_0 =
& \sum_{(t, s) \in (\N_0)^2} \; 
\sumtwo{(m^+_{0}, \ldots, m^+_{t-1}) \in \N^{t}}{(m^-_0, \ldots, m^-_{s-1}) \in \N^{s}}
\widehat A(m_0^-, m_0^+) \,
\prod_{y= 0}^{t-1} \tilde A'(m^+_y, m^+_{y+1})
\prod_{y = 0}^{s-1} \tilde A'(m^-_y, m^-_{y+1})
\end{split}
\end{equation}
and
\begin{equation} 
\label{eq:long3b}
\begin{split}
\cZ_+ = \, 2 \sum_{x \in \N} \,
& \sumtwo{(t, s) \in (\N_0)^2}{t \ge x} \;
\sum_{(m^+_0, \ldots, m^+_{x-1}) \in (\N_0)^{x-1}} \;
\sumtwo{(m^+_{x}, \ldots, m^+_{t-1}) \in \N^{t-x}}{(m^-_0, \ldots, m^-_{s-1}) \in \N^{s}}\\
& \widehat A(m_0^-, m_0^+) \,
\prod_{y=0}^{x-1} A(m^+_y, m^+_{y+1})
\prod_{y= x}^{t-1} \tilde A'(m^+_y, m^+_{y+1})
\prod_{y = 0}^{s-1} \tilde A'(m^-_y, m^-_{y+1}).
\end{split}
\end{equation}

Finally, we observe that we can replace $\tilde A'(i,j)$ by $\tilde A(i,j)$ in 
\eqref{eq:long3a}--\eqref{eq:long3b}, because for $i\in\N$ these two matrices coincide. After 
this replacement, the ranges $\N^{t-x}$ and $\N^{s}$ in the sums can be replaced by 
$(\N_0)^{t-x}$ and $(\N_0)^{s}$, respectively, because $\tilde A(0,j) = 0$. This leads us 
to the desired matrix product representations:
\begin{equation}
\label{eq:long4a}
\begin{split}
\cZ_0 = \sum_{(t, s) \in (\N_0)^2} \; 
\big[ (\tilde A^\intercal)^s \, \widehat A \, \tilde A^t \big](0,0)
= \bigg[ \frac{1}{1- \tilde A^\intercal} \, \widehat A \,
\frac{1}{1 - \tilde A} \bigg] (0,0)
\end{split}
\end{equation}
and
\begin{equation} 
\label{eq:long4b}
\begin{split}
\cZ_+ &= 2 \sum_{x \in \N} \,
\sumtwo{(t, s) \in (\N_0)^2}{t \ge x} \; 
\big[ (\tilde A^\intercal)^s \, \widehat A \, A^x \, \tilde A^{t-x} \big](0,0)\\
&= \bigg[ \frac{1}{1- \tilde A^\intercal} \, \frac{A}{1 - A}\, \widehat A \, \frac{1}{1 - \tilde A} 
+  \frac{1}{1- \tilde A^\intercal} \, \widehat A \,  \frac{A}{1 - A}\, \frac{1}{1 - \tilde A} 
\bigg] (0,0)\\ 
&= \bigg[ \frac{1}{1- \tilde A^\intercal} \, \widehat A \, \frac{2 A}{1 - A} \, \frac{1}{1 - \tilde A} \bigg] (0,0).
\end{split}
\end{equation}
Summing these two formulas, we obtain the formula \eqref{Z+form}.
\end{proof}

\noindent
Note that $\widehat A_{\mu,\delta,\beta}$ plays only a minor role in \eqref{Z+form}: the
divergence of $\cZ(\mu,\delta,\beta)$ is controlled by $A_{\mu,\delta,\beta}$ and 
$\tilde A_{\mu,\delta,\beta}$. 

\medskip
For later use we state a version of Proposition~\ref{prop:grcanspec} for bridges. Namely, let
\begin{equation}
\cZ_\mathrm{bridge}(\mu,\delta,\beta) = \sum_{n\in\N_0} e^{-\mu n}\,
\bbZ_{n,\mathrm{bridge}}^{*,\delta,\beta}, \qquad \mu \in [0,\infty),
\end{equation} 
with
\begin{equation}
\label{partfuncbridge}
\bbZ_{n,\mathrm{bridge}}^{*,\delta,\beta}
= \bE\left[\exp\left\{\sum_{y\in \Z} G^*_{\delta,\beta}(L_n(y))\right\} 
\ind_{\big\{0< S_i \leq  S_n\,\,\forall\,0<i<n\big\}}\right].
\end{equation}

\begin{lemma}
\label{lem:grcanspecloop}
For $\mu \in [0,\infty)$ and $(\delta,\beta) \in \cQ$,
\begin{equation}
\label{Zbridgeform}
\cZ_\mathrm{bridge}(\mu,\delta, \beta) 
= \left[\frac{A_{\mu,\delta,\beta}}{1 - A_{\mu,\delta,\beta}}\right](0,0).
\end{equation}
\end{lemma}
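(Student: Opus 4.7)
The plan is to revisit the argument of Proposition~\ref{prop:grcanspec}, exploiting the substantial simplifications produced by the bridge constraint. Writing $x = S_n$, the indicator $\ind_{\{0 < S_i \le S_n \,\forall\, 0 < i < n\}}$ forces $x \ge 1$, $L_n(0) = 0$, and that $S_i \in \{1,\dots,x\}$ for every intermediate $i$. In terms of the edge-crossing numbers of \eqref{eq:edge}, these conditions translate into $M^+_0 = M^-_0 = 0$, $M^-_y = 0$ for all $y \ge 0$ (no excursion below $0$), and $M^+_y = 0$ for all $y \ge x$ (no crossing of the edge $(x,x+1)$); since state $0$ is absorbing for $Q$, the last condition collapses to the single requirement $M^+_x = 0$.

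Next I would use \eqref{eq:LM} together with $G^*_{\delta,\beta}(0) = 0$ to simplify the local-time sum in \eqref{partfuncbridge} to $\sum_{y=0}^{x-1} G^*_{\delta,\beta}(m^+_y + m^+_{y+1} + 1)$, where $m^+_y$ denotes the realised value of $M^+_y$ and $m^+_0 = m^+_x = 0$. Invoking Theorem~\ref{th:bra} with $\ell = 0$ (under $\cP_{0,x}$ the conditions on $M^+_0$, $M^-_0$, and the full $M^-$-chain are automatic), the random-walk probability factorises into a product of $Q$-matrix entries along the $M^+$-chain. This yields the representation
\begin{equation*}
\bbZ_{n,\mathrm{bridge}}^{*,\delta,\beta} = \sum_{x \ge 1} \; \sumtwo{(m^+_1,\dots,m^+_{x-1}) \in \N_0^{x-1}}{x + 2(m^+_1+\dots+m^+_{x-1}) = n} \; \prod_{y=0}^{x-1} Q(m^+_y+1, m^+_{y+1}) \, e^{G^*_{\delta,\beta}(m^+_y+m^+_{y+1}+1)},
\end{equation*}
with the convention $m^+_0 = m^+_x = 0$.

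Summing over $n \in \N_0$ with weight $e^{-\mu n}$ removes the constraint linking $n$ to the $m^+_y$ and distributes the exponential along the product as $\prod_{y=0}^{x-1} e^{-\mu(m^+_y+m^+_{y+1}+1)}$, producing exactly the entries of $A_{\mu,\delta,\beta}$ from \eqref{eq:def.Abeta.p.r}. Hence
\begin{equation*}
\cZ_\mathrm{bridge}(\mu,\delta,\beta) = \sum_{x \ge 1} A_{\mu,\delta,\beta}^x(0,0) = \Big[\frac{A_{\mu,\delta,\beta}}{1 - A_{\mu,\delta,\beta}}\Big](0,0),
\end{equation*}
the second equality being a standard resolvent identity (cf.\ the footnote after Proposition~\ref{prop:grcanspec}).

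I do not anticipate any genuine obstacle: the argument is a short specialisation of Proposition~\ref{prop:grcanspec}, with the $M^-$-chain, the initial mixing factor $\widehat A$, and the terminal $(1+A)$ factor all collapsing, leaving only the $A^x$-chain. The only mild care is the bookkeeping of the boundary entries $m^+_0 = m^+_x = 0$, which reflect the absence of returns to $0$ and of crossings of the edge $(x,x+1)$; this is also what removes the factor of $2$ present in $\cZ_+$ in the proof of Proposition~\ref{prop:grcanspec} (there is no reflection symmetry $S_n \leftrightarrow -S_n$ for the bridge).
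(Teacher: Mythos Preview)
Your proposal is correct and follows essentially the same route as the paper's own proof, which is a one-line remark that the bridge constraint eliminates the $\cZ_0$ contribution, the $m^-$-chain, and the $\widehat A$ factor from the proof of Proposition~\ref{prop:grcanspec}, leaving precisely $\sum_{x\ge 1} A_{\mu,\delta,\beta}^x(0,0)$. Your write-up simply fills in the bookkeeping details (boundary conditions $m^+_0=m^+_x=0$, collapse of the $\tilde A$-factors, absence of the reflection factor~$2$) that the paper leaves implicit.
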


\begin{proof}
Because of the bridge condition, the term with $\cZ_0$ is absent, while in $\cZ_+$  
only the part with the $m_y^+$'s survives (without $\hat{A}$).
\end{proof}


\subsection{Spectral analysis of the relevant matrices} 
\label{subsec:eigenvalue}

In this section we prove some properties of the matrices $A_{\mu,\delta,\beta}$ in 
\eqref{eq:def.Abeta.p.r} and $\tilde A_{\mu,\delta,\beta}$ in \eqref{eq:def.Abeta.p.r2},
viewed as operators from $\ell_2(\N_0)$ into itself. These will be needed to exploit 
Propositions~\ref{prop:grcanspec}--\ref{lem:grcanspecloop}.

\begin{proposition} 
\label{pr:prop.A.lambda.mu}
For all $(\mu,\delta,\beta)\in [0,\infty)\times \cQ$, the matrix $A_{\mu,\delta,\beta}\colon\,
\ell_2(\N_0)\mapsto \ell_2(\N_0)$ has strictly positive entries, is symmetric, and is 
Hilbert-Schmidt.
\end{proposition}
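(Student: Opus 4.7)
My approach is to address the three properties in turn: strict positivity and symmetry are immediate from the definition, while the Hilbert--Schmidt property requires a decay estimate on $e^{G^*_{\delta,\beta}(\ell)}$. For positivity, every factor in \eqref{eq:def.Abeta.p.r} is positive: the exponential factor trivially, and $Q(i+1,j) = \binom{i+j}{i}(1/2)^{i+j+1}>0$ since $i+1 \geq 1$. For symmetry, the prefactor $e^{-\mu(i+j+1)+G^*_{\delta,\beta}(i+j+1)}$ depends on $(i,j)$ only through $i+j$, and $Q(i+1,j) = \binom{i+j}{i}(1/2)^{i+j+1} = \binom{i+j}{j}(1/2)^{i+j+1} = Q(j+1,i)$ by the standard symmetry of binomial coefficients.

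The substantive step is the Hilbert--Schmidt property $\sum_{i,j \in \N_0} A_{\mu,\delta,\beta}(i,j)^2 < \infty$. Since the entries are non-increasing in $\mu$, it suffices to treat the case $\mu = 0$. The key input, which I plan to extract from the properties of $G^*_{\delta,\beta}$ collected in Appendix~\ref{AppA}, is an estimate of the form $e^{G^*_{\delta,\beta}(\ell)} \leq C(\delta,\beta)/\sqrt{1+\ell}$. The underlying reason is that completing the square gives $\bbE[e^{\delta\Omega_\ell - \beta\Omega_\ell^2}] = e^{\delta^2/(4\beta)}\,\bbE[e^{-\beta (\Omega_\ell-\delta/(2\beta))^2}]$, and since $\Omega_\ell$ has variance $\ell$ under $\bbP$ while the Gaussian weight $e^{-\beta(\,\cdot\,)^2}$ localises its argument at scale $O(1)$, a local CLT type estimate yields a $1/\sqrt{\ell}$ decay of the last expectation (this is manifest for Gaussian charges via the explicit formula \eqref{eq:Gstar.gaussian}).

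Granted this decay, the conclusion follows by a direct computation. Indeed,
\begin{equation*}
\sum_{i,j \in \N_0} A_{0,\delta,\beta}(i,j)^2 \leq C^2 \sum_{i,j \geq 0} \frac{Q(i+1,j)^2}{i+j+2},
\end{equation*}
and grouping terms with $n = i+j$ and using Vandermonde's identity $\sum_{i=0}^n \binom{n}{i}^2 = \binom{2n}{n}$ reduces the right-hand side to $C^2 \sum_{n \geq 0} \binom{2n}{n}\big/\bigl[(n+2)\,4^{n+1}\bigr]$. Stirling's formula gives $\binom{2n}{n}4^{-n} \sim (\pi n)^{-1/2}$, so the terms decay like $n^{-3/2}$ and the sum converges. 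The main obstacle is thus not the combinatorial estimate but the decay $e^{G^*_{\delta,\beta}(\ell)} = O(\ell^{-1/2})$, which relies on a quantitative spreading estimate for $\Omega_\ell$ and which I expect to be already available from Appendix~\ref{AppA}.
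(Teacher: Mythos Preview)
Your proposal is correct and follows essentially the same approach as the paper: both invoke the decay $e^{G^*_{\delta,\beta}(\ell)}\le C/\sqrt{\ell}$ from Appendix~\ref{AppA} (Proposition~\ref{pr:asymptGstar}, specifically \eqref{eq:app.estG2}) and then reduce the Hilbert--Schmidt sum to a series with $n^{-3/2}$ terms. The only cosmetic difference is that the paper phrases the combinatorial step via the random-walk identity $Q(i+1,j)=\tfrac12\bP(S_{i+j}=i-j)$ and $\sum_v \bP(S_u=v)^2=\bP(S_{2u}=0)$, which is exactly your Vandermonde identity $\sum_i\binom{n}{i}^2=\binom{2n}{n}$ in probabilistic dress.
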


\begin{proof}
It is obvious that $A_{\mu,\delta,\beta}$ has strictly positive entries and is symmetric.
The Hilbert-Schmidt norm of the operator $A_{\mu,\delta,\beta}$ is defined by
\begin{equation}
\label{eq:Amubetadelta.HS}
\|A_{\mu,\delta,\beta}\|_{\rm HS} =  \left( \sum_{i,j\in\N_0} [A_{\mu,\delta,\beta}(i,j)]^2 \right)^{1/2}.
\end{equation}
By \eqref{eq:def.Qij} and \eqref{eq:app.estG2} in Appendix~\ref{AppA}, we have 
($C$ denotes a generic constant that may change from line to line) 
\begin{equation}
A_{\mu,\delta,\beta}(i,j) \leq \frac{C}{\sqrt{i+j+1}}\, Q(i+1,j), \qquad i,j\in\N_0.
\end{equation}
Observe that, by \eqref{eq:Q}, 
\begin{equation}
\label{eq:QandP}
Q(i+1,j) = \tfrac12 \binom{i+j}{i} \frac{1}{2^{i+j}}
= \tfrac12\,\bP(S_{i+j} = i-j), \qquad i,j\in\N_0,
\end{equation}
where $S=(S_i)_{i\in\N_0}$ is a simple random walk. We may therefore write
\begin{equation}
[A_{\mu,\delta,\beta}(i,j)]^2 \leq \frac{C}{i+j+1}\, \bP(S_{i+j} = i-j)^2 
= \frac{C}{i+j+1}\, \bP^{\otimes 2}(S_{i+j} = S'_{i+j} = i-j),
\end{equation}
where $S'$ is an independent copy of $S$. Using the change of variables $u=i+j$, $v=i-j$, we obtain
\begin{equation}
\begin{aligned}
\sum_{i,j\in\N_0} [A_{\mu,\delta,\beta}(i,j)]^2 
&\leq C \sum_{u\in\N_0,\,v\in \Z} \frac{1}{1+u}\, \bP^{\otimes 2}(S_u = S'_u = v)\\
& = C \sum_{u \in \N_0} \frac{1}{1+u}\, \bP^{\otimes 2}(S_u = S'_u) 
\leq C \sum_{u \in \N} u^{-3/2} < \infty,
\end{aligned}
\end{equation}
which proves that $\|A_{\mu,\delta,\beta}\|_{\rm HS}<\infty$. Thus, $A_{\mu,\delta,\beta}$ 
maps $\ell_2(\N_0)$ into itself. 
\end{proof}
Adapting the proof of Proposition~\ref{pr:prop.A.lambda.mu}, we see that 
$\hat{A}_{\mu,\delta,\beta}$ and $\tilde{A}_{\mu,\delta,\beta}$ also map $\ell_2(\N_0)$ 
into itself.

Let $\lambda_{\delta,\beta}(\mu)$ be the spectral radius of $A_{\mu,\delta,\beta}$. By 
Proposition~\ref{pr:prop.A.lambda.mu}, $A_{\mu,\delta,\beta}$ is Hilbert-Schmidt, and 
hence is compact (see Dunford and Schwartz~\cite[XI.6, Theorem 6]{DS64}). Therefore 
$\lambda_{\delta,\beta}(\mu)$ is also the largest eigenvalue of $A_{\mu,\delta,\beta}$ 
(see Kato~\cite[V.2.3.]{KA95}), and admits the Rayleigh characterization (see Dunford
and Schwarz~\cite[X.4]{DS64})
\begin{equation}
\label{eq:lambda.var.rep}
\gl_{\delta,\beta}(\mu) = \suptwo{u\in \ell_2(\bbN_0)}{\| u \|_2 = 1} 
\langle u,A_{\mu,\delta,\beta}\, u \rangle
= \suptwo{u\in \ell_2(\bbN_0)}{\| u \|_2 = 1} 
\sum_{i,j\in \N_0}  A_{\mu,\delta,\beta}(i,j)\, u_i\, u_j.
\end{equation}
Since the entries of $A_{\mu,\delta,\beta}$ are strictly positive, we have
\begin{equation}
\label{eq:lambda.var.rep2}
\gl_{\delta,\beta}(\mu)= \suptwo{u\in \ell_2(\bbN_0)\colon u\geq 0}{\| u \|_2 = 1} 
\langle u, A_{\mu,\delta,\beta} u\rangle,
\end{equation}
where $u\geq 0$ means that all coordinates of $u$ are non-negative. Moreover, by the 
Perron-Frobenius Theorem, there exists an eigenvector $\nu_{\mu,\delta,\beta}\in
\ell_2(\N_0)$ with $\|\nu_{\mu,\delta,\beta}\|_2=1$, associated with $\lambda_{\delta,\beta}(\mu)$ 
with multiplicity one, whose coordinates are all strictly positive (see Baillon, Cl\'ement, Greven 
and den Hollander~\cite[Lemma 9]{BCGdH94}).

\medskip
We next state the regularity and the monotonicity of $(\mu,\delta,\beta) \mapsto \lambda_{\delta,\beta}
(\mu)$ on $[0,\infty) \times \cQ$.

\begin{proposition}
\label{prop:regl} 
The following properties hold:\\
{\rm (i)} $(\mu,\delta,\beta) \mapsto \lambda_{\delta,\beta}(\mu)$ is finite, jointly continuous and 
log-convex on $[0,\infty) \times \cQ$, and analytic on $(0,\infty)^3$.\\
{\rm (ii)} For $(\delta,\beta) \in \cQ$, $\mu \mapsto \lambda_{\delta,\beta}(\mu)$ is strictly 
decreasing and strictly log-convex on $[0,\infty)$, with $\lim_{\mu\to\infty} \lambda_{\delta,\beta}
(\mu) = 0$.\\
{\rm (iii)} For $(\mu,\delta) \in [0,\infty)^2$, $\beta \mapsto \lambda_{\delta,\beta}(\mu)$ is 
strictly decreasing on $(0,\infty)$.
\end{proposition}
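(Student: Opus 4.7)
The plan is to prove all three items from the variational representation \eqref{eq:lambda.var.rep2} combined with a Perron–Frobenius argument and analytic perturbation theory. First I observe that, for every fixed $(i,j) \in \N_0 \times \N_0$, the map $(\mu,\delta,\beta) \mapsto A_{\mu,\delta,\beta}(i,j)$ is log-convex: the factor $e^{-\mu(i+j+1)}$ is log-linear, and $(\delta,\beta) \mapsto G^*_{\delta,\beta}(i+j+1) = \log \bbE[e^{\delta \Omega_{i+j+1} - \beta \Omega_{i+j+1}^2}]$ is convex, being the log-moment generating function of a random variable affine in $(\delta,\beta)$. For each admissible $u \geq 0$ with $\|u\|_2 = 1$ the quadratic form $\langle u, A_{\mu,\delta,\beta} u \rangle = \sum_{i,j} A_{\mu,\delta,\beta}(i,j) u_i u_j$ is therefore a non-negatively weighted sum of log-convex functions, hence itself log-convex; taking the supremum in $u$ preserves log-convexity and yields log-convexity of $\lambda_{\delta,\beta}(\mu)$ on $[0,\infty) \times \cQ$. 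Finiteness is already granted by $\lambda_{\delta,\beta}(\mu) \leq \|A_{\mu,\delta,\beta}\|_{\mathrm{HS}} < \infty$ from Proposition~\ref{pr:prop.A.lambda.mu}.

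For joint continuity I would show that $(\mu,\delta,\beta) \mapsto A_{\mu,\delta,\beta}$ is continuous in the Hilbert–Schmidt norm on $[0,\infty) \times \cQ$. Pointwise continuity of each entry is plain, and a dominated convergence argument based on the uniform HS bound on compact subsets (adapting the computation in the proof of Proposition~\ref{pr:prop.A.lambda.mu}) upgrades this to HS-norm convergence. Continuity of the spectral radius under operator-norm perturbations then gives continuity of $\lambda_{\delta,\beta}(\mu)$. For analyticity on $(0,\infty)^3$, each entry of $A_{\mu,\delta,\beta}$ is manifestly analytic in $(\mu,\delta,\beta)$ and the HS norm is analytically controlled, so $A_{\mu,\delta,\beta}$ is an analytic family of compact operators. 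Since $\lambda_{\delta,\beta}(\mu)$ is a simple, isolated eigenvalue by Perron–Frobenius (multiplicity one, strictly positive eigenvector), Kato's analytic perturbation theory~\cite{KA95} yields analyticity of $\lambda_{\delta,\beta}(\mu)$.

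For the strict monotonicity statements in (ii) and (iii), I would exploit the strict positivity of the Perron eigenvector $\nu_{\mu,\delta,\beta}$ together with the strict monotonicity of every entry of $A_{\mu,\delta,\beta}$. For (ii), if $\mu_1 < \mu_2$ then $A_{\mu_1,\delta,\beta}(i,j) > A_{\mu_2,\delta,\beta}(i,j)$ for all $i,j$, so choosing $\nu_2$ the eigenvector associated with $\lambda_{\delta,\beta}(\mu_2)$,
\begin{equation*}
\lambda_{\delta,\beta}(\mu_1) \geq \langle \nu_2, A_{\mu_1,\delta,\beta} \nu_2 \rangle
> \langle \nu_2, A_{\mu_2,\delta,\beta} \nu_2 \rangle = \lambda_{\delta,\beta}(\mu_2),
\end{equation*}
the strict inequality following from $\nu_2 > 0$ componentwise. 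For (iii) the same reasoning applies once one checks that $\beta \mapsto G^*_{\delta,\beta}(\ell)$ is strictly decreasing for every $\ell \geq 1$, which follows by direct differentiation of the log-MGF in \eqref{eq:rel.G.star}. Finally, $\lim_{\mu\to\infty} \lambda_{\delta,\beta}(\mu) = 0$ comes from $\lambda_{\delta,\beta}(\mu) \leq \|A_{\mu,\delta,\beta}\|_{\mathrm{HS}}$ and dominated convergence applied to the sum defining the HS norm.

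The genuinely delicate step will be the \emph{strict} log-convexity in $\mu$, since log-convex functions need not be strictly log-convex. For fixed $u \geq 0$ with $\|u\|_2 = 1$ set
\begin{equation*}
\phi_u(\mu) = \log \langle u, A_{\mu,\delta,\beta} u \rangle
= \log \sum_{i,j \in \N_0} c_{i,j}\, e^{-\mu(i+j+1)},
\qquad c_{i,j} = e^{G^*_{\delta,\beta}(i+j+1)}\, Q(i+1,j)\, u_i u_j \geq 0.
\end{equation*}
This is the log-MGF of a positive measure on $\N$, hence strictly convex provided the set $\{i+j+1 : c_{i,j}>0\}$ contains at least two distinct values. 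For the Perron eigenvector $\nu = \nu_{\mu^*,\delta,\beta}$, which has strictly positive coordinates, this set is all of $\N$, so $\phi_\nu$ is strictly convex. If $\log \lambda_{\delta,\beta}$ were affine on some interval $I \ni \mu^*$, then $\phi_\nu \leq \log \lambda_{\delta,\beta}$ with equality at $\mu^*$ would force matching derivatives at $\mu^*$, and convexity would then give $\phi_\nu''(\mu^*) > 0 = (\log \lambda_{\delta,\beta})''(\mu^*)$, contradicting $\phi_\nu \leq \log \lambda_{\delta,\beta}$ in a neighbourhood of $\mu^*$. Hence $\log \lambda_{\delta,\beta}$ is strictly convex on $[0,\infty)$, which is the core of the argument I expect to require the most care.
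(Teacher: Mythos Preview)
Your proposal is correct and follows essentially the same skeleton as the paper: Perron--Frobenius gives simplicity of the top eigenvalue, analytic perturbation theory (the paper cites Crandall--Rabinowitz~\cite{CR73} rather than Kato, but the content is the same) gives analyticity, the variational formula \eqref{eq:lambda.var.rep2} plus entrywise log-convexity gives log-convexity, and the strict-monotonicity arguments via the strictly positive Perron eigenvector are identical to the paper's.

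Two points where you diverge are worth noting. For $\lim_{\mu\to\infty}\lambda_{\delta,\beta}(\mu)=0$ the paper uses the one-line bound $A_{\mu,\delta,\beta}(i,j)\le e^{-\mu}A_{0,\delta,\beta}(i,j)$ (since $i+j+1\ge 1$), giving $\lambda_{\delta,\beta}(\mu)\le e^{-\mu}\lambda_{\delta,\beta}(0)$ directly from the Rayleigh quotient; this is simpler than your HS-norm dominated-convergence route. For strict log-convexity in $\mu$, the paper argues in one line: $\mu\mapsto\log\lambda_{\delta,\beta}(\mu)$ is analytic on $(0,\infty)$ and convex, hence either strictly convex or affine, and it is not affine. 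Your touching argument with the Perron eigenvector $\nu_{\mu^*}$ is a genuinely different and more self-contained route: it directly rules out affinity on any subinterval by producing a strictly convex minorant tangent at $\mu^*$, without needing to separately justify ``not log-linear''. Both work; the paper's is shorter once analyticity is in hand, yours is more explicit and does not rely on the identity theorem.
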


\begin{proof}
(i) By Proposition~\ref{pr:prop.A.lambda.mu}, $A_{\mu,\delta,\beta}$ has strictly positive entries, 
and as an operator on $\ell_2(\N_0)$ it is positive, irreducible and Hilbert-Schmidt, and hence compact. 
Consequently, we can apply the Perron-Frobenius Theorem to obtain that $\lambda_{\delta,\beta}(\mu)$ 
is a simple eigenvalue of  $A_{\mu,\delta,\beta}$. Since each entry of $A_{\mu,\delta,\beta}$ is 
continuous and analytic in $(\mu,\delta,\beta)$ on $[0,\infty) \times \cQ$, we can apply Crandall and 
Rabinowitz~\cite[Lemma 1.3]{CR73} to get the claimed continuity and analyticity of $\lambda_{\delta,
\beta}(\mu)$. To get log-convexity, use the variational characterization of $\lambda_{\delta,\beta}(\mu)$ 
in \eqref{eq:lambda.var.rep2} and note that: (a) $\mu \mapsto A_{\mu,\delta,\beta}(i,j)$ is log-convex for 
each $i,j\in \N_0$, (b) the sum and the product of two log-convex functions is log-convex, (c) the 
supremum of convex functions is convex. (Use that a log-convex function can be written as a 
supremum over log-linear functions.)\\
(ii) Pick  $0 \leq \mu_1<\mu_2$ and set $\nu_i=\nu_{\mu_i,\delta,\beta}$, $i=1,2$. Since $\nu_2$ 
has strictly positive entries and
\begin{equation}
A_{\mu_1,\delta,\beta}(i,j) > A_{\mu_2,\delta,\beta}(i,j), \qquad i,j\in\N_0, 
\end{equation}
we have 
\begin{equation}
\lambda_{\delta,\beta}(\mu_1) \geq  \langle \nu_2, A_{\mu_1,\delta,\beta}\, \nu_2\rangle 
>  \langle \nu_2, A_{\mu_2,\delta,\beta}\, \nu_2\rangle= \lambda_{\delta,\beta}(\mu_2),
\end{equation}
which yields the first claim. The second claim follows from the fact that $\mu \mapsto \lambda_{\delta,\beta}(\mu)$ is analytic on $(0,\infty)$ and log-convex but not log-linear 
on $[0,\infty)$. The third claim follows from the estimate $\gl_{\gd,\gb}(\mu) = \sup_{u\in \ell_2
(\N_0)\colon\,\|u\|_2 = 1} \langle u, A_{\mu,\gd,\gb} u \rangle \leq e^{-\mu} \gl_{\gd,\gb}(0)$.\\
(iii) Pick  $0 \leq \beta_1<\beta_2$ and set $\nu_i=\nu_{\mu,\delta,\beta_i}$, $i=1,2$.
By Proposition~\ref{pr:Gbetadelta.monotonicity} in Appendix~\ref{AppA}, we have
\begin{equation}
A_{\mu,\delta,\beta_1}(i,j) > A_{\mu,\delta,\beta_2}(i,j), \qquad i,j\in\N_0, 
\end{equation}
and so we can repeat the argument in (ii).
\end{proof}

Two situations emerge, which correspond to the ballistic phase $\cB$, respectively, the subballistic 
phase $\cS$ (see Figs.~\ref{fig-spectral}--\ref{fig-critcurve} and recall \eqref{mudeltabetadef}):
\begin{itemize}
\item 
If $\lambda_{\delta,\beta}(0) \geq 1$, then there is a unique $\mu(\delta,\beta) \in [0,\infty)$ 
for which $\lambda_{\delta,\beta}(\mu(\delta,\beta)) = 1$.
\item 
If $\lambda_{\delta,\beta}(0) < 1$, then there is no $\mu \in [0,\infty)$ for which $\lambda_{\beta,
\delta}(\mu) = 1$, and we set $\mu(\delta,\beta) = 0$.
\end{itemize}

For later use we state the following gap.

\begin{proposition}
\label{prop:gap}
$\tilde\lambda_{\delta,\beta}(\mu)<\lambda_{\delta,\beta}(\mu)$ for all $(\mu,\gd,\gb) 
\in [0,\infty)\times \cQ$.
\end{proposition}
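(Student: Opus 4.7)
The plan is to deduce the strict inequality from a clean factorization of $\tilde A_{\mu,\delta,\beta}$ combined with simplicity of the Perron eigenvalue of $A_{\mu,\delta,\beta}$. Let $S\colon\ell_2(\N_0)\to\ell_2(\N_0)$ be the right shift, $(Su)_0=0$ and $(Su)_i=u_{i-1}$ for $i\geq 1$. Reading off \eqref{eq:def.Abeta.p.r2} gives the factorization $\tilde A_{\mu,\delta,\beta}=S\,A_{\mu,\delta,\beta}$. Since $S$ is an isometry ($S^*S=I$) and $A_{\mu,\delta,\beta}$ is self-adjoint (Proposition~\ref{pr:prop.A.lambda.mu}), one has
\[
\tilde A^*_{\mu,\delta,\beta}\,\tilde A_{\mu,\delta,\beta}
\;=\; A_{\mu,\delta,\beta}\,S^*S\,A_{\mu,\delta,\beta}
\;=\; A_{\mu,\delta,\beta}^2,
\]
so $\|\tilde A_{\mu,\delta,\beta}\|_{\mathrm{op}}^2=\|A_{\mu,\delta,\beta}^2\|_{\mathrm{op}}=\lambda_{\delta,\beta}(\mu)^2$. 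Bounding the spectral radius by the operator norm immediately yields the weak inequality $\tilde\lambda_{\delta,\beta}(\mu)\leq\lambda_{\delta,\beta}(\mu)$.

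To upgrade this to strict inequality, I would argue by contradiction. Suppose $\tilde\lambda_{\delta,\beta}(\mu)=\lambda_{\delta,\beta}(\mu)=:\lambda$; note $\lambda>0$ by strict positivity of the entries of $A_{\mu,\delta,\beta}$. Since $\tilde A_{\mu,\delta,\beta}=S\,A_{\mu,\delta,\beta}$ is compact (a bounded operator composed with a Hilbert-Schmidt one) and has non-negative entries, the Krein-Rutman theorem supplies a non-trivial $\tilde\nu\geq 0$ in $\ell_2(\N_0)$ with $\tilde A_{\mu,\delta,\beta}\tilde\nu=\lambda\tilde\nu$. The saturated chain
\[
\lambda\,\|\tilde\nu\| \;=\; \|\tilde A_{\mu,\delta,\beta}\tilde\nu\| \;\leq\; \|\tilde A_{\mu,\delta,\beta}\|_{\mathrm{op}}\,\|\tilde\nu\| \;=\; \lambda\,\|\tilde\nu\|
\]
forces equality throughout, so $\tilde\nu$ maximises $u\mapsto\langle u,A_{\mu,\delta,\beta}^2 u\rangle/\|u\|^2$ and hence is a top eigenvector of the self-adjoint operator $A_{\mu,\delta,\beta}^2$ with eigenvalue $\lambda^2$.

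The key step is then to identify this top eigenspace. By the strong Perron-Frobenius / Krein-Rutman theorem applied to the compact operator $A_{\mu,\delta,\beta}$ with strictly positive entries (extending the simplicity statement already used in Section~\ref{subsec:eigenvalue}), the spectral radius $\lambda$ is simple and is the unique eigenvalue of modulus $\lambda$; consequently the $\lambda^2$-eigenspace of $A_{\mu,\delta,\beta}^2$ is one-dimensional and spanned by the Perron eigenvector $\nu_{\mu,\delta,\beta}>0$. Thus $\tilde\nu=c\,\nu_{\mu,\delta,\beta}$ for some $c>0$. Substituting into $\tilde A_{\mu,\delta,\beta}\tilde\nu=\lambda\tilde\nu$ gives
\[
\lambda c\,\nu_{\mu,\delta,\beta}
\;=\; \tilde A_{\mu,\delta,\beta}\tilde\nu
\;=\; c\lambda\,S\nu_{\mu,\delta,\beta},
\]
forcing $S\nu_{\mu,\delta,\beta}=\nu_{\mu,\delta,\beta}$, which is impossible because $(S\nu_{\mu,\delta,\beta})_0=0$ while $\nu_{\mu,\delta,\beta}(0)>0$. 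The main obstacle, resolved by this final contradiction, is passing from the equality of operator norms to a strict inequality of spectral radii; the strictness is harvested precisely from the zero top row of $\tilde A_{\mu,\delta,\beta}$, which rules out the Perron eigenvector of $A_{\mu,\delta,\beta}$ as an eigenvector of $\tilde A_{\mu,\delta,\beta}$.
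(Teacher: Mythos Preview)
Your proof is correct and takes a cleaner route than the paper's. Both arguments ultimately rest on the same two ingredients --- the zero top row of $\tilde A_{\mu,\delta,\beta}$ and the Perron structure of $A_{\mu,\delta,\beta}$ (simple top eigenvalue, strictly positive eigenvector, no other eigenvalue of the same modulus) --- but they package them differently. The paper works with the bound $\tilde\lambda_{\delta,\beta}(\mu)\le\|\tilde A^2\|_{\rm op}^{1/2}$ from the spectral radius formula, then introduces the restricted norm $\|A\|_{{\rm op},*}=\sup_{u(0)=0}\|Au\|/\|u\|$ and shows $\|\tilde A^2\|_{\rm op}\le\|A\|_{{\rm op},*}\|A\|_{\rm op}<\|A\|_{\rm op}^2$ via the spectral decomposition of $A$. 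You instead observe the exact factorization $\tilde A=SA$ with $S$ the right shift, use the $C^*$-identity and $S^*S=I$ to get $\|\tilde A\|_{\rm op}=\lambda$ in one line, and then argue by contradiction: an eigenvector of $\tilde A$ at the top of the spectrum would have to be a maximiser of $u\mapsto\langle A^2u,u\rangle/\|u\|^2$, hence (by simplicity and absence of eigenvalue $-\lambda$) a multiple of $\nu_{\mu,\delta,\beta}$, which cannot satisfy $S\nu=\nu$ because $\nu(0)>0$.

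Your route is more conceptual and makes the mechanism transparent; the paper's route is more quantitative, in that it produces an explicit strict upper bound $\|\tilde A^2\|_{\rm op}^{1/2}<\lambda$ rather than merely ruling out equality. A minor remark: you can dispense with Krein--Rutman altogether. Since $\tilde A$ is compact, if $\tilde\lambda=\lambda>0$ there is some (possibly complex) eigenvalue $z$ of $\tilde A$ with $|z|=\lambda$ and eigenvector $w$; the same norm-saturation argument forces $w$ into the $\lambda^2$-eigenspace of $A^2$, hence $w=c\,\nu_{\mu,\delta,\beta}$, and comparing the $0$-th coordinates of $\tilde A w=zw$ gives $0=z\,\nu_{\mu,\delta,\beta}(0)$, a contradiction.
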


\begin{proof}
Let
\begin{equation}
\| A_{\mu,\delta,\beta} \|_{\rm op} 
= \sup_{u\in\ell_2(\N_0)\setminus \{0\}} \frac{\|A_{\mu,\delta,\beta}u\|_2}{\|u\|_2}.
\end{equation}
We prove the strict inequality by showing that
\begin{equation}
\label{eq:rel.lambda2}
\tilde{\gl}_{\gd,\gb}(\mu) \leq \| \tilde{A}^2_{\mu,\gd,\gb}\|_{\rm op}^{1/2} 
< \| A_{\mu,\gd,\gb}\|_{\rm op} = \gl_{\gd,\gb}(\mu).
\end{equation}
The first inequality in \eqref{eq:rel.lambda2} is a consequence of the following relation:
\begin{equation} 
\label{eq:spr.Hadamard}
\spr(M) = \lim_{n\to \infty} \|M^n\|_{\rm op}^{1/n} = \inf_{n\in\N} \|M^n\|_{\rm op}^{1/n},
\end{equation}
where $M$ is any bounded linear operator and $\spr(M)$ is the spectral radius of $M$ 
(Dunford and Schwartz~\cite[VII.3]{DS64}). 

We now prove the second inequality in \eqref{eq:rel.lambda2}. To that end, define
\begin{equation}
\| A_{\mu,\delta,\beta} \|_{\rm op, *} 
= \suptwo{u\in\ell_2(\N_0)\setminus\{0\}}{u(0) = 0} \frac{\|A_{\mu,\delta,\beta}u\|_2}{\|u\|_2}.
\end{equation}
For $u\in \ell_2(\N_0)$, we have
\begin{equation}
(\tilde{A}_{\mu,\delta,\beta}^2u)(i) 
= \sum_{j\in \N_0} A_{\mu,\delta,\beta}(i-1,j) (\tilde{A}_{\mu,\delta,\beta}u)(j), \quad i \in\N, 
\qquad (\tilde{A}_{\mu,\delta,\beta}^2u)(0) = 0.
\end{equation}
Since $(\tilde{A}_{\mu,\delta,\beta}u)(0)=0$, this yields
\begin{equation}
\|\tilde A_{\mu,\delta,\beta}^2\|_{\rm op} 
\leq \|A_{\mu,\delta,\beta}\|_{\rm op,*} \|\tilde A_{\mu,\delta,\beta}\|_{\rm op} 
\leq \|A_{\mu,\delta,\beta}\|_{\rm op,*} \|A_{\mu,\delta,\beta}\|_{\rm op}.
\end{equation}
It therefore remains to show that $\|A_{\mu,\delta,\beta}\|_{\rm op,*} < \|A_{\mu,\delta,\beta}\|_{\rm op}$. 
To that end, note that, since $A_{\mu,\delta,\beta}$ is compact and symmetric, there exist eigenvalues 
$(\gl_k)_{k\in\N_0}$ in $\R$ and associated eigenvectors $(\nu_k)_{k\in\N_0}$ in $\ell_2(\N_0)$ such 
that
\begin{equation} \label{eq:decospect}
\gl_0 = \gl_{\gd,\gb}(\mu), \qquad |\gl_k| \leq |\gl_1| < \gl_0, \quad k \in\N,
\qquad \langle \nu_k, \nu_l \rangle = \delta_{kl}, \quad k,l\in\N_0,
\qquad \nu_0(0) > 0
\end{equation}
(see Kato~\cite[Theorem 6.38, Section 6.9]{KA95} and Zerner~\cite[Theorem 1]{Z87}). 
Let $u\in\ell_2(\N_0)$ be such that $\|u\|_2 = 1$. Write $u = \sum_{k\in\N_0} c_k \nu_k$, 
with $c_k = \langle u, \nu_k \rangle$ and $\sum_{k\in\N_0} c_k^2 = 1$. Then
\begin{equation}
\label{rel1}
\|A_{\mu,\delta,\beta}u\|_2^2 
= \sum_{k\in\N_0} \gl_k^2 |c_k|^2 \leq \gl_0^2 c_0^2 + |\gl_1|^2 (1-c_0^2).
\end{equation}
If $u(0) = 0$, then by the Cauchy-Schwarz inequality
\begin{equation}
\label{rel2}
c_0^2 = \langle u, \nu_0 \rangle^2 = \Big( \sum_{i\in \N} u(i) \nu_0(i) \Big)^2 
\leq \sum_{i\in \N} [\nu_0(i)]^2 = 1-[\nu_0(0)]^2.
\end{equation}
Since $\nu_0(0) > 0$ and $|\gl_1| < \gl_0$, we get from \eqref{rel1}--\eqref{rel2}
\begin{equation}
\|A_{\mu,\delta,\beta}\|_{\rm op,*} \leq \sqrt{\gl_0^2(1-[\nu_0(0)]^2) 
+ |\gl_1|^2 [\nu_0(0)]^2} < \gl_0 = \|A_{\mu,\delta,\beta}\|_{\rm op}.
\end{equation}

Finally, the last equality in \eqref{eq:rel.lambda2} follows
immediately by the decomposition \eqref{eq:decospect}.
\end{proof}


\subsection{Spectral representation of the generating function}
\label{ss:spec}


\subsubsection{$\bullet$ Case $\mu > \mu(\delta,\beta)$}

We show that $\cZ(\mu,\delta,\beta)$ in \eqref{Z+form} is finite. Indeed, by the definition 
of $\mu(\gd,\gb)$ and Proposition~\ref{prop:regl}(ii), we know that $\gl_{\gd,\gb}(\mu) 
< 1$, and therefore $1-A_{\mu,\gd,\gb}$ is invertible. Moreover, since $A_{\mu,\gd,\gb}$ is 
symmetric, it is also normal (i.e., $\| A_{\mu,\gd,\gb}\|_{\rm op} = \gl_{\gd,\gb}(\mu) < 1$), 
and so the inverse of $1-A_{\mu,\gd,\gb}$ equals $\sum_{n\in\N_0} A^n_{\mu,\gd,\gb}$. 
Thanks to Proposition~\ref{prop:gap} we also have $\tilde{\gl}_{\gd,\gb}(\mu) < 1$. Therefore 
$\cZ(\mu,\delta,\beta)$ is finite.


\subsubsection{$\bullet$ Case $0 \leq \mu < \mu(\delta,\beta)$} 

Suppose that $\mu(\delta,\beta) >0$ (otherwise there is nothing to prove). We show that 
$\cZ(\mu,\delta,\beta)$ in \eqref{Z+form} is infinite. Indeed, by Lemma \ref{lem:grcanspecloop},
\begin{equation}
\cZ(\mu,\gd,\gb) \geq \cZ^{\rm bridge}(\mu,\gd,\gb) = \sum_{x\in \N} (A^x_{\mu,\gd,\gb})(0,0).
\end{equation}
For $i,j\in\N_0$, let
\begin{equation}
\check{A}_{\mu,\gd,\gb}(i,j) = \frac{A_{\mu,\gd,\gb}(i,j)\nu_{\mu,\gd,\gb}(j)}
{\gl_{\gd,\gb}(\mu)\nu_{\mu,\gd,\gb}(i)},
\end{equation}
and observe that
\begin{equation}
(A^x_{\mu,\gd,\gb})(0,0) = [\gl_{\gd,\gb}(\mu)]^x (\check{A}^x_{\mu,\gd,\gb})(0,0).
\end{equation}
Since $\gl_{\gd,\gb}(\mu) \geq 1$, the proof is complete once we show that the sequence 
of positive numbers $\{(\check{A}^n_{\mu,\gd,\gb})(0,0)\}_{n\in\N}$ is bounded away from 
$0$. To that end, observe that $\check{A}$ is a transition matrix on $\N_0$ with
$\nu_{\mu,\gd,\gb}^2$ as invariant probability measure. Therefore, by the renewal 
theorem,
\begin{equation}
\lim_{n\to \infty} (\check{A}^n_{\mu,\gd,\gb})(0,0) = \nu_{\mu,\gd,\gb}(0)^2 >0,
\end{equation}
which completes the proof.


\subsection{Conclusion}
\label{ss:concl}

We are finally ready to conclude the proof of Theorem~\ref{thm:free.energy}. As shown 
above, $\cZ(\mu, \delta,\beta)$ is finite for $\mu > \mu(\delta,\beta)$ and infinite for 
$0 \leq \mu < \mu(\delta,\beta)$. Therefore we have proven that
\begin{equation}
\label{partfunclimsup}
\mu(\delta,\beta) = \limsup_{n\to\infty} \frac{1}{n} \log \bbZ_n^{*,\delta,\beta}. 
\end{equation}
Below we show that
\begin{equation}
\label{partfuncliminf}
\liminf_{n\to\infty} \frac{1}{n} \log \bbZ_n^{*,\delta,\beta} \geq \mu(\delta,\beta).
\end{equation}
Combining \eqref{partfunclimsup}--\eqref{partfuncliminf} and recalling \eqref{eq:ZnZn*rel}, 
we get the spectral representation in \eqref{varc} in Theorem~\ref{thm:free.energy}.
Since $\mu(\delta,\beta) \geq 0$, we also get the lower bound in \eqref{eq:Fineq}
in Theorem~\ref{thm:free.energy}. The fact that $(\delta,\beta) \mapsto F^*(\delta,\beta) 
= \mu(\delta,\beta)$ is convex is immediate from \eqref{eq:identity*}.

Consider the bridge version of the partition function defined in \eqref{partfuncbridge}. The 
sequence $(\log \bbZ_{n,\mathrm{bridge}}^{*,\delta,\beta})_{n\in\bbN}$ is super-additive 
because the concatenation of two bridges is again a bridge. Hence
\begin{equation}
\lim_{n\to\infty} \frac{1}{n} \log \bbZ_{n,\mathrm{bridge}}^{*,\delta,\beta} \quad \text{ exists}.
\end{equation}
It follows from Lemma~\ref{lem:grcanspecloop} that the limit equals $\mu(\delta,\beta)$.
Since $\bbZ_n^{*,\delta,\beta} \geq \bbZ_{n,\mathrm{bridge}}^{*,\delta,\beta}$, this settles 
\eqref{partfuncliminf}.


\section{General properties: proof of the main theorems}
\label{phasetransition}

In Section~\ref{ss:critcurve} we prove the qualitative properties of the excess free energy 
and the critical curve stated in Theorem~\ref{thm:critcurve}. In Section~\ref{ss:jointLDP} 
we prove the large deviation principle for the speed and the charge stated in 
Theorem~\ref{thm:jointLDP}, while in Section~\ref{ss:rfshape} we show how the shape
of the associated rate functions in Figs.~\ref{fig-rfspeed}--\ref{fig-rfcharge} come about.
In Section~\ref{CLTproof} we prove the central limit theorem for the speed and the charge 
stated in Theorem~\ref{thm:CLTspeedcharge}. In Section~\ref{ss:LLN} we prove the law 
of large numbers for the speed and the charge stated in Theorems~\ref{thm:speed}--\ref{thm:charge}.


\subsection{Critical curve}
\label{ss:critcurve}

In this section we prove Theorem~\ref{thm:critcurve}. Fix $\delta\in [0,\infty)$. Clearly, $\beta 
\to F^*(\delta,\beta)$ is non-increasing and convex on $(0,\infty)$ (see \eqref{eq:identity*}), 
and hence is continuous on $(0,\infty)$.

By Theorem~\ref{thm:free.energy}, we know that $F^*(\delta,\beta) \geq 0$. Since $\beta \mapsto 
F^*(\delta,\beta)$ is non-increasing and continuous, there exists a $\beta_c(\delta)=\sup\{\beta\in 
(0,\infty)\colon\,F^*(\delta,\beta)>0\}$ such that $F^*(\delta,\beta)>0$ when $0<\beta<\beta_c(\gd)$ 
and $F^*(\delta,\beta)=0$ when $\beta \geq \beta_c(\gd)$. Since $(\delta,\beta) \mapsto F^*(\delta,
\beta)$ is convex on $\cQ$, the level set $\{(\delta,\beta)\in \cQ \colon\,F^*(\delta,\beta) 
\leq 0\}$ is convex, and it follows that $\delta \mapsto \beta_c(\gd)$ (which coincides with the 
boundary of this level set) is also convex. 

Next we prove that $\beta_c(\delta)\in (0,\infty)$ for $\delta \in (0,\infty)$. By \eqref{eq:app.estG2}, $\lim_{\beta \to \infty} G_{\delta,\beta}^*(\ell)
=-\infty$ for $\delta\in [0,\infty)$ and $\ell\in \N$. Hence, by the definition of the Hilbert-Schmidt norm of 
the operator $A_{\mu,\delta,\beta}$ in \eqref{eq:Amubetadelta.HS}, we have $\lim_{\beta\to \infty} 
\|A_{\mu,\delta,\beta}\|_{\mathrm{HS}}=0$. But $\|A_{\mu,\delta,\beta}\|_{\mathrm{op}} \leq 
\|A_{\mu,\delta,\beta}\|_{\mathrm{HS}}$ and, since $A_{\mu,\delta,\beta}$ is normal, 
$\lambda_{\delta,\beta}(\mu) = \|A_{\mu,\delta,\beta}\|_{\mathrm{op}}$, so that $\lim_{\beta\to \infty} 
\lambda_{\delta,\beta}(0)=0$. Thus, $F^*(\delta,\beta)= \mu(\delta,\beta) =0$ for $\beta$ large 
enough, and so $\beta_c(\delta)<\infty$. Also, observe that $F^*(\delta,0)=-f(\delta)>0$ for 
$\delta \in (0,\infty)$, which yields that $\beta_c(\delta)>0$ for $\delta\in (0,\infty)$. Finally, 
since $F^*(0,\beta) = \mu(0,\beta) = 0$ for $\beta \in (0,\infty)$ (recall 
\eqref{eq:def.Qij}--\eqref{mudeltabetadef}), we get $\beta_c(0)=0$. The convexity of 
$\gd \mapsto \beta_c(\gd)$ and the fact that $\beta_c(\gd)>0$ for $\gd \in (0,\infty)$ imply that 
$\gd \mapsto \beta_c(\gd)$ is strictly increasing. The continuity of $\gd \mapsto \beta_c(\gd)$ 
follows from convexity and finiteness.

From \eqref{mudeltabetadef} and \eqref{varc} it follows that, for $\delta \in [0,\infty)$, 
$\beta_c(\gd)$ coincides with the unique solution of $\gl_{\gd,\gb}(0) = 1$. It therefore 
follows from Proposition~\ref{prop:regl}(i) and the implicit function theorem that $\delta 
\mapsto \beta_c(\delta)$ is analytic  on $(0,\infty)$. Finally, since $F^*(\delta,\beta)
=\mu(\delta,\beta)$, it follows from \eqref{mudeltabetadef}, Proposition~\ref{prop:regl}(i) 
and the implicit function theorem that $F^*$ is analytic on $\intr(\cB)$.


\subsection{Large deviation principles for the speed and the charge}
\label{ss:jointLDP}

In this section we prove Theorem~\ref{thm:jointLDP}.

\begin{proof}
The proof comes in 6 Steps.

\medskip\noindent
{\bf 1.}
We begin by introducing the joint moment-generating function for the speed and the charge.
Fix $(\delta,\beta) \in \cQ$ and $(\gamma,\gamma')\in\R^2$. Let
\begin{equation}
\label{Zngamma}
\begin{aligned}
\bbZ_n^{*,\delta,\beta}(\gamma,\gamma') 
&= (\bbE^\delta \times \bE)
\Big[e^{\gamma S_n + \gamma' \Omega_n}\,e^{-\beta H^\omega_n(S) - n f(\gd)}\Big]\\
&= (\bbE \times \bE)\Big[ e^{\gamma S_n + (\gd + \gamma')\gO_n - \gb H^\omega_n(S)} \Big]\\
&= \bE\Big[ e^{\gamma S_n + \sum_{y\in \bbZ} G^*_{\gd+\gamma',\gb}(L_n(y))} \Big].
\end{aligned}
\end{equation}
Then
\begin{equation}
\bbE_n^{\delta,\beta}\left[e^{\gamma S_n + \gamma' \Omega_n}\right]
= \frac{\bbZ_n^{*,\delta,\beta}(\gamma,\gamma')}{\bbZ_n^{*,\delta,\beta}(0,0)},
\end{equation}
where we recall that $\bbE_n^{\delta,\beta}$ is the expectation w.r.t.\ the annealed polymer 
measure of length $n$ defined in (\ref{eq:def.polmeasure}--\ref{eq:def.annpartfunc}). 
Next, let  
\begin{equation}
\cZ(\mu,\delta,\beta;\gamma,\gamma') 
= \sum_{n\in\N_0} e^{-\mu n} \bbZ_n^{*,\delta,\beta}(\gamma,\gamma').
\end{equation}
Then $\cZ(\mu,\delta,\beta;\gamma,\gamma')$ has a spectral representation similar to the one 
in Proposition~\ref{prop:grcanspec}. Indeed, the only difference is that $A_{\mu,\delta,\beta}$ 
must be replaced by $e^\gamma A_{\mu,\delta,\beta}$ and $\delta$ by $\delta+\gamma'$. 
The same is true for the bridge version of the moment-generating function, for which 
Lemma~\ref{lem:grcanspecloop} holds with the same replacement (see 
Proposition~\ref{prop:LDP.bridge}). Recall \eqref{mugammaid}. Repeating 
the argument in Sections~\ref{ss:spec}--\ref{ss:concl}, we obtain that
\begin{equation}
\label{Zngammaquotient}
\Lambda_{\delta,\beta}(\gamma,\gamma') 
= \lim_{n\to\infty} \frac{1}{n} \log 
\bbE_n^{\delta,\beta}\left[e^{\gamma S_n + \gamma' \Omega_n}\right]
= \lim_{n\to\infty} \frac{1}{n} \log 
\frac{\bbZ_n^{*,\delta,\beta}(\gamma,\gamma')}{\bbZ_n^{*,\delta,\beta}(0,0)} 
\end{equation}
is given by
\begin{equation}
\label{Lambdagammaidalt}
\Lambda_{\delta,\beta}(\gamma,\gamma') 
= \big[\mu(\delta+\gamma',\beta,\gamma)
\vee \tilde{\mu}(\delta+\gamma',\beta)\big] - \mu(\delta,\beta).
\end{equation} 
Here, the second term in the right-hand side comes from the denominator in \eqref{Zngammaquotient}, 
while the first term captures the crossover from a regime where the spectral radius of $e^\gamma 
A_{\mu,\delta+\gamma',\beta}$ controls the blow up of the generating function to a regime where the 
spectral radius of $\tilde{A}_{\mu,\delta+\gamma',\beta}$ does.

\medskip\noindent
{\bf 2.}
The result in \eqref{Lambdagammaidalt} allows us to apply the G\"artner-Ellis theorem 
of large deviation theory (den Hollander~\cite[Chapter V]{dH00}) and obtain that the
pair consisting of the empirical speed and the empirical charge satisfies the large deviation 
principle on $[0,\infty)$, with the associated rate function $I_{\delta,\beta}$ given by the 
Legendre transform of $\Lambda_{\delta,\beta}$, i.e.,
\begin{equation}
\label{rfjoint}
I_{\delta,\beta}(\theta,\theta') = \sup_{\gamma,\gamma'\in\R} 
\big[(\theta\gamma+\theta'\gamma')-\Lambda_{\delta,\beta}(\gamma,\gamma')\big].
\end{equation}
Actually, the G\"artner-Ellis theorem only gives us the large deviation principle in the regions 
where $I_{\delta,\beta}$ is ``exposed'', i.e., where $(\theta,\theta') \mapsto I_{\delta,\beta}
(\theta,\theta')$ is strictly convex. In the regions where $(\theta,\theta') \mapsto I_{\delta,\beta}
(\theta,\theta')$ is flat, it only gives a lower bound on the rate function and so we need to 
provide a matching upper bound. To prove the upper bound, we restrict ourselves to the 
marginal large deviation functions, which are obtained from $I_{\delta,\beta}(\theta,\theta')$ 
by setting $\gamma'=0$, respectively, $\gamma=0$. Substituting \eqref{Lambdagammaidalt} 
into \eqref{rfjoint}, we get the formulas for $I^v_{\delta,\beta}(\theta)$ and $I^\rho_{\delta,\beta}
(\theta')$ in \eqref{Ivdef}--\eqref{Irhodef}, where for the former we use that $\mu(\delta+\gamma',
\beta) \geq \tilde{\mu}(\delta+\gamma',\beta)$. 

\medskip\noindent
{\bf 3.}
Before embarking on the proof of the matching upper bounds for the flat pieces in the rate 
functions, we state two auxiliary propositions whose proofs are deferred to the end of Section \ref{ss:jointLDP}. Recall \eqref{partfuncbridge} and define the laws 
$\big(\bP_{n,\mathrm{bridge}}^{\gd,\gb}\big)_{n\in\N}$ by 
\begin{equation}
\frac{\dd\bP_{n,\mathrm{bridge}}^{\gd,\gb}}{\dd(\bbP^\gd \times \bP)}(\go,S) 
= \frac{1}{\bbZ_{n,{\rm bridge}}^{*,\gd,\gb}} e^{-\gb H_n^\go(S) -n f(\gd)}\,
\ind_{\big\{0< S_i \leq  S_n\,\,\forall\,0<i<n\big\}}.
\end{equation}
We also need a bridge version of \eqref{Zngamma}:
\begin{equation}
\label{Zngammabridge}
\bbZ_{n, {\rm bridge}}^{*,\delta,\beta}(\gamma,\gamma') 
= (\bbE^\delta \times E)
\left[e^{\gamma S_n + \gamma' \Omega_n}\,e^{-\beta H^\omega_n(S) - nf(\gd)} 
\ind_{\big\{0< S_i \leq  S_n\,\,\forall\,0<i<n\big\}}\right].
\end{equation}

\begin{proposition}
\label{prop:LDP.bridge}
{\rm (1)} The limit
\begin{equation}
\label{Zngammaquotientbridge}
\Lambda^{\rm bridge}_{\delta,\beta}(\gamma,\gamma') = \lim_{n\to\infty} \frac{1}{n} \log 
\frac{\bbZ_{n, {\rm bridge}}^{*,\delta,\beta}(\gamma,\gamma')}{\bbZ_{n, {\rm bridge}}^{*,\delta,\beta}(0,0)} 
\end{equation}
exists and is given by
\begin{equation}
\label{Lambdagammaidaltbridge}
\Lambda^{\rm bridge}_{\delta,\beta}(\gamma,\gamma') 
= \mu(\delta+\gamma',\beta,\gamma) - \mu(\delta,\beta).
\end{equation} 
{\rm (2)} Let $\hat v(\gd,\gb)$ be defined by 
\begin{equation}
\frac{1}{\hat{v}(\delta,\beta)} = \Big[-\frac{\partial}{\partial\mu} 
\log \lambda_{\delta,\beta}(\mu)\Big]_{\mu = 0},
\end{equation}
which satisfies $\hat{v}(\delta,\beta) \leq \tilde{v}(\delta,\beta)$ (see Fig.~{\rm \ref{fig-spectral}}
and \eqref{eq:speedspec*}). For every open set $O \subset (\hat{v}(\delta,\beta),\infty)$,
\begin{equation}
\label{eq:LDP.bridge.speed.lb}
\liminf_{n\to\infty}\frac{1}{n} \log \bP^{\gd,\gb}_{n,\mathrm{bridge}}(n^{-1}S_n \in O) 
\geq - \inf_{\theta \in O} I_{\gd,\gb}^{v, {\rm bridge}}(\theta),
\end{equation}
where
\begin{equation}
I_{\gd,\gb}^{v, {\rm bridge}}(\theta) 
= \sup_{\gamma\in \bbR} \big[\theta \gamma - \gL_{\gd,\gb}^{\rm bridge}(\gamma,0)\big],
\qquad \theta \in [0,\infty),
\end{equation}
and
\begin{equation}
\label{eq:eq.rate.bridge}
I_{\gd,\gb}^{v, {\rm bridge}}(\theta) = I_{\gd,\gb}^{v}(\theta),
\qquad \theta\in [\tilde v(\gd,\gb),\infty).
\end{equation}
{\rm (3)} For every open set $O \subset (\tilde\rho(\gb),\infty)$,
\begin{equation}
\label{eq:LDP.bridge.charge.lb}
\liminf_{n\to\infty}\frac{1}{n} \log \bP^{\gd,\gb}_{n,\mathrm{bridge}}(n^{-1}\gO_n \in O) 
\geq - \inf_{\theta' \in O} I_{\gd,\gb}^{\rho, {\rm bridge}}(\theta'),
\end{equation}
where
\begin{equation}
\label{eq:eq.rate.bridge2}
I_{\gd,\gb}^{\rho, {\rm bridge}}(\theta') 
= \sup_{\gamma'\in \bbR} \big[\theta' \gamma' - \gL_{\gd,\gb}^{\rm bridge}(0,\gamma')\big],
\qquad \theta' \in [0,\infty).
\end{equation}
and
\begin{equation}
\label{eq:eq.rate.bridge3}
I_{\gd,\gb}^{\rho, {\rm bridge}}(\theta')  = I_{\gd,\gb}^{\rho}(\theta'),
\qquad \theta' \in [0,\infty).
\end{equation}
\end{proposition}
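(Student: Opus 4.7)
My plan is to adapt the spectral/generating-function analysis of Sections~\ref{ss:grcanrepr}--\ref{ss:concl} to the bridge setting, and then apply the G\"artner--Ellis theorem to deduce parts~(2)--(3). For part~(1), the first step is super-additivity: concatenating two tilted bridges of lengths $n_1, n_2$ produces a tilted bridge of length $n_1+n_2$ whose first and second pieces visit disjoint sets of sites (namely $\{1,\dots,S_{n_1}\}$ and $\{S_{n_1}+1,\dots,S_{n_1+n_2}\}$), so the Hamiltonian \eqref{eq:def.hamiltonian} splits additively. Combined with additivity of the tilting factor $e^{\gamma S_n + \gamma' \Omega_n}$, this makes $n \mapsto \log\bbZ_{n,\mathrm{bridge}}^{*,\delta,\beta}(\gamma,\gamma')$ super-additive, and Fekete's lemma yields existence of $\Lambda^{\mathrm{bridge}}(\gamma,\gamma')$.

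To identify the limit, I would mimic the derivation of Lemma~\ref{lem:grcanspecloop}. As in the passage \eqref{Zngamma}--\eqref{Lambdagammaidalt}, the change of measure $\bbP^\delta \to \bbP^{\delta+\gamma'}$ absorbs the $\gamma'\Omega_n$-tilt into the shift $\delta \mapsto \delta+\gamma'$, while the $\gamma S_n$-tilt multiplies each summand in $\sum_x A^x(0,0)$ by $e^{\gamma x}$. This produces
\begin{equation}
\sum_{n\in\N_0} e^{-\mu n}\,\bbZ_{n,\mathrm{bridge}}^{*,\delta,\beta}(\gamma,\gamma')
= \left[\frac{e^\gamma A_{\mu,\delta+\gamma',\beta}}{1 - e^\gamma A_{\mu,\delta+\gamma',\beta}}\right](0,0),
\end{equation}
which is finite iff $e^\gamma \lambda_{\delta+\gamma',\beta}(\mu) < 1$. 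By the definition of $\mu(\delta+\gamma',\beta,\gamma)$ in \eqref{mugammaid}, the radius of convergence in $\mu$ equals $\mu(\delta+\gamma',\beta,\gamma)$. Arguing as in Section~\ref{ss:concl}, with the bridge lower bound coming from the Perron--Frobenius/renewal estimate on the tilted transition matrix analogous to $\check A$, this gives $\Lambda^{\mathrm{bridge}}(\gamma,\gamma') = \mu(\delta+\gamma',\beta,\gamma) - \mu(\delta,\beta)$.

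For parts~(2)--(3), Proposition~\ref{prop:regl} combined with the implicit function theorem shows that $\gamma \mapsto \mu(\delta,\beta,\gamma)$ and $\gamma' \mapsto \mu(\delta+\gamma',\beta)$ are analytic and strictly convex on the set where they are strictly positive; their derivatives therefore sweep out the open intervals $(\hat v(\delta,\beta),1)$ and $(\tilde\rho(\beta), m)$, where $m$ is the essential supremum of $\omega_1$. Every point in these intervals is an exposed point of $\Lambda^{\mathrm{bridge}}(\cdot,0)$ and $\Lambda^{\mathrm{bridge}}(0,\cdot)$, so G\"artner--Ellis (den Hollander~\cite[Chapter V]{dH00}) yields the lower bounds \eqref{eq:LDP.bridge.speed.lb} and \eqref{eq:LDP.bridge.charge.lb}. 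The equality $I^{\rho,\mathrm{bridge}} = I^\rho$ in \eqref{eq:eq.rate.bridge3} is immediate: setting $\gamma=0$ in \eqref{Lambdagammaidaltbridge} and comparing \eqref{eq:eq.rate.bridge2} with \eqref{Irhodef} gives the identity directly, since no $\vee\tilde\mu$ appears. For the speed, let $\gamma^{**}$ be the unique $\gamma$ with $\mu(\delta,\beta,\gamma)=\tilde\mu(\delta,\beta)$; implicit differentiation gives $\partial_\gamma\mu|_{\gamma^{**}} = \tilde v(\delta,\beta)$, so for $\theta\geq\tilde v$ the supremum in \eqref{Ivdef} is attained at some $\gamma\geq\gamma^{**}$ where $\mu(\delta,\beta,\gamma)\vee\tilde\mu = \mu(\delta,\beta,\gamma)$, yielding $I^v(\theta)=I^{v,\mathrm{bridge}}(\theta)$.

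The hardest step will be the Tauberian-style passage in the second paragraph: converting the radius of convergence of the generating function into the exponential growth rate of $\bbZ_{n,\mathrm{bridge}}^{*,\delta,\beta}(\gamma,\gamma')$. The upper bound is straightforward from finiteness of the generating function above $\mu(\delta+\gamma',\beta,\gamma)$, but the matching lower bound requires adapting the Perron--Frobenius/renewal argument of Section~\ref{ss:spec} to the tilted matrix $e^\gamma A_{\mu,\delta+\gamma',\beta}$, in particular verifying that the associated transition kernel remains positive-recurrent with an $\ell^2$ invariant density so that the renewal theorem applies. Once that is in place, the remaining analysis reduces to standard Legendre-transform computations.
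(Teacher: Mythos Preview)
Your proposal is correct and follows essentially the same route as the paper: the tilted bridge generating function
\[
\cZ^{\mathrm{bridge}}(\mu,\gd,\gb;\gamma,\gamma') = \Big[\tfrac{e^\gamma A_{\mu,\gd+\gamma',\gb}}{1 - e^\gamma A_{\mu,\gd+\gamma',\gb}}\Big](0,0)
\]
identifies the limit, and the G\"artner--Ellis lower bound yields \eqref{eq:LDP.bridge.speed.lb}--\eqref{eq:LDP.bridge.charge.lb} on the exposed set; your identification of $\gamma^{**}$ with the paper's $\gamma_c(\gd,\gb) = -\log\lambda_{\gd,\gb}(\tilde\mu(\gd,\gb))$ and the ensuing argument for \eqref{eq:eq.rate.bridge}--\eqref{eq:eq.rate.bridge3} match the paper's reasoning.

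One remark: the step you flag as hardest is in fact already handled by your first paragraph. Super-additivity of $n\mapsto\log\bbZ_{n,\mathrm{bridge}}^{*,\gd,\gb}(\gamma,\gamma')$ gives existence of the limit, so $\lim = \limsup$. The radius of convergence of the generating function always equals the $\limsup$, hence equals the limit without any further Perron--Frobenius or renewal input. The renewal argument of Section~\ref{ss:spec} is only needed when existence of the limit has not been established independently; since you have it via Fekete, the Tauberian passage is immediate.
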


\noindent
(With additional work, \eqref{eq:LDP.bridge.speed.lb} and \eqref{eq:LDP.bridge.charge.lb} 
can be turned into large deviation principles.)

Define the partition function restricted to loops:
\begin{equation}
\label{eq:def.loop.part.func}
\bbZ_{n,\mathrm{loop}}^{*,\gd,\gb} 
= \bE \Big[e^{\sum_{y\in\Z} G^*_{\delta,\beta}(L_n(y))}\,
\ind_{\{S_i\geq 0\,\,\forall\, 0<i<n,\, S_n = 0\}} \Big],\qquad n\in 2\N.
\end{equation}
\begin{proposition}
\label{prop:loop.part.func}
The sequence $(n^{-1}\log \bbZ_{n,\mathrm{loop}}^{*,\gd,\gb})_{n\in 2\bbN}$ converges.
\end{proposition}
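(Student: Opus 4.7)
The plan is to show that $a_n := \log \bbZ_{n,\mathrm{loop}}^{*,\gd,\gb}$, for $n\in 2\N$, is super-additive up to a sub-linear error, and then to invoke an extension of Fekete's lemma. The natural concatenation of two one-sided loops $S^{(1)}$ and $S^{(2)}$ of lengths $n_1,n_2\in 2\N$ produces a one-sided loop of length $n_1+n_2$ whose local times satisfy $L(y)=L^{(1)}(y)+L^{(2)}(y)$ for every $y\in\N_0$. By independence of the two pieces under $\bE$,
\begin{equation*}
\bbZ_{n_1+n_2,\mathrm{loop}}^{*,\gd,\gb}
\;\ge\; (\bE\otimes\bE')\Big[\exp\Big\{\sum_{y\ge 0} G^*_{\gd,\gb}\big(L^{(1)}(y)+L^{(2)}(y)\big)\Big\}\,\ind_{\mathrm{loop}_1\cap\mathrm{loop}_2}\Big].
\end{equation*}
If $\ell\mapsto G^*_{\gd,\gb}(\ell)$ were super-additive, this would immediately give $\bbZ_{n_1+n_2,\mathrm{loop}}^{*,\gd,\gb}\ge \bbZ_{n_1,\mathrm{loop}}^{*,\gd,\gb}\,\bbZ_{n_2,\mathrm{loop}}^{*,\gd,\gb}$ and Fekete's lemma would apply directly. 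However, super-additivity of $G^*_{\gd,\gb}$ fails in general, so additional work is required.

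The first step is then to control the pointwise super-additivity defect
\begin{equation*}
r_{\gd,\gb}(\ell_1,\ell_2) := G^*_{\gd,\gb}(\ell_1)+G^*_{\gd,\gb}(\ell_2) - G^*_{\gd,\gb}(\ell_1+\ell_2).
\end{equation*}
Writing $\Omega_{\ell_1+\ell_2}=\Omega_{\ell_1}+\tilde\Omega_{\ell_2}$ with $\tilde\Omega_{\ell_2}$ an independent copy and expanding the square in the exponent of \eqref{eq:rel.G.star} yields the identity $e^{G^*_{\gd,\gb}(\ell_1+\ell_2)} = \bbE\big[e^{\gd\Omega_{\ell_1}-\gb\Omega_{\ell_1}^2}\, e^{G^*_{\gd-2\gb\Omega_{\ell_1},\gb}(\ell_2)}\big]$; combined with the monotonicity and convexity bounds on $G^*_{\gd,\gb}$ recorded in Appendix~\ref{AppA} and a conditional Jensen inequality, this should give a uniform upper bound $r_{\gd,\gb}(\ell_1,\ell_2)\le C(\gd,\gb)$ independent of $\ell_1,\ell_2$. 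The Gaussian prototype \eqref{eq:Gstar.gaussian} confirms this picture: the defect is bounded by a positive constant for small $\ell_1,\ell_2$ and becomes \emph{negative} as $\ell_1,\ell_2\to\infty$, reflecting that $G^*_{\gd,\gb}$ is super-additive in the large-argument regime.

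The second step is to sum the pointwise bound over overlap sites and conclude. Since
\begin{equation*}
\sum_{y\ge 0} r_{\gd,\gb}\big(L^{(1)}(y),L^{(2)}(y)\big) \;\le\; C(\gd,\gb)\,\big|\mathrm{range}(S^{(1)})\cap\mathrm{range}(S^{(2)})\big|
\end{equation*}
and the range of a simple random walk of length $n$ is of order $\sqrt n$ outside an event of exponentially small $\bE$-measure, the contribution of atypical loops can be absorbed into a sub-exponential correction by using the upper bound $a_n\le n\mu(\gd,\gb)+o(n)$ from Theorem~\ref{thm:free.energy} to control the remainder. Concatenation therefore yields $a_{n_1+n_2}\ge a_{n_1}+a_{n_2}-o(n_1+n_2)$, and a standard extension of Fekete's lemma for almost-super-additive sequences guarantees existence of $\lim_{n\to\infty,\,n\in 2\N} n^{-1}a_n$; finiteness follows from the upper bound just quoted, together with a lower bound on $a_n$ obtained from explicit out-and-back loop configurations. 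The main obstacle I foresee lies in the first step: the uniform bound on $r_{\gd,\gb}$ must be established for arbitrary charge distributions satisfying~\eqref{Mdeltacond} using only the abstract convexity properties of the two-parameter map $\Omega\mapsto e^{\gd\Omega-\gb\Omega^2}$, whereas the diffusive control on the range in the second step is by comparison a routine application of standard random walk estimates.
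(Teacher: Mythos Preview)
Your Step~1 is actually the \emph{easy} part, not the hard one. The bound $r_{\gd,\gb}(\ell_1,\ell_2)\le C(\gd,\gb)$ follows directly from Proposition~\ref{pr:asymptGstar}: since $G^*_{\gd,\gb}(\ell)=-\tfrac12\log\ell+O(1)$ for $\ell\ge 1$, one has
\[
r_{\gd,\gb}(\ell_1,\ell_2)=-\tfrac12\log\frac{\ell_1\ell_2}{\ell_1+\ell_2}+O(1)\le \tfrac12\log 2+O(1),
\]
because $\ell_1\ell_2/(\ell_1+\ell_2)\ge \min(\ell_1,\ell_2)/2\ge 1/2$. No conditional Jensen argument is needed.

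The genuine gap is in Step~2. Under end-to-end concatenation the two loops overlap on the \emph{entire} range of the shorter one, so the total defect is $C\cdot\min(\max S^{(1)},\max S^{(2)})$, and this quantity sits \emph{inside} the expectation. To pull it out you need $\bbZ^{*,\gd,\gb}_{n,\mathrm{loop}}[\max S\le K_n]\ge e^{-o(n)}\bbZ^{*,\gd,\gb}_{n,\mathrm{loop}}$ for some $K_n=o(n)$. Your appeal to simple-random-walk estimates does not deliver this: for a loop of length $n$ one has $\bP(\max S>K\sqrt n\mid\text{loop})\asymp e^{-cK^2}$, which is \emph{not} exponentially small in $n$; and the pointwise weight $e^{\sum_y G^*(L_n(y))}$ can be as large as $e^{cn}$ (since $\sup_\ell G^*(\ell)>0$ in general), so the ``atypical'' contribution $\bbZ^{*,\gd,\gb}_{n,\mathrm{loop}}[\max S>K_n]$ cannot be discarded without already knowing the behaviour of the loop polymer measure---which is precisely what the proposition is meant to establish. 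The upper bound $a_n\le n\mu(\gd,\gb)+o(n)$ from Theorem~\ref{thm:free.energy} bounds the \emph{whole} partition function, not its atypical part, and gives no leverage here.

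The paper avoids this circularity by a different concatenation: insert the $m$-loop at the (unique) maximum of the $n$-loop, so that the inserted piece lives strictly above the level $S_a$ and the remaining $n$-piece strictly below, except at the single site $S_a$. The ranges are then disjoint up to \emph{one} site, the defect is uniformly $O(1)$, and one obtains directly
\[
\log\bbZ^{*,\gd,\gb}_{m+n,\mathrm{loop}}\ \ge\ \log\bbZ^{*,\gd,\gb}_{m,\mathrm{loop}}+\log\bbZ^{*,\gd,\gb}_{n,\mathrm{loop}}-\log\big[(m+1)\wedge(n+1)\big]-O(1),
\]
to which Hammersley's almost-super-additive lemma applies. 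In short: rather than controlling a defect summed over $O(\text{range})$ sites, change the concatenation so that the overlap is a single site.
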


\noindent
{\bf 4. Flat piece of $I^v_{\gd,\gb}$.} 
We start with the matching upper bound for $I^v_{\gd,\gb}$ on $(0,\tilde{v}(\gd,\gb))$ 
(see Fig.~\ref{fig-rfspeed}). For ease of notation, we omit to write integer parts. Let 
$\theta \in (0, \tilde{v}(\gd,\gb))$, and let $\gep>0$ be small enough so that 
$(\theta-\gep,\theta+\gep) \subseteq (0,\tilde{v}(\gd,\gb))$. It is enough to show that
\begin{equation}
\label{eq:lb.lin.piece.speed}
\begin{aligned}
&\liminf_{n\to \infty} \frac{1}{n} \log 
\bbP_n^{\gd,\gb}\Big(n^{-1}S_n\in (\theta-\gep,\theta+\gep) \Big)\\ 
&\qquad \geq \frac{\theta}{\tilde{v}(\gd,\gb)} I^v_{\gd,\gb}(0) 
+ \Big( 1- \frac{\theta}{\tilde{v}(\gd,\gb)} \Big) I^v_{\gd,\gb}(\tilde{v}(\gd,\gb)) - \bar\gep,
\end{aligned}
\end{equation}
for some $\bar\gep$ that tends to zero as $\gep \downarrow 0$. Note that
\begin{equation}
\label{eq:Ivgdgb0}
I^v_{\gd,\gb}(0) = \mu(\gd,\gb) - \inf_{\gamma\in\bbR} 
\{ \mu(\gd,\gb,\gamma) \vee \tilde{\mu}(\gd,\gb)\} 
= \mu(\gd,\gb) - \tilde{\mu}(\gd,\gb).
\end{equation}
In order to prove \eqref{eq:lb.lin.piece.speed}, we adopt the following strategy: the 
polymer moves to the right ballistically at speed $\tilde{v}(\gd,\gb)$ for a fraction of 
time $\theta/ \tilde{v}(\gd,\gb)$ and spends the rest of the time making a loop to the right. 
Recall \eqref{eq:def.loop.part.func}. As the reader can easily check, the method explained 
in Section~\ref{freeenergy} leads to the following representation of the grand-canonical 
partition function restricted to loops:
\begin{equation}
\sum_{n\in 2\N} e^{-\mu n} \bbZ_{n,\mathrm{loop}}^{*,\gd,\gb} 
= \Bigg[ \frac{1}{1-\tilde{A}_{\mu,\gd,\gb}}  \Bigg](0,0), \qquad \mu\geq 0.
\end{equation}
Note that $\spr(\tilde{A}_{\mu,\gd,\gb}) = \tilde{\gl}_{\gd,\gb}(\mu) < 1$ when 
$\mu > \tilde{\mu}(\gd,\gb)$, while $\tilde{\gl}_{\gd,\gb}(\mu) \geq 1$ when 
$\mu \in (0, \tilde{\mu}(\delta,\beta))$. By repeating the same argument as 
in Section~\ref{ss:spec}, we deduce that
\begin{equation}
\limsup_{ {n\to\infty} \atop {n\in 2\N} } \frac{1}{n} 
\log \bbZ_{n,\mathrm{loop}}^{*,\gd,\gb} = \tilde{\mu}(\gd,\gb).
\end{equation}
Moreover, by Proposition~\ref{prop:loop.part.func}, the limsup is actually a lim. Recall that
\begin{equation}
\lim_{n\to \infty} \frac{1}{n} \log \bbZ_{n}^{*,\gd,\gb} 
= \lim_{n\to \infty} \frac{1}{n} \log \bbZ_{n, {\rm bridge}}^{*,\gd,\gb} = \mu(\gd,\gb).
\end{equation}
Write $m_\mp = n\theta/\tilde v(\delta,\beta)(1\pm\gep)$ and abbreviate $E_m 
= \{m^{-1}S_m \in (\tilde v(\delta,\beta), \tilde v(\delta,\beta) (1+\gep^2))\}$.
Then the strategy above translates into
\begin{equation}
\begin{aligned}
&\bbP_n^{\gd,\gb}\Big(n^{-1}S_n \in (\theta(1-2\gep),\theta(1+2\gep)) \Big)\\
&\qquad = \frac{1}{\bbZ_n^{*,\gd,\gb}}
\bE\Big[e^{\sum_{y\in\Z} G^*_{\delta,\beta}(L_n(y))} 
\ind_{\{n^{-1}S_n \in (\theta(1-2\gep),\theta(1+2\gep))\}} \Big]\\
&\qquad \geq \sumtwo{m_- \leq m \leq m_+}{n-m \in 2\N}
\Bigg(\frac{\bbZ_{m,\mathrm{bridge}}^{*,\gd,\gb}(E_m)}{e^{m\mu(\gd,\gb)[1+o(1)]}} \Bigg) 
\Bigg(\frac{\bbZ_{n-m, \mathrm{loop}}^{*,\gd,\gb}}{e^{(n-m)\mu(\gd,\gb)[1+o(1)]}} \Bigg)\\
&\qquad \geq \sumtwo{m_- \leq m \leq m_+}{n-m \in 2\N} 
\bP_{m,\mathrm{bridge}}^{\gd,\gb}(E_m)\, 
\frac{e^{(n-m)\tilde\mu(\gd,\gb)[1+o(1)]}}{e^{(n-m)\mu(\gd,\gb)[1+o(1)]}}\\
&\qquad \geq C\, n \,\exp\Big[n[1+o(1)]
\quad \Big\{- \frac{\theta}{\tilde v(\gd,\gb) (1 - \gep)} 
I^v_{\gd,\gb}\big(\tilde v(\delta,\beta)(1+\gep^2)\big)\\
&\qquad\qquad\qquad\qquad  + \Big(1- \frac{\theta}{\tilde v(\delta,\beta) (1+\gep)}\Big) 
[\tilde\mu(\gd,\gb) - \mu(\gd,\gb)]\Big\} \Big],
\end{aligned}
\end{equation}
where $C$ is a positive constant that depends on $\gep, \theta, \tilde{v}$, and $\bbZ_{n,\mathrm{bridge}}^{*,\gd,\gb}(E)$ 
is short-hand notation for the bridge partition function restricted to the event $E$. We use \eqref{eq:eq.rate.bridge} 
for the last inequality.

By continuity of $\theta \mapsto I^v_{\gd,\gb}(\theta)$ at $\theta=\tilde v$ 
and \eqref{eq:Ivgdgb0}, this is the desired result. Note that the last inequality holds because of 
Proposition~\ref{prop:LDP.bridge}. Also note also that the $o(1)$'s are uniform on $m_- \leq m \leq m_+$ 
because $n-m \geq (1 - \theta/[\tilde v(\gd,\gb)(1-\gep)])n$.

\medskip\noindent
{\bf 5. Flat piece of $I^\rho_{\gd,\gb}$.} 
We now turn to the matching upper bound for $I^\rho_{\gd,\gb}$ on $(0,\tilde\rho(\gb))$ (see 
Fig.~\ref{fig-rfcharge}). We call a path $(S_i)_{0 \leq i \leq n}$ a \emph{half-bridge} when 
$S_i\geq S_0 (= 0)$ for all $0\leq i \leq n$, and we define
\begin{equation}
\bbZ^{*,\gd,\gb}_{n,\rm{hb}} = \bE\Big[e^{\sum_{y\in\bbZ} G^*_{\gd,\gb}(L_n(y))}
\ind_{\{S_i\geq0\,\,\forall\, 0 \leq i \leq n\}}\Big].
\end{equation}
Let $\theta' \in (0,\tilde\rho(\gb))$. The strategy is similar as above: fix $m \approx \theta'n/\tilde\rho(\gb)$, 
let the first $m$ charges have an empirical average close to $\tilde\rho(\gb)$, and let the remaining 
$n-m$ charges have an empirical average close to $0$. To be more precise, write $m_\pm = n\theta'
(1\pm\gep)/\tilde \rho(\gb)$ and abbreviate $\bar E_m = \{m^{-1}\gO_m \in (1,1+\gep^2)\tilde\rho(\gb))\}$ 
and $\tilde E_{n-m} = \{0\leq \gO_{n-m} \leq \gep(n-m)\}$. Estimate
\begin{equation}
\begin{aligned}
&\bbP^{\gd,\gb}_{n}\big(n^{-1}\gO_n \in (\theta' (1-2\gep), \theta' (1+2\gep))\big)\\
&\qquad = \frac{1}{\bbZ_n^{*,\gd,\gb}}\, \bbZ_n^{*,\gd,\gb}\big(n^{-1}\gO_n \in 
(\theta' (1-2\gep), \theta' (1+2\gep))\big)\\
&\qquad \geq \sum_{m_- \leq m \leq m_+} \frac{1}{\bbZ_n^{*,\gd,\gb}}\, 
\bbZ_{m, {\rm bridge}}^{*,\gd,\gb}(\bar E_m)\, 
\bbZ_{n-m,{\rm hb}}^{*,\gd,\gb}(\tilde E_{n-m})\\
&\qquad \geq e^{-n\gep} \sum_{m_- \leq m \leq m_+}
\frac{\bbZ_{m, {\rm bridge}}^{*,\gd,\gb}(\bar E_m)}{e^{m\mu(\gd,\gb)}} 
\frac{\bbZ_{n-m,{\rm hb}}^{*,\gd,\gb}(\tilde E_{n-m})}{e^{(n-m) \mu(\gd,\gb)}}\\
&\qquad \geq e^{-2n\gep} \sum_{m_- \leq m \leq m_+}
\bP_{m, {\rm bridge}}^{*,\gd,\gb}(\bar E_m) 
\frac{\bbZ_{n-m,{\rm hb}}^{*,\gd,\gb}(\tilde E_{n-m})}{e^{(n-m) \mu(\gd,\gb)}}.
\end{aligned}
\end{equation}
Thanks to \eqref{eq:eq.rate.bridge2}--\eqref{eq:eq.rate.bridge3} and the G\"artner-Ellis theorem, we know that
\begin{equation}
\liminf_{n\to\infty} \frac{1}{n} \log \bP_{n, {\rm bridge}}^{*,\gd,\gb}(\bar E_n) 
\geq \inf_{\theta'\in (1,1+\gep^2)\tilde\rho(\gb)}I^{\rho}_{\gd,\gb}(\theta').
\end{equation}
Below we will prove that
\begin{equation}
\label{eq:liminf.loop}
\liminf_{ {n\to\infty} } \frac{1}{n} 
\log \bbZ_{n,{\rm hb}}^{*,\gd,\gb}(\tilde E_n) \geq 0.
\end{equation}
Recalling that $I^{\rho}_{\gd,\gb}(0) = \mu(\gd,\gb)$ and using the continuity of $I^{\rho}_{\gd,\gb}$, 
we therefore obtain
\begin{equation}
\begin{aligned}
\liminf_{n\to \infty}  \frac{1}{n} \log & \bP^{\gd,\gb}_{n}\big(n^{-1}\gO_n 
\in (\theta' (1-\gep), \theta' (1+\gep))\big)  \\ &
\geq - \frac{\theta'}{\tilde\rho(\gb)} I^{\rho}_{\gd,\gb}(\tilde\rho(\gb)) 
- \Big(1-\frac{\theta'}{\tilde\rho(\gb)}\Big)I^{\rho}_{\gd,\gb}(0) - \tilde\gep,
\end{aligned}
\end{equation}
where $\tilde\gep \downarrow 0$ as $\gep \downarrow 0$, which is the desired result.

\medskip\noindent
{\bf 6.}
It remains to prove \eqref{eq:liminf.loop}. To that end, abbreviate $\Omega_n^x 
= \sum_{i=1}^n \go_i \ind_{\{S_i = x\}}$ and estimate
\begin{equation}
\bbZ_{n,{\rm hb}}^{*,\gd,\gb}(\tilde E_n) 
\geq (\bbE \times \bE) \Big[e^{-\gb \sum_{x\in\Z} (\Omega_n^x)^2} 
\ind_{\{\gO_n \in (0,\gep n)\}}\,\ind_{\{S_i \geq 0\,\,\forall\,0\leq i \leq n\}} \Big].
\end{equation}
The following strategy gives a lower bound. Fix $\ga\in (0,\tfrac12)$ and $C,M$ large, 
and define the events (to ease the notation we omit writing integer parts)
\begin{equation}
\begin{aligned}
\cA_n &= \{S_i \in [0, n^\ga] \,\,\forall\,0\leq i \leq n\},\\
\cB_n &= \{\ell_n(x) \leq C n^{1-\ga} \,\,\forall\,x \in [0,n^\ga]\},\\
\cC_n &= \Big\{ \Omega_n^x  \in (0,M] \,\,\forall\,x \in [0,n^\ga] \Big\}.
\end{aligned}
\end{equation}
On the event $\cA_n \cap \cB_n \cap \cC_n$, we have
\begin{equation}
\sum_{x\in\Z} (\Omega_n^x)^2 \leq M^2 (n^\ga + 1),
\qquad \Omega_n \in (0,\gep n) \text{ for } n \geq (M/2\gep)^{1/(1-\alpha)}.
\end{equation}
Therefore, for $n$ large enough,
\begin{equation}
\label{ZnAnBnCn}
\bbZ_{n,{\rm hb}}^{*,\gd,\gb}(\tilde E_n) 
\geq e^{-\gb M^2 (n^\ga+1)} (\bbE \times \bE)[\ind_{\cA_n \cap \cB_n \cap \cC_n}].
\end{equation}
Next, by the independence of the charges and the local central limit theorem, 
we have, for $M$ large enough,
\begin{equation}
\begin{aligned}
\bE\big[\bbP(\cC_n)\,\ind_{\{\cA_n \cap \cB_n\}}\big] 
&\geq \bE\left[ \big\{\bbP(\gO^0_\ell \in (0,M] \,\,\forall\, \ell \leq Cn^{1-\ga}\big\}^{n^\ga+1}\,
\ind_{\{\cA_n \cap \cB_n\}}\right] \\
&\geq \left( \frac{c_M}{\sqrt{Cn^{1-\ga}}} \right)^{n^\ga+1} \bP(\cA_n \cap \cB_n),
\end{aligned}
\end{equation}
where $c_M$ is a constant that depends on $M$. In addition, there exists a constant 
$c$ such that
\begin{equation}
\label{PnAnBnCn}
\bP(\cA_n \cap \cB_n) \geq e^{-c n^{1-2\ga}}.
\end{equation}
Indeed, the probability that a simple random walk stays inside $[0, n^\ga]$ up to time 
$n$ is
\begin{equation}
\label{estAn}
\bP(\cA_n) = \exp\Big[ - \frac{\gl_1 n}{(n^\ga)^2} (1+o(1))\Big], \qquad n\to\infty,
\end{equation}
where $\gl_1$ is the principal Dirichlet eigenvalue of the continuous Laplacian on $[0,1]$. 
Moreover, for $C$ large enough we have $\lim_{n\to\infty} \bP(\cB_n \mid \cA_n) = 1$,
because $(n^{-(1-\ga)}\,L_n(x n^{\ga}))_{x \in [0,1]}$ conditionally on $\cA_n$ converges 
in distribution to the square of the principal Dirichlet eigenfunction. Combining 
(\ref{ZnAnBnCn}--\ref{PnAnBnCn}), we arrive at
\begin{equation}
\bbZ_{n,{\rm hb}}^{*,\gd,\gb}(\tilde E_n) 
\geq \exp\big[-\gb M^2 (n^{\ga}+1) - c n^{1-2\ga} - c_{M,\alpha,C} n^{\ga} \log n\big],
\end{equation}
which yields \eqref{eq:liminf.loop}.
\end{proof}

\begin{proof}[Proof of Proposition~\ref{prop:LDP.bridge}]
The proof consists in a slight modification of the arguments at the beginning of 
this section, see \eqref{Zngamma}--\eqref{rfjoint}. Indeed, define 
\begin{equation}
\cZ^\mathrm{bridge}(\mu,\gd,\gb;\gamma,\gamma') 
= \sum_{n\in\N_0} e^{-\mu n} \bbZ^{*,\gd,\gb}_{n,\mathrm{bridge}}(\gamma,\gamma').
\end{equation}
A minor change in \eqref{Zbridgeform} gives
\begin{equation}
\cZ^\mathrm{bridge}(\mu,\gd,\gb;\gamma,\gamma') 
= \Big[ \frac{e^\gamma A_{\mu,\gd+\gamma',\gb}}
{1 - e^\gamma A_{\mu,\gd+\gamma',\gb}} \Big](0,0),
\end{equation}
from which we deduce \eqref{Lambdagammaidaltbridge}. Let
\begin{equation}
\label{eq:defgammac}
\gamma_c(\delta,\beta) = - \log \lambda_{\delta,\beta}(\tilde{\mu}(\delta,\beta)),
\end{equation}
as shown in Fig.~\ref{fig-spectralalt}. Observe that $\Lambda^\mathrm{bridge}_{\delta,\beta}
(\gamma,0) = \Lambda_{\delta,\beta}(\gamma,0)$ for $\gamma \geq \gamma_c(\gd,\gb)$, 
which yields \eqref{eq:eq.rate.bridge}. Finally, \eqref{eq:eq.rate.bridge2} is an immediate 
consequence of $\Lambda_{\gd,\gb}(0,\cdot) \equiv \Lambda^{\rm bridge}_{\gd,\gb}(0,\cdot)$. 
Both \eqref{eq:LDP.bridge.speed.lb} and \eqref{eq:LDP.bridge.charge.lb} follow from the 
G\"artner-Ellis theorem.
\end{proof}

\begin{proof}[Proof of Proposition~\ref{prop:loop.part.func}]
For $n\in 2\N$, let $\cL_n$ be the space of right-loops of length $n$, that is
\begin{equation}
\label{eq:def.loop}
\cL_n = \{S_i \geq 0,\,\,\forall\,0<i<n,\,S_n=0\}.
\end{equation}
For $m,n\in 2\N$, define $\cL_{m,n}$ as the following subset of $\cL_{m+n}$:
\begin{equation}
\begin{aligned}
\cL_{m,n} &= \big\{S\in \cL_{m+n}\colon\,\exists\, 0 \leq a \leq n\colon\, S_a = S_{m+a},\\
&\qquad S_i <S_a,\,\,\forall\,i \in [0,a) \cup (m+a,m+n],\,S_j \geq S_a,\,\,\forall\,j \in [a,a+m]\big\}.
\end{aligned}
\end{equation}
Note that: (i) there are at most $n+1$ ways of cutting a loop in $\cL_{m+n}$ 
to obtain two loops in $\cL_m$ and $\cL_n$ after gluing the two end-parts 
together; (ii) the loops obtained in this way involve two independent sets 
of charges. Therefore
\begin{equation}
\bbZ_{m+n,\mathrm{loop}}^{*,\gd,\gb} \geq 
\bE\Big[ e^{\sum_{y\in\Z} G^*_{\delta,\beta}(L_n(y))}\, 
\ind_{\{S\in \cL_{m,n}\}} \Big] \geq \frac{1}{n+1} 
\bbZ_{m,\mathrm{loop}}^{*,\gd,\gb} \bbZ_{n,\mathrm{loop}}^{*,\gd,\gb}
\end{equation}
and so
\begin{equation}
\log \bbZ_{m+n,\mathrm{loop}}^{*,\gd,\gb} \geq \log \bbZ_{m,\mathrm{loop}}^{*,\gd,\gb} 
+ \log \bbZ_{n,\mathrm{loop}}^{*,\gd,\gb} - \log [(m+1) \wedge (n+1)].
\end{equation}
Hence the sequence $\big(\log \bbZ_{n,\mathrm{loop}}^{*,\gd,\gb}\big)_{n\in 2\N}$ is almost 
super-additive, and an application of Hammersley~\cite[Theorem 1]{H62} 
gives the result.
\end{proof}

\subsection{Shape of rate functions}
\label{ss:rfshape}

In this section we show how Figs.~\ref{fig-rfspeed}--\ref{fig-rfcharge} come about. Recall 
from \eqref{mugammaid} that $\log\lambda_{\delta,\beta}(\mu(\delta,\beta,\gamma))
= -\gamma$ when $\gamma \geq -\log \gl_{\gd,\gb}(0)$. Differentiating this equation 
twice with respect to $\gamma$, we obtain
\begin{equation}
\begin{aligned}
\frac{\partial\mu}{\partial\gamma}(\delta,\beta,\gamma)
&= \left[- \frac{\partial}{\partial\mu}\log\lambda_{\delta,\beta}(\mu)
\right]^{-1}_{\mu=\mu(\delta,\beta,\gamma)},\\
\frac{\partial^2\mu}{\partial\gamma^2}(\delta,\beta,\gamma)
&= \left[\frac{\partial^2}{\partial\mu^2}\log\lambda_{\delta,\beta}(\mu)
\right]_{\mu=\mu(\delta,\beta,\gamma)}
\left[\frac{\partial\mu}{\partial\gamma}(\delta,\beta,\gamma)\right]^3.
\end{aligned}
\end{equation}
From this and the strict convexity (respectively, monotonicity) of $\mu \mapsto \log \gl_{\gd,\gb}(\mu)$ 
we see that $\gamma \mapsto \mu(\delta,\beta,\gamma)$ is strictly convex (respectively, increasing) 
for $\gamma \geq -\log \gl_{\gd,\gb}(0)$ (recall Fig.~\ref{fig-spectral}). Moreover, since $\mu(\gd,\gb) 
= F^*(\gd,\gb)$, we know that $\delta \mapsto \mu(\delta,\beta)$ is strictly increasing and strictly 
convex as well. 

\begin{figure}[htbp]
\begin{center}
\setlength{\unitlength}{0.35cm}
\begin{picture}(12,12)(0,-4)
\put(0,0){\line(1,0){12}}
\put(0,-6){\line(0,1){14}}
{\thicklines
\qbezier(0,6)(4,0)(12,-3)
\qbezier(0,1.5)(2,0)(10,-3.5)
}
\put(-1,-.5){$0$}
\put(12.5,-0.2){$\mu$}
\put(6,.7){$\mu(\delta,\beta)$}
\put(.6,-1.5){$\tilde{\mu}(\delta,\beta)$}
\put(13,-3){$\log\lambda_{\delta,\beta}$}
\put(11,-4.5){$\log\tilde{\lambda}_{\delta,\beta}$}
\put(-4.6,2.5){$-\gamma_c(\delta,\beta)$}
\put(0,6){\circle*{.4}}
\put(0,1.6){\circle*{.4}}
\put(6,0){\circle*{.4}}
\put(2.6,0){\circle*{.4}}
\qbezier[20](2.6,0)(2.6,1.3)(2.6,2.7)
\qbezier[20](2.6,2.7)(1.6,2.7)(0,2.7)
\end{picture}
\end{center}
\vspace{0.5cm}
\caption{\small Illustration of the crossover value $\gamma_c(\delta,\beta)<0$ when $(\delta,\beta)
\in \intr(\cB)$. This crossover value changes sign at the critical curve.}
\label{fig-spectralalt}
\end{figure}

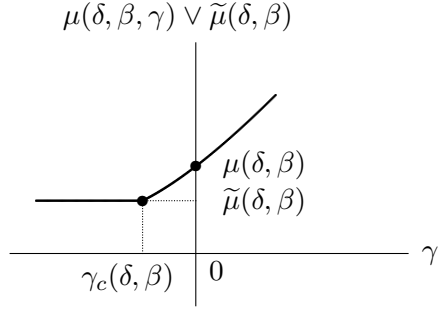
\begin{figure}[htbp]
\vspace{1cm}
\begin{center}
\setlength{\unitlength}{0.35cm}
\begin{picture}(12,12)(-4,-4.5)
\put(-7,0){\line(1,0){15}}
\put(0,-2){\line(0,1){10}}
{\thicklines
\qbezier(-6,2)(-4,2)(-2,2)
\qbezier(-2,2)(0,3)(3,6)
}
\qbezier[20](-2,0)(-2,1)(-2,2)
\qbezier[20](-2,2)(-1,2)(0,2)
\put(0.5,-1){$0$}
\put(8.5,-0.2){$\gamma$}
\put(-5,8.7){$\mu(\delta,\beta,\gamma) \vee \tilde{\mu}(\delta,\beta)$}
\put(-4.3,-1.2){$\gamma_c(\delta,\beta)$}
\put(1,3.1){$\mu(\delta,\beta)$}
\put(1,1.7){$\tilde{\mu}(\delta,\beta)$}
\put(-2,2){\circle*{.4}}
\put(0,3.3){\circle*{.4}}
\end{picture}
\end{center}
\vspace{-1cm}
\caption{\small Qualitative plot of $\gamma \mapsto \mu(\delta,\beta,\gamma) \vee 
\tilde{\mu}(\delta,\beta)$ for $(\delta,\beta) \in \intr(\cB)$. Both $\tilde{\mu}(\delta,\beta)>0$
and $\tilde{\mu}(\delta,\beta)=0$ are possible.}
\label{fig-maxmus}
\end{figure}

Recall \eqref{eq:defgammac}. By \eqref{mudeltabetadef} and \eqref{mugammaid},  
$\mu(\delta,\beta,\gamma)>\tilde{\mu}(\delta,\beta)$ when $\gamma>\gamma_c(\delta,\beta)$ 
and $\mu(\delta,\beta,\gamma) \leq \tilde{\mu}(\delta,\beta)$ when $\gamma \leq 
\gamma_c(\delta,\beta)$. Hence $\gamma \mapsto \mu(\delta,\beta,\gamma) \vee 
\tilde{\mu}(\delta,\beta)$ has the shape depicted in Fig.~\ref{fig-maxmus}. The right slope 
of this function at $\gamma=\gamma_c(\delta,\beta)$ is precisely the speed $\tilde{v}(\delta,\beta)$ 
defined in \eqref{eq:speedspec*}. Hence $\theta \mapsto I^v_{\delta,\beta}(\theta)$, which is 
given by the Legendre transform in \eqref{Ivdef}, is linear on $[0,\tilde{v}(\delta,\beta)]$ and 
strictly convex on $(\tilde{v}(\delta,\beta),\infty)$. For $\theta \in (0,\tilde{v}(\delta,\beta))$ the 
supremum over $\gamma$ in \eqref{Ivdef} is taken at $\gamma=\gamma_c(\delta,\beta)$, so 
that
\begin{equation}
I^v_{\delta,\beta}(\theta) = \mu(\delta,\beta) 
+ \big[\theta\gamma_c(\delta,\beta)-\tilde{\mu}(\delta,\beta)\big]
= \big[\mu(\delta,\beta) - \tilde{\mu}(\delta,\beta)\big] 
- \theta \log \lambda_{\delta,\beta}(\tilde{\mu}(\delta,\beta)). 
\end{equation}

\begin{figure}[htbp]
\begin{center}
\setlength{\unitlength}{0.35cm}
\begin{picture}(12,12)(0,-2)
\put(0,0){\line(12,0){12}}
\put(0,0){\line(0,8){8}}
{\thicklines
\qbezier(0,0)(5,0.5)(9,6.5)
}
\put(-1,-.5){$0$}
\put(12.5,-0.2){$\delta$}
\put(-0.3,8.5){$\beta$}
\put(0,0){\circle*{.4}}
\put(9,2){$\cB$}
\put(3,4){$\cS$}
\put(8.5,7.3){$\beta_c(\delta)$}
\put(4.7,-1.2){$\delta_c(\beta)$}
\put(7.8,-1.2){$\delta$}
\put(-1,2.7){$\beta$}
\put(8,3){\circle*{.4}}
\qbezier[30](8,3)(7,3)(6,3)
\qbezier[30](6,3)(6,1.5)(6,0)
\qbezier[30](8,3)(8,1.5)(8,0)
\qbezier[60](0,3)(4,3)(8,3)
\end{picture}
\end{center}
\vspace{0cm}
\caption{\small Illustration of the crossover value $\gamma'_c(\delta,\beta)<0$ when $(\delta,\beta)
\in \intr(\cB)$. This crossover value changes sign at the critical curve.}
\label{fig-critcurvealt}
\end{figure}

\begin{figure}[htbp]
\vspace{0.5cm}
\begin{center}
\setlength{\unitlength}{0.35cm}
\begin{picture}(12,12)(-4,-4)
\put(-7,0){\line(1,0){15}}
\put(0,-2){\line(0,1){10}}
{\thicklines
\qbezier(-6,0)(-4,0)(-2,0)
\qbezier(-2,0)(0,1)(2,5)
}
\put(.5,.5){$0$}
\put(8.5,-0.2){$\gamma'$}
\put(-3,8.7){$\mu(\delta+\gamma',\beta)$}
\put(-4,-1.4){$\gamma'_c(\delta,\beta)$}
\put(-2,0){\circle*{.4}}
\end{picture}
\end{center}
\vspace{-1cm}
\caption{\small Qualitative plot of $\gamma' \mapsto \mu(\delta+\gamma',\beta)$
for $(\delta,\beta) \in \intr(\cB)$.}
\label{fig-mu}
\end{figure}
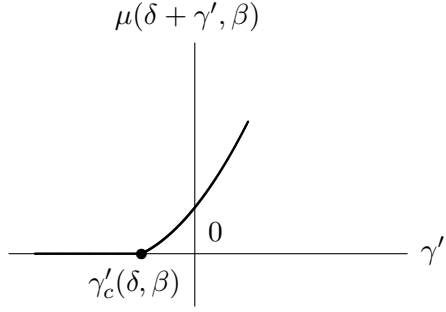

Let
\begin{equation}
\gamma'_c(\delta,\beta) = \delta_c(\beta) - \delta,
\end{equation}
as shown in Fig.~\ref{fig-critcurvealt}, where we recall that $\beta\mapsto \delta_c(\beta)$ 
is the inverse of the critical curve. By \eqref{mudeltabetadef}, $\mu(\delta+\gamma',\beta)
>0$ when $\gamma'>\gamma'_c(\delta,\beta)$ and $\mu(\delta+\gamma',\beta)=0$ when 
$\gamma'<\gamma'_c(\delta,\beta)$. Hence $\gamma' \mapsto \mu(\delta+\gamma',\beta)$ 
has the shape depicted in Fig.~\ref{fig-mu}. The right slope of this function at $\gamma
=\gamma'_c(\delta,\beta)$ is precisely $\tilde{\rho}(\beta)$ defined in \eqref{eq:speedspec*}. 
Hence $\theta' \mapsto I^\rho_{\delta,\beta}(\theta')$, which is given by the Legendre 
transform in \eqref{Irhodef}, is linear on $[0,\tilde{\rho}(\beta)]$ and strictly convex on 
$(\tilde{\rho}(\beta),\infty)$. For $\theta' \in (0,\tilde{\rho}(\beta))$ the supremum over 
$\gamma'$ in \eqref{Irhodef} is taken at $\gamma=\gamma'_c(\delta,\beta)$, so that
\begin{equation}
I^\rho_{\delta,\beta}(\theta') = \mu(\delta,\beta) 
+ \big[\theta'\gamma'_c(\delta,\beta)-\mu(\delta+\gamma'_c(\delta,\beta),\beta)\big]
= \mu(\delta,\beta) - \theta'[\delta-\delta_c(\beta)]. 
\end{equation}


\subsection{Central limit theorems for the speed and the charge}
\label{CLTproof}

In this section we prove Theorem~\ref{thm:CLTspeedcharge} for the speed. The 
extension of the argument to the charge is given in Appendix~\ref{AppB}.

\begin{proof}
The following sketch of the proof for the speed is inspired by K\"onig~\cite{K96}. 
The proof comes in 6 Steps.

\medskip\noindent
{\bf 1.} 
We begin with a probabilistic interpretation of $e^{-\mu(\gd,\gb)n}\Z_n^{*,\gd,\gb}
(S_n = x)$, $x\in\bbN$. We recall that the kernels $A, \tilde A$ and $\widehat A$
are defined in \eqref{eq:def.Abeta.p.r}, \eqref{eq:def.Abeta.p.r2} and \eqref{eq:hatA}, 
respectively.

Let $\bP^{x,\gd,\gb}$ be the joint law of two independent positive recurrent Markov 
chains on $\N_0$, denoted by $(M_y^-)_{y\geq 0}$ and $(M_y^+)_{y\geq 0}$, where 
$(M_y^+)_{0\leq y < x}$ has transition kernel $Q_{\gd,\gb}$ given in \eqref{eq:Qgdgb}
below, and $(M_y^-)_{y\geq 0}$ and $(M_y^+)_{y\geq x}$ have transition kernel $\tilde 
Q_{\gd,\gb}$ given in \eqref{eq:tildeQgdgb} below. The law of $M^+_0$ is the invariant 
law for $Q_{\gd,\gb}$, while the law of $M^-_0$ is the invariant law for $\tilde Q_{\gd,\gb}$ 
restricted to $\bbN$ and normalised by $\tilde\gl_\bbN$ ($0$ is an absorbing state). In 
particular,
\begin{equation}
\label{eq:M0pm}
\bP^{x,\gd,\gb}(M_0^+ = k) = \nu(k)^2 \,, \qquad
\bP^{x,\gd,\gb}(M_0^- = k) = \tilde \eta(k) \tilde \nu(k),\qquad k\in\N_0,
\end{equation}
with $\nu, \tilde \nu, \tilde \eta$ defined below. Let
\begin{equation}
\sigma = \inf\{i\in\N\colon\, M_i^- = 0\}, \qquad
\tau = \inf\{ i\in\N\colon\, M_{x-1+i}^+ = 0\},
\end{equation}
and define
\begin{equation}
\label{eq:Ypmj}
Y^- = 2 \sum_{y=0}^{\sigma - 1} M^-_y,\quad Y^+ 
= 2 \sum_{y=0}^{\tau - 1} M^+_{x +y}, \qquad Y_j 
= \sum_{y=0}^{j-1} (2M^+_y + 1), \qquad 1\leq j \leq x.
\end{equation}

\begin{lemma}
\label{lem:CLT1}
There exist explicit functions $f,g,h,k$, which are given in \eqref{eq:explicit} below, 
such that
\begin{equation}
\label{fghrep}
\begin{aligned}
&e^{-\mu(\gd,\gb)n}\Z_n^{*,\gd,\gb}(S_n = x)\\ 
&\qquad = \bE^{x,\gd,\gb}\Big[f(M^-_{\sigma-1})\,g(M_0^-, M_0^+)\, 
h(M^+_{x})\,k(M^+_{\tau-1})\,\ind_{\{Y^- + Y_x + Y^+ = n\}}\Big].
\end{aligned}
\end{equation}
\end{lemma}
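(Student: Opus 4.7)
The plan is to combine the exact matrix-product representation derived in Section~\ref{freeenergy} with a Doob $h$-transform at the Perron--Frobenius eigenvectors of $A_{\mu(\gd,\gb),\gd,\gb}$ and $\tilde A_{\mu(\gd,\gb),\gd,\gb}$, so as to rewrite the deterministic matrix products as expectations over Markov chains.

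First I would adapt the combinatorial identity \eqref{eq:long}--\eqref{eq:long2} to the case $S_n = x$ fixed (with $x\in\N$), obtaining
\begin{equation}
\Z_n^{*,\gd,\gb}(S_n = x)
= 2 \sum_{\ell \in \N_0} e^{G^*_{\gd,\gb}(\ell)}
\sum_{(m^+,m^-) \in \cS_{\ell,x,n}} \rho_\ell(m_0^+,m_0^-)
\prod_{y=0}^{x-1} A_0(m_y^+,m_{y+1}^+)
\prod_{y \ge x} \tilde A_0'(m_y^+,m_{y+1}^+)
\prod_{y \ge 0} \tilde A_0'(m_y^-,m_{y+1}^-),
\end{equation}
where the index set $\cS_{\ell,x,n}$ is the one introduced in the proof of Proposition~\ref{prop:grcanspec}, the subscript $0$ indicates $\mu = 0$, and $\tilde A_0'$ is the variant of $\tilde A_0$ with $\tilde A_0'(0,0)=1$ used in that proof. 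The constraint $n = x + 2\sum_y(m_y^+ + m_y^-)$ built into $\cS_{\ell,x,n}$ allows the prefactor $e^{-\mu(\gd,\gb)n}$ to be distributed among the matrix elements, shifting every matrix from $\mu = 0$ to $\mu = \mu(\gd,\gb)$.

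Next I would perform the $h$-transform. Let $\nu = \nu_{\mu(\gd,\gb),\gd,\gb}$ be the normalised Perron--Frobenius eigenvector of the symmetric matrix $A_{\mu(\gd,\gb),\gd,\gb}$, whose eigenvalue equals $1$ by definition of $\mu(\gd,\gb)$; and let $\tilde\nu, \tilde\eta$ be the right and left Perron--Frobenius eigenvectors of $\tilde A_{\mu(\gd,\gb),\gd,\gb}$ restricted to $\N$, sharing the common eigenvalue $\tilde\gl_\bbN < 1$ (this restriction is needed because $0$ is absorbing for $\tilde A$). Set
\begin{equation}
Q_{\gd,\gb}(i,j) = \frac{A_{\mu(\gd,\gb),\gd,\gb}(i,j)\,\nu(j)}{\nu(i)},
\qquad
\tilde Q_{\gd,\gb}(i,j) = \frac{\tilde A_{\mu(\gd,\gb),\gd,\gb}(i,j)\,\tilde\nu(j)}{\tilde\gl_\bbN\,\tilde\nu(i)},
\quad i,j \in \N.
\end{equation}
These are stochastic kernels with invariant laws $\nu^2$ on $\N_0$ and $\tilde\eta\tilde\nu$ on $\N$, respectively. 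Substituting into the matrix products produces telescoping ratios of eigenvector values together with powers of $\tilde\gl_\bbN$ that are precisely compensated by the factor $e^{-\mu(\gd,\gb)n}$ expanded on the bond-crossings of the $\tilde A$-segments. What remains are the probabilities under $\bP^{x,\gd,\gb}$ of the joint trajectory of $(M^+, M^-)$ starting from \eqref{eq:M0pm}, multiplied by four explicit boundary factors.

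Finally, I would identify those boundary factors as the functions $f, g, h, k$. The term $\rho_\ell(m_0^+,m_0^-)\,e^{G^*_{\gd,\gb}(\ell)}$, which glues the positive and negative parts at the origin and plays the role of $\widehat A_{\mu,\gd,\gb}$ in \eqref{eq:hatA}, yields $g(M_0^-, M_0^+)$; the crossover at $y=x$ between the kernel with immigration (for $0 \le y < x$) and the kernel without immigration (for $y \ge x$) yields $h(M_x^+)$; the last-step absorption transitions $\tilde A(m^-_{\sigma-1}, 0)$ and $\tilde A(m^+_{\tau-1}, 0)$, which fall outside $\tilde Q_{\gd,\gb}$, yield $f(M^-_{\sigma-1})$ and $k(M^+_{\tau-1})$. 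The constraint on $\cS_{\ell,x,n}$, rewritten through \eqref{eq:Ypmj}, becomes $\ind_{\{Y^- + Y_x + Y^+ = n\}}$, since $Y_x = \sum_{y=0}^{x-1}(2M_y^+ + 1)$ counts the $2M_y^+$ up-down excursions above each site $0, \ldots, x-1$ plus the $x$ deterministic upward crossings that carry the endpoint to $x$, while $Y^\pm$ count the steps of the two flanking excursions. The main obstacle is the bookkeeping: one has to verify that every factor $\nu(\cdot), \tilde\nu(\cdot), \tilde\eta(\cdot)$ and every power of $\tilde\gl_\bbN$ produced by the telescoping either cancels against a twin factor or is collected into one of $f, g, h, k$, with no stray terms left; the explicit formulas for $f, g, h, k$ indicated in \eqref{eq:explicit} then come out of a direct comparison of the two sides of \eqref{fghrep}.
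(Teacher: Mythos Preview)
Your approach is the same as the paper's: the matrix-product representation from Section~\ref{ss:grcanrepr} specialised to fixed $S_n=x$, followed by a Doob $h$-transform against the Perron--Frobenius eigenvectors of $A$ and of $\tilde A$ restricted to $\bbN$. Two points need correcting.

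First, the factor $2$ in your displayed formula is spurious: in the derivation of $\cZ_+$ the factor $(1+\ind_{\{x>0\}})$ accounts for both $S_n=x$ and $S_n=-x$, but here $x$ is fixed.

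Second, and more importantly, your claim that ``powers of $\tilde\gl_\bbN$ are precisely compensated by the factor $e^{-\mu(\gd,\gb)n}$'' is wrong. The prefactor $e^{-\mu(\gd,\gb)n}$ is entirely absorbed into the matrix entries when you pass from $A_0,\tilde A_0$ to $A_{\mu(\gd,\gb),\gd,\gb},\tilde A_{\mu(\gd,\gb),\gd,\gb}$ via the constraint $n=x+2\sum_y(m_y^++m_y^-)$; nothing is left over to cancel $\tilde\gl_\bbN$-powers. With your normalisation $\tilde Q(i,j)=\tilde A(i,j)\tilde\nu(j)/(\tilde\gl_\bbN\,\tilde\nu(i))$ the kernel is stochastic on $\bbN$, the chain never reaches $0$, and $\sigma,\tau$ are a.s.\ infinite, so the representation collapses. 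The paper instead sets $\tilde Q(i,j)=\tilde A(i,j)\tilde\nu(j)/\tilde\nu(i)$ for $i,j\in\bbN$, which is substochastic with row-deficit $1-\tilde\gl_\bbN$; this deficit is declared to be the one-step absorption probability $\tilde Q(i,0)$, so that $\sigma,\tau$ are finite with geometric-type tails. The bookkeeping then produces a global prefactor $(1-\tilde\gl_\bbN)^{-2}$ (one power per flank) together with the boundary ratios $\tilde A(\cdot,0)/\tilde\nu(\cdot)$, $\widehat A(\cdot,\cdot)/(\tilde\eta(\cdot)\nu(\cdot))$ and $\tilde\nu(\cdot)/\nu(\cdot)$, which are exactly the functions $f,g,h,k$ in \eqref{eq:explicit}. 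Once you drop the $\tilde\gl_\bbN$ from the denominator of $\tilde Q$, the rest of your outline goes through.
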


\begin{proof}
An adaptation of the arguments leading to \eqref{eq:long3b} gives
\begin{equation}
\label{eq:CLT2}
\begin{split}
e^{-\mu n}\bbZ_n^{*,\gd,\gb}(S_n=x)
&= \sumtwo{(t,s)\in (\N_0)^2}{t\geq x} \,\,
\sum_{(m_y^+)_{0\leq y < t}, (m_y^-)_{0\leq y < s}\in \cS_{x,n}^{s,t}}\\
& \widehat A(m_0^-, m_0^+) \,
\prod_{y=0}^{x-1} A(m^+_y, m^+_{y+1})
\prod_{y= x}^{t-1} \tilde A'(m^+_y, m^+_{y+1})
\prod_{y = 0}^{s-1} \tilde A'(m^-_y, m^-_{y+1}),
\end{split}
\end{equation}
where
\begin{equation}
\begin{split}
\cS_{x,n}^{s,t} = \Big\{ &m_y^+\in \N_0,\, 0\leq y <x,\,m_y^+ \in \N,\, x\leq y < t,\\ 
&m_y^- \in \N,\, 0\leq y < s,\, \sum_{y=0}^{s-1} m_y^- + \sum_{y=0}^{t-1} 
m_y^+ = \tfrac12 (n-x) \Big\}
\end{split}
\end{equation}
and $m^+_t = m_s^- = 0$. Recall that the subscripts $\mu,\gd,\gb$ are suppressed from 
$\widehat A$, $A$ and $\tilde A'$, and the same observation as made below \eqref{eq:long3b} 
allows us to replace $\tilde A'$ by $\widetilde{A}$ in the line above. From now on we 
choose $\mu = \mu(\gd,\gb)$. Then, the spectral radius of $A$ is $1$. Writing $\nu$ for its 
normalized associated eigenvector (right or left, by symmetry), we note that
\begin{equation}
\label{eq:Qgdgb}
Q_{\gd,\gb}(i,j) = A(i,j)\,\frac{\nu(j)}{\nu(i)}, \qquad i,j\in\N_0,
\end{equation}
is a transition matrix with invariant probability distribution $\{\nu^2(i)\}_{i\in \N_0}$. 

Let us take a closer look at $\tilde A$. Note first that \eqref{eq:spr.Hadamard} is valid for $\tilde A$ 
and $\tilde A_{\N}$ (the restriction of $\tilde A$ to $\N\times\N$), since both matrices are 
Hilbert-Schmidt and therefore define bounded linear operators on $\ell_2(\N_0)$, respectively,
$\ell_2(\N)$. Next, by Proposition~\ref{prop:gap}, $\spr(\tilde A) = \tilde \lambda < 1$. Write 
$\tilde \gl_\N = \spr(\tilde A_\N)$ and observe that $\tilde \gl_\N \leq \tilde \gl$. Indeed, by 
\eqref{eq:spr.Hadamard}, for all $n\in\N$ we have
\begin{equation}
\tilde \gl_\N \leq \suptwo{u\in \ell_2(\N)}{\| u \|_2=1} \|\widetilde{A}_\bbN^n u\|_2^{1/n}
= \suptwo{u\in \ell_2(\N_0)}{\| u \|_2=1, u(0) = 0} \|\widetilde{A}^n u\|^{1/n} 
\leq  \|\widetilde{A}^n\|_{\rm op}^{1/n},
\end{equation}
and so we get the claim by letting $n\to \infty$. Therefore there exists a vector $\{\tilde\nu(i)\}_{i\in \N}$ 
with strictly positive entries such that
\begin{equation}
\sum_{j\in\N}  \tilde A(i,j) \frac{\tilde \nu(j)}{ \tilde \nu(i)} = \tilde \gl_\N < 1,
\qquad i \in \N.
\end{equation}
This defines a transition matrix $\tilde Q_{\gd,\gb}$ on $\N_0 \times \N_0$ given by
\begin{equation}
\label{eq:tildeQgdgb}
\begin{aligned}
& \tilde Q_{\gd,\gb}(i,j) = \tilde A(i,j)\, \frac{\tilde \nu(j)}{ \tilde \nu(i)},\quad i,j\in \N,\\
& \tilde Q_{\gd,\gb}(i,0) = 1- \tilde\gl_\N >0,\quad i\in\N,\\
& \tilde Q_{\gd,\gb}(0,0) = 1.
\end{aligned}
\end{equation}
Note that $0$ is an absorbing state for $\tilde Q_{\gd,\gb}$. Let $\{\tilde \eta(i)\}_{i\in\N}$ be 
a left eigenvector of $\tilde A_\N$. We may normalise $\tilde \nu$ and $\tilde \eta$ such 
that $\sum_{i\in \N} \tilde \eta (i) \tilde \nu (i) = 1$. 
Recalling \eqref{eq:M0pm}, from \eqref{eq:CLT2} we obtain
\begin{equation}\label{eq:explicit}
\begin{split}
&e^{-\mu(\gd,\gb) n}\bbZ_{n,\gd,\gb}^{*,\gd,\gb}(S_n=x)\\
&= \frac{1}{(1-\tilde\gl_\N)^2} \bE^{x,\gd,\gb}\left[\frac{\tilde A(M^-_{\sigma -1},0)}{\tilde\nu (M^-_{\sigma-1})} 
\frac{\widehat A(M_0^-,M_0^+)}{\tilde \eta(M_0^-)\nu(M_0^+)}
\frac{\tilde \nu (M_x^+)}{\nu (M_x^+)} \frac{\tilde A(M^{+}_{\tau-1},0)}{\tilde \nu(M^{+}_{\tau-1})} 
\ind_{\{Y^- + Y_x + Y^+ = n\}}\right],
\end{split}
\end{equation}
which completes the proof of \eqref{fghrep}.
\end{proof}

\medskip\noindent
{\bf 2.} 
Henceforth we denote by $\bP^{\gd,\gb}$ the law of the time-homogeneous Markov chain 
$(M^+_y)_{y \geq 0}$ with transition kernel $Q_{\gd,\gb}$. We then have the following 
representation.

\begin{lemma}
\label{lem:CLT3}
There exists an explicit function $u$ such that, for all $\bar x \in \N$,
\begin{equation}
\label{eq:lem.CLT3}
\begin{aligned}
\sum_{x\geq \bar x} 
&e^{-\mu(\gd,\gb)n}\Z_n^{*,\gd,\gb}(S_n = x) = \sum_{a,b,n_0^-,n_0^+} u(n_0^-, n_0^+, a, b)\\
&\times \bP^{\gd,\gb}\Big(Y_{\bar x} \leq n-n_0^--n_0^+,\, 
\exists\, j\in\N\colon\, Y_j = n - n_0^- - n_0^+,\, M_{j-1}^+ = b
~\Big|~ M_0^+ = a \Big).
\end{aligned}
\end{equation}
\end{lemma}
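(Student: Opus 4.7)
The plan is to start from the representation in Lemma~\ref{lem:CLT1} and isolate the contribution of the middle segment $(M^+_y)_{0 \leq y \leq x-1}$, which under $\bP^{x,\gd,\gb}$ evolves with transition kernel $Q_{\gd,\gb}$, i.e.\ exactly the kernel defining $\bP^{\gd,\gb}$. The remaining pieces of the formula in \eqref{fghrep}---the negative chain, the initial value of $M_0^+$, the transition across the level $y = x$, and the positive chain after $x$---are all mutually independent under $\bP^{x,\gd,\gb}$, so they can be absorbed into a single function $u(n_0^-, n_0^+, a, b)$ whose arguments encode the relevant boundary data.

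More precisely, I would condition \eqref{fghrep} on the four quantities $Y^- = n_0^-$, $Y^+ = n_0^+$, $M_0^+ = a$, $M_{x-1}^+ = b$, so that the requirement $Y^- + Y_x + Y^+ = n$ becomes $Y_x = n - n_0^- - n_0^+$ on the middle piece alone. By the independence structure of $\bP^{x,\gd,\gb}$, the conditional expectation in \eqref{fghrep} then factorises, and one may set
\begin{equation*}
u(n_0^-, n_0^+, a, b) = \nu(a)^2 \,\bE^-\!\big[f(M^-_{\sigma-1})\, g(M_0^-, a)\, \ind_{\{Y^- = n_0^-\}}\big] \sum_{c \in \N_0} Q_{\gd,\gb}(b,c)\, h(c)\, \bE^{+,\tilde Q}_c\!\big[k(M^+_{\tau-1})\, \ind_{\{Y^+ = n_0^+\}}\big],
\end{equation*}
where $\bE^-$ stands for expectation under the negative chain (with kernel $\tilde Q_{\gd,\gb}$ and invariant-on-$\N$ initial distribution) and $\bE^{+,\tilde Q}_c$ for the chain under $\tilde Q_{\gd,\gb}$ started from $c$. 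With $u$ defined in this way, the residual factor in \eqref{fghrep} is precisely $\bP^{\gd,\gb}(M_{x-1}^+ = b,\,Y_x = n - n_0^- - n_0^+ \mid M_0^+ = a)$, since $(M^+_y)_{0 \leq y \leq x-1}$ under $\bP^{x,\gd,\gb}$ is a time-homogeneous Markov chain with kernel $Q_{\gd,\gb}$.

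It remains to sum over $x \geq \bar x$. Here I would exploit the strict monotonicity of $j \mapsto Y_j$: since each increment $2M^+_y + 1$ is at least $1$, for any fixed value $m \in \N_0$ there is at most one index $j$ with $Y_j = m$, and this $j$ satisfies $j \geq \bar x$ if and only if $Y_{\bar x} \leq m$. Applying this with $m = n - n_0^- - n_0^+$ converts $\sum_{x \geq \bar x} \bP^{\gd,\gb}(M^+_{x-1} = b,\,Y_x = m \mid M_0^+ = a)$ into $\bP^{\gd,\gb}(Y_{\bar x} \leq m,\,\exists\, j \in \N\colon Y_j = m,\,M^+_{j-1} = b \mid M_0^+ = a)$, which is exactly \eqref{eq:lem.CLT3}. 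The main obstacle I anticipate is the clean handling of the kernel switch at $y = x$: under $\bP^{x,\gd,\gb}$ the transition $M^+_{x-1} \to M^+_x$ uses $Q_{\gd,\gb}$ while $M^+_x \to M^+_{x+1}$ already uses $\tilde Q_{\gd,\gb}$, and this asymmetry is the reason the function $u$ must involve the extra sum over $c = M^+_x$ with weight $Q_{\gd,\gb}(b,c)h(c)$ rather than simply $h(b)$; getting this bookkeeping right is what makes the residual middle chain agree with the time-homogeneous $\bP^{\gd,\gb}$ appearing on the right of \eqref{eq:lem.CLT3}.
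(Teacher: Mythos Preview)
Your proposal is correct and follows the same route as the paper: factorise the expectation in \eqref{fghrep} by conditioning on $(Y^-, M_0^+, M_{x-1}^+, Y^+)$, absorb the boundary pieces into $u$, and then use strict monotonicity of $j\mapsto Y_j$ to turn $\sum_{x\ge\bar x}$ into the event $\{Y_{\bar x}\le m,\ \exists j\colon Y_j=m\}$. Your explicit sum $\sum_c Q_{\gd,\gb}(b,c)h(c)\,\bE^{+,\tilde Q}_c[\cdots]$ is exactly the expansion of the paper's conditional expectation $\bE^{x,\gd,\gb}[h(M_x^+)k(M^+_{\tau-1})\ind_{\{Y^+=n_0^+\}}\mid M^+_{x-1}=b]$, so the bookkeeping at the kernel switch that you flag as the main obstacle is handled correctly.
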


\begin{proof}
Define
\begin{equation}\label{eq:udef}
\begin{split}
u(n_0^-,n_0^+,a,b) &= \frac{1}{(1-\tilde\gl_\N)^2}
\bE^{\gd,\gb}\left[ \frac{\tilde A(M^-_{\sigma-1},0)}{\tilde \nu(M^-_{\sigma-1})} 
\frac{\widehat A(M_0^-,a)}{\tilde \eta (M_0^-)\nu(a)} \ind_{\{Y^- = n_0^-\}} \right]
\bP^{\gd,\gb}(M_0^+ = a)\\ 
&\qquad\qquad \times \bE^{x,\gd,\gb}\left[ \frac{\tilde \nu(M_x^+)}{\nu(M_x^+)} 
\frac{\tilde A(M^+_{\tau -1},0)}{\tilde \nu(M^+_{\tau-1})} 
\ind_{\{Y^+ = n_0^+\}} ~\Big|~ M^+_{x-1}=b \right] .
\end{split}
\end{equation}
(Note that the right-hand side in the line above does not actually depend on $x$,
because $(M^+_y)_{y\ge x}$ is by definition a Markov chain with kernel 
$\tilde Q_{\gd,\gb}$ given in \eqref{eq:tildeQgdgb}.)
By the Markov property and Lemma~\ref{lem:CLT1},
\begin{equation}
\begin{split}
&e^{-\mu(\gd,\gb) n}\bbZ_{n}^{*,\gd,\gb}(S_n=x)\\
&= \sum_{a,b,n_0^-,n_0^+} u(n_0^-,n_0^+,a,b) \bP^{\gd,\gb}\big(Y_x = n - n_0^- - n_0^+, 
M_{x-1}^+ = b \big| M_0^+ = a \big).
\end{split}
\end{equation}
The increments of $(Y_j)_{j\in\N}$ are strictly positive, which means that the events in the line 
above are disjoint for different values of $x$. Therefore, summing over $x$ we get the claim.
\end{proof}

\medskip\noindent
{\bf 3.} 
From Lemma~\ref{lem:CLT3} we see that the fluctuations of $S_n$ are related to the fluctuations 
of $(Y_j)_{j\in\N_0}$. The key ingredient of our proof is the following lemma.

\begin{lemma}
\label{lem:CLT4}
For every $a\in\N_0$, the process $(Y_j)_{j\in\N_0}$ under $\bP^{\gd,\gb}(\,\cdot\,|M_0^+=a)$
satisfies the CLT with mean and variance
\begin{equation}
\mu_Y = -\left[\frac{\partial}{\partial\mu} 
\log \lambda_{\gd,\gb}(\mu)\right]_{\mu = \mu(\gd,\gb)},
\qquad 
\sigma_Y^2 = \left[\frac{\partial^2}{\partial\mu^2}
\log\lambda_{\delta,\beta}(\mu)\right]_{\mu=\mu(\delta,\beta)}, 
\end{equation}
i.e., $(Y_N - N \mu_Y)/(\sigma_Y \sqrt{N}) \to N(0,1)$ in law as $N\to\infty$.
\end{lemma}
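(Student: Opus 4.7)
The plan is to prove the CLT via a moment-generating-function argument that exploits the spectral structure of $A = A_{\mu(\gd,\gb),\gd,\gb}$. The starting point is the algebraic decomposition
\begin{equation}
Y_N = \sum_{y=0}^{N-1}(M_y^+ + M_{y+1}^+ + 1) + M_0^+ - M_N^+,
\end{equation}
which matches the exponential weight $e^{-\mu(i+j+1)}$ built into $A$. Combined with $Q_{\gd,\gb}(i,j) = A(i,j)\,\nu(j)/\nu(i)$ and $A_{\mu(\gd,\gb)+\gep,\gd,\gb}(i,j) = e^{-\gep(i+j+1)}\,A(i,j)$, a direct telescoping computation starting from the invariant law $\nu^2$ yields
\begin{equation}
\bbE_{\nu^2}^{\gd,\gb}\!\Big[e^{-\gep (Y_N + M_N^+ - M_0^+)}\Big] = \big\langle \nu,\, A_{\mu(\gd,\gb)+\gep,\gd,\gb}^N\, \nu \big\rangle,
\end{equation}
where $\bbE_{\nu^2}^{\gd,\gb}$ denotes expectation under the stationary chain.

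Next, by Proposition~\ref{pr:prop.A.lambda.mu} the operator $A_{\mu(\gd,\gb)+\gep,\gd,\gb}$ is symmetric, positive and Hilbert-Schmidt, with a simple Perron eigenvalue $\lambda_{\gd,\gb}(\mu(\gd,\gb)+\gep)$ and a spectral gap. Consequently,
\begin{equation}
\big\langle \nu,\, A_{\mu(\gd,\gb)+\gep,\gd,\gb}^N\, \nu \big\rangle = \big[\lambda_{\gd,\gb}(\mu(\gd,\gb)+\gep)\big]^N \,\big|\langle \nu,\, \nu_{\gep}\rangle\big|^2 \,\big(1 + O(q^N)\big),
\end{equation}
for some $q=q(\gep) \in (0,1)$, where $\nu_\gep$ is the normalised principal eigenvector (with $\nu_0 = \nu$). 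Analytic perturbation theory (Kato~\cite{KA95}) ensures that $\nu_\gep$ and $\lambda_{\gd,\gb}(\mu(\gd,\gb)+\gep)$ are analytic in $\gep$ in a complex neighbourhood of $0$, so $\langle \nu, \nu_\gep \rangle = 1 + O(\gep)$. Together with the exponential decay of $\nu$ (inherited from the exponential factor $e^{-\mu(\cdot)}$ in $A$), which controls the boundary term $e^{-\gep(M_N^+ - M_0^+)}$, this yields
\begin{equation}
\frac{1}{N}\log \bbE_{\nu^2}^{\gd,\gb}\!\big[e^{-\gep Y_N}\big] \xrightarrow[N\to\infty]{} \log \lambda_{\gd,\gb}\big(\mu(\gd,\gb)+\gep\big),
\end{equation}
uniformly on a complex neighbourhood of $\gep = 0$. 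Expanding the right-hand side to second order gives $\mu_Y \gep + \tfrac{1}{2}\sigma_Y^2 \gep^2 + O(\gep^3)$ with the stated $\mu_Y$ and $\sigma_Y^2$, and evaluating at $\gep = -it/\sqrt{N}$ followed by the L\'evy continuity theorem delivers the CLT for $Y_N$ under the stationary initial law.

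To pass from the stationary initial law $\nu^2$ to the deterministic initial condition $M_0^+ = a$ stated in the lemma, I would invoke the geometric ergodicity of $Q_{\gd,\gb}$ inherited from the spectral gap of $A$: the law of $M_k^+$ started from $a$ converges exponentially fast to $\nu^2$, so $Y_N$ under the two initial laws differs by an $O_{\bbP}(1)$ random variable with uniformly controlled exponential moments, which is negligible for the CLT.

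The main obstacle I expect is a technical one: securing the uniformity and analyticity of the spectral expansion in $\gep$ in a \emph{complex} neighbourhood of $0$, and in particular ensuring that the overlap $\langle \nu,\nu_\gep\rangle$ makes sense and depends analytically on $\gep$. This requires combining the self-adjointness and compactness of $A_{\mu(\gd,\gb)+\gep,\gd,\gb}$ with Kato's analytic perturbation theory, together with an exponential-decay estimate on $\nu$ (derived from the eigenvalue equation $A\nu = \nu$ and the decay of the matrix entries). Once this uniformity is in hand, differentiation twice under the limit is justified and the identification of $\mu_Y, \sigma_Y^2$ with the spectral derivatives is immediate.
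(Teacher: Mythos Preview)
Your approach is correct and constitutes a genuinely different route from the paper's. What you outline is the Nagaev--Guivarc'h spectral method: compute the (complex) moment generating function of $\tilde Y_N := \sum_{y=0}^{N-1}(M_y^+ + M_{y+1}^+ + 1)$ via the transfer-matrix identity $\bE^{\gd,\gb}_{\nu^2}[e^{-\gep\tilde Y_N}] = \langle \nu, A_{\mu(\gd,\gb)+\gep,\gd,\gb}^N \nu\rangle$, then expand $\log\lambda_{\gd,\gb}(\mu(\gd,\gb)+\gep)$ to second order and invoke L\'evy continuity at $\gep = -it/\sqrt{N}$. The identification of $\mu_Y,\sigma_Y^2$ with the first two $\mu$-derivatives of $\log\lambda$ is then immediate (note a sign slip in your expansion: the linear term is $-\mu_Y\gep$, not $+\mu_Y\gep$). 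The paper instead follows K\"onig~\cite{K96} and uses the \emph{regenerative} approach of Chung~\cite[Theorem~16.1]{Chung}: decompose $Y_N$ into i.i.d.\ blocks between successive returns of $M^+$ to the starting state $a$, then identify the block mean and variance with the spectral derivatives by differentiating the identity $\bP^{\gd,\gb}(T_1<\infty\mid M_0^+=a)=1$, rewritten as a product of $A_{\mu,\gd,\gb}/\lambda_{\gd,\gb}(\mu)$ matrices, with respect to $\mu$.

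Each approach has its own technical cost. The paper must justify termwise differentiation of an infinite sum (done via the uniform decay $\sup_{|\mu-\mu(\gd,\gb)|\le\gep}\sup_{\ell\ge R} e^{-\mu\ell+G^*_{\gd,\gb}(\ell)}\downarrow 0$). Your approach must justify the spectral expansion uniformly for complex $\gep$ near $0$: this is standard Kato perturbation theory, since $\gep\mapsto A_{\mu(\gd,\gb)+\gep,\gd,\gb}$ is an analytic family of Hilbert--Schmidt operators (using $\mu(\gd,\gb)>0$ in $\intr(\cB)$), and the top eigenvalue is simple at $\gep=0$, so both the eigenvalue and eigenprojection extend analytically and the spectral gap persists on a complex neighbourhood. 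Your handling of the boundary term $M_N^+-M_0^+$ and of the passage from the stationary to the deterministic initial law is fine; in fact for the CLT it suffices that these terms are $O_{\bP}(1)$, which follows from positive recurrence alone without invoking the exponential decay of $\nu$. The spectral route is arguably more direct here, since the CLT mean and variance are \emph{defined} as spectral derivatives and fall out of the Taylor expansion without the intermediate identification step the paper performs.
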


\begin{proof}
Recall that under $\bP^{\gd,\gb}$ the process $(M^+_y)_{y\in\N_0}$ is a positive recurrent Markov 
chain, with transition kernel $Q_{\gd,\gb}$ and invariant probability measure $\{\nu^2(i)\}_{i\in\N_0}$.
Following \cite[Lemma 4.1]{K96}, we are going to apply the CLT for Markov chains (see 
Chung~\cite[Theorem 16.1]{Chung}).

The strategy is as follows. For $i\in\N_0$, denote by $T_i$ the $i$-th return time to the state $a$
of $(M^+_y)_{y\in\N_0}$ (with $T_0 = 0$) and set $\ell_j := \max\{i\in\N_0: \ T_i \le j\}$. Then, we 
can decompose
\begin{equation} 
\label{eq:CLT?}
Y_N - N \mu_Y
= M_0^+ - M_N^+ + \sum_{i=1}^{\ell_N} \Big( U_i - \mu_Y (T_i - T_{i-1}) \Big) 
+  \Big(U^{\mathrm{inc}}_{\ell_N+1} - \mu_Y (N - T_{\ell_N}) \Big),
\end{equation}
where
\begin{equation}
U_i = \sum_{y= T_{i-1}}^{T_i-1} (M^+_y + M^+_{y+1} + 1) \,, \qquad
U_{\ell_N+1}^{\mathrm{inc}} 
= \sum_{y= T_{\ell_N}}^{N-1} (M^+_y + M^+_{y+1} + 1).
\end{equation}
By the strong Markov property, under $\bP^{\gd,\gb}(\,\cdot\,|M_0^+=a)$ the random variables 
$(T_i - T_{i-1})_{i\in\N}$ are i.i.d.\ with finite expectation $\mu_T = \bE^{\gd,\gb}(T_1|M_0^+=a) 
= \nu(a)^{-2}$. Consequently, by the strong law of large numbers, $\lim_{N\to\infty} \ell_N / N =
\mu_T^{-1}$ a.s. Also the random variables $(U_i)_{i\in\N}$ are i.i.d., again by the strong Markov 
property. If we show that
\begin{equation} 
\label{eq:topr1}
\mu_Y = -\bigg[\frac{\partial}{\partial\mu} \log \lambda_{\gd,\gb}(\mu)\bigg]_{\mu = \mu(\gd,\gb)} 
= \frac{\bE^{\gd,\gb}(U_1|M_0^+=a)}{\mu_T}
= \frac{\bE^{\gd,\gb}(U_1|M_0^+=a)}{\bE^{\gd,\gb}(T_1|M_0^+=a)},
\end{equation}
then the random variables $U_i - \mu_Y (T_i - T_{i-1})$ are i.i.d.\ and are centered. If we pretend 
that we may replace the upper index $\ell_N$ by $\mu_T^{-1} N$ in \eqref{eq:CLT?}, 
and if we ignore the 
contribution of the ``boundary term'' $U^{\mathrm{inc}}_{\ell_N+1} - \mu_Y (N - T_{\ell_N})$, then
the standard CLT yields $(Y_N - N \mu_Y)/(\sigma_Y \sqrt{N}) \to N(0,1)$ with
\begin{equation} 
\label{eq:topr2}
\sigma_Y^2 
= \frac{\bE^{\gd,\gb}\big((U_1 - \mu_Y T_1)^2 \big| M_0^+=a \big)}{\mu_T}
= \frac{\bE^{\gd,\gb}\big((U_1 - \mu_Y T_1)^2 \big|
M_0^+=a \big)}{\bE^{\gd,\gb}(T_1|M_0^+=a)}.
\end{equation}
But this is indeed justified by Chung~\cite[Theorem 16.1]{Chung}. The boundary term $M_0^+ - M_N^+$ in \eqref{eq:CLT?} is harmless. It only remains to show
that $\mu_Y$ and $\sigma_Y^2$, as defined in the statement of Lemma~\ref{lem:CLT4},
(are finite and) satisfy relations \eqref{eq:topr1}-\eqref{eq:topr2}.

Recalling the definition \eqref{eq:Qgdgb} of the transition kernel $Q_{\gd,\gb}(i,j)$, we can write
\begin{equation}
\label{1relation}
1 = \bP^{\gd,\gb}(T_1 < \infty | M_0^+ = a) 
= \sum_{n\in\N}\,\, \sumtwo{m_1,\ldots,m_{n-1}\neq a}{m_0 = m_n =a} \,\,
\prod_{i=1}^n \frac{A_{\mu,\gd,\gb}(m_{i-1},m_i)}{\gl_{\gd,\gb}(\mu)}
\end{equation}
with $\mu = \mu(\gd,\gb)$. Recalling the definition \eqref{eq:def.Abeta.p.r} of the kernel 
$A_{\mu,\gd,\gb}$, when we differentiate this relation with respect to $\mu$ term by term and 
set $\mu = \mu(\gd,\gb)$, we get precisely relation \eqref{eq:topr1}. Differentiating it a second 
time, we get \eqref{eq:topr2}. We only need an argument analogous to \cite[Eq.\ (4.6)]{K96} to 
interchange differentiation and summation. This follows from an adaptation of 
\cite[Eq.\ (4.7)--(4.11)]{K96}. In particular, \cite[Eq.\ (4.8)]{K96} is replaced in our context by
\begin{equation}
K_R = \sup_{\mu\colon\, |\mu - \mu(\gd,\gb)|\leq \gep} \, 
\sup_{\ell\geq R} \, e^{-\mu \ell + G^*_{\gd,\gb}(\ell)} \downarrow 0, \qquad R\to\infty,
\end{equation}
for $\gep$ small enough, which uses the fact that $G^*_{\gd,\gb}$ is bounded from above and 
$\mu(\gd,\gb)>0$.
\end{proof}

\medskip\noindent
{\bf 4.} 
Fix $c\in \bbR$ and set
\begin{equation}
\label{eq:def.xn}
x_n 
= \frac{n}{\mu_Y} + c\sqrt{n} 
= nv(\gd,\gb)+ c\sqrt{n}
\end{equation}
with $\mu_Y= -[\frac{\partial}{\partial\mu} \log \lambda_{\gd,\gb}(\mu)]_{\mu = \mu(\gd,\gb)}$ as 
in Lemma~\ref{lem:CLT4} (the second equality in \eqref{eq:def.xn} follows from the definition 
in \eqref{eq:speedspec} of the speed $v(\gd,\gb)$).

Let us next look at the probability in \eqref{eq:lem.CLT3} with $\bar x = x_n$. For simplicity, 
we first forget about the constraint $\{\exists\, j\in\N\colon\,Y_j = n - n_0^- - n_0^+, 
M_{j-1}^+ = b\}$. Recall \eqref{eq:def.xn}. We prove that, for fixed $n_0^-,n_0^+\in \N$ and 
$a\in\N_0$,
\begin{equation}
\lim_{n\to \infty} \bP^{\gd,\gb}\big(Y_{x_n} \leq n - n_0^- - n_0^+
~\big|~ M_0^+ = a \big) = \theta(c), \qquad
\theta(c) = \int_c^{+\infty} 
\frac{e^{-z^2/[2\sigma_v(\gd,\gb)^2]}}{\sqrt{2\pi \sigma_v(\gd,\gb)^2}}\,\dd z,
\end{equation}
where $\sigma_v(\gd,\gb)$ is defined in \eqref{eq:formula.variance}. Indeed,
\begin{equation}
\begin{split}
& \bP^{\gd,\gb}\big(Y_{x_n} \leq n - n_0^- - n_0^+ ~|~ M_0^+ = a \big)  \\
& \qquad = \bP^{\gd,\gb}\Big(Y_{x_n} - x_n v(\gd,\gb)^{-1}
\leq -c v(\gd,\gb)^{-3/2} \sqrt{x_n}\,\,[1+o(1)] ~\Big|~ M_0^+ = a \Big),
\end{split}
\end{equation}
and so the claim follows from Lemma~\ref{lem:CLT4} and the fact that $\sigma_v(\gd,\gb)^2 
= v(\gd,\gb)^3 \sigma_Y^2$, by \eqref{eq:formula.variance}.

\medskip\noindent
{\bf 5.} 
Adapting the argument below \cite[Eq.\ (4.20)]{K96}, we show that, for fixed $n_0^-$,
$n_0^+$ and $a,b$,
\begin{equation}
\label{eq:CLTstep5}
\bP^{\gd,\gb}\big(Y_{x_n} \leq n-n_0^--n_0^+,\, \exists\,j\in\N\colon\, 
Y_j = n - n_0^- - n_0^+, 
M_{j-1}^+ = b | M_0^+ = a \big) \stackrel{n\to\infty}{\sim} \theta(c) m(b),
\end{equation}
for an explicit positive constant $m(b)$. Indeed, by the standard ergodic theorem for 
positive recurrent aperiodic Markov chains, $\bP^{\gd,\gb}(M_n^+ = b \ |\ M_0^+ = a) 
\to \nu^2(b)$ as $n\to\infty$, which ensures in particular that $(M_n^+)$ is tight, since
$\sum_{b\in\N_0} \nu^2(b) = 1$. As a consequence, the analogue of \cite[Eq.\ (4.20)]{K96} 
holds, i.e., $\limsup_{b\to\infty} \sup_{n\in\N} \bP^{\gd,\gb}(M_n^+ = b\ |\ M_0^+ = a) = 0$.

\medskip\noindent
{\bf 6.} We may now conclude the proof by showing that 
\begin{equation}
\label{eq:CLTresult}
\lim_{n\to\infty} \bbP_n^{\gd,\gb}\big(S_n \geq n v(\gd,\gb) + c\sqrt{n}\big) = \theta(c).
\end{equation}
First, note that
\begin{equation}
\bbP_n^{\gd,\gb}(S_n = x) 
= \frac{e^{-\mu(\gd,\gb)n}\Z_n^{*,\gd,\gb}(S_n = x)}{e^{-\mu(\gd,\gb)n}\Z_n^{*,\gd,\gb}}, 
\end{equation}
so that, by Lemma~\ref{lem:CLT3},
\begin{equation} \label{eq:CLTstep6}
\begin{aligned}
&\bbP_n^{\gd,\gb}\big(S_n \geq n v(\gd,\gb) + c\sqrt{n}\big) 
= \frac{\sum_{x\geq x_n}\bbP_n^{\gd,\gb}(S_n = x)}{\sum_{x\in \N}\bbP_n^{\gd,\gb}(S_n = x)}\\
&= \frac{\begin{array}{ll} 
&\sum_{a,b,n_0^-,n_0^+} u(n_0^-, n_0^+, a, b) \bP^{\gd,\gb}\big(Y_{x_n} \leq n-n_0^--n_0^+,\\ 
&\qquad\qquad\qquad\qquad \exists\, j\in\N\colon\, Y_j = n - n_0^- - n_0^+,
M_{j-1}^+ = b \,\big|\, M_0^+ = a \big)
\end{array}}
{\sum_{a,b,n_0^-,n_0^+} u(n_0^-, n_0^+, a, b) \bP^{\gd,\gb}\big(\exists\, j\in\N\colon\, 
Y_j = n - n_0^- - n_0^+, M_{j-1}^+ = b \,\big|\, M_0^+ = a \big)}.
\end{aligned}
\end{equation}
Equation \eqref{eq:CLTresult} follows from \eqref {eq:CLTstep5} and \eqref {eq:CLTstep6} by 
letting $n\to\infty$. If we set
\begin{equation}
\label{Jndef}
J(n)  = \sup\{j\in\N\colon\, Y_j \leq n\} - 1,
\end{equation}
then the limit is justified by the following points:
\begin{enumerate}
\item\label{it:1}
$\sum_{n_0^-,\ n_0^+,\ a} u(n_0^-, n_0^+, a, b)  < \infty$
for all $b$,
\item\label{it:2}
$\displaystyle\limsup_{B\to\infty} \sup_{n\in\N} \sumtwo{n_0^-, n_0^+, a}{b > B} 
u(n_0^-, n_0^+, a, b) 
\bP^{\gd,\gb}(n-n_0^- - n_0^+ \in Y, M^+_{J(n)-1}=b \,|\, M_0^+=a) = 0$.
\end{enumerate}
Note that \eqref{it:2} allows us to truncate the 
sums in the numerator and denominator of \eqref{eq:CLTstep6} to $b \le B$, after which \eqref{it:1} allows us to apply 
dominated convergence.
To see why \eqref{it:1} and \eqref{it:2} hold, 
we first note that, by the definition \eqref{eq:def.Abeta.p.r2} of $\tilde A$,
\begin{equation}
	\tilde A(n,0) \stackrel{n\to\infty}{\asymp} e^{-n(\mu + \log 2)} \,,
\end{equation}
where the $\log 2$ comes from $\tilde Q(n,0)$, cf.\ \eqref{eq:def.Qij}, and we note that 
$G^*$ gives a negligible contribution, by Proposition~\ref{pr:asymptGstar}. Since, by 
Lemma~\ref{lem:nurate} below,
\begin{equation}
\label{eq:belo}
\tilde\nu(n) \stackrel{n\to\infty}{\asymp} \nu(n) \stackrel{n\to\infty}{\asymp} e^{-rn}, 
\qquad \mu = \log\cosh(r) > r - \log 2,
\end{equation}
it follows that
\begin{equation}
\sup_{n\in\N} \frac{\tilde A(n,0)}{\tilde\nu(n)} < \infty,
\end{equation}
from which, recalling \eqref{eq:udef}, we deduce that for some positive constant $C$,
\begin{equation}\label{eq:doubl}
\begin{aligned}
&\sum_{n_0^-, n_0^+} u(n_0^-, n_0^+, a, b) \leq C \, \phi(a,b)
\, \bP^{\gd,\gb}(M_0^+ = a),\\
&\text{where} \quad
\phi(a,b) = \bE^{\gd,\gb}\Big[ \frac{\hat A(M_0^-, a)}{\tilde\eta (M_0^-) \nu(a)} \Big]\, 
\bE^{x,\gd,\gb}\Big[ \frac{\tilde \nu(M^+_x)}{\nu(M^+_x)} \Big| M^+_{x-1} = b \Big].
\end{aligned}
\end{equation}
We remind that the second expectation does not actually depend on $x$, by
the Markov property. Moreover, we claim that for every $\epsilon > 0$
there exists $C < \infty$ such that
\begin{equation} 
\label{eq:claimju}
\bE^{x,\gd,\gb}\Big[ \frac{\tilde \nu(M^+_x)}{\nu(M^+_x)} \Big| M^+_{x-1} = b \Big]
\le C e^{\epsilon b} \,, \qquad b \in \N_0 \,.
\end{equation}
In fact, recalling the definition \eqref{eq:Qgdgb} of the transition kernel $Q_{\gd,\gb}$
of $(M_n^+)$,
\begin{equation}
\bE^{x,\gd,\gb}\Big[ \frac{\tilde \nu(M^+_x)}{\nu(M^+_x)} \Big| M^+_{x-1} = b \Big]
= \sum_{k\in \N_0} \frac{\tilde \nu(k)}{\nu(k)} \, A(b,k) \, \frac{\nu(k)}{\nu(b)}
= \frac{1}{\nu(b)} \sum_{k\in \N_0} \tilde \nu(k) \, A(b,k) \,.
\end{equation}
Since $\nu(b) \ge c \, e^{-(r+\epsilon/2)b}$ (recall \eqref{eq:belo}), it suffices to show 
that $\sum_{k\in\N_0} \tilde \nu(k) \, A(b,k) \le C \, e^{-(r-\epsilon/2)b}$. To this 
end, we show that
\begin{equation}
\label{eq:toshowthat}
\sum_{b\in \N_0} e^{(r-\epsilon/2)b} 
\bigg( \sum_{k\in \N_0} \tilde \nu(k) \, A(b,k) \bigg) < \infty \,.
\end{equation}
Recalling the definition \eqref{eq:def.Abeta.p.r} of $A$, we note that, since $G^*$ 
is bounded from above (by Proposition~\ref{pr:asymptGstar}),
\begin{equation}
A(b,k) \le C \, e^{-\mu(b+k)} \, \binom{b+k}{b} \, \frac{1}{2^{b+k+1}}
= \frac{C}{2} \, e^{-\mu(b+k)} \, \bP(S_{b+k}=b-k),
\end{equation}
with $S_n$ the simple symmetric random walk on $\Z$. Since $\tilde\nu(k) \le C 
e^{-(r-\epsilon/2)k}$ (recall \eqref{eq:belo}), setting $i = b-k$ and $j = b+k$, we get
\begin{equation}
\begin{split}
\sum_{b\in \N_0} e^{(r-\epsilon/2)b} 
& \bigg( \sum_{k\in \N_0} \tilde \nu(k) \, A(b,k) \bigg)
\le C \sum_{j\in\N_0} \sum_{i\in\Z}
e^{(r-\epsilon/2)i} e^{-\mu j} \bP(S_j=i)\\
& = C \sum_{j\in\N_0} e^{-\mu j} \bE[e^{(r-\epsilon/2)S_j}] 
= C \sum_{j\in\N_0} e^{(\log \cosh(r-\epsilon/2)-\mu )j} < \infty,
\end{split}
\end{equation}
because $\mu = \log\cosh(r) > \log\cosh(r-\epsilon)$ (recall \eqref{eq:belo}).
We have proved \eqref{eq:toshowthat}, and hence \eqref{eq:claimju}.

To prove \eqref{it:1}, recall \eqref{eq:M0pm} and \eqref{eq:hatA}. Bound
$\widehat A(\ell, a) \le C e^{-\mu(\ell + a)}$ (because $Q(\ell,a) \le 1$ and 
$G^*$ is bounded from above by Proposition~\ref{pr:asymptGstar}), to  
obtain via \eqref{eq:doubl} that
\begin{equation}
\begin{aligned}
\sum_{a\in\N_0} \bE^{\gd,\gb}\Big[ \frac{\widehat A(M_0^-, a)}{\tilde\eta (M_0^-) \nu(a)} \Big] 
\bP^{\gd,\gb}(M_0^+ = a) 
&\leq \sum_{a,\ell \in \N_0} \frac{\widehat A(\ell, a)}{\tilde\eta (\ell) \nu(a)} 
\nu(a)^2 \tilde\eta(\ell) \tilde\nu(\ell)\\
&\leq C \sum_{a,\ell \in \N_0} e^{-\mu(a+\ell)} \nu(a) \tilde\nu(\ell) < \infty.
\end{aligned}
\end{equation}
To prove \eqref{it:2}, we first note that, by Cauchy-Schwarz,
\begin{equation}
\begin{split}
& \bP^{\gd,\gb}(M_0^+ = a, n-n_0^- - n_0^+\in Y, M^+_{J(n-n_0^- - n_0^+)}=b) \\
& \qquad \le \sqrt{\bP^{\gd,\gb}(M_0^+ = a)}
\sqrt{\bP^{\gd,\gb}(n-n_0^- - n_0^+\in Y, M^+_{J(n-n_0^- - n_0^+)}=b)}
\le \nu(a) \, \nu(b),
\end{split}
\end{equation}
because $\bP^{\gd,\gb}(M_0^+ = a) = \nu(a)^2$ (recall \eqref{eq:M0pm}), and the
relation
\begin{equation}
\label{eq:magic}
\sup_{k\in\N} \bP^{\gd,\gb}(k \in Y, M^+_{J(k)}=b) = \nu(b)^2
\end{equation}
proved below.
Recalling \eqref{eq:udef} and \eqref{eq:doubl}--\eqref{eq:claimju}, we can finally estimate 
\begin{equation}
\begin{aligned}
&\sumtwo{n_0^-, n_0^+, a}{b > B} u(n_0^-, n_0^+, a, b) 
\bP^{\gd,\gb}(n-n_0^- - n_0^+ \in Y, M^+_{J(n)}=b |\, M_0^+=a)\\
& \qquad \leq C \sum_{a\in\bbN, b>B} \phi(a,b) \, \nu(a) \, \nu(b)
\leq C \Bigg(\sum_{a\in\bbN_0}
\bE^{\gd,\gb}\Big[ \frac{\hat A(M_0^-, a)}{\tilde\eta (M_0^-)} \Big] \Bigg)
\Bigg( \sum_{b > B} e^{\epsilon b} \nu(b) \Bigg) \,.
\end{aligned}
\end{equation}
Since $\nu(b) \asymp e^{-r b}$ (recall \eqref{eq:belo}), the sum in the second parenthesis 
converges, and vanishes as $B\to\infty$. To complete the proof of \eqref{it:2}, it suffices to 
show that the first parenthesis is finite. Similarly as above, recalling \eqref{eq:M0pm}, 
\eqref{eq:belo} and bounding $\hat A(i,j) \le C \, e^{-\mu(i+j)}$ (recall \eqref{eq:hatA}),
we get
\begin{equation}
\sum_{a\in \N_0} \bE^{\gd,\gb}\Big[ \frac{\hat A(M_0^-, a)}{\tilde\eta (M_0^-)} \Big] 
\leq C \sum_{a,\ell \in \N_0} e^{-\mu(a+\ell)}\tilde\nu(\ell) < \infty.
\end{equation}

It only remains to prove \eqref{eq:magic}. If we define $p^{(n)}(b) = \sup_{k \leq n} 
\bP^{\gd,\gb}(k\in Y, M^+_{J(k)}=b)$, then
\begin{equation}
\label{eq:supequalsnu}
\sup_{n \in \N_0} \bP^{\gd,\gb}(n\in Y, M^+_{J(n)}=b) = p^{(\infty)}(b) = \lim_{n\to\infty} p^{(n)}(b).
\end{equation}
Note that
\begin{equation}
p^{(\infty)}(b) \geq  p^{(2b+1)}(b) = \nu(b)^2,
\end{equation}
while, for all $b\geq 1$,
\begin{equation}
\begin{aligned}
&\bP^{\gd,\gb}(n\in Y, M^+_{J(n)}=b) \\
&\quad = \left\{
\begin{array}{ll}
0, & \mbox{if}\ 2b+1>n\\
\nu(b)^2, & \mbox{if}\ 2b+1=n\\
\sum_{1\leq a \leq n-2b-1} \bP(n-2b-1\in Y, M^+_{J(n-2b-1)} = a) Q^{\gd,\gb}(a,b), & \mbox{if}\ 2b+1 <n.
\end{array}
\right.
\end{aligned}
\end{equation}

If $2b+1<n$, then we have in particular that $p^{(n)}(b) \leq \sum_{1\leq a \leq n-b} p^{(n-1)}(a) 
Q^{\gd,\gb}(a,b)$. Therefore
\begin{equation}
\label{eq:ineq_pn0}
p^{(n)}(b) \leq \nu(b)^2 \ind_{\{2b+1=n\}} + (p^{(n-1)}Q^{\gd,\gb})(b)\ind_{\{2b+1<n\}},
\end{equation}
from which we get for $n\in \N$ (with the convention $p^{(0)} = 0$),
\begin{equation}
\label{eq:ineq_pn}
\begin{aligned}
\|p^{(n)}\|_1 &\leq \nu(n)^2  + \|p^{(n-1)}Q^{\gd,\gb}\|_1\\
&\leq \nu(n)^2  + \|p^{(n-1)}\|_1,
\end{aligned}
\end{equation}
because $Q^{\gd,\gb}$ is stochastic. Since $\|p^{(1)}\|_1 = \nu(1)^2$, we get $\|p^{(n)}\|_1 \leq 
\sum_k \nu(k)^2 =1$, for $n\geq1$, so $\|p^{(\infty)}\|_1 \leq 1$ by Fatou's lemma. By letting $n\to\infty$ 
in \eqref{eq:ineq_pn0}, we obtain $p^{(\infty)}(b) \leq (p^{(\infty)}Q^{\gd,\gb})(b)$ for all $b \geq1$. 
But $\|p^{(\infty)}Q^{\gd,\gb}\|_1 \leq \|p^{(\infty)}\|_1$, which implies that $p^{(\infty)} = p^{(\infty)}
Q^{\gd,\gb}$, and there exists a constant $c$ such that $p^{(\infty)} = c\nu^2$. Necessarily, 
$c\leq 1$, so $p^{(\infty)}(b) \leq \nu(b)^2$. This completes the proof of \eqref{eq:supequalsnu},
and hence the proof of the central limit theorem for the speed.
\end{proof}

\medskip 
In Appendix~\ref{AppB} we list the key ingredients necessary to extend the above argument to 
prove the central limit theorem for the charge.


\subsection{Laws of large numbers for the speed and the charge}
\label{ss:LLN}

In this section we prove Theorems~\ref{thm:speed}--\ref{thm:charge}.

\begin{proof}
If $\beta \neq \beta_c(\delta)$ (``off the critical curve''), then the following hold:
\begin{itemize} 
\item
The rate function $I^v_{\gd,\gb}$ for the speed in \eqref{Ivdef} has a unique zero 
at $v(\delta,\beta) = \frac{\partial\mu}{\partial\gamma}(\gd,\gb,0)$ (corresponding to 
$\gamma=0$), which equals \eqref{eq:speedspec}.
\item
The rate function $I^\rho_{\gd,\gb}$ for the charge in \eqref{Irhodef} has a unique 
zero at $\rho(\delta,\beta)=\frac{\partial\mu}{\partial\delta}(\gd,\gb)$ (corresponding to 
$\gamma'=0$), which equals \eqref{eq:chargespec}.  
\end{itemize}
Hence the laws of large numbers follow from the large deviation principles. (Note: 
The condition $S_n>0$ in \eqref{LLNspeed} is put in to fix a direction for the speed: 
by the symmetry of the simple random walk the same large deviation principle holds to the left.) 
If, on the other hand, $\beta = \beta_c(\delta)$ (``on the critical curve''), then the rate 
functions have a horizontal piece, and hence the laws of large numbers need a separate 
argument.

We first note that $v(\gd,\gb_c(\gd))>0$, as it appears from \eqref{eq:speedspec} and Lemma \ref{lem:slopecrit1}. Thanks to the strict convexity of $I^v_{\gd,\gb_c(\gd)}$ at the right of $v(\gd,\gb_c(\gd))$, 
it suffices to prove that
\begin{equation}
\lim_{n\to\infty} \bbP_n^{\gd,\gb}\big(S_n \leq (1-\gep)v(\gd,\gb_c(\gd))n \mid S_n >0\big) = 0
\qquad \forall\,\gep>0.
\end{equation} 
We will use the representation of $\bbP_n^{\gd,\gb}(S_n = x)$, $x\in \bbZ$, as developed in the 
proof of Theorem~\ref{thm:CLTspeedcharge}. Reproducing Steps 1--6 with $x_n = (1-\gep)
v(\gd,\gb_c(\gd))n$, we are left with controlling
\begin{equation}
\bP^{\gd,\gb}(Y_{x_n} \geq n) \quad \mbox{ with } \quad Y_k = \sum_{y=0}^{k-1} (2 M^+_y + 1),
\end{equation}
where $(M_y^+)_{y\in\N}$ is the stationary Markov chain whose transition matrix is given by 
\eqref{eq:Qgdgb}, for the choice of parameters $\beta = \gb_c(\gd)$ and $\mu = \mu(\gd,
\gb_c(\gd)) =0$. It will be shown in the proof of Lemma~\ref{lem:slopecrit1} below that 
$\sum_{i\in \N_0} i \nu(i)^2<\infty$, which means that the Markov chain has finite mean. 
Furthermore, $\bE(Y_n) = \frac{n}{v(\gd,\gb_c(\gd))}$. Since $n\geq \frac{1+\gep}
{v(\gd,\gb_c(\gd))} x_n$, we only need that
\begin{equation}
\lim_{n\to\infty} \bP(Y_n - \bE[Y_n] \geq \gep n) = 0,
\end{equation}
but this follows from the law of large numbers for stationary Markov chains.
\end{proof}


\section{Asymptotic properties: proof of the main theorems}
\label{s:asymptotics}

Section~\ref{ss:asympcritcurve} contains the proof of the scaling of $\beta_c(\delta)$
for $\delta\to\infty$ stated in Theorem~\ref{thm:asymp.cc}(2). Section~\ref{orderphtr}
explains why the phase transition is first order as claimed in Theorem~\ref{thm:orderphtr}.
Section~\ref{ss:WIL} contains the proof of the scaling of $\beta_c(\delta)$ for $\delta 
\downarrow 0$ stated in Theorem~\ref{thm:asymp.cc}(1), and also deals with the weak 
interaction limit $\delta,\beta \downarrow 0$ in Theorem~\ref{thm:Fscalbeta}.


\subsection{Scaling of the critical curve}
\label{ss:asympcritcurve}

In this section we give the proof of Theorem~\ref{thm:asymp.cc}(2). We begin by stating a rough 
but helpful lemma. Recall 
\eqref{eq:rel.G.star}: 
\begin{equation}
\label{reann}
G^*_{\delta,\beta}(\ell) = \log \bbE\left[e^{\delta \Omega_\ell-\beta\Omega_\ell^2}\right]
= \log \bbE\left[e^{ \delta \Omega_\ell (1-\frac{\beta}{\delta} \Omega_\ell)}\right].
\end{equation}

\begin{lemma}
\label{SBcond}
The following hold:\\
{\rm (1)} If $G^*_{\delta,\beta}(\ell)\leq 0$ for all $\ell\in\N$, then $\beta \geq \beta_c(\gd)$.\\
{\rm (2)} If there exists $\ell \in \N$ such that $\ell^{-1}G^*_{\delta,\beta}(\ell) > \log 2$,
then $\beta < \beta_c(\gd)$.
\end{lemma}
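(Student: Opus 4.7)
The plan is to prove both parts using the partition function representation \eqref{eq:identity*}, namely $\bbZ_n^{*,\delta,\beta} = \bE[\exp(\sum_{x\in\Z} G^*_{\delta,\beta}(L_n(x)))]$, together with the basic fact $G^*_{\delta,\beta}(0) = 0$.

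For part (1), the argument is immediate. Under the hypothesis, every term in the sum $\sum_{x\in\Z} G^*_{\delta,\beta}(L_n(x))$ is non-positive (sites with $L_n(x) = 0$ contribute $0$, sites with $L_n(x) \in \N$ contribute $\leq 0$). Hence $\bbZ_n^{*,\delta,\beta} \leq 1$ for every $n$, so $F^*(\delta,\beta) = \lim_{n\to\infty} n^{-1}\log \bbZ_n^{*,\delta,\beta} \leq 0$. Combined with the lower bound $F^*(\delta,\beta) \geq 0$ from \eqref{eq:Fineq} in Theorem~\ref{thm:free.energy}, this gives $F^*(\delta,\beta) = 0$, so $(\delta,\beta) \in \cQ^{=}$ and $\beta \geq \beta_c(\delta)$ by the characterisation \eqref{phasesdef}.

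For part (2), the strategy is to lower-bound $\bbZ_n^{*,\delta,\beta}$ by restricting the SRW expectation to a single, carefully chosen deterministic trajectory. Given the hypothesised $\ell \in \N$, I would construct a nearest-neighbour path of length $n = \ell N + O(1)$ whose local time equals $\ell$ on exactly $N$ consecutive sites and $0$ elsewhere. One explicit construction: take down-crossing numbers $d_k = \ell - 1$ for even $k$ and $d_k = 0$ for odd $k$ over the edges $(k,k+1)$, $0 \leq k \leq N-1$, together with $u_k = d_k + 1$; this yields $L_n(x) = \ell$ for $x = 1,\dots,N$ and total length $n = N + 2\sum_{k=0}^{N-1} d_k \sim \ell N$ as $N \to \infty$. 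Since such a specific SRW trajectory has probability exactly $2^{-n}$, and $G^*_{\delta,\beta}(0) = 0$ kills the contributions from unvisited sites, we obtain
\begin{equation*}
\bbZ_n^{*,\delta,\beta} \;\geq\; 2^{-n}\, \exp\!\Big(N\, G^*_{\delta,\beta}(\ell)\Big).
\end{equation*}
Taking logarithms, dividing by $n$, and letting $N \to \infty$ (hence $n \to \infty$) gives
\begin{equation*}
F^*(\delta,\beta) \;\geq\; -\log 2 \,+\, \frac{G^*_{\delta,\beta}(\ell)}{\ell} \;>\; 0
\end{equation*}
by hypothesis. Hence $(\delta,\beta) \in \cQ^{>}$, i.e.\ $\beta < \beta_c(\delta)$.

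The only mildly delicate step is verifying the existence of a deterministic path with local-time profile concentrated on $N$ sites at height exactly $\ell$; this is elementary combinatorics (equivalently, the Ray-Knight correspondence guarantees that any non-negative sequence of ``edge-crossing numbers'' with compatible parity is realised by some SRW trajectory, so one has wide latitude in the construction). No matching upper bounds are required, since the lemma only asserts the two one-sided implications.
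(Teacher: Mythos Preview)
Your proof is correct and follows essentially the same approach as the paper. Part~(1) is identical. For part~(2), the paper also restricts to a single deterministic trajectory with many sites of local time exactly $\ell$ (it uses a zigzag between consecutive pairs of sites rather than your edge-crossing construction), leading to the same bound $F^*(\delta,\beta) \geq \ell^{-1}G^*_{\delta,\beta}(\ell) - \log 2$. One minor imprecision: in your construction the boundary site $0$ carries local time $\ell-1$ (and site $N$ may carry local time $1$ when $N$ is even), so the profile is not exactly ``$\ell$ on $N$ sites and $0$ elsewhere''; but these $O(1)$ boundary contributions vanish after dividing by $n$, so the conclusion is unaffected.
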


\begin{proof}
(1) This is immediate from \eqref{eq:Fdef} and (\ref{eq:identity*}--\ref{eq:ZnZn*rel}).\\
(2) Let $\ell \in \N$ such that $\ell^{-1}G^*_{\delta,\beta}(\ell) > \log 2$. We restrict the partition 
function $\bbZ_n^{*,\delta,\beta}$ to the trajectory $s=(s_i)_{i\in\N_0}$ defined by $s_0=0$ 
and
\begin{equation}
\begin{aligned}
s_{2a\ell+2b+1}
&=2a+1, &a\in \N_0, \,  0 \leq b < \ell,\\
s_{2a\ell+2(b+1)}
&=2a+2, &a\in \N_0, \,  0 \leq b <  \ell.
\end{aligned}
\end{equation}
For $a\in \N$, we can estimate
\begin{equation}
\label{dert}
\bbZ_{2a\ell}^{*,\delta,\beta} \geq e^{2a G^{*}_{\delta,\beta}(\ell)}\,  
\bP(S_{[0,2a\ell]}=s_{[0,2a\ell]}).
\end{equation}
Take $\frac{1}{2a\ell} \log$ on both sides of \eqref{dert}, let $a\to \infty$ and recall \eqref{eq:identity*},  
to obtain that $F^*(\delta,\beta) \geq \ell^{-1}G^{*}_{\delta,\beta}(\ell)-\log 2>0$.
\end{proof}

With the help of Lemma~\ref{SBcond} the asymptotics of $\beta_c(\delta)$ for $\delta\to\infty$ is 
proved as follows.

\medskip\noindent 
\emph{Lattice case}:
Recall \eqref{eq:Tlat}. We prove that there exists a $c_1>0$ such that
\begin{equation}
\label{resfi}
\frac{\delta}{T}-\frac{c_1}{T^2}\leq \beta_c(\delta)\leq \frac{\delta}{T} \qquad \forall\,\delta>0.
\end{equation}

To prove the upper bound in \eqref{resfi}, note that $\bbP(\Omega_\ell\in (0,T))=0$ for all 
$\ell\in \N$. Therefore, choosing $\beta=\delta/T$ in the right-hand side of \eqref{reann}, 
we obtain that $\delta \Omega_\ell (1-\frac{1}{T} \Omega_\ell) \leq 0$ for all $\ell\in\N$ and 
$\bbP$-a.e.\ $\omega$. Consequently, $G^*_{\delta,\delta/T}(\ell)\leq 0$ for $\delta>0$ 
and $\ell\in \N$ which, by Lemma~\ref{SBcond}(1), implies that  $\beta_c(\delta)\leq \delta/ T$
for $\delta>0$.

To prove the lower bound in \eqref{resfi}, note that there exists an $\ell\in \N$ such that 
$\bbP(\Omega_\ell=T)>0$ (Durrett~\cite[Theorem 3.5.2]{Du91}). Therefore, choosing 
$\beta = \beta(\delta,T) =\delta/T - c_1/T^2$ in the right-hand side of \eqref{reann}, we obtain 
\begin{align}
\label{reann1}
G^*_{\delta,\beta(\delta,T)}(\ell)
&\geq \log \bbE\left[e^{ \delta \Omega_\ell \left(1-\frac{\beta(\delta,T)}{\delta} \Omega_\ell\right)}
\ind_{\{\Omega_\ell=T\}}\right]\\ \nonumber
&= \log \bbP(\Omega_\ell=T) + \delta T\left(1-\tfrac{\beta(\delta,T)}{\delta} T\right)
= \log \bbP(\Omega_\ell=T)+c_1,
\end{align}
hence
\begin{equation}
\label{reann2}
\frac{1}{\ell}\, G^*_{\delta,\beta(\delta,T)}(\ell)-\log 2 
\geq \frac{1}{\ell}  \log\bbP(\Omega_{\ell}=T)+\frac{c_1}{\ell} - \log 2.
\end{equation}
The right-hand side of \eqref{reann2} is strictly positive for $c_1$ large enough, uniformly in 
$\delta$. Therefore, by Lemma~\ref{SBcond}(2), we have $(\delta,\beta(\delta,T))\in \cB$, 
which completes the proof of \eqref{resfi}.

\medskip\noindent 
\emph{Non-lattice case}: 
We show that 
\begin{equation}
\label{nlc}
\liminf_{\delta\to \infty} \beta_c(\delta)/\delta=\infty.
\end{equation}
Pick $C>0$. The proof of \eqref{nlc} will be complete once we show that $(\delta,C\delta)\in\cB$ 
for $\delta$ large enough. To that end we note that for any $\eta > 0$ there exists an $\ell \in \N$ 
such that $\bbP(\Omega_\ell \in (0,\eta)) > 0$ (by the non-lattice assumption; recall \eqref{eq:Tlat} 
with $T=0$). Choosing $\eta = 1/C$, we get
\begin{align}
\label{drt}
\begin{split}
G^*_{\delta,C\delta}(\ell)
 & \geq \log \bbE\left[e^{\delta\Omega_\ell(1 - C\Omega_\ell)} 
\ind_{\big\{\Omega_\ell\in (0,\frac{1}{C})\big\}}\right]  \\
&\geq \log \bbP\big(\Omega_\ell \in (0,\tfrac{1}{C})\big) 
+ \delta \, \bbE\big[\Omega_\ell(1-C\Omega_\ell) \mid \Omega_\ell \in (0,\tfrac{1}{C})\big].
\end{split}
\end{align}
Since $\bbE[\Omega_\ell(1-C\Omega_\ell) \mid \Omega_\ell \in (0,\tfrac{1}{C})] > 0$,
the right-hand side tends to $\infty$ as $\delta \to \infty$, and hence
$\ell^{-1} G^*_{\delta,C\delta}(\ell) > \log 2$ for $\delta$ large enough.
This completes the proof of \eqref{nlc} by Lemma~\ref{SBcond}.

\medskip\noindent
\emph{Non-lattice case with density}:  
Suppose that $\omega_1$ has a density $g_1$ with respect to the Lebesgue measure $\lambda$. 
Then, for all $\ell\in \N$, $\Omega_\ell$ has a density $g_\ell$ with respect to $\lambda$.

\begin{lemma}
\label{lem:dc1}
$\liminf_{\delta\to \infty} \beta_c(\delta)/(\delta^2/\log \delta) \geq \frac{1}{4}$.
\end{lemma}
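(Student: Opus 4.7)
The strategy is to invoke Lemma~\ref{SBcond}(2): it suffices to produce, for each $c \in (0, 1/4)$ and all sufficiently large $\delta$, an integer $\ell$ such that
\begin{equation*}
\frac{1}{\ell}\,G^*_{\delta,\beta}(\ell) > \log 2, \qquad \beta = \frac{c\delta^2}{\log\delta}.
\end{equation*}
This gives $\beta_c(\delta) \geq c\delta^2/\log\delta$ for all large $\delta$, and letting $c \uparrow 1/4$ yields the claim. The underlying heuristic is that the parabola $\delta x - \beta x^2$ attains its maximum $\log\delta/(4c)$ at $x^* = \log\delta/(2c\delta)$, which tends to $0$, so a Laplace integral against a bounded density, which pays a logarithmic width penalty, still leaves a gain of $(\frac{1}{4c} - 1)\log\delta$, positive precisely when $c < 1/4$.

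First I would fix a (large) integer $L_0$ such that $g_{L_0}$ is continuous and strictly positive at $0$. For continuity, the assumption $g_1 \in L^1 \cap L^\infty$ implies that $g_L = g_1 * g_{L-1}$ is continuous for every $L \geq 2$ via $L^1$-continuity of translation. For positivity at $0$, I would appeal to Gnedenko's local central limit theorem: since $g_1 \in L^2$ (by interpolation between $L^1$ and $L^\infty$), one has $\sup_{y \in \R} |\sqrt{L}\,g_L(\sqrt{L}\,y) - \phi(y)| \to 0$ as $L \to \infty$, where $\phi$ is the standard normal density; in particular $g_L(0) \to 1/\sqrt{2\pi L}$. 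Fix $L_0$ together with constants $c_0, \eta_0 > 0$ so that $g_{L_0}(x) \geq c_0$ for all $x \in (-\eta_0, \eta_0)$.

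Next I would complete the square to write
\begin{equation*}
G^*_{\delta,\beta}(L_0) = \frac{\delta^2}{4\beta} + \log \int_\R e^{-\beta(x - x^*)^2}\,g_{L_0}(x)\,dx,\qquad x^* = \frac{\delta}{2\beta},
\end{equation*}
and set $\sigma = 1/\sqrt{\beta}$. Since both $x^* \sim \log\delta/(2c\delta)$ and $\sigma \sim \sqrt{\log\delta}/(\sqrt{c}\,\delta)$ tend to $0$, the interval $I_\delta = (x^* - \sigma, x^* + \sigma)$ is contained in $(-\eta_0, \eta_0)$ for all $\delta$ large. Restricting the integral to $I_\delta$, using $g_{L_0} \geq c_0$ there, and noting that $\beta(x - x^*)^2 \leq 1$ on $I_\delta$, I would deduce
\begin{equation*}
G^*_{\delta,\beta}(L_0) \geq \frac{\log\delta}{4c} - 1 + \log(2 c_0 \sigma) = \Big(\frac{1}{4c} - 1\Big)\log\delta + O(\log\log\delta).
\end{equation*}
Since $c < 1/4$ makes the coefficient of $\log\delta$ strictly positive, the right-hand side diverges to $+\infty$ and in particular exceeds $L_0 \log 2$ for $\delta$ sufficiently large, which is the desired conclusion.

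The main obstacle is securing a finite $L_0$ with $g_{L_0}(0) > 0$. A purely elementary argument is not entirely straightforward under the sole assumption of a bounded density, since simple examples (e.g.\ $g_1$ supported on $(1,2) \cup (-3,-2)$ with weights chosen so that the mean vanishes) show that $g_L(0) = 0$ is possible for small $L$, so an appeal to some form of LCLT seems unavoidable. Gnedenko's LCLT provides a clean resolution under the $L^2$ hypothesis that follows for free from boundedness, after which the remainder of the argument is a standard Laplace estimate.
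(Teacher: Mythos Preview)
Your proof is correct and follows essentially the same approach as the paper: fix a finite $\ell$ for which the local CLT guarantees $g_\ell$ is continuous and positive near $0$, do a Laplace-type lower bound on $\bbE[e^{\delta\Omega_\ell-\beta\Omega_\ell^2}]$, and apply Lemma~\ref{SBcond}(2). The only differences are cosmetic---you complete the square and restrict to a window of width $1/\sqrt{\beta}$ around the maximum, whereas the paper keeps the parabola in the form $\delta s(1-\tfrac{\beta}{\delta}s)$ and integrates over the full support $(0,\delta/\beta)$---but the resulting estimates and the $(\tfrac{1}{4c}-1)\log\delta$ gain are identical.
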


\begin{proof}
Using \eqref{reann}, we may write
\begin{equation}
G^*_{\delta,\beta}(\ell) \geq \log \bbE\left[ e^{\delta \Omega_\ell (1- \frac{\beta}{\delta}\gO_\ell)} 
\ind_{\{\gO_\ell \in (0,\frac{\delta}{\beta})\}} \right].
\end{equation}
Suppose that $g_1$ has a finite $L^\infty(\lambda)$-norm. Then $g_\ell$ is bounded and 
continuous for $\ell\in\N\backslash\{1\}$, with $g_\ell(0) > 0$ for $\ell$ large enough,
by the local central limit theorem (see Feller~\cite[Theorem 2 in Section XV.5]{Fe71}).
By continuity, for every $\epsilon \in (0,1)$ there exists a $\gamma > 0$ such that $(1-\epsilon) 
g_\ell(0) \leq g_\ell(x) \leq (1+\epsilon) g_\ell(0)$ for all $x \in (0,\gamma)$. If $\frac{\delta}{\beta} 
< \gamma$, then we can write
\begin{equation}
\begin{aligned}
G^*_{\delta,\beta}(\ell) &= \log \int_\R e^{\delta s (1- \frac{\beta}{\delta}s)} 
\ind_{\{s\in (0,\frac{\delta}{\beta})\}} g_\ell(s)\dd s.\\
& \geq \log \Big[ (1-\epsilon)g_\ell(0) \int_\R e^{\delta s (1- \frac{\beta}{\delta}s)} 
\ind_{\{s\in (0,\frac{\delta}{\beta})\}} \dd s\Big]\\
&\geq \log[(1-\epsilon)g_\ell(0)] + \log \Big[ \int_\R e^{\frac{\delta^2}{\beta} 
s(1-s)} \tfrac{\delta}{\beta} \ind_{\{s\in (0,1)\}} \dd s \Big].
\end{aligned}
\end{equation}
Choosing $\beta = C \delta^2/\log \delta$, with $C > 0$ fixed and $\delta$ large enough, 
we get that the condition $\delta/\beta < \gamma$ is satisfied and
\begin{equation}
G^*_{\delta,\beta}(\ell) \geq  \log \Big[ \int_\R \delta^{\frac{s(1-s)}{C}} 
\ind_{\{s\in (0,1)\}}\dd s \Big] - \log \delta + \log \log \delta + \log[(1-\epsilon)g_\ell(0)] - \log C.
\end{equation}
For every $\epsilon\in (0,1)$, there exists a $C_\gep>-\infty$ such that $\log[\int_\R\delta^{\frac{s(1-s)}{C}} 
\ind_{\{s\in (0,1)\}}\dd s] \geq \frac{1-\gep}{4C} \log \delta + C_\gep$, and so $\lim_{\delta\to\infty} 
G^*_{\delta,\beta}(\ell)=\infty$ when $\frac{1-\gep}{4C}>1$. Since $\epsilon \in (0,1)$ is arbitrary, it 
follows from Lemma~\ref{SBcond} that $(\delta,C \delta^2/\log\delta)\in\cB$ for any $C<\frac14$
and $\delta$ large enough.
\end{proof}

\begin{lemma}
\label{lem:dc2}
$\limsup_{\delta\to \infty} \beta_c(\delta)/(\delta^2/ \log \delta) \le \frac{1}{4}$.
\end{lemma}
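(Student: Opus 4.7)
The plan is to invoke Lemma~\ref{SBcond}(1) and reduce the problem to a uniform upper bound on $G^*_{\delta,\beta}(\ell)$. Fix $C > \tfrac14$ and set $\beta_\delta = C \delta^2/\log\delta$. It suffices to show that, for $\delta$ large enough (depending only on $C$), one has $G^*_{\delta,\beta_\delta}(\ell) \le 0$ for every $\ell \in \N$. Once this is established, Lemma~\ref{SBcond}(1) gives $\beta_c(\delta) \le C\delta^2/\log\delta$ for all large $\delta$, and letting $C \downarrow \tfrac14$ yields the desired $\limsup$ bound.

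To get this uniform bound, I would complete the square in the exponent:
\begin{equation*}
\delta s - \beta_\delta s^2 = \frac{\delta^2}{4\beta_\delta} - \beta_\delta\bigl(s - s_0\bigr)^2, \qquad s_0 = \frac{\delta}{2\beta_\delta}.
\end{equation*}
Writing $g_\ell$ for the density of $\Omega_\ell$ (which exists since $\omega_1$ has a density), this gives
\begin{equation*}
e^{G^*_{\delta,\beta_\delta}(\ell)} = e^{\delta^2/(4\beta_\delta)} \int_\R e^{-\beta_\delta(s-s_0)^2}\, g_\ell(s)\, \dd s \;\le\; \|g_\ell\|_\infty \, e^{\delta^2/(4\beta_\delta)}\, \sqrt{\pi/\beta_\delta}.
\end{equation*}
Crucially, since $g_\ell = g_1^{*\ell}$ and $g_1$ is assumed bounded, Young's convolution inequality applied inductively (using $\|g_1\|_1 = 1$) gives $\|g_\ell\|_\infty \le \|g_1\|_\infty$ uniformly in $\ell$. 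Plugging in $\beta_\delta = C\delta^2/\log\delta$, so that $\delta^2/(4\beta_\delta) = (\log\delta)/(4C)$ and $1/\sqrt{\beta_\delta} = \sqrt{\log\delta}/(\sqrt{C}\,\delta)$, yields
\begin{equation*}
e^{G^*_{\delta,\beta_\delta}(\ell)} \;\le\; \|g_1\|_\infty \sqrt{\pi/C}\, \cdot\, \delta^{1/(4C) - 1}\sqrt{\log\delta}, \qquad \ell \in \N.
\end{equation*}

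Since $C > \tfrac14$, the exponent $1/(4C) - 1$ is strictly negative, so the right-hand side tends to $0$ as $\delta \to \infty$ uniformly in $\ell$. Hence $G^*_{\delta,\beta_\delta}(\ell) \le 0$ for all $\ell \in \N$ once $\delta \ge \delta_0(C)$, and the proof is complete. There is no genuine obstacle: the whole argument is driven by the observation that the quadratic energy $-\beta s^2$ dominates the linear tilt $\delta s$ as soon as $\beta$ grows faster than $\delta^2/\log\delta$ with the correct constant, and the density assumption makes the Gaussian integral bound \emph{uniform} in $\ell$ via Young's inequality, which is what matches the matching lower bound from Lemma~\ref{lem:dc1}.
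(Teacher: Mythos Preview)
Your proof is correct and follows the same overall strategy as the paper: reduce to Lemma~\ref{SBcond}(1), use the uniform density bound $\|g_\ell\|_\infty \le \|g_1\|_\infty$, and exploit that the peak of $\delta s - \beta s^2$ is $\delta^2/(4\beta)$. The one difference is in how the integral $\int e^{\delta s - \beta s^2} g_\ell(s)\,\dd s$ is controlled. The paper splits according to whether $\Omega_\ell \in (0,\tfrac{\delta}{\beta})$ or not, bounds the exponential by its maximum on the first piece and by $1$ (plus a tail estimate) on the second, obtaining the factor $\tfrac{\delta}{\beta} = \tfrac{\log\delta}{C\delta}$ from the interval length. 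You instead complete the square and evaluate the Gaussian integral directly, obtaining the factor $\sqrt{\pi/\beta} = \sqrt{\pi/C}\,\tfrac{\sqrt{\log\delta}}{\delta}$. Your route is a bit cleaner---no case distinction, no separate tail argument---and in fact gives a sharper bound (the Gaussian width $\sqrt{1/\beta}$ rather than the full support $\delta/\beta$), though both are more than enough for the conclusion.
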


\begin{proof}
Here we assume that the density $g_1$ of $\omega_1$ is bounded, i.e., $\|g_1 \|_\infty < \infty$. 
Then $\| g_\ell \|_\infty \le \|g_1\|_\infty$ for all $\ell\in\N$. Note that, for every $\epsilon > 0$,
\begin{equation}
\begin{split}
\bbE\left[e^{ \delta \Omega_\ell (1-\frac{\beta}{\delta} \Omega_\ell)}
\ind_{\{\Omega_\ell \not\in (0,\frac{\delta}{\beta})\}}\right]
& \le e^{-\delta \epsilon} 
+ \bbP(\Omega_\ell \in [-\epsilon,0] \cup [\tfrac{\delta}{\beta},\tfrac{\delta}{\beta}+\epsilon])
\le e^{-\delta \epsilon} + 2\epsilon\|g_1\|_\infty,
\end{split}
\end{equation}
because $\delta \Omega_\ell (1-\frac{\beta}{\delta} \Omega_\ell) \le - \delta \epsilon$
when $\Omega_\ell < -\epsilon$ or $\Omega_\ell > \frac{\delta}{\beta}+\epsilon$,
and $\delta \Omega_\ell (1-\frac{\beta}{\delta} \Omega_\ell) \le 0$ on the whole domain
of integration $\{\Omega_\ell \not\in (0,\frac{\delta}{\beta})\}$. Choosing $\epsilon 
= \frac{1}{\sqrt{\delta}}$, we get
\begin{equation}
\lim_{\delta \to \infty}
\bbE\left[e^{ \delta \Omega_\ell (1-\frac{\beta}{\delta} \Omega_\ell)}
\ind_{\{\Omega_\ell \not\in (0,\frac{\delta}{\beta})\}}\right] = 0
\end{equation}
uniformly in $\ell\in\N$. On the other hand, since $\max_{0 \le z \le \frac{\delta}{\beta}}
 \delta z(1-\frac{\beta}{\delta}z) = \frac{\delta^2}{4\beta}$, we can write
\begin{equation}
\bbE\left[e^{ \delta \Omega_\ell (1-\frac{\beta}{\delta} \Omega_\ell)}
\ind_{\{\Omega_\ell \in (0,\frac{\delta}{\beta})\}}\right] 
\le e^{\frac{\delta^2}{4\beta}} \, \bbP(\Omega_\ell \in (0,\tfrac{\delta}{\beta}))
\le e^{\frac{\delta^2}{4\beta}} \, \|g_1\|_\infty \, \frac{\delta}{\beta}.
\end{equation}
We now choose $\beta= C \delta^2/\log \delta$ with $C > 0$, to get
\begin{equation}
\bbE\left[e^{ \delta \Omega_\ell (1-\frac{\beta}{\delta} \Omega_\ell)}
\ind_{\{\Omega_\ell \in (0,\frac{\delta}{\beta})\}}\right] 
\le \|g_1\|_\infty \, \delta^{\frac{1}{4C}} \, \frac{\log \delta}{C \delta},
\end{equation}
and when $C > \frac{1}{4}$ the right-hand side vanishes as $\delta\to\infty$
uniformly in $\ell\in\N$. Altogether, we have shown that $\lim_{\delta \to \infty} 
G^*_{\delta,\beta}(\ell) = -\infty$ for any $C > \frac{1}{4}$ and $\beta = C 
\delta^2/\log \delta$, uniformly in $\ell\in\N$. It follows from Lemma~\ref{SBcond} 
that $(\delta,C \delta^2/\log\delta)\in \cS$ for any $C>\frac14$ and $\delta$ 
large enough.
\end{proof}


\subsection{Order of the phase transition}
\label{orderphtr}
 
In this section we give the proof of Theorem~\ref{thm:orderphtr}. It follows from 
\eqref{mudeltabetadef} that
\begin{equation}
\frac{\partial}{\partial\mu} \lambda_{\delta,\beta}(\mu(\delta,\beta))
\,\frac{\partial\mu}{\partial\beta}(\delta,\beta) 
+ \frac{\partial}{\partial\beta} \lambda_{\delta,\beta}(\mu(\delta,\beta))
= 0, \qquad (\delta,\beta) \in \intr(\cB).
\end{equation}
Since $F^*(\delta,\beta)= \mu(\delta,\beta)$ by \eqref{varc}, and $F^*(\delta,\beta_c(\delta))=0$,
Lemmas~\ref{lem:slopecrit1}--\ref{lem:slopecrit2} below imply \eqref{Cidscal}--\eqref{Cid} in
Theorem~\ref{thm:orderphtr}.

\begin{lemma} 
\label{lem:slopecrit1}
For all $\delta \in (0,\infty)$,
\begin{equation}
\left[-\frac{\partial \lambda_{\delta,\beta_c(\delta)}(\mu)}{\partial \mu} \right]_{\mu=0} \in (0,\infty).
\end{equation}
\end{lemma}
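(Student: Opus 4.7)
The conclusion amounts to two separate bounds on the right-derivative of $\mu\mapsto\lambda_{\delta,\beta_c(\delta)}(\mu)$ at $\mu=0$; existence of this one-sided derivative (possibly $=-\infty$) is already granted by the convexity statements in Proposition~\ref{prop:regl}. The key structural input is the explicit factorisation $A_{\mu,\delta,\beta}(i,j) = e^{-\mu(i+j+1)}\,A_{0,\delta,\beta}(i,j)$, which I combine with the Rayleigh characterisation \eqref{eq:lambda.var.rep2} of the top eigenvalue.

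For strict negativity, the bound $i+j+1\ge 1$ gives the pointwise estimate $A_{\mu,\delta,\beta}\le e^{-\mu}A_{0,\delta,\beta}$, hence (applied through \eqref{eq:lambda.var.rep2}) $\lambda_{\delta,\beta}(\mu)\le e^{-\mu}\lambda_{\delta,\beta}(0)$. Specialising to $\beta=\beta_c(\delta)$, where by definition $\lambda_{\delta,\beta_c(\delta)}(0)=1$, one gets $\lambda_{\delta,\beta_c(\delta)}(\mu)-1\le e^{-\mu}-1$; dividing by $\mu>0$ and letting $\mu\downarrow 0$ yields $[\partial_\mu^+\lambda_{\delta,\beta_c(\delta)}]_{\mu=0}\le -1<0$.

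For the matching upper bound on $-\partial_\mu^+\lambda_{\delta,\beta_c(\delta)}$ at $\mu=0$, I would insert the normalised Perron--Frobenius eigenvector $\nu\in\ell^2(\N_0)$ of $A_{0,\delta,\beta_c(\delta)}$ (strictly positive, $A_{0,\delta,\beta_c(\delta)}\nu=\nu$) as a test vector. Using $1-e^{-x}\le x$ for $x\ge 0$ one obtains
\begin{equation*}
1-\lambda_{\delta,\beta_c(\delta)}(\mu) \;\le\; \langle\nu,(A_{0,\delta,\beta_c(\delta)}-A_{\mu,\delta,\beta_c(\delta)})\nu\rangle \;\le\; \mu\sum_{i,j\in\N_0}(i+j+1)\,A_{0,\delta,\beta_c(\delta)}(i,j)\,\nu(i)\nu(j),
\end{equation*}
and by the symmetry of $A_{0,\delta,\beta_c(\delta)}$ together with the eigenvector identity $A_{0,\delta,\beta_c(\delta)}\nu=\nu$ the right-hand sum collapses to $\mu\bigl(1+2\sum_i i\,\nu(i)^2\bigr)$. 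Thus finiteness of the right-derivative reduces to the single estimate $\sum_i i\,\nu(i)^2<\infty$.

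I expect this last reduction to be the main obstacle. Since the entries of $A_{0,\delta,\beta_c(\delta)}$ decay only polynomially, via $e^{G^*_{\delta,\beta}(\ell)}\le C/\sqrt{\ell}$ from Appendix~\ref{AppA}, the eigenvector $\nu$ at $\mu=0$ cannot be expected to decay exponentially (in sharp contrast to the subcritical regime $\mu>0$, where Lemma~\ref{lem:nurate} gives $\nu_\mu(n)\asymp e^{-rn}$ with $\mu=\log\cosh r$). My plan is to prove a polynomial bound $\nu(i)=O(i^{-\alpha})$ with some $\alpha>1$---more than enough to ensure $\sum_i i\,\nu(i)^2<\infty$---by bootstrapping the a priori Cauchy--Schwarz estimate $\nu(i)\le\|A_{0,\delta,\beta_c(\delta)}(i,\cdot)\|_2$ through iterative use of the eigenvector equation $\nu(i)=\sum_j A_{0,\delta,\beta_c(\delta)}(i,j)\nu(j)$, exploiting that the kernel $Q(i+1,\cdot)=\tfrac12\bP(S_{i+j}=i-j)$ is concentrated in a window of width $\sqrt{i}$ around $j=i+1$ (local CLT). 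An alternative route is a Foster--Lyapunov drift argument for the reversible Markov chain $\check A(i,j):=A_{0,\delta,\beta_c(\delta)}(i,j)\,\nu(j)/\nu(i)$, which is stationary and reversible with respect to $\nu^2$, using a polynomial test function whose drift under $\check A$ dominates $-i$ outside a finite set. The delicate point, in either approach, is that the naive power-law ansatz $\nu(i)\sim i^{-\alpha}$ leads to the self-consistency equation $\alpha=\alpha+\tfrac12$, which has no solution, so the genuine decay of $\nu$ necessarily involves subleading corrections that must be tracked carefully.
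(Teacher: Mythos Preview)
Your argument for strict negativity is correct and clean. Your reduction of finiteness via the eigenvector equation to $\sum_i i\,\nu(i)^2<\infty$ is also correct, and in fact gives the exact identity
\[
\left[-\partial_\mu\lambda_{\delta,\beta_c(\delta)}(\mu)\right]_{\mu=0}=1+2\sum_{i\in\N_0} i\,\nu(i)^2,
\]
which is sharper than the bounding sum the paper works with.

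The gap is that you do not prove the first-moment bound, and your proposed route---a pointwise estimate $\nu(i)=O(i^{-\alpha})$ with $\alpha>1$---runs headlong into the self-consistency obstruction you yourself flag. The paper sidesteps this in two ways. First, it does \emph{not} use the identity above; instead it inserts the bound $e^{G^*_{\delta,\beta}(\ell)}\le C\ell^{-1/2}$ from Appendix~\ref{AppA} \emph{before} appealing to the eigenvector equation, which reduces the target from $\sum_i i\,\nu(i)^2$ to controlling $\sum_{i,j}\sqrt{i+j+1}\,Q(i{+}1,j)\,\nu(i)\nu(j)$, i.e.\ essentially a half-moment rather than a first moment. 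Second, it never attempts pointwise decay of $\nu$; the bootstrap is run entirely on integrated quantities: from $\sum_{i\ge k}\nu(i)^2\le Ck^{-1/2}$ (obtained by feeding the eigenvector equation back into the tail sum), to $\nu\in\ell^r$ for $r>4/3$, to a tail bound on $\sum_{i\ge k}\nu(i)^{4-\varepsilon}$, and finally to $\sum_i\sqrt{i}\,\nu(i)^2<\infty$ via H\"older. A last Cauchy--Schwarz step, after the change of variables $u=i+j$, $v=i-j$, converts the double sum into $C\sum_i\sqrt{i}\,\nu(i)^2$.

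Both of these ideas are absent from your plan. Using the $e^{G^*}$ decay upfront buys you a crucial factor of $\sqrt{\ell}$ and makes the bootstrap target much milder; working with tail sums in various $\ell^p$ norms avoids the no-go you identified for power-law ans\"atze. Your Foster--Lyapunov suggestion might work, but the drift would have to be computed against $\check A$, which involves the unknown $\nu$, so it is circular unless combined with a priori bounds of the type the paper derives.
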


\begin{proof}
For $(i,j) \in \N_0^2$,
\begin{equation} \label{eq:derA}
\left[- \frac{\partial}{\partial \mu}A_{\mu,\delta,\beta}(i,j) \right]_{\mu=0} 
= (i+j+1) A_{0,\delta,\beta}(i,j).
\end{equation}
In the following, $\delta$ is kept fixed and $\beta = \beta_c(\delta)$, which is the value of $\beta$ 
for which $\lambda_{\delta,\beta}(0) = 1$. Let $\chi=\chi_{0,\delta,\beta_c(\delta)}$ be the right 
eigenvector of $A_{0,\delta,\beta_c(\delta)}$ associated with the eigenvalue $1$. Recall 
Proposition~\ref{pr:prop.A.lambda.mu}. Since $A_{0,\delta,\beta_c(\delta)}$ is Hilbert-Schmidt 
and symmetric, $\chi \in \ell_2(\N_0)$ and $\chi$ is a left eigenvector as well. We choose $\chi$ 
such that $\| \chi \|_2 = 1$. Our starting point is the relation
\begin{equation}
\label{partlambdarel}
\left[ -\frac{\partial \lambda_{\delta,\beta_c(\delta)}(\mu)}{\partial \mu} \right]_{\mu=0} 
= \sum_{i,j \in \N_0} \chi(i)\, (i+j+1) A_{0,\delta,\beta_c(\gd)}(i,j) \, \chi(j),
\end{equation}
which follows by \eqref{eq:derA} and by $\langle \chi, \frac{\partial}{\partial \mu}\chi\rangle
= \frac{1}{2} \frac{\partial}{\partial \mu} \|\chi\|_2 = 0$.
From Proposition~\ref{pr:asymptGstar} in Appendix~\ref{AppA}, we know that
\begin{equation}
\label{eq:G*bd}
e^{G^*_{\delta,\beta}(i+j+1)} \leq \frac{C}{\sqrt{(i+j+1)}}\,
\qquad i,j\in\N_0,
\end{equation}
for some constant $C$. Therefore, recalling \eqref{eq:def.Abeta.p.r} and \eqref{eq:Q}, 
we need to show that
\begin{equation}
\label{sumfinite}
\sum_{i,j\in \N_0} \chi(i) \chi(j) \sqrt{i+j+1}\, \bP(S_{i+j} = i-j) < \infty.
\end{equation}
This is done in 5 steps. In Steps 1--4 we derive successively stronger tail estimates on 
$\chi$. In Step 5 we use these to prove \eqref{sumfinite}. In what follows, $C$ is a
constant that may change from line to line (and depend on other choices of constants).

\medskip\noindent 
{\bf 1.} 
Estimate
\begin{equation}
\label{est1chi}
\begin{aligned}
\sum_{i \geq k} \chi(i)^2 
&= \sum_{i \geq k} \sum_{j\in\N_0} A_{0,\delta,\beta}(i,j) \chi(i)\chi(j)\\
&\leq C \sum_{i \geq k} \chi(i) \sum_{j \in \N_0} \frac{1}{\sqrt{i+j}}\,\bP(S_{i+j}= i-j)\, \chi(j) \\
&\leq C \left[\sum_{i \geq k} \chi(i)^2\right]^{1/2}  
\left[ \sum_{i \geq k} \left( \sum_{j\in\N_0} 
\frac{1}{\sqrt{i+j}}\,\bP(S_{i+j}= i-j)\, \chi(j) \right)^2\, \right]^{1/2}
\end{aligned}
\end{equation}
and
\begin{equation}
\label{est2chi}
\begin{aligned}
&\sum_{i \geq k} \left( \sum_{j\in\N_0} 
\frac{1}{\sqrt{i+j}}\, \bP(S_{i+j}= i-j)\, \chi(j) \right)^2
\leq \sum_{i \geq k} \sum_{j\in\N_0} \frac{1}{i+j}\, \bP(S_{i+j}= i-j)^2\\
&\leq \sum_{ {u \geq k} \atop {v\in\Z} } \frac{1}{u} \bP(S_u = v)^2
\leq \sum_{u \geq k} \frac{1}{u} \bP(S_{2u} = 0)
\leq \sum_{u \geq k} Cu^{-\tfrac32} \leq C  k^{-\tfrac12}.
\end{aligned}
\end{equation}
Combining \eqref{est1chi}--\eqref{est2chi}, we get 
\begin{equation}
\label{est3chi}
\sum_{i \geq k} \chi(i)^2 \leq C k^{-\tfrac12}.
\end{equation}
Abbreviating $\sigma_2(k) = \sum_{i \geq k} \chi(i)^2$, we find that, for any 
$\ga \in (0,\tfrac12)$, 
\begin{equation}
\label{est4chi}
\begin{aligned}
&\sum_{i\in\N_0} i^\ga \chi(i)^2 = \sum_{i\in\N_0} i^\ga [\sigma_2(i) - \sigma_2(i+1)]\\
&\qquad = \sum_{i\in\N} [i^\ga - (i-1)^\ga] \sigma_2(i) \leq C \sum_{i\in\N} i^{\ga-1} \sigma_2(i) 
\leq C \sum_{i\in\N} i^{\alpha - \tfrac32} < \infty.
\end{aligned}
\end{equation}

\medskip\noindent
{\bf 2.} 
Next we use \eqref{est4chi} to prove that $\chi\in\ell_r(\N_0)$ for all $r>\tfrac43$. 
Indeed, let $\gep \in (0,\tfrac13)$ and $r=\tfrac43+\gep$, and use H\"older's inequality to 
estimate
\begin{equation}
\label{est2}
\sum_{i\in\N_0} \chi(i)^r \leq \Big( \sum_{i\in\N_0} \chi(i)^{rp}\, i^{\tfrac13 p} \Big)^{1/p} 
\Big( \sum_{i\in\N_0} i^{-\tfrac13 q} \Big)^{1/q}
\end{equation}
with $p = (\frac23 + \frac{\gep}{2})^{-1}$ and $q = (\frac13-\frac{\gep}{2})^{-1}$. Since $rp=2$, $\tfrac13 p
<\tfrac12$ and $\tfrac13 q>1$, we can use \eqref{est4chi} to get that both sums in the 
right-hand side are finite.

\medskip\noindent
{\bf 3.} 
Next we prove that $\sum_{i \geq k} \chi(i)^{4-\gep} \leq Ck^{-\tfrac52+\gep}$ for all 
$\gep \in (0,3)$. Indeed, since
\begin{equation}
\chi(i) = \sum_{j\in\N_0} A(i,j)\,\chi(j) 
\leq C \sum_{j\in\N_0} \frac{1}{\sqrt{i+j}}\, \bP(S_{i+j}= i-j)\, \chi(j),
\end{equation}
we can use H\"older's inequality with $p=4-\gep$ and $q=\frac{4-\gep}{3-\gep}> \tfrac43$ 
to estimate
\begin{equation}
\chi(i) \leq \Big( \sum_{j\in\N_0} (i+j)^{-2+\frac{\gep}{2}}\, 
\bP(S_{i+j} = i-j)^{4-\gep} \Big)^{1/(4-\gep)}
\Big( \sum_{j\in\N_0} \chi(j)^{(4-\gep)/(3-\gep)} \Big)^{(3-\gep)/(4-\gep)}.
\end{equation}
Hence, using that the second sum is finite as shown in \eqref{est2}, we get 
\begin{equation}
\label{est3}
\begin{aligned}
&\sum_{i \geq k} \chi(i)^{4-\gep} \leq C \sum_{i \geq k} 
\sum_{j\in\N_0} (i+j)^{-2+\frac{\gep}{2}}\,\bP(S_{i+j}=i-j)^{4-\gep}\\ 
&\leq C \sum_{ {u \geq k} \atop {v\in\Z} } u^{-2+\frac{\gep}{2}}\,\bP(S_u = v)^{4-\gep} 
\leq C \sum_{u \geq k} u^{-\tfrac72+\gep} \leq  C k^{-\tfrac52+\gep},
\end{aligned}
\end{equation}
where in the next-to-last inequality we use that
\begin{equation}
\begin{aligned}
&\bP(S_u = v) \leq \bP(S_u \in \{-1,0,1\}) \leq C\,u^{-\tfrac12} \quad \forall\, u\in\N,\,v\in\Z,\\
&\sum_{v\in\bbZ} \bP(S_u = v)^2 = \bP(S_{2u}=0) \quad \forall\,u \in \N.
\end{aligned}
\end{equation} 

\medskip\noindent
{\bf 4.} Next we prove that $\sum_{i\in\N_0} \sqrt{i}\, \chi(i)^2<\infty$. Let $\gep \in (0,\tfrac12)$. 
By H\"older's inequality with $p=(2-\frac{\gep}{2})/(1-\tfrac{\gep}{2})$ and $q=2-\tfrac{\gep}{2}$,
\begin{equation}
\sum_{i\in\N} \sqrt{i}\, \chi(i)^2 \leq \left(\sum_{i\in\N} 
i^{-(1+\tfrac{\gep}{4})/(1-\tfrac{\gep}{2})}\right)^{(1-\tfrac{\gep}{2})/(2-\tfrac{\gep}{2})} 
\left(\sum_{i\in\N} i^2 \chi(i)^{4-\gep}\right)^{1/(2-\tfrac{\gep}{2})}.
\end{equation}
The first term converges. Abbreviating $\sigma_{4-\gep}(k) = \sum_{i \geq k} \chi(i)^{4-\gep}$,
we find that 
\begin{equation}
\sum_{i\in\N_0} i^2 \chi(i)^{4-\gep} = \sum_{i\in\N} \big[i^2-(i-1)^2\big] \sigma_{4-\gep}(i)
\leq C \sum_{i\in\N} i \sigma_{4-\gep}(i) < C \sum_{i\in\N} i^{-\tfrac32+\gep}< \infty,
\end{equation}
where we use the estimate in \eqref{est3}.

\medskip\noindent
{\bf 5.} We can now give the proof of \eqref{sumfinite}. The change of variables $u = i+j$
and $v = i - j$ yields
\begin{equation}
\begin{aligned}
&\sum_{i,j\in \N_0} \chi(i) \chi(j) \sqrt{i+j+1}\, \bP(S_{i+j} = i-j)\\
&= \sum_{u \in \N_0} \sqrt{u+1} \sum_{v\in\bbZ} 
\chi\Big(\frac{u+v}{2}\Big) \chi\Big(\frac{u-v}{2}\Big) \bP(S_u = v)\\
&= \sum_{u \in \N_0} \sqrt{u+1}\,\, 
\bE\Big[\chi\Big(\frac{u+S_u}{2}\Big) \chi\Big(\frac{u-S_u}{2}\Big)\Big].
\end{aligned}
\end{equation}
By the Cauchy-Schwarz inequality and the symmetry of the simple random walk, the right-hand 
side is bounded from above by
\begin{equation}
\sum_{u \in \N_0} \sqrt{u+1}\,\,\bE\Big[\chi\Big(\frac{u+S_u}{2}\Big)^2\Big] 
= \sum_{u \in \N_0} \sqrt{u+1} \sum_{v\in \bbZ} \chi\Big(\frac{u+v}{2}\Big)^2 \bP(S_u = v).
\end{equation}
Split the sum into two parts: $|v|> \tfrac12 u$ and $|v| \leq \tfrac12 u$. The first part 
can be estimated by (recall that $\|\chi\|_2=1$)
\begin{equation}
\begin{aligned}
&\sum_{u \in \N_0} \sqrt{u+1} \sum_{|v|> \tfrac12 u} 
\chi\Big(\frac{u+v}{2}\Big)^2 \bP(S_u = v)\\ 
&\leq \sum_{u \in \N_0} \sqrt{u+1}\, \bP(|S_u| > \tfrac12 u) 
\leq \sum_{u \in \N_0} \sqrt{u+1}\, e^{-C u} < \infty.
\end{aligned}
\end{equation}
The second part can be estimated by
\begin{equation}
\begin{aligned}
&\sum_{u \in \N_0} \sqrt{u+1} \sum_{|v| \leq \tfrac12 u} 
\chi\Big(\frac{u+v}{2}\Big)^2 \bP(S_u = v)\\ 
&\leq C \sum_{u \in \N_0} \sum_{|v| \leq \tfrac12 u} \sqrt\frac{u+v}{2}\,  
\chi\Big(\frac{u+v}{2}\Big)^2 \bP(S_u = v)\\
&\leq C \sum_{i \in \N_0} \sqrt{i}\, \chi(i)^2 \sum_{j\in\N_0} \bP(S_{i+j} = i-j)\\
&= 2C \sum_{i \in \N_0} \sqrt{i}\, \chi(i)^2,
\end{aligned}
\end{equation}
where we use \eqref{eq:QandP} in the last equality.
\end{proof}

\begin{lemma}
\label{lem:slopecrit2} 
For all $\delta \in (0,\infty)$,
\begin{equation}
\left[ - \frac{\partial \lambda_{\delta,\beta}(0)}{\partial \beta} 
 \right]_{\beta = \beta_c(\delta)} \in (0,\infty).
\end{equation}
\end{lemma}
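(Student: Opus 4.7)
My plan is to mirror the proof of Lemma~\ref{lem:slopecrit1}, but with a $\beta$-derivative in place of a $\mu$-derivative. Since the kernel $A_{0,\delta,\beta}\colon \ell_2(\N_0)\to \ell_2(\N_0)$ is symmetric, positive and Hilbert-Schmidt, with a simple principal eigenvalue $\lambda_{\delta,\beta}(0)$ (by Proposition~\ref{pr:prop.A.lambda.mu} and Perron-Frobenius), first-order eigenvalue perturbation -- exactly as in \eqref{partlambdarel}, using $\langle \chi,\partial_\beta\chi\rangle = \tfrac12\partial_\beta\|\chi\|_2^2 = 0$ -- gives, with $\chi$ the unit eigenvector of $A_{0,\delta,\beta_c(\delta)}$ associated with the eigenvalue $1$,
\begin{equation*}
-\left[\frac{\partial\lambda_{\delta,\beta}(0)}{\partial\beta}\right]_{\beta=\beta_c(\delta)} = \sum_{i,j\in\N_0} \chi(i)\,\chi(j)\,|\psi(i+j+1)|\,A_{0,\delta,\beta_c(\delta)}(i,j),
\end{equation*}
where direct differentiation of \eqref{eq:def.Abeta.p.r} and \eqref{eq:rel.G.star} yields
\begin{equation*}
\psi(\ell) := \left[\frac{\partial G^*_{\delta,\beta}(\ell)}{\partial\beta}\right]_{\beta=\beta_c(\delta)} = -\bbE^{\delta,\beta_c(\delta),\ell}\big[\Omega_\ell^2\big] < 0,
\end{equation*}
with $\bbE^{\delta,\beta,\ell}$ the expectation under the probability measure on $\R$ with density proportional to $e^{\delta s-\beta s^2}$ with respect to the law of $\Omega_\ell$.

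Strict positivity of $-\partial_\beta\lambda_{\delta,\beta_c(\delta)}(0)$ is immediate: every summand above is strictly positive, because the entries of $\chi$ are strictly positive (Perron-Frobenius), $A_{0,\delta,\beta_c(\delta)}(i,j)>0$ for all $i,j\in\N_0$, and $|\psi(\ell)|>0$ for every $\ell\geq 1$ since $\Omega_\ell^2>0$ with positive probability.

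The finite upper bound is the core of the lemma. I would prove it via the pointwise estimate $|\psi(\ell)|\leq C(\delta)\sqrt{\ell}$ for $\ell\geq 1$, itself obtained from
\begin{equation*}
\bbE^{\delta,\beta_c(\delta),\ell}[\Omega_\ell^2] \leq \frac{\sup_{s\in\R}\,s^2 e^{\delta s-\beta_c(\delta)s^2}}{e^{G^*_{\delta,\beta_c(\delta)}(\ell)}} \leq \frac{C(\delta)}{e^{G^*_{\delta,\beta_c(\delta)}(\ell)}},
\end{equation*}
together with the lower bound $e^{G^*_{\delta,\beta_c(\delta)}(\ell)}\geq c(\delta)/\sqrt{\ell}$ for $\ell\geq 1$; this lower bound is the local-central-limit-type companion of \eqref{eq:G*bd} and I would extract it from the estimates in Appendix~\ref{AppA} (in the Gaussian case \eqref{eq:Gstar.gaussian} it is explicit). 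Combined with the elementary bound $\sqrt{i+j+1}\leq 1 + (i+j+1)/2$, this gives
\begin{equation*}
-\left[\frac{\partial\lambda_{\delta,\beta}(0)}{\partial\beta}\right]_{\beta=\beta_c(\delta)} \leq C(\delta)\sum_{i,j\in\N_0}\chi(i)\chi(j)\left(1+\tfrac{i+j+1}{2}\right)A_{0,\delta,\beta_c(\delta)}(i,j) = C(\delta)\left(1+\tfrac{1}{2}\left[-\frac{\partial\lambda_{\delta,\beta_c(\delta)}(\mu)}{\partial\mu}\right]_{\mu=0}\right),
\end{equation*}
where I have used $\langle\chi,A_{0,\delta,\beta_c(\delta)}\chi\rangle = \lambda_{\delta,\beta_c(\delta)}(0) = 1$ and, via \eqref{partlambdarel}, the identification of $\sum_{i,j}\chi(i)\chi(j)(i+j+1)A_{0,\delta,\beta_c(\delta)}(i,j)$ with $-[\partial_\mu\lambda_{\delta,\beta_c(\delta)}(\mu)]_{\mu=0}$; the latter quantity is finite by Lemma~\ref{lem:slopecrit1}.

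The main obstacle is the lower bound $e^{G^*_{\delta,\beta}(\ell)}\gtrsim\ell^{-1/2}$; the rest of the argument is either a direct computation or an immediate consequence of Lemma~\ref{lem:slopecrit1}. A subsidiary technical issue is the justification of the term-by-term differentiation of the series defining $\lambda_{\delta,\beta}(0)$, which I would handle exactly as in \cite[Eqs.~(4.7)--(4.11)]{K96} (already invoked in the proof of Lemma~\ref{lem:CLT4}), or more abstractly via the Kato-Rellich theorem for analytic families of compact self-adjoint operators on $\ell_2(\N_0)$.
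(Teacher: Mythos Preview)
Your proof is correct and follows the same overall strategy as the paper: write the $\beta$-derivative of the principal eigenvalue via first-order perturbation as a weighted quadratic form in the eigenvector $\chi$, with weight $\partial_\beta G^*_{\delta,\beta}(i+j+1)$, and then control the weight. The difference lies in the sharpness of that control. You bound the numerator $\bbE[\Omega_\ell^2 e^{\delta\Omega_\ell-\beta\Omega_\ell^2}]$ crudely by $\sup_{s\in\R} s^2 e^{\delta s-\beta s^2}=O(1)$, which together with $e^{G^*_{\delta,\beta}(\ell)}\gtrsim \ell^{-1/2}$ yields only $|\psi(\ell)|=O(\sqrt{\ell})$ and forces you to absorb the extra $\sqrt{i+j+1}$ by appealing to Lemma~\ref{lem:slopecrit1}. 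The paper instead slices the numerator over unit intervals and uses the local limit theorem (as in \eqref{eq:app.estG2}) to get the sharper bound $\bbE[\Omega_\ell^2 e^{\delta\Omega_\ell-\beta\Omega_\ell^2}]\leq C\ell^{-1/2}$; this gives $|\psi(\ell)|=O(1)$ uniformly in $\ell$ (stated separately as Lemma~\ref{lem:der.beta.G}), so the sum is bounded directly by a multiple of $\langle\chi,A_{0,\delta,\beta_c(\delta)}\chi\rangle=1$, and no recourse to Lemma~\ref{lem:slopecrit1} is needed. Your route is perfectly valid and has the virtue of recycling the hard work already done for Lemma~\ref{lem:slopecrit1}; the paper's route is more self-contained and yields the sharper fact that $\partial_\beta G^*_{\delta,\beta}(\ell)$ is bounded in $\ell$.
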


\begin{proof}
Since we fix $\|\chi\|_2 = 1$, we have
$\langle \chi, \frac{\partial}{\partial \beta}\chi\rangle
= \frac{1}{2} \frac{\partial}{\partial \beta} \|\chi\|_2 = 0$, hence
\begin{equation}
\left[ \frac{\partial \lambda_{\delta,\beta}(0)}{\partial \beta} \right]_{\beta = \beta_c(\delta)} 
= \sum_{i,j \in \N_0} \chi(i) \chi(j) \left[ \frac{\partial}{\partial \beta}G^*_{\delta,\beta}(i+j-1) \right] 
A_{0,\beta_c(\delta),\delta}(i,j).
\end{equation}
Note that, for $\ell \in \N_0$,
\begin{equation}
\label{eq:partial.beta.G}
\frac{\partial}{\partial \beta}G^*_{\delta,\beta}(\ell) 
= - \frac{\bbE[\Omega_\ell^2\,e^{\delta \gO_\ell -\beta\Omega_\ell^2}]}
{\bbE[e^{\delta \gO_\ell -\beta\Omega_\ell^2}]}.
\end{equation}
Use Lemma~\ref{lem:der.beta.G} below to conclude that the sum in the numerator is finite.
\end{proof}

\begin{lemma}
\label{lem:der.beta.G}
$\inf_{\ell\to\infty} \frac{\partial}{\partial \beta}G^*_{\delta,\beta}(\ell) > -\infty$.
\end{lemma}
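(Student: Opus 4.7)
The plan is to establish the stronger statement $\sup_{\ell \in \N_0} \bbE_*^\ell[\Omega_\ell^2] < \infty$, where $\bbE_*^\ell[\,\cdot\,] := \bbE[\,\cdot\,e^{\delta\Omega_\ell - \beta\Omega_\ell^2}]/\bbE[e^{\delta\Omega_\ell - \beta\Omega_\ell^2}]$; indeed, by \eqref{eq:partial.beta.G} this is exactly $\sup_\ell \bigl[-\partial_\beta G^*_{\delta,\beta}(\ell)\bigr]$. Heuristically, the Gaussian-like factor $e^{-\beta\Omega_\ell^2}$ pins $\Omega_\ell$ near the point $m := \delta/(2\beta)$ at scale $O(1/\sqrt{\beta})$, so the tilted second moment should remain bounded in $\ell$.

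The first step is to complete the square: writing $\delta y - \beta y^2 = -\beta(y - m)^2 + \delta^2/(4\beta)$ and noting that the constant $e^{\delta^2/(4\beta)}$ cancels in $\bbE_*^\ell$, one obtains
\begin{equation*}
\bbE_*^\ell[\Omega_\ell^2] = \frac{\bbE\bigl[\Omega_\ell^2\,e^{-\beta(\Omega_\ell - m)^2}\bigr]}{\bbE\bigl[e^{-\beta(\Omega_\ell - m)^2}\bigr]}.
\end{equation*}
Using $\Omega_\ell^2 \leq 2(\Omega_\ell - m)^2 + 2m^2$ in the numerator, together with the pointwise inequality $y^2 e^{-\beta y^2} \leq \tfrac{2}{e\beta}\,e^{-\beta y^2/2}$ (which follows from $u\,e^{-u/2} \leq 2/e$, attained at $u=2$, applied with $u = \beta y^2$), I would then reduce the problem to bounding
\begin{equation*}
R_\ell := \frac{\bbE[e^{-\beta(\Omega_\ell - m)^2/2}]}{\bbE[e^{-\beta(\Omega_\ell - m)^2}]},
\qquad \text{via} \qquad
\bbE_*^\ell[\Omega_\ell^2] \leq 2m^2 + \tfrac{4}{e\beta}\,R_\ell.
\end{equation*}

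Reversing the completion of the square in the numerator and denominator of $R_\ell$ yields, after a short computation, $R_\ell = e^{\delta^2/(8\beta)}\exp\!\bigl[G^*_{\delta/2,\beta/2}(\ell) - G^*_{\delta,\beta}(\ell)\bigr]$. To conclude, I would invoke the sharp asymptotics for $G^*$ recorded in Appendix~\ref{AppA}: Proposition~\ref{pr:asymptGstar} is expected to provide matching bounds $e^{G^*_{\delta',\beta'}(\ell)} \asymp 1/\sqrt{\ell+1}$ (with constants continuous in $(\delta',\beta')$ for $\beta'>0$), so that the $1/\sqrt{\ell+1}$ factors cancel and $\sup_\ell R_\ell < \infty$. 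As a sanity check, for standard Gaussian charges a direct computation from \eqref{eq:Gstar.gaussian} gives $\partial_\beta G^*_{\delta,\beta}(\ell) \to -1/(2\beta) - \delta^2/(4\beta^2)$ as $\ell\to\infty$, a finite limit consistent with the bound.

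The main obstacle is the lower bound $e^{G^*_{\delta,\beta}(\ell)} \geq c_{\delta,\beta}/\sqrt{\ell+1}$: if Appendix~\ref{AppA} records only the matching upper bound \eqref{eq:G*bd}, I would establish the lower bound independently by restricting the integral defining $G^*_{\delta,\beta}(\ell)$ to the event $\{|\Omega_\ell - m| \leq 1/\sqrt{\beta}\}$, on which the integrand is bounded below by a positive constant, and bounding the probability of this event below by a local central limit estimate. Such an estimate is available since, by \eqref{Mdeltacond} and \eqref{eq:omegacond}, $\omega_1$ has all exponential moments and unit variance; a standard Gnedenko-type local limit theorem (or its lattice analogue, depending on the case in \eqref{intcond}) then yields $\bbP(|\Omega_\ell - m| \leq 1/\sqrt{\beta}) \gtrsim 1/\sqrt{\ell+1}$ uniformly in $\ell$ large, completing the argument.
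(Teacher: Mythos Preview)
Your argument is correct. The reduction to $R_\ell = e^{\delta^2/(8\beta)}\exp\bigl[G^*_{\delta/2,\beta/2}(\ell)-G^*_{\delta,\beta}(\ell)\bigr]$ is algebraically right, and Proposition~\ref{pr:asymptGstar} already gives the two-sided estimate $G^*_{\delta',\beta'}(\ell)=-\tfrac12\log\ell+O(1)$ for every $(\delta',\beta')\in\cQ$, so the difference is bounded and no separate lower-bound argument is needed; your fallback local-CLT argument is exactly how that lower bound is proved in the appendix anyway.

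The paper takes a more direct route: it bounds numerator and denominator of \eqref{eq:partial.beta.G} separately, each by a constant times $\ell^{-1/2}$. The denominator is $e^{G^*_{\delta,\beta}(\ell)}\ge C\ell^{-1/2}$ straight from \eqref{eq:app.estG1}, and the numerator is handled by slicing $\{\Omega_\ell\in(k,k+1]\}$ as in \eqref{eq:app.estG2}, picking up the convergent series $\sum_k k^2 e^{\delta k-\beta k^2}$ times a uniform local-CLT bound $\bbP(\Omega_\ell\in(k,k+1])\le C\ell^{-1/2}$. Your approach trades this slicing for the pointwise inequality $y^2 e^{-\beta y^2}\le \tfrac{2}{e\beta}e^{-\beta y^2/2}$, which effectively shifts the parameter pair $(\delta,\beta)$ to $(\delta/2,\beta/2)$ and lets you invoke the already-packaged asymptotics of Proposition~\ref{pr:asymptGstar} rather than repeat its proof. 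Both arguments rest on the same local-CLT input; yours is slightly more elegant in that it reuses the appendix result wholesale, while the paper's is marginally more self-contained.
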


\begin{proof}
Recall \eqref{eq:partial.beta.G} above. An argument similar to \eqref{eq:app.estG1} gives
\begin{equation}
\bbE[e^{\delta \gO_\ell -\beta\Omega_\ell^2}] \geq C\, \ell^{-1/2},
\end{equation}
and an argument similar to \eqref{eq:app.estG2} gives
\begin{equation}
\bbE[\Omega_\ell^2\,e^{\delta \gO_\ell -\beta\Omega_\ell^2}] \leq C\, 
\Big(\sum_{k\in\N} k^2e^{\gd k - \gb k^2}\Big)\, \ell^{-1/2},
\end{equation}
which ends the proof.
\end{proof}


\subsection{Weak interaction limit}
\label{ss:WIL}

In this section we give the proof of Theorems~\ref{thm:asymp.cc}(1) and \ref{thm:Fscalbeta}.
Recall \eqref{eq:def.Abeta.p.r} and \eqref{eq:def.Gbeta.p}. The proof comes in 10 Steps.

\medskip\noindent
{\bf 1.}
To study limits of variational formulas, we make use of the notion of \emph{epi-convergence} 
(which was used in the derivation of scaling limits for weakly self-avoiding walks as well; see 
van der Hofstad and den Hollander~\cite{vdHdH95}).

\begin{definition}
\label{def:epi}
Let $(Z,\tau)$ be a metrizable topological space, and let $Z' \subset Z$ be dense in $Z$.
Given $U_\beta\colon\,Z \to \R$ with $\beta \in (0,\infty)$ and $U\colon\,Z \to \bar{\R}$, the 
family $(U_\beta)_{\beta \in (0,\infty)}$ is said to be epi-convergent to $U$ on $Z'$, written
\begin{equation}
\lim_{\beta \downarrow 0} U_\beta \epi{=} U \text{ on } Z',
\end{equation}
when the following properties hold:
\begin{equation}
\begin{aligned}
&\forall\,z_\beta \stackrel{\tau}{\to} z \text{ in } Z'\colon 
&\limsup_{\beta \downarrow 0} U_\beta(z_\beta) \leq U(z),\\  
&\exists\,z_\beta \stackrel{\tau}{\to} z \text{ in } Z'\colon 
&\liminf_{\beta \downarrow 0} U_\beta(z_\beta) \geq U(z).  
\end{aligned}
\end{equation}
\end{definition}

\noindent
The importance of the notion of epi-convergence is contained in the following proposition, for 
which we refer to Attouch~\cite[Theorem 1.10 and Proposition 1.14]{At84}.

\begin{proposition}
\label{prop:epi}
Suppose that
\begin{itemize}
\item[(I)] 
$\lim_{\beta \downarrow 0} U_\beta \epi{=} U$ on $Z'$.
\item[(II)] 
For all $\beta \in (0,\infty)$, $U_\beta$ is continuous on $Z$ and has a unique
maximiser $\bar{z}_\beta \in Z$.
\item[(III)]
There exists a $K \subset Z'$ such that $K$ is $\tau$-relatively compact in $Z$, $U$ has a 
unique maximizer $\bar{z} \in \bar{K}$, and there exists a sequence $(z_\beta)_{\beta \in 
(0,\infty)}$ in $\bar{K}$ such that $z_\beta-\bar{z}_\beta \stackrel{\tau}{\to} 0$ and $U_\beta(z_\beta) 
- U_\beta(\bar{z}_\beta) \to 0$ as $\beta \downarrow 0$.
\end{itemize}
Then, as $\beta \downarrow 0$,
\begin{equation}
\sup_{z \in Z} U_\beta(z) \to \sup_{z \in Z} U(z),
\qquad \bar{z}_\beta \stackrel{\tau}{\to} \bar{z}.
\end{equation}
\end{proposition}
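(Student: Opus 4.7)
The plan is to deduce both conclusions — convergence of the suprema and convergence of the maximisers — from the two halves of the epi-convergence hypothesis (I), using the compactness provided by (III) to control the (possibly non-convergent) sequence $(\bar{z}_\beta)_{\beta>0}$. This is essentially the classical argument of Attouch~\cite{At84}, which I reproduce in outline.

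First I would establish $\lim_{\beta\downarrow 0} \sup_Z U_\beta = \sup_Z U$ via two matching one-sided inequalities. For the lower bound, I apply the ``$\exists$'' half of (I) at the point $\bar{z}$: since $\bar{z} \in \bar{K}$ and $K \subset Z'$ is dense in $\bar{K}$, I approximate $\bar{z}$ by points of $Z'$ and use a diagonal extraction to obtain a sequence $z_\beta^{\star} \stackrel{\tau}{\to} \bar{z}$ with $\liminf_{\beta\downarrow 0} U_\beta(z_\beta^{\star}) \geq U(\bar{z}) = \sup_Z U$. Since $\sup_Z U_\beta \geq U_\beta(z_\beta^{\star})$, this yields $\liminf_{\beta\downarrow 0} \sup_Z U_\beta \geq \sup_Z U$. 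For the upper bound, I use the sequence $(z_\beta)_{\beta>0}$ in $\bar{K}$ from (III): by $\tau$-compactness of $\bar{K}$, every subsequence admits a further subsequence $z_{\beta_{n_k}} \stackrel{\tau}{\to} z^{\star} \in \bar{K}$. The ``$\forall$'' half of (I), combined with a density/approximation step to handle the case $z^{\star} \notin Z'$, gives $\limsup_{k\to\infty} U_{\beta_{n_k}}(z_{\beta_{n_k}}) \leq U(z^{\star}) \leq \sup_Z U$. The second property in (III), $U_\beta(z_\beta) - U_\beta(\bar{z}_\beta) \to 0$, transfers this bound to $\bar{z}_\beta$, giving $\limsup_{\beta\downarrow 0} \sup_Z U_\beta = \limsup_{\beta\downarrow 0} U_\beta(\bar{z}_\beta) \leq \sup_Z U$.

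Once convergence of suprema is in hand, the convergence $\bar{z}_\beta \stackrel{\tau}{\to} \bar{z}$ follows by contradiction. Assume on the contrary that there is an open neighbourhood $V$ of $\bar{z}$ and a sequence $\beta_n \downarrow 0$ with $\bar{z}_{\beta_n} \notin V$. Because $z_\beta - \bar{z}_\beta \stackrel{\tau}{\to} 0$, the corresponding $z_{\beta_n}$ also eventually lies outside a slightly smaller neighbourhood of $\bar{z}$. By $\tau$-compactness of $\bar{K}$, a further subsequence satisfies $z_{\beta_{n_k}} \stackrel{\tau}{\to} z^{\star} \in \bar{K} \setminus \{\bar{z}\}$. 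Applying the ``$\forall$'' half of (I) to this subsequence yields $U(z^{\star}) \geq \limsup_k U_{\beta_{n_k}}(z_{\beta_{n_k}}) = \sup_Z U$, where the equality uses both the convergence of the suprema proved above and $U_\beta(z_\beta) - U_\beta(\bar{z}_\beta) \to 0$. Hence $z^{\star}$ is a maximiser of $U$ distinct from $\bar{z}$, contradicting the uniqueness assumption in (III).

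The main obstacle is the mismatch between where epi-convergence is assumed (on $Z' \subset Z$) and where the maximisers $\bar{z}_\beta$ may live (anywhere in $Z$). The hypothesis (III) is precisely the device that reduces questions about $(\bar{z}_\beta)_{\beta>0}$ to questions about sequences in $K \subset Z'$ confined to the compact set $\bar{K}$. The most delicate step is extending the ``$\forall$'' half of (I) to cluster points $z^{\star} \in \bar{K}$ that may fail to belong to $Z'$; this is handled by a diagonal approximation that writes $z_{\beta_{n_k}}$ as a limit in $Z'$ and exploits the density of $K$ in $\bar{K}$ together with mild semicontinuity of $U$ on $\bar{K}$.
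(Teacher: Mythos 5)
You should note first that the paper itself offers no proof of this proposition: it is imported as a black box from Attouch \cite{At84}, and the real work in the paper consists of verifying hypotheses (I)--(III) for the specific functionals. Your outline follows the standard epi-convergence argument that underlies Attouch's theorems (recovery sequence at $\bar z$ for the lower bound; relative compactness of $\bar K$ plus the $\limsup$ inequality along the approximate maximisers for the upper bound; uniqueness of $\bar z$ for convergence of the maximisers), so the skeleton is the right one.

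The genuine gap sits exactly at the two steps you flag as delicate and then dispatch with a ``density/diagonal approximation plus mild semicontinuity of $U$''. Hypothesis (I) provides the two epi-inequalities only for sequences and limit points in $Z'$ (this is also how the paper verifies it, with $f\in\cC$), whereas you need the $\liminf$ inequality at $\bar z$ and the $\limsup$ inequality at cluster points $z^\star$ of sequences living in $\bar K$, and neither $\bar z$, $z^\star$, nor those sequences need lie in $Z'$. No semicontinuity of $U$ is available from (I)--(III), and your approximation step moreover perturbs the argument of $U_\beta$ by an amount that (II) controls only for fixed $\beta$, not uniformly as $\beta\downarrow 0$. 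This is not a removable technicality: take $Z=[0,1]$, $Z'=K=(0,1]$, $U_\beta(z)=-z$, $U(z)=-z$ on $Z'$ and $U(0)=1$, $\bar z_\beta=0$, $z_\beta=\beta$; then (I)--(III) hold verbatim, yet $\sup_Z U_\beta\equiv 0$ does not converge to $\sup_Z U=1$. Hence the missing ingredient cannot be conjured from (I)--(III) by any diagonal argument; it must come from the extra input implicit in the citation to \cite{At84} (epi-convergence on all of $Z$, with the attendant semicontinuity of epi-limits) or, in the present application, from the facts that the unique maximiser $g^{a,b}$ of $U$ belongs to $\cC=Z'$ and that $U$ is upper semicontinuous along the relevant sequences in $\bar K$. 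A correct write-up has to state and use such a property explicitly; as it stands, the two hand-waved steps are where the entire content of the proposition lies.
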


\noindent
Below we will apply Proposition~\ref{prop:epi} with the following choices:
\begin{equation}
\label{epichoices}
\begin{aligned}
Z &= \big\{f \in L^2((0,\infty))\colon\,f \geq 0,\,\|f\|_2 = 1\big\},\\
Z' &= \cC,\\
\tau &= \text{ topology induced by the $L^2$-norm},\\
K &= K_c = \big\{f \in \cC\colon\,U(f) \geq -c\big\},\\
U_\beta &= \gb^{-\eta}\,I_{\gd,\gb},\\
U &= a U_1 - b U_2 - U_3,
\end{aligned}
\end{equation} 
where $\cC$ is the set defined in \eqref{Csetdef}, $U_1, U_2, U_3$ are the functions 
defined in \eqref{eq:defA123} below, $I_{\gd,\gb}$ is the functional defined in \eqref{var1} 
below, $\eta = \tfrac23$ (or $\eta=\tfrac13$) for Theorem~\ref{thm:asymp.cc}(1) (or 
Theorem~\ref{thm:Fscalbeta}), while $c$ is a constant chosen large enough so that 
$K_c \neq \emptyset$. The constants $a$ and $b$ are chosen as we go along.

\medskip\noindent
{\bf 2.} 
From the Rayleigh formula, we have
\begin{equation}
\label{lambdavardiscrete}
\gl_{\gd,\gb}(\mu) - 1 = \suptwo{v\in \ell_2(\N_0)}{v\geq0,\, \|v\|_2 =1} 
\Big\{ \sum_{i,j\in\N_0} A_{\mu,\delta,\beta}(i,j) v(i) v(j) - \sum_{i\in \N_0} v(i)^2 \Big\},
\end{equation}
We begin by showing that the supremum can actually be taken over functions in $L^2((0,\infty))$. 
Indeed, for $\eta>0$ write
\begin{equation}
R(h) = \frac{1}{\gb^\eta} \int_0^\infty \dd x \int_0^\infty \dd y\, h(x) h(y) \,
A_{\mu,\delta,\beta}\Big(\Big\lfloor \frac{x}{\gb^\eta} \Big\rfloor, 
\Big\lfloor \frac{y}{\gb^\eta} \Big\rfloor\Big), \quad h\in L^2((0,\infty)). 
\end{equation}
Let $f\in L^2((0,\infty))$ with $\| f\|_2=1$. Define the piecewise constant function
\begin{equation}
g(x) = \sum_{i\in \N} c_i \ind_{\{\gb^\eta (i-1) < x \leq \gb^\eta i\}}, \quad x \in (0,\infty),
\qquad c_i = \frac{1}{\gb^\eta} \int_{\gb^\eta (i-1)}^{\gb^\eta i} f(x)\,\dd x, \quad i\in\N.
\end{equation}
Then $\|g\|_2^2 = \int_0^\infty g(x)^2 \dd x = \gb^\eta \sum_{i\in \N} c_i^2$ which, by Jensen's 
inequality, is smaller than or equal to $\int_{0}^\infty f(x)^2 \dd x =1$. Next, denote the renormalised 
version of $g$ by $\hat g = g/\|g\|_2$. Then
\begin{equation}
R(\hat g) - 1  = \frac{1}{\| g\|^2_2} R(g) - 1 = \frac{1}{\| g\|^2_2} R(f) - 1 \geq R(f) - 1.
\end{equation}
Therefore we may write
\begin{equation}
\label{var1}
\gl_{\gd,\gb}(\mu) - 1 = \suptwo{f\in L^2((0,\infty))}{f\geq 0,\, \| f\|_2=1} I_{\gd,\gb}(f)
\end{equation}
with
\begin{equation} 
I_{\gd,\gb}(f) = \frac{1}{\gb^\eta} \int_0^\infty \dd x \int_0^\infty \dd y\, f(x) f(y)\, 
A_{\mu,\delta,\beta}\Big(\left\lfloor\frac{x}{\gb^\eta}\right\rfloor,
\left\lfloor\frac{y}{\gb^\eta}\right\rfloor\Big) - \int_0^\infty \dd x f(x)^2.
\end{equation}

\medskip\noindent
{\bf 3.} 
Next, recall \eqref{eq:def.Qij}. Using that, for all $x$,
\begin{equation}
\frac{1}{\gb^\eta}\int_0^\infty Q\Big(\Big\lfloor \frac{x}{\gb^\eta} \Big\rfloor 
+ 1, \Big\lfloor \frac{y}{\gb^\eta} \Big\rfloor\Big)\, \dd y = 1,
\end{equation}
we decompose the variational formula in \eqref{var1} as
\begin{equation}
\label{eq:var.for}
I_{\gd,\gb}(f) = I^1_{\gd,\gb}(f) - I^2_{\gd,\gb}(f),
\end{equation}
where
\begin{equation}
\label{I1I2def}
\begin{aligned}
I^1_{\gd,\gb}(f) &= \int_0^\infty \dd x \int_0^\infty \dd y\, f(x)^2  
\Big[ \frac{1}{\gb^\eta} A_{\mu,\delta,\beta}\left(\left\lfloor\frac{x}{\gb^\eta}\right\rfloor,
\left\lfloor\frac{y}{\gb^\eta}\right\rfloor\right) 
- \frac{1}{\gb^\eta} Q\Big(\left\lfloor\frac{x}{\gb^\eta}\right\rfloor+1,
\left\lfloor\frac{y}{\gb^\eta}\right\rfloor\Big)\Big],\\
I^2_{\gd,\gb}(f) &= \frac12 \int_0^\infty \dd x \int_0^\infty \dd y\, [f(x) - f(y)]^2 \frac{1}{\gb^\eta} 
A_{\mu,\delta,\beta}\Big(\left\lfloor\frac{x}{\gb^\eta}\right\rfloor,
\left\lfloor\frac{y}{\gb^\eta}\right\lfloor\Big).
\end{aligned}
\end{equation}
We are interested in the behaviour of the quantity in \eqref{eq:var.for} as $\beta \downarrow 0$. 
Define
\begin{equation}
\label{eq:defA123}
U_1(f) = \int_0^\infty \dd x\, (2x) f(x)^2, \quad U_2(f) = \int_0^\infty \dd x\, (2x)^2 f(x)^2, 
\quad U_3(f) = \int_0^\infty \dd x\, x[f'(x)]^2,
\end{equation}
with $U_3(f)=\infty$ when $f'$ does not exist everywhere. Our key observation is the following lemma. 
Parts (1) and (2) settle requirement in (I) in Proposition~\ref{prop:epi}, Part (3) settles requirement (III), 
while requirement (II) follows from the fact that \eqref{lambdavardiscrete} has a unique maximiser 
and hence so does \eqref{var1}.  

\begin{lemma}
\label{lem:epiconvergence}
{\rm (1)} 
Pick $B,C\in \bbR$ and put $\mu = B\gb^{4/3}$, $\frac12 \gd^2 - \gb = C(\tfrac12 \gd^2)^{4/3}$, 
$\eta = \tfrac23$. Then
\begin{equation}
\label{I1I2scal(1)}
\begin{aligned}
\lim_{\gb \downarrow 0} \frac{1}{\gb^{2/3}}\, I^1_{\gd,\gb}(f) 
&\epi{=} (C-B)\, U_1(f) - \, U_2(f),\\
\lim_{\gb \downarrow 0} \left[-\frac{1}{\gb^{2/3}}\, I^2_{\gd,\gb}(f)\right] 
&\epi{=} -U_3(f).
\end{aligned}
\end{equation}
{\rm (2)} 
Pick $B\in\bbR$ and put $\mu = -f(\gd) - B \gb^{2/3}$, $\eta = \tfrac13$. Then
\begin{equation}
\label{I1I2scal(2)}
\begin{aligned}
\lim_{\gb \downarrow 0} \frac{1}{\gb^{1/3}}\, I^1_{\gd,\gb}(f) 
&\epi{=} B\, U_1(f) - \rho_\gd U_2(f), \\
\lim_{\gb \downarrow 0} \left[-\frac{1}{\gb^{1/3}}\, I^2_{\gd,\gb}(f)\right] 
&\epi{=} - U_3(f),
\end{aligned}
\end{equation}
where $\rho_\gd = \bbE^\gd[\go_1]$.\\
{\rm (3)} 
Fix $\mu,\gd,\gb\in(0,\infty)$ and let $K_c = \{f\in\cC\colon\, U(f) \geq -c\}$. 
Let $\bar{f}_\beta \in Z$ be the unique maximizer of $U_\gb$ in $Z$ defined
in \eqref{epichoices}. Then there exist $f_\gb \in \overline{K_c}$, $\beta \in (0,\infty)$, 
such that 
\begin{equation}
\lim_{\gb \downarrow 0} \| f_\gb - \bar{f}_\gb \|_2 =0,\qquad 
\lim_{\gb \downarrow 0} |U_\gb(f_\gb) - U_\gb(\bar{f}_\gb)| = 0.
\end{equation}
The same holds for $\gd = \gd(\gb)$ satisfying $\frac12\gd^2 - \gb = C(\frac12 \gd^2)^{4/3}$, $C\in (0,\infty)$.

\end{lemma}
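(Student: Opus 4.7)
My plan follows the strategy of \cite{vdHdH95} and reduces the discrete variational formula in \eqref{var1} to its continuous Sturm--Liouville counterpart via two core ingredients: a pointwise expansion of the exponent in $A_{\mu,\delta,\beta}(i,j)$ in the rescaled variables $(x,y) = (\beta^\eta i, \beta^\eta j)$, and the Gaussian approximation to the random-walk transition kernel $Q$ coming from the local central limit theorem. I would apply these to $I^1_{\delta,\beta}$ and $I^2_{\delta,\beta}$ separately, then upgrade pointwise limits to epi-convergence.

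First I would establish a uniform expansion, on compact subsets of $(0,\infty)^2$, of the quantity $E(i,j) := -\mu(i+j+1) + G^*_{\delta,\beta}(i+j+1)$. For part (1), the relations $\tfrac12\delta^2 = \beta + C(\tfrac12\delta^2)^{4/3}$ and $\mu = B\beta^{4/3}$ force $\delta,\beta \downarrow 0$ with $\delta^2 \sim 2\beta$; combining the normalisation $\bbE(\omega_1) = 0$, $\bbVar(\omega_1) = 1$ with Taylor expansion (valid since $\beta\ell \sim \beta^{1/3} \to 0$ when $\ell = (x+y)/\beta^{2/3}$) yields
$G^*_{\delta,\beta}(\ell) = (\tfrac12\delta^2 - \beta)\ell - \beta^2 \ell^2 + o(\beta^2\ell^2)$,
so after substitution $E(i,j) = \beta^{2/3}[(C-B)(x+y) - (x+y)^2] + o(\beta^{2/3})$. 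For part (2) I would use the decomposition $-\mu\ell + G^*_{\delta,\beta}(\ell) = B\beta^{2/3}\ell + \log\bbE^\delta[e^{-\beta\Omega_\ell^2}]$ and Taylor-expand the log-moment generating function for small $\beta\ell$, retaining only contributions of size $\beta^{1/3}$ after substituting $\ell = (x+y)/\beta^{1/3}$; this produces the claimed leading-order behaviour of $E(i,j)$ with the coefficient of $(x+y)^2$ matching that of the target variational formula.

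Second, by the local CLT one has $\beta^{-\eta} Q(i+1,j) \sim (4\pi x)^{-1/2}\,\beta^{-\eta/2}\,\exp[-(x-y)^2/(4x\beta^\eta)]$ in rescaled variables, so that $\beta^{-\eta} Q$ acts as an approximate Gaussian kernel of width $\sqrt{2x\beta^\eta}$ that concentrates on the diagonal $y = x$ as $\beta \downarrow 0$, while $\int \beta^{-\eta} Q\,dy = 1$. For $I^1_{\delta,\beta}$ I would write $\beta^{-\eta}(A-Q) = \beta^{-\eta}Q\,(e^{E}-1) \sim \beta^{-\eta}Q\cdot E$ using the smallness of $E$, and then integration over $y$ concentrates everything at $y=x$, yielding $\int f(x)^2 \int \beta^{-\eta}(A-Q)\,dy\,dx \to \int f(x)^2\, p(2x)\,dx$ where $p$ is the polynomial from the preceding step; this is the asserted limit after invoking the definitions of $U_1, U_2$. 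For $I^2_{\delta,\beta}$, a Taylor expansion $[f(y)-f(x)]^2 = f'(x)^2(y-x)^2 + o((y-x)^2)$ combined with the Gaussian variance $\int (y-x)^2\,\beta^{-\eta}Q\,dy \sim 2x\beta^\eta$ gives $\beta^{-\eta} I^2_{\delta,\beta}(f) \to \tfrac12\int 2x\, f'(x)^2\,dx = U_3(f)$.

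To upgrade these pointwise limits into epi-convergence, the $\liminf$ half is handled by the constant recovery sequence $f_\beta \equiv f$ for $f \in \cC$, exploiting the smoothness and integrability built into $\cC$. The $\limsup$ half on arbitrary sequences $f_\beta \to f$ in $L^2$ follows by truncation to $x \in (r,R)$ and Fatou, using the tail estimate $Q(i+1,j) \leq C(i+j+1)^{-1/2}$ together with the upper bound on $G^*_{\delta,\beta}$ from Appendix~\ref{AppA} to control the off-diagonal contribution of $\beta^{-\eta}A$ uniformly. For part (3), given the discrete maximiser $\bar f_\beta$ of \eqref{var1}, an a priori bound $U_\beta(\bar f_\beta) \geq -c$, obtained by testing against any fixed $f_* \in \cC$, implies boundedness of the relevant moments of $\bar f_\beta$; I would then mollify and truncate to produce $f_\beta = (\bar f_\beta \wedge R)*\phi_\epsilon \in \overline{K_c}$ with $\|f_\beta - \bar f_\beta\|_2 \to 0$ and $|U_\beta(f_\beta) - U_\beta(\bar f_\beta)| \to 0$ by a standard diagonal extraction in $(R,\epsilon,\beta)$. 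The main obstacle will be the uniformity of the expansions over $(x,y)$ ranging in the full quadrant (rather than a fixed compact) and the delicate behaviour near $x=0$ where the Gaussian approximation to $Q$ degenerates; these are to be controlled by the integrability constraints built into $\cC$ and the tail estimates from Appendix~\ref{AppA}.
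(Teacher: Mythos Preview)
Your treatment of parts (1) and (2) is essentially the paper's own argument: the paper expands $e^{G^*_{\delta,\beta}(\ell)-\mu\ell}-1$ via a Taylor expansion (pushed to fifth order in Lemmas~\ref{lem:epi.one.sup}--\ref{lem:epi.one.inf} to control remainders), uses the local CLT for $Q$ via the change of variables $u=x+y$, $v=x-y$ (see \eqref{eq:cvgI_oneone}), and for $I^2$ restricts to $|x-y|\le \beta^{1/12}$ and invokes the corresponding computations in \cite{vdHdH95}. Your split into ``constant recovery sequence for the $\liminf$'' and ``truncation plus Fatou for the $\limsup$'' is exactly the pairing of Lemmas~\ref{lem:epi.one.sup}--\ref{lem:epi.two.inf}.

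For part (3), however, your plan departs from the paper and has a genuine gap. The paper does \emph{not} mollify: it takes $f_\beta$ to be the piecewise-linear interpolation of the rescaled Perron eigenvector $\tau_\beta$ of $A_{\mu,\delta,\beta}$, and the entire content of the proof (deferred to Appendix~\ref{AppC}) consists of four quantitative eigenvector estimates,
\[
\sum_i i^2\tau_\beta(i)^2 \le C\beta^{-4/3},\quad
\sum_i i(\Delta\tau_\beta(i))^2 \le C\beta^{2/3},\quad
\tau_\beta(0)^2 \le C\beta^{2/3}\log\tfrac1\beta,\quad
\|\Delta\tau_\beta\|_2^2 \le C\beta^{4/3}\log\tfrac1\beta,
\]
which are what guarantee $f_\beta\in\overline{K_c}$ and $|U_\beta(f_\beta)-U_\beta(\bar f_\beta)|\to 0$. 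These bounds do \emph{not} follow from the a~priori inequality $U_\beta(\bar f_\beta)\ge -c$ alone, as you suggest; their proof uses the eigenvalue equation $A_\beta\tau_\beta=\lambda(\beta)\tau_\beta$ in an essential way (see e.g.\ the derivation of \eqref{eq:bo} from \eqref{eq:plug}), together with the pointwise control \eqref{eq:f<0}--\eqref{eq:f<} on $h(x)=G^*(x)-\mu x$, and for general disorder an additional Edgeworth-type comparison with the Gaussian case (Lemma~\ref{lem:Gapprox}). Your proposed $f_\beta=(\bar f_\beta\wedge R)*\phi_\epsilon$ would still require precisely these eigenvector moment bounds to show that it lands in $\overline{K_c}$ and that $U_\beta$ is stable under the modification; the mollification and diagonal extraction by themselves do not supply them.
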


\begin{proof}[Sketch of the proof]
We give a brief sketch, the details of which will be worked out in Steps 7--9 below. 
Using Proposition~\ref{pr:G.conv-to-Brownian} in Appendix~\ref{AppA} we get,
for $\beta \downarrow 0$,
\begin{equation}
\label{Ascalprop}
\begin{aligned}
&\frac{1}{\gb^\eta} A_{\mu,\delta,\beta}\Big(\left\lfloor\frac{x}{\gb^\eta}\right\rfloor,
\left\lfloor\frac{y}{\gb^\eta}\right\rfloor\Big) 
- \frac{1}{\gb^\eta} Q\Big(\left\lfloor\frac{x}{\gb^\eta}\right\rfloor +1,
\left\lfloor\frac{y}{\gb^\eta}\right\rfloor\Big)\\ 
&\qquad = \Big[e^{G^*_{\delta,\beta}(\frac{x+y}{\gb^\eta})-\mu (\frac{x+y}{\gb^\eta})} - 1 \Big] 
\frac{1}{\gb^\eta} Q\Big(\left\lfloor\frac{x}{\gb^\eta}\right\rfloor +1,
\left\lfloor\frac{y}{\gb^\eta}\right\rfloor\Big)\\
&\qquad \epi{\sim} \Big[(\tfrac12\gd^2 - \gb - \mu) \gb^{-\eta}(x+y) 
- \gb^{1-2\eta}\gd^2 (x+y)^2 \Big] \frac{1}{\gb^\eta} 
Q\Big(\left\lfloor\frac{x}{\gb^\eta}\right\rfloor +1,
\left\lfloor\frac{y}{\gb^\eta}\right\rfloor\Big).
\end{aligned}
\end{equation}
Inserting \eqref{Ascalprop} into the first line of \eqref{I1I2def} and using Proposition~\ref{lem:asymptP} 
in Appendix~\ref{AppA}, we find the first lines of \eqref{I1I2scal(1)}--\eqref{I1I2scal(2)}. Inserting 
\eqref{Ascalprop} into the second line of \eqref{I1I2def} and using Proposition~\ref{lem:asymptP} 
in Appendix~\ref{AppA}, we find the second lines of \eqref{I1I2scal(1)}--\eqref{I1I2scal(2)}. Part (3)
will be achieved by taking for $f_\beta$ the linear interpolation of $\bar{f}_\beta$. 
\end{proof}

\medskip\noindent
{\bf 4.\ Proof of Theorem \ref{thm:asymp.cc}(1).} 
We look at the scaling of $\gb_c(\gd) - \tfrac12\gd^2$ as $\gd \downarrow 0$. Put
\begin{equation}
\label{eq:setvar}
\eta = \tfrac23, \qquad \mu = B\gb^{\tfrac43}, \qquad \beta 
= \tfrac12\gd^2 - C (\tfrac12\gd^2)^{\tfrac43}.
\end{equation}
Then \eqref{eq:var.for}, Proposition~\ref{prop:epi} and Lemma~\ref{lem:epiconvergence}(1,3) 
imply that
\begin{equation}
\label{lambdarholim}
\lim_{\gd \downarrow  0} \gb^{-\tfrac23}\,[\gl_{\gd,\gb}(\mu)-1] = \chi(C-B,1),
\end{equation}
where
\begin{equation}
\label{varforSL1}
\chi(a,1) = \sup_{f\in\cC} 
\left\{ \int_0^\infty \dd x\, \big[ f(x)^2 [a(2x) - (2x)^2] - \tfrac12 f'(x)^2 (2x) \big] \right\},
\qquad a\in\bbR.
\end{equation}
with $\cC$ the set defined in \eqref{Csetdef}. Via integration by parts, the variational 
formula in \eqref{varforSL1} can be rewritten as
\begin{equation}
\label{varforSL2}
\chi(a,1) = \sup_{f\in\cC} 
\langle f, \cL^{a,1} f \rangle,\qquad (\cL^{a,1}f)(x) = xf''(x)  + f'(x) + [a(2x) - (2x)^2] f(x),
\end{equation}
which is the variational representation of the largest eigenvalue $\chi(a,1)$ of the 
Sturm-Liouville operator $\cL^{a,1}$ introduced in \eqref{SL}--\eqref{SLeig}. Pick
$B=0$ in \eqref{lambdarholim} and use that $C \mapsto \chi(C,1)$ changes sign 
at $C=a^*$ according to \eqref{SLprop}, to obtain that \eqref{lambdarholim} yields 
the scaling for the critical curve given in \eqref{eq:betacasympzero}.

\medskip\noindent
{\bf 5.\ Proof of Theorem~\ref{thm:Fscalbeta}(2).} 
This a consequence of \eqref{lambdarholim} and the shape of $b\mapsto \chi(a,b)$ (recall 
Fig.~\ref{fig-chiab}). Indeed, let $\beta$ be as in \eqref{eq:setvar}. Then
\begin{equation}
F^*\Big(\gd,\tfrac12\gd^2-C(\tfrac12\gd^2)^{\tfrac43}\Big) 
\sim [C - a^*(1)](\tfrac12\gd^2)^{\tfrac43}, \qquad \gd,\gb \downarrow 0.
\end{equation}
The right-hand side is equivalent to $\gb_c(\gd) - \gb$ as soon as $\beta_c(\gd) - \beta \asymp 
\delta^{8/3}$.

\medskip\noindent
{\bf 6.\ Proof of Theorem \ref{thm:Fscalbeta}(1).}
Set $\mu = -f(\gd) - B \gb^{\tfrac23}$. Then \eqref{eq:var.for}, Proposition~\ref{prop:epi} 
and Lemma~\ref{lem:epiconvergence}(2,3) imply that
\begin{equation}
\lim_{\gd \downarrow 0} \gb^{-\tfrac13}\,[\gl_{\gd,\gb}(\mu)-1] = \chi(B,\rho_\gd),
\end{equation}
from which we get $\mu(\gd,\gb) = -f(\gd) - a^*(\rho_\gd) \gb^{\tfrac23} [1+o(1)]$. 
Because $F(\gd,\gb)=\mu(\gd,\gb)+f(\gd)$, this proves the asymptotics for the free
energy in the first part of \eqref{Fvrhoscal}. As to the asymptotics for the speed, 
recall \eqref{eq:speedspec}, which reads
\begin{equation}
\label{varvalt}
v(\delta,\beta) 
= \left[-\frac{\partial}{\partial\mu} 
\lambda_{\delta,\beta}(\mu)\right]^{-1}_{\mu = \mu(\delta,\beta)}
\end{equation}
because $\gl_{\gd,\gb}(\mu(\gd,\gb))=1$. We have just proven that
\begin{equation}
\label{lambdascalingformula}
\gl_{\gd,\gb}\Big(-f(\gd) - B\gb^{\tfrac23}\Big) = 1 + \chi(B,\rho_\gd) \gb^{\tfrac13} [1+o(1)],
\qquad \beta \downarrow 0.
\end{equation}
By convexity, we may take the derivative of \eqref{lambdascalingformula} with respect to 
$B$, to get
\begin{equation}
-\gb^{\tfrac23}\left[\frac{\partial}{\partial\mu} 
\lambda_{\delta,\beta}(\mu)\right]_{\mu = -f(\gd) - B\gb^{\tfrac23}} 
= \left[\frac{\partial}{\partial B}\chi(B,\rho_\gd)\right] \gb^{\tfrac13} [1+o(1)].
\end{equation}
Using monotonicity of $\mu \mapsto \frac{\partial}{\partial\mu} \lambda_{\delta,\beta}(\mu)$ 
and continuity of $B \mapsto \frac{\partial}{\partial B}\chi(B,\rho_\gd)$, we in turn deduce that
\begin{equation}
\begin{aligned}
\left[\frac{\partial}{\partial\mu} \lambda_{\delta,\beta}(\mu)\right]_{\mu = \mu(\gd,\gb)} 
&= -\left[\frac{\partial}{\partial B}\chi(B,\rho_\gd) \right]_{B = a^*(\rho_\gd)} \gb^{-\tfrac13} [1+o(1)],
\end{aligned}
\end{equation}
which via \eqref{varvalt} proves the second part of \eqref{Fvrhoscal}. To get the asymptotics 
for the charge we note that
\begin{equation}
\rho(\gd,\gb)-\rho_\delta = \frac{\partial}{\partial\gd} [\mu(\gd,\gb)+f(\delta)]
= \frac{\partial}{\partial\gd} F(\delta,\beta). 
\end{equation}
We know that $F(\delta,\beta) \sim -a^*(\rho_\delta)\beta^{2/3}$ as $\beta \downarrow 0$. 
Hence we get the third part of \eqref{Fvrhoscal}, provided we show that the differentiation 
w.r.t.\ $\delta$ and the limit $\beta \downarrow 0$ may be interchanged. This can be justified 
as follows. Fix $\delta \in (0,\infty)$. For $\gd_1,\gd_2 \in (0,\infty)$, estimate
\begin{equation}
\begin{aligned}
\Big|\gb^{-\tfrac23} \frac{\partial}{\partial\gd}F(\gd,\gb) 
- \frac{\dd}{\dd\gd}[-a^*(\rho_\gd)]\Big| 
&\leq \gb^{-\tfrac23} \Big|\frac{\partial}{\partial\gd}F(\gd,\gb)
- \frac{F(\gd_1,\gb)- F(\gd_2,\gb)}{\gd_1 - \gd_2}\Big|\\
&\qquad + \Big| \frac{[\gb^{-\tfrac23}F(\gd_1,\gb) + a^*(\rho_{\gd_1})] 
- [\gb^{-\tfrac23} F(\gd_2,\gb) + a^*(\rho_{\gd_2})]}{\gd_1 - \gd_2}\Big|\\
&\qquad + \Big|\frac{a^*(\rho_{\gd_2})-a^*(\rho_{\gd_1})}{\gd_1 - \gd_2} 
- \frac{\dd}{\dd\gd}[-a^*(\rho_\gd)]\Big|.
\end{aligned}
\end{equation}
The second term in the right-hand side tends to zero as $\beta \downarrow 0$ for every choice of 
$\delta_1,\delta_2$. The third term does not depend on $\beta$, and tends to zero as $\delta_1, 
\delta_2 \to \delta$. To control the first term it is enough to prove that for some $\gb_0 \in 
(0,\infty)$,
\begin{equation}
\lim_{\gd_1,\gd_2 \to \gd} \sup_{\gb \in (0,\gb_0)} 
\gb^{-\tfrac23} \Big|\frac{\partial}{\partial\gd}F(\gd,\gb) 
- \frac{F(\gd_1,\gb) - F(\gd_2,\gb)}{\gd_1-\gd_2} \Big| = 0.
\end{equation}
A sufficient condition for the latter is that there exist $\gb_0,\gep_0 \in (0,\infty)$ such that
\begin{equation}
\sup_{\gb \in (0,\gb_0)} \gb^{-\tfrac23} \sup_{|\gd' - \gd|\leq \gep_0} 
\frac{\partial^2}{\partial\gd^2}F(\gd',\gb) < \infty.
\end{equation}
Finally, from \eqref{eq:formula.variance}, $\frac{\partial^2}{\partial\gd^2}F(\gd,\gb) 
= \frac{\partial^2}{\partial\gd^2}\mu(\gd,\gb) + \frac{\dd^2}{\dd\gd^2}f(\gd) 
= \sigma_\rho^2(\gd,\gb) - \sigma_\rho^2(\gd,0)$.

\medskip\noindent
{\bf 7.}
In the remaining steps we prove Lemma~\ref{lem:epiconvergence}. Along the way we need two 
technical lemmas.

\begin{lemma}
\label{lem:lim_moment}
As $\ell\to\infty$,
\begin{equation}
\begin{array}{lll}
&\forall m\in\N_0\colon &\bbE[\gO_\ell^{2m}] 
= c_m\ \ell^m [1+o(1)] \mbox{ with } c_m= 1 \times 3 \times \ldots \times (2m-1),\\[0.2cm]
&\forall m\in\N\colon &\bbE[\gO_\ell^{2m+1}] \leq 
c'_m\ \ell^m \mbox{ for some constant $c'_m$ depending on $m$.}
\end{array}
\end{equation}
\end{lemma}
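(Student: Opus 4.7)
The plan is to expand the moment as a multilinear sum and classify terms by the partition structure on the indices. Writing
\begin{equation}
\bbE[\gO_\ell^n] = \sum_{(k_1,\ldots,k_n) \in \{1,\ldots,\ell\}^n}
\bbE[\go_{k_1} \cdots \go_{k_n}],
\end{equation}
I would group terms according to the set partition $\pi$ of $\{1,\ldots,n\}$ induced by the equality relation on the tuple $(k_1,\ldots,k_n)$. By independence of the $\go_i$ and because $\bbE(\go_1)=0$ (see \eqref{eq:omegacond}), any partition $\pi$ that contains a block of size $1$ yields a vanishing contribution. Hence only partitions $\pi=\{B_1,\ldots,B_r\}$ with $|B_i|\geq 2$ for all $i$ survive. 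For such $\pi$, the number of tuples inducing $\pi$ equals $\ell(\ell-1)\cdots(\ell-r+1) = \ell^r + O(\ell^{r-1})$, and the expectation factorises as $\prod_{i=1}^r \bbE[\go_1^{|B_i|}]$, which is finite thanks to the exponential moment assumption \eqref{Mdeltacond}.

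For the even case $n=2m$, the maximal number of blocks compatible with $|B_i|\geq 2$ is $r=m$, and this maximum is attained exactly by the pair partitions of $\{1,\ldots,2m\}$, of which there are $(2m-1)!!=c_m$. Each pair partition contributes $\bbE[\go_1^2]^m = 1$ by \eqref{eq:omegacond}, so the combined pair-partition contribution equals $c_m\,\ell^m + O(\ell^{m-1})$. Any partition with a block of size $\geq 3$ has at most $r \leq m-1$ blocks, hence contributes $O(\ell^{m-1})$. Summing over the finitely many partitions of $\{1,\ldots,2m\}$ gives $\bbE[\gO_\ell^{2m}] = c_m\,\ell^m[1+o(1)]$.

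For the odd case $n=2m+1$, no pair partition exists, and the maximum number of blocks under $|B_i|\geq 2$ is $r=m$, attained by partitions with exactly one block of size $3$ and $m-1$ blocks of size $2$. Every surviving partition therefore satisfies $r\leq m$, yielding
\begin{equation}
\bigl|\bbE[\gO_\ell^{2m+1}]\bigr|
\leq \sum_{\pi\colon |B_i|\geq 2,\, r(\pi)\leq m} \ell^{r(\pi)}
\prod_{i=1}^{r(\pi)} \bbE\bigl[|\go_1|^{|B_i|}\bigr]
\leq c'_m\,\ell^m,
\end{equation}
for a constant $c'_m$ depending only on $m$ and on the absolute moments $\bbE[|\go_1|^j]$ with $j\leq 2m+1$, all finite by \eqref{Mdeltacond}.

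There is no real obstacle here: the argument is a standard cumulant-style counting, and the only non-trivial point is keeping track of which partitions achieve the extremal number of blocks in the even versus odd case. The finiteness of all the constants involved is immediate from the uniform exponential-moment hypothesis \eqref{Mdeltacond}.
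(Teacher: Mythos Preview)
Your argument is correct: this is precisely the standard partition-counting computation the paper has in mind when it writes ``The computations are straightforward and are left to the reader.'' There is nothing to compare, since the paper gives no proof of its own.
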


\begin{proof}
The computations are straightforward and are left to the reader.
\end{proof}

\begin{lemma}
\label{lem:momentsQij}
For every $\ga\geq 1$ there exists a constant $c_\ga$ such that
\begin{equation}
\sum_{j\in \bbN_0} (i+j)^\ga Q(i,j) \leq c_\ga\, i^\ga,\qquad i\in\bbN_0.
\end{equation}
\end{lemma}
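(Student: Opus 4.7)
The plan is to exploit the probabilistic interpretation of $Q(i,\cdot)$ given in \eqref{eq:Q}: for $i\in\N$, $Q(i,j) = \bP(\xi_1 + \cdots + \xi_i = j)$ where $(\xi_k)_{k\in\N}$ are i.i.d.\ $\Geo_0(\tfrac12)$ random variables. The case $i=0$ is trivial since $Q(0,j) = \ind_{\{j=0\}}$ gives $\sum_j (0+j)^\alpha Q(0,j) = 0$.

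For $i\in\N$, setting $T_i = \xi_1 + \cdots + \xi_i$, the sum in question rewrites as
\begin{equation}
\sum_{j\in\N_0} (i+j)^\alpha Q(i,j) = \bE\big[(i + T_i)^\alpha\big].
\end{equation}
Since $\xi_1$ is geometric with parameter $\tfrac12$, it has finite exponential moments, and in particular $\|\xi_1\|_\alpha := \bE[\xi_1^\alpha]^{1/\alpha} < \infty$ for every $\alpha\geq 1$. Minkowski's inequality in $L^\alpha$ then gives
\begin{equation}
\|i + T_i\|_\alpha \;\leq\; i \,+\, \Big\|\sum_{k=1}^i \xi_k\Big\|_\alpha
\;\leq\; i \,+\, \sum_{k=1}^i \|\xi_k\|_\alpha
\;=\; i\,\big(1 + \|\xi_1\|_\alpha\big),
\end{equation}
and raising to the power $\alpha$ yields
\begin{equation}
\bE\big[(i + T_i)^\alpha\big] \;\leq\; c_\alpha\, i^\alpha,
\qquad c_\alpha = (1 + \|\xi_1\|_\alpha)^\alpha.
\end{equation}

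Since the geometric moments $\|\xi_1\|_\alpha$ are standard and computable (there is no sharp constant needed, only finiteness), the argument is fully routine; no genuine obstacle arises. The only point worth emphasising is that Minkowski is the most economical tool here: more refined bounds via Rosenthal or Marcinkiewicz-Zygmund would give a term of smaller order $i^{\alpha/2}$ for the centred sum, but the stated bound $c_\alpha i^\alpha$ is all that is required in the sequel, and it follows directly from subadditivity of the $L^\alpha$-norm.
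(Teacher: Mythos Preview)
Your proof is correct and follows essentially the same approach as the paper: both exploit the probabilistic interpretation $Q(i,j)=\bP(\xi_1+\cdots+\xi_i=j)$ from \eqref{eq:Q} and then bound the $\alpha$-th moment of the i.i.d.\ sum. The only cosmetic difference is that the paper first splits $(i+j)^\alpha \le 2^{\alpha-1}(i^\alpha+j^\alpha)$ and then appeals to \eqref{eq:Q} to bound $\sum_j j^\alpha Q(i,j)\le C i^\alpha$, whereas you apply Minkowski directly to $\|i+T_i\|_\alpha$; your route is marginally cleaner since it avoids the splitting step and gives an explicit constant.
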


\begin{proof}
Note that $(i+j)^\ga \leq 2^{1-\ga}(i^\ga + j^\ga)$, and use \eqref{eq:Q} to show that 
$\sum_{j\in\N_0} j^\ga Q(i,j) \leq C i^\ga$ for some constant $C$.
\end{proof}

\medskip\noindent
{\bf 8.}
Lemmas~\ref{lem:epi.one.sup}--\ref{lem:epi.two.inf} below prove the epi-convergence 
claimed in Lemma~\ref{lem:epiconvergence}(1,2), which is requirement (I) in 
Proposition~\ref{prop:epi}.

\begin{lemma}
\label{lem:epi.one.sup}
Let $\beta = \tfrac12\gd^2 - C(\tfrac12\gd^2)^{4/3}$ and $\mu = B\gb^{4/3}$. 
Then, for all $f_\beta\stackrel{L^2}{\to} f$ in $\cC$, 
\begin{equation}
\limsup_{\gb \downarrow 0} \gb^{-\tfrac23} I^1_{\gd,\gb}(f_\beta) \leq (C-B) U_1(f) - U_2(f).
\end{equation}
\end{lemma}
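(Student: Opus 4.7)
My plan is to reduce the estimate to a pointwise analysis of the kernel appearing in $I^1_{\gd,\gb}$ and then combine it with the $L^2$-convergence $f_\beta\to f$ through a truncation-plus-dominated-convergence argument. Using the identity $A_{\mu,\gd,\gb}(i,j) = e^{G^*_{\gd,\gb}(i+j+1)-\mu(i+j+1)}\,Q(i+1,j)$, I would first rewrite
\begin{equation}
\gb^{-2/3}I^1_{\gd,\gb}(f_\beta) = \int_0^\infty\!\!\int_0^\infty f_\beta(x)^2\,\Phi_\gb(x,y)\,K_\gb(x,y)\,\dd x\,\dd y,
\end{equation}
where $\ell_\gb(x,y) = \lfloor x/\gb^{2/3}\rfloor + \lfloor y/\gb^{2/3}\rfloor + 1$, $K_\gb(x,y) = \gb^{-2/3}Q(\lfloor x/\gb^{2/3}\rfloor+1,\lfloor y/\gb^{2/3}\rfloor)$, and $\Phi_\gb(x,y) = \gb^{-2/3}\bigl[e^{G^*_{\gd,\gb}(\ell_\gb(x,y))-\mu\ell_\gb(x,y)}-1\bigr]$.

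The next step is to expand $G^*_{\gd,\gb}$ using Proposition~\ref{pr:G.conv-to-Brownian}. With the scaling $\tfrac12\gd^2-\gb=C(\tfrac12\gd^2)^{4/3}$, $\mu=B\gb^{4/3}$, so that $\gd^2 = 2\gb + O(\gb^{4/3})$, one obtains
\begin{equation}
G^*_{\gd,\gb}(\ell)-\mu\ell = (C-B)\gb^{4/3}\ell - \gb^{2}\ell^2 + o\bigl(\gb^{2/3}\bigr)
\end{equation}
uniformly for $\ell\gb^{2/3}$ in bounded subsets of $[0,\infty)$. Substituting $\ell=\ell_\gb(x,y)$ and using $e^u-1 = u + O(u^2)$ for small $u$, it follows that $\Phi_\gb(x,y)\to (C-B)(x+y) - (x+y)^2$ pointwise on $(0,\infty)^2$. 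The global upper bound $e^{G^*_{\gd,\gb}(\ell)}\leq C\ell^{-1/2}$ from Proposition~\ref{pr:asymptGstar} ensures that $\Phi_\gb$ stays controlled even when $\ell_\gb(x,y)\gb^{2/3}$ grows. Proposition~\ref{lem:asymptP} then guarantees that $K_\gb(x,y)\,\dd y$ acts as an approximation of the identity concentrating around $y=x$ and preserving polynomial moments (by Lemma~\ref{lem:momentsQij}), so that for a fixed continuous test function $h$ of polynomial growth,
\begin{equation}
\int_0^\infty \Phi_\gb(x,y)K_\gb(x,y)h(y)\,\dd y \longrightarrow \bigl[(C-B)(2x)-(2x)^2\bigr]h(x)
\end{equation}
pointwise in $x\in(0,\infty)$.

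To conclude the $\limsup$ inequality, I would combine the above with the $L^2$-convergence $f_\beta\to f$ by splitting the integral in $x$ into $\{x\leq R\}$ and $\{x>R\}$ for large $R$. On the compact set, pointwise convergence of the kernel together with $f_\beta^2\to f^2$ in $L^1$ (guaranteed by $L^2$-convergence and the Cauchy-Schwarz domination via the polynomial bound on $\Phi_\gb K_\gb$) yields the expected contribution $\int_0^R f^2[(C-B)(2x)-(2x)^2]\dd x$. On the tail, the quadratic term $-(2x)^2$ dominates so the contribution is eventually non-positive, which is what is needed for an upper bound; the positive contribution from $(C-B)(2x)$ is controlled thanks to the $\int x^{9/2}g^2 < \infty$ moment condition built into $\cC$, which is preserved in a weak sense along any sequence in $\cC$ via the bound $e^{G^*_{\gd,\gb}}\leq C\ell^{-1/2}$.

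The main obstacle will be Step~2 itself: controlling the Taylor remainder in the expansion of $G^*_{\gd,\gb}(\ell)-\mu\ell$ uniformly in $\ell$, because the expansion only describes the regime $\ell\gb^{2/3} = O(1)$ while the integral ranges over all $(x,y)$. Outside this window the quadratic contribution dominates and renders $\Phi_\gb$ large and negative, so for a $\limsup$ upper bound this tail contribution is harmless; nevertheless, quantifying "harmless" requires the pointwise bound $e^{G^*_{\gd,\gb}(\ell)}\leq C\ell^{-1/2}$ and the moment estimates on $Q$ from Lemma~\ref{lem:momentsQij} to show that the truncation error vanishes as $R\to\infty$ uniformly in $\gb$ small. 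A secondary technical point is that $f_\beta$ is only assumed to converge in $L^2$ rather than in a weighted norm, so the integral of $(2x)^2 f_\beta(x)^2$ need not converge to $U_2(f)$; but, as explained above, this works in favour of the $\limsup$ inequality rather than against it, since the quadratic term enters with a negative sign.
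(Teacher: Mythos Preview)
Your strategy---rewrite $I^1_{\gd,\gb}$ via the factor $\Phi_\gb(x,y)=\gb^{-2/3}[e^{G^*_{\gd,\gb}(\ell)-\mu\ell}-1]$, identify its pointwise limit, and argue that the tail is harmless because the quadratic term enters with a negative sign---matches the paper's heuristic, but the proof as written has a genuine gap in exactly the place you yourself flag as ``the main obstacle''.

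The problem is that Proposition~\ref{pr:G.conv-to-Brownian} gives only a \emph{pointwise asymptotic} $D_{\gd,\gb}(\ell)\sim(Cu-u^2)\gb^{2/3}$ for $\ell=\lfloor u\gb^{-2/3}\rfloor$ with $u$ fixed; it does not furnish a one-sided upper bound valid for \emph{all} $\ell$ uniformly in small $\gb$. Your tail argument relies on such a bound: to say ``the contribution from $x>R$ is eventually non-positive'' you need $\Phi_\gb(x,y)\le 0$ (or at least $\int\Phi_\gb K_\gb\,\dd y\le 0$) for all large $x$, uniformly in $\gb$. The global estimate $e^{G^*_{\gd,\gb}(\ell)}\le C\ell^{-1/2}$ from Proposition~\ref{pr:asymptGstar} does not help here, because in the regime $\gd^2\sim 2\gb$ the constant $C$ blows up like $\gb^{-1/2}$ (it involves $\sum_m e^{\gd m-\gb m^2}$), so for $\ell=u\gb^{-2/3}$ the bound degenerates. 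Likewise, on the compact part $\{x\le R\}$ you need not just pointwise convergence of $\int\Phi_\gb K_\gb\,\dd y$ but a dominating function independent of $\gb$, and none of the ingredients you cite provides one.

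The paper closes this gap with a device you do not mention: a one-sided fifth-order Taylor inequality for the random exponent, namely (with $e_{\gd,\gb}(\ell)=\gd\gO_\ell-\gb\gO_\ell^2$, which is bounded above by $\gd^2/(4\gb)\to\tfrac12$)
\[
e^{e_{\gd,\gb}(\ell)}-1\;\le\;e_{\gd,\gb}(\ell)+\tfrac12 e_{\gd,\gb}(\ell)^2+\tfrac16 e_{\gd,\gb}(\ell)^3+\tfrac{1}{24}e_{\gd,\gb}(\ell)^4+c\,e_{\gd,\gb}(\ell)^5\ind_{\{e_{\gd,\gb}(\ell)\ge 0\}}.
\]
After taking $\bbE$ and reordering in powers of $\gb$, this produces the explicit upper bound $\bbE[e^{e_{\gd,\gb}(\ell)}]-1\le(\tfrac12\gd^2-\gb)\bbE[\gO_\ell^2]+(\tfrac12\gb^2-\tfrac12\gd^2\gb+\tfrac{1}{24}\gd^4)\bbE[\gO_\ell^4]+R_{\gd,\gb}(\ell)$, valid for every $\ell$. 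The moment asymptotics of Lemma~\ref{lem:lim_moment} then turn this into an explicit polynomial in $\ell$, and Lemma~\ref{lem:momentsQij} lets you integrate that polynomial against $Q(i+1,\cdot)$. This is what replaces your truncation-plus-negativity argument: the inequality is global, so no splitting at $R$ is needed, and the remainder $R_{\gd,\gb}(\ell)$ is shown term-by-term to be $o(\gb^{2/3})$ after integration. Incorporating this bound is the missing step; once you have it, the change of variables $u=x+y$, $v=x-y$ together with the local CLT for $Q$ finishes the job exactly along the lines you sketched.
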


\begin{proof}
For simplicity, we start with the case $B=0$. For $\ell \in \N$, let $e_{\gd,\gb}(\ell) 
= \gd \gO_\ell - \gb \gO_\ell^2$. Note that
\begin{equation}
\sup_{\ell\in \N} e_{\gd,\gb}(\ell) \leq \tfrac14\frac{\gd^2}{\gb} = \tfrac12[1+o(1)],
\qquad \gb \downarrow 0.
\end{equation}
We expand $e^{e_{\gd,\gb}(\ell)}-1$ to fifth order in $\gb$, so that the expansion 
includes the limiting term. There exists a constant $c$ such that
\begin{equation}
\label{eq:egdgb_sup}
e^{e_{\gd,\gb}(\ell)}-1 \leq e_{\gd,\gb}(\ell) +\tfrac12 e_{\gd,\gb}(\ell)^2 
+ \tfrac16 e_{\gd,\gb}(\ell)^3 + \tfrac{1}{24} e_{\gd,\gb}(\ell)^4 
+ c e_{\gd,\gb}(\ell)^5 \ind_{\{e_{\gd,\gb}(\ell)\geq 0\}}.
\end{equation}
Keep in mind that $\ell$ will later be replaced by $i+j$, where $0\leq i,j \leq N \gb^{-\frac23}$. 
Reordering the terms of the expansion according to $\gb$, we get
\begin{equation}
\label{eq:egdgb_sup1}
\bbE[e^{e_{\gd,\gb}(\ell)}-1] \leq \big( \tfrac12 \gd^2 - \gb \big) 
\bbE[\gO_\ell^2] + \big( \tfrac12 \gb^2 - \tfrac12 \gd^2 \gb 
+ \tfrac{1}{24} \gd^4 \big) \bbE[\gO_\ell^4] + R_{\gd,\gb}(\ell),
\end{equation}
where $R_{\gd,\gb}(\ell)$ is a remainder term given by
\begin{equation}
\begin{split}
R_{\gd,\gb}(\ell) = \big(\tfrac16 \gd^3 - \gd \gb\big)\bbE[\gO_\ell^3] 
+ \big(\tfrac12 \gd \gb^2 - \tfrac16 \gd^3 \gb \big)\bbE[\gO_\ell^5] 
+ \tfrac14 \gd^2 \gb^2 \bbE[\gO_\ell^6] - \tfrac16 \gd\gb^3 \bbE[\gO_\ell^7]\\ 
+ \tfrac{1}{24} \gb^4 \bbE[\gO_\ell^8] + \gd^5 \bbE[\gO_\ell^5] 
+ 10 \gd^3 \gb^2 \bbE[\gO_\ell^7] + 5 \gd \gb^4 \bbE[\gO_\ell^9].
\end{split}
\end{equation}
Recall the first line of \eqref{I1I2def}, and set $\eta = \tfrac23$. Estimate 
$I^1_{\gd,\gb}(f_\gb) \leq I^{1,1}_{\gd,\gb}(f_\gb) + I^{1,2}_{\gd,\gb}(f_\gb)$, 
where
\begin{equation}
\begin{split}
&I^{1,1}_{\gd,\gb}(f_\gb) 
= \gb^{-\frac23} \int_0^\infty \dd x\int_0^\infty \dd y\, f_\gb^2(x)\,   
Q\left(\left\lfloor x\gb^{-2/3}\right\rfloor +1, \left\lfloor y\gb^{-2/3}\right\rfloor\right)\\
&\times \left\{\big(\tfrac12\gd^2-\gb\big) \bbE\left[\gO_{\left\lfloor x\gb^{-2/3}\right\rfloor
+ \left\lfloor y\gb^{-2/3}\right\rfloor}^2\right] 
+ \big(\tfrac12 \gb^2-\tfrac12 \gd^2 \gb + \tfrac{1}{24}\gd^4\big) 
\bbE\left[\gO_{\left\lfloor x\gb^{-2/3}\right\rfloor 
+ \left\lfloor y\gb^{-2/3}\right\rfloor }^4 \right] \right\}
\end{split}
\end{equation}
and
\begin{equation}
\begin{aligned}
I^{1,2}_{\gd,\gb}(f_\gb) &= \gb^{-\frac23} \int_0^\infty \dd x\int_0^\infty \dd y\, f_\gb^2(x)\,   
Q\left(\left\lfloor x\gb^{-2/3}\right\rfloor + 1, \left\lfloor y\gb^{-2/3}\right\rfloor\right)\\ 
&\qquad \times R_{\gd,\gb}\left(\left\lfloor x\gb^{-2/3}\right\rfloor + \left\lfloor y\gb^{-2/3}\right\rfloor\right).
\end{aligned}
\end{equation}
Let us first deal with $I^{1,1}_{\gd,\gb}(f_\gb)$. We cut the integrals over $x$ and $y$ at $\gep>0$. 
Using Lemmas~\ref{lem:lim_moment}--\ref{lem:momentsQij} we get that, for all $\gep>0$ small 
enough, there exists a constant $c$ such that
\begin{equation}
\begin{aligned}
I^{1,1}_{\gd,\gb}(f_\gb) 
&\leq c\gep \gb^{\tfrac23} + [1+o(1)]\,\gb^{-\tfrac23}
\int_\gep^\infty \dd x \int_\gep^\infty \dd y\, f_\gb^2(x)\,   
Q\left(\left\lfloor x\gb^{-2/3}\right\rfloor + 1, \left\lfloor y\gb^{-2/3}\right\rfloor\right)\\ 
&\qquad \qquad \qquad \times [C(x+y) - (x+y)^2]\,\gb^{\frac23}.
\end{aligned}
\end{equation}
Therefore,
\begin{equation}
\begin{aligned}
\limsup_{\gb\downarrow 0} \gb^{-\tfrac23}\,I^{1,1}_{\gd,\gb}(f_\gb) 
&\leq 
\limsup_{\gb\downarrow 0} \beta^{-\tfrac23} \int_0^\infty \dd x\int_0^\infty \dd y f_\gb^2(x)\,   
Q\left(\left\lfloor x\gb^{-2/3}\right\rfloor+1, \left\lfloor y\gb^{-2/3}\right\rfloor\right)\\
&\qquad \qquad \qquad \times  [C(x+y) - (x+y)^2].
\end{aligned}
\end{equation}
Recall \eqref{eq:QandP}. Making the change of variables $u=x+y$ and $v=x-y$, we obtain 
\begin{equation}
\begin{aligned}
\label{eq:cvgI_oneone}
&\beta^{-\tfrac23}\int_0^\infty \dd x\int_0^\infty \dd y\, f_\gb^2(x)\,   
Q\left(\left\lfloor x\gb^{-2/3}\right\rfloor +1, \left\lfloor y\gb^{-2/3}\right\rfloor\right) 
[C(x+y) - (x+y)^2]\\
& \leq \tfrac12 \gb^{-\tfrac23} \int_0^\infty \dd u \int_{-\infty}^\infty \dd v\, 
f_\gb^2\,\Big(\frac{u+v}{2}\Big)\,(Cu - u^2)\,\bP\Big(S_{\lfloor u\gb^{-2/3}\rfloor} 
= \lfloor v\gb^{-\tfrac23}\rfloor\Big)\\
& = \int_0^\infty \dd \left(\frac{u}{2}\right)\, (Cu - u^2)\, 
\bE\Big[ f^2_\gb\Big( \frac{u + \gb^{2/3}S_{\lfloor u\gb^{-2/3}\rfloor}}{2} \Big) \Big] \\
& \stackrel{\gb \downarrow 0}{\longrightarrow} \int_0^\infty \dd \left(\frac{u}{2}\right)
\,(Cu - u^2)\,f\left(\frac{u}{2}\right)^2.
\end{aligned}
\end{equation}
We next deal with the remainder term $I^{1,2}_{\gd,\gb}(f_\gb)$ and show it is $o(\gb^{2/3})$. For 
brevity we deal with the first term of $R_{\gd,\gb}(\ell)$ only, and leave the reader to check that the other terms 
in $R_{\gd,\gb}(\ell)$ can be handled in the same way. Using Lemmas~\ref{lem:lim_moment}--\ref{lem:momentsQij}, 
we get
\begin{equation}
\begin{aligned}
&\int_0^\infty \dd x \int_0^\infty \dd y\, \gb^{-2/3}\,f_\gb^2(x)\, 
Q\left(\left\lfloor x\gb^{-2/3}\right\rfloor+1,\left\lfloor y\gb^{-2/3}\right\rfloor\right)\,
\big(\tfrac16 \gd^3 - \gd\gb \big)\,\bbE\left[\gO^3_{\left\lfloor (x+y)\gb^{-2/3}\right\rfloor}\right]\\
&\leq c \gb^{3/2} \int_0^\infty \dd x\, f_\gb^2(x) \int_0^\infty \dd (\gb^{-2/3}y)\, 
Q\left(\left\lfloor x \gb^{-2/3}\right\rfloor +1,\left\lfloor y\gb^{-2/3}\right\rfloor\right)\,
\big[(x+y)\gb^{-2/3}\big]\\
&\leq c \gb^{3/2} \int_0^\infty \dd x\, f_\gb^2(x)\,\big[x\gb^{-2/3}\big] 
\stackrel{\gb \downarrow 0}{\sim} c \gb^{5/6} \int_0^\infty \dd x\, xf^2(x). 
\end{aligned}
\end{equation}
We next indicate how to deal with the case $B>0$. The left-hand side of \eqref{eq:egdgb_sup} 
has to be replaced by $e^{e_{\gd,\gb}(\ell)-\mu\ell}-1$. Since $\mu \ell$ is of order $\gb^{2/3}$, 
the first term in the right-hand side of \eqref{eq:egdgb_sup1} becomes $(\tfrac12 \gd^2 - \gb - \mu) 
\bE(\gO_\ell^2)$ (recall that $\bE(\gO_\ell^2)=\ell$). Moreover, $\tfrac12 \gd^2 - \gb - \mu$ is equivalent to $(C-B)\gb^{4/3}$, so we may repeat the computations for the case $B=0$ after replacing $C$ by $C-B$.
\end{proof}

\begin{lemma}
\label{lem:epi.one.inf}
Let $\beta = \tfrac12\gd^2 - C(\tfrac12\gd^2)^{4/3}$ and $\mu = B\gb^{4/3}$. Then, 
for all $f\in \cC$, 
\begin{equation}
\liminf_{\gb \downarrow 0} \gb^{-2/3} I^1_{\gd,\gb}(f) \geq (C-B) U_1(f) - U_2(f).
\end{equation}
\end{lemma}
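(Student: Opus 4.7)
The plan is to adapt the argument of Lemma~\ref{lem:epi.one.sup}, replacing upper bounds by matching lower bounds throughout. Since in the definition of epi-convergence the liminf direction only requires one approximating sequence, we simply take $f_\beta = f \in \cC$. The cornerstone is the elementary inequality $e^u - 1 \geq u$, valid for every $u \in \R$. Writing
\begin{equation*}
\beta^{-2/3}\bigl[A_{\mu,\delta,\beta}(i,j) - Q(i+1,j)\bigr] = \beta^{-2/3}\,Q(i+1,j)\bigl[e^{G^*_{\delta,\beta}(\ell) - \mu\ell} - 1\bigr]
\end{equation*}
with $\ell = i+j+1$, and noting that for bounded $(x+y)$ we will verify $|G^*_{\delta,\beta}(\ell) - \mu\ell| = O(\beta^{2/3})$, the gap in $e^u - 1 \geq u$ is $O(u^2) = O(\beta^{4/3})$, hence $o(\beta^{2/3})$ after dividing by $\beta^{2/3}$. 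Thus the inequality is asymptotically sharp at our scale, and it suffices to produce the expansion of $G^*_{\delta,\beta}(\ell) - \mu\ell$ itself together with a tail estimate.

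The required expansion is obtained by the same computation as in Lemma~\ref{lem:epi.one.sup}, but applied to $\log\bbE[e^{\delta\Omega_\ell - \beta\Omega_\ell^2}]$ rather than to $\bbE[e^{\delta\Omega_\ell - \beta\Omega_\ell^2}] - 1$: using $\log(1+z) = z + O(z^2)$ and Taylor-expanding the inner exponential to fourth order, Lemma~\ref{lem:lim_moment} yields
\begin{equation*}
G^*_{\delta,\beta}(\ell) - \mu\ell = (\tfrac{1}{2}\delta^2 - \beta - \mu)\ell + 3(\tfrac{1}{2}\beta^2 - \tfrac{1}{2}\delta^2\beta + \tfrac{1}{24}\delta^4)\ell^2 + o(\beta^{2/3})
\end{equation*}
uniformly on $\ell \leq 2M\beta^{-2/3}$. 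Substituting $\tfrac{1}{2}\delta^2 - \beta = C(\tfrac{1}{2}\delta^2)^{4/3}$ and $\mu = B\beta^{4/3}$, and using $\delta^2 = 2\beta[1+o(1)]$, the right-hand side reduces to $[(C-B)(x+y) - (x+y)^2]\beta^{2/3} + o(\beta^{2/3})$ for $x+y \leq 2M$. Combined with the local central limit theorem for the kernel $\beta^{-2/3}Q$ used in \eqref{eq:cvgI_oneone}, Fatou's lemma on $(0,M]^2$ yields the matching liminf for the contribution of this bounded region.

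To complete the argument we must show that the contribution from the tail $T_M = \{x > M\} \cup \{y > M\}$ does not spoil the bound. For this we use the universal pointwise inequality $e^u - 1 \geq -1$: the contribution of $T_M$ is at least $-\beta^{-2/3}\iint_{T_M} f(x)^2\, Q(\lfloor x\beta^{-2/3}\rfloor+1, \lfloor y\beta^{-2/3}\rfloor)\,\dd x\,\dd y$, which by the local CLT converges as $\beta\downarrow 0$ to $-2\int_{M/2}^\infty f(w)^2\,\dd w$ and vanishes as $M \to \infty$ because $\|f\|_2 = 1$. Letting $\beta \downarrow 0$ and then $M\to\infty$ yields the desired inequality. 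The main obstacle is controlling the remainder in the Taylor expansion of $G^*$ uniformly on $\ell \leq 2M\beta^{-2/3}$, so that the pointwise asymptotic passes under the integral sign; this requires the same moment estimates (Lemmas~\ref{lem:lim_moment} and~\ref{lem:momentsQij}) used in the upper bound, together with the observation $(M_\ell - 1)^2 = O(\beta^{4/3})$ that controls the passage from $\log M_\ell$ to $M_\ell - 1$.
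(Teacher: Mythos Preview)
Your treatment of the bounded region $(0,M]^2$ is sound and, in fact, cleaner than the paper's: applying $e^u-1\ge u$ to the \emph{deterministic} exponent $u=G^*_{\delta,\beta}(\ell)-\mu\ell$ and then expanding $u$ is a nice shortcut. The problem is the tail.

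Your tail bound drops a factor of $\beta^{-2/3}$. By the definition of $I^1_{\delta,\beta}$ there is already one $\beta^{-2/3}$ in front of $Q$, and the lemma asks for $\beta^{-2/3}I^1_{\delta,\beta}$, which introduces a second one. Using $e^u-1\ge -1$ on $T_M$ therefore gives
\[
\text{(tail of }\beta^{-2/3}I^1_{\delta,\beta}) \ \ge\ -\,\beta^{-2/3}\iint_{T_M} f(x)^2\,\beta^{-2/3}Q\big(\lfloor x\beta^{-2/3}\rfloor+1,\lfloor y\beta^{-2/3}\rfloor\big)\,\dd x\,\dd y,
\]
and on the slice $\{x>M\}$ the inner $y$-integral equals $1$ (it is a probability), so the right-hand side is $-\beta^{-2/3}\int_M^\infty f^2$, which diverges to $-\infty$ as $\beta\downarrow 0$. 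The convergence to $-2\int_{M/2}^\infty f^2$ you claim is what one gets only after the accounting error. This is not a cosmetic issue: for $\ell\gg\beta^{-2/3}$ one has $G^*_{\delta,\beta}(\ell)\sim-\tfrac12\log\ell$ by Proposition~\ref{pr:asymptGstar}, so $e^u-1$ is genuinely close to $-1$ there, and the crude bound cannot be improved without further input.

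The paper avoids the tail cut altogether by expanding the \emph{random} exponent $e_{\delta,\beta}(\ell)=\delta\Omega_\ell-\beta\Omega_\ell^2$ to fifth order, using the global lower bound $e^z-1\ge\sum_{k=1}^5 z^k/k!$ (valid for all $z\in\R$ since the Taylor polynomial has odd degree). After taking expectations, every term is a polynomial in $\ell$ via Lemma~\ref{lem:lim_moment}, and Lemma~\ref{lem:momentsQij} together with $\int x^{9/2}f^2<\infty$ (from $f\in\cC$) makes all of them integrable against $f(x)^2\,\beta^{-2/3}Q$ on the whole of $(0,\infty)^2$. The point of going to fifth order is precisely that the remainder is then dominated by polynomial moments globally, so no truncation---and hence no separate tail argument---is needed. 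If you want to keep your deterministic-exponent approach on the bulk, you would still need to replace the $e^u-1\ge -1$ step by a genuine polynomial control of the tail, which essentially forces you back to the paper's moment-based estimate.
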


\begin{proof}
Instead of \eqref{eq:egdgb_sup}, use
\begin{equation}
\label{eq:egdgb_inf}
e^{e_{\gd,\gb}(\ell)}-1 \geq e_{\gd,\gb}(\ell) + \tfrac12 e_{\gd,\gb}(\ell)^2 
+ \tfrac16 e_{\gd,\gb}(\ell)^3 + \tfrac{1}{24} e_{\gd,\gb}(\ell)^4 
+ \tfrac{1}{120} e_{\gd,\gb}(\ell)^5.
\end{equation}
The analysis for small $\beta$ in Lemma~\ref{lem:epi.one.sup} essentially carries 
over.
\end{proof}

\begin{lemma}
\label{lem:epi.two.sup}
Let $\beta = \tfrac12\gd^2 - C(\tfrac12 \gd^2)^{4/3}$ and $\mu = B\gb^{4/3}$. Then,
for all $f_\beta\stackrel{L^2}{\to} f\in \cC$, 
\begin{equation}
\limsup_{\gb \downarrow 0} \Big\{ -\gb^{-2/3} I^2_{\gd,\gb}(f_\gb) \Big\} \leq - U_3(f).
\end{equation}
\end{lemma}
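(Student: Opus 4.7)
The plan is to prove the equivalent lower bound
\begin{equation*}
\liminf_{\gb \downarrow 0} \gb^{-2/3}\,I^2_{\gd,\gb}(f_\gb) \geq U_3(f)
\end{equation*}
for every sequence $f_\gb \to f$ in $L^2((0,\infty))$ with $f_\gb,f \in \cC$. Viewing
\begin{equation*}
\mathcal{E}_\gb(g,h) := \tfrac12 \int_0^\infty\!\!\int_0^\infty [g(x)-g(y)][h(x)-h(y)]\,K_\gb(x,y)\,\dd x\,\dd y, \quad K_\gb(x,y) := \gb^{-4/3}A_{\mu,\gd,\gb}\big(\lfloor x\gb^{-2/3}\rfloor,\lfloor y\gb^{-2/3}\rfloor\big),
\end{equation*}
as a discrete Dirichlet form with $\mathcal{E}_\gb(f,f) = \gb^{-2/3}I^2_{\gd,\gb}(f)$, the goal becomes a Mosco-type lower bound. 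By the identity \eqref{eq:QandP}, $Q(i+1,j) = \tfrac12\bP(S_{i+j}=i-j)$, so the base kernel is of random-walk type: since $j$ under $Q(i+1,\cdot)$ is a sum of $i+1$ i.i.d.\ $\Geo_0(\tfrac12)$ variables, it has mean $i+1$ and variance $2(i+1)$, so after rescaling $i = x\gb^{-2/3}$ the conditional law of $y$ given $x$ has mean $\approx x$ and variance $\approx 2x\gb^{2/3}$. Moreover Proposition~\ref{pr:G.conv-to-Brownian} gives $e^{G^*_{\gd,\gb}(i+j+1)-\mu(i+j+1)} = 1 + O(\gb^{2/3}(x+y))$ on compact sets, so at leading order $A$ may be replaced by $Q$.

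First I would treat a fixed $\phi \in \cC$. The Taylor expansion $[\phi(x)-\phi(y)]^2 = \phi'(x)^2(x-y)^2 + R_\phi(x,y)$, combined with the variance computation above and fourth-moment bounds on $j-i$ under $Q(i+1,\cdot)$ deduced from Lemma~\ref{lem:momentsQij}, yields
\begin{equation*}
\lim_{\gb \downarrow 0}\mathcal{E}_\gb(\phi,\phi) = \tfrac12\int_0^\infty \phi'(x)^2\cdot 2x\,\dd x = U_3(\phi).
\end{equation*}
The discrepancies from replacing $A$ by $Q$ and from the order-$\gb^{2/3}$ bias $\bE[y|x]-x$ contribute terms of order $\gb^{2/3}$ that vanish in the limit.

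Next, to pass from a fixed $\phi$ to a general sequence $f_\gb \to f$ in $L^2$, decompose $f_\gb = f + g_\gb$ with $g_\gb \to 0$ in $L^2$ and exploit the non-negativity of $\mathcal{E}_\gb$:
\begin{equation*}
\mathcal{E}_\gb(f_\gb,f_\gb) = \mathcal{E}_\gb(f,f) + 2\mathcal{E}_\gb(f,g_\gb) + \mathcal{E}_\gb(g_\gb,g_\gb) \geq \mathcal{E}_\gb(f,f) + 2\mathcal{E}_\gb(f,g_\gb).
\end{equation*}
The first term converges to $U_3(f)$ by the previous step applied to $\phi=f$. Polarization rewrites the cross term as $\mathcal{E}_\gb(f,g_\gb) = \int g_\gb(x)\,h_\gb(x)\,\dd x$ with $h_\gb(x) := f(x)m_\gb(x) - (K_\gb f)(x)$ and $m_\gb(x) := \int K_\gb(x,y)\,\dd y$. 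Expanding $f(y)-f(x)$ to second order identifies $h_\gb(x)$ as a discrete finite-difference approximation of the Sturm--Liouville operator $\mathcal{L}_0 f(x) := -(xf'(x))'$, and one obtains the uniform-in-$\gb$ bound $\|h_\gb\|_2 \leq C$. Therefore $|\mathcal{E}_\gb(f,g_\gb)| \leq \|g_\gb\|_2 \|h_\gb\|_2 \to 0$, which gives the claim.

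The main technical obstacle will be to establish the uniform-in-$\gb$ $L^2$-bound on $h_\gb$ together with its pointwise convergence $h_\gb \to \mathcal{L}_0 f$, since $f \in \cC$ is not assumed to be compactly supported. This requires quantitative moment estimates for the discrete kernel $K_\gb$ combining the Hilbert--Schmidt control of Proposition~\ref{pr:prop.A.lambda.mu}, the bound $e^{G^*_{\gd,\gb}(\ell)} \leq C/\sqrt{\ell}$ from Proposition~\ref{pr:asymptGstar}, and local-CLT estimates for the simple random walk implicit in \eqref{eq:QandP}, in order to handle both the small-$x$ degeneracy of the limiting diffusion and the tails at infinity; a preliminary truncation of $f$ to a compact subset $[\gep,R] \subset (0,\infty)$, with the errors controlled via $\int_0^\gep + \int_R^\infty f(x)^2\,\dd x$, may be used to reduce to a setting in which the above estimates apply uniformly.
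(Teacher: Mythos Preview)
Your approach is genuinely different from the paper's. The paper's argument is a direct truncation: since the integrand $[f_\gb(x)-f_\gb(y)]^2 K_\gb(x,y)$ is non-negative, one may restrict to $x\in(\gep,N)$ and $|y-x|<N\gb^{1/3}$ at no cost for a lower bound, crudely lower-bound $e^{G^*_{\gd,\gb}(\ell)}$ by $e^{-\gb M^2\ell-\gd M\sqrt{\ell}}\bbP(|\gO_\ell|\le M\sqrt{\ell})$, apply the local CLT \eqref{eq:Qasymp} for $Q$, change variables $\omega=(y-x)\gb^{-1/3}$, and then invoke \cite[Eqs.~(2.17)--(2.26)]{vdHdH95} to pass from $\gb^{-2/3}[f_\gb(x)-f_\gb(x+\gb^{1/3}\omega)]^2$ to $\omega^2 f'(x)^2$ in the limit, for $L^2$-converging $f_\gb$. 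No generator-level estimate is needed because positivity makes the domain restriction free.

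Your Mosco-type route via $\mathcal{E}_\gb(f_\gb,f_\gb)\ge \mathcal{E}_\gb(f,f)+2\mathcal{E}_\gb(f,g_\gb)$ is conceptually clean and correctly identifies $h_\gb$ as a discrete approximation of $\mathcal{L}_0 f=-(xf')'$. The concern is the claimed uniform bound $\|h_\gb\|_2\le C$: the class $\cC$ only imposes $\int x f'^2<\infty$ and $\int x^{9/2}f^2<\infty$, which does \emph{not} imply $f'\in L^2$ or $xf''\in L^2$, so $\mathcal{L}_0 f$ need not lie in $L^2$ and the discrete version $h_\gb$ may fail to be uniformly bounded there. Your proposed remedy (truncate $f$ to $[\gep,R]$) is exactly what the paper does up front, and once you truncate, the polarization detour buys nothing over the direct estimate. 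In short, your scheme works if $f$ is additionally taken in $C_c^\infty((0,\infty))$ (a dense subset), but extending from there to all of $\cC$ still requires a lower-semicontinuity argument of the same nature as the one the paper imports from \cite{vdHdH95}; the Mosco decomposition does not bypass it.
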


\begin{proof}
We need a lower bound for
\begin{equation}
\begin{aligned}
I^2_{\gd,\gb}(f_\gb) &= \tfrac12 \int_0^\infty \dd x \int_0^\infty \dd y\, 
[f_\gb(x)-f_\gb(y)]^2\\
&\qquad\qquad \times \gb^{-2/3}\,
e^{G^*_{\gd,\gb}(\lfloor(x+y)\gb^{-2/3}\rfloor)-\mu(\lfloor(x+y)\gb^{-2/3}\rfloor)}\, 
Q\left(\left\lfloor x\gb^{-2/3}\right\rfloor+1,\left\lfloor y\gb^{-2/3}\right\rfloor\right).
\end{aligned}
\end{equation}
Fix $M>1$ and observe that
\begin{equation}
e^{G^*_{\gd,\gb}(\ell)} \geq e^{-\gb M^2 \ell - \gd M \sqrt{\ell}}\, 
\bbP\left(\left|\frac{\gO_\ell}{\sqrt{\ell}}\right| \leq M \right).
\end{equation}
Fix $0<\gep<N$. Then, restricting the integral over $x$ to the interval $(\gep, N)$ and the 
integral over $y$ to $(x\pm \gb^\frac13 N)$, we obtain for $\gep$ and $\gb$ small enough
\begin{equation}
\begin{aligned}
&\gb^{-2/3} I^2_{\gd,\gb}(f_\gb) \geq e^{-3\gb^{1/3}M^2 N - 3\gb^{1/6}M\sqrt{N} - 3B\gb^{2/3}N} 
\, \inf_{\ell \geq \frac{\gep}{2} \gb^{-2/3}}\, \bbP\left( \left|\frac{\gO_\ell}{\sqrt{\ell}}\right| \leq M \right)\\
&\qquad \times \tfrac12 \int_\gep^N \dd x \int_{x-\gb^{1/3}N}^{x+\gb^{1/3}N} \dd y\, 
\gb^{-4/3} [f_\gb(x)-f_\gb(y)]^2\, Q\left(\left\lfloor x\gb^{-2/3}\right\rfloor +1, 
\left\lfloor y\gb^{-2/3}\right\rfloor\right).
\end{aligned}
\end{equation}
For fixed $\gep,N,M$, the exponential term goes to $1$ and the infimum tends to 
$\bbP(|\cN(0,1)|\leq M)$ as $\gd,\gb \downarrow 0$. Put $\go = (y-x)\gb^{-1/3}$ and 
use \eqref{eq:Qasymp}, so that the integral becomes
\begin{equation}
[1+O(\gb^{2/9})] \int_\gep^N \dd x\, \int_{-N}^N\, \dd\go\, \gb^{-2/3}\, 
[f_\gb(x)-f_\gb(x+\gb^{1/3}\go)]^2\, \frac{1}{\sqrt{2\pi (2x)}}e^{-\frac{\go^2}{2(2x)}}.
\end{equation}
We are now in the same situation as in the proof of \cite[Lemma 7, Eq.\,(2.14)]{vdHdH95}. 
We refer to \cite[Eqs.\ (2.17)--(2.26)]{vdHdH95} to show that the limit of this integral as 
$\gb\downarrow 0$ is the integral with $[\go f'(x)]^2$ in place of $\gb^{-2/3}\,[f_\gb(x)
-f_\gb(x+\gb^{1/3}\go)]^2$ in the integrand. Letting $\gep \downarrow 0$ and $N, M \to \infty$, 
we get the desired result.
\end{proof}

\begin{lemma}
\label{lem:epi.two.inf}
Let $\beta = \tfrac12 \gd^2 - C(\tfrac12 \gd^2)^{4/3}$ and $\mu = B\gb^{4/3}$. Then, for all 
$f\in \cC$, 
\begin{equation}
\liminf_{\gb \downarrow 0} \Big\{ -\gb^{-2/3} I^2_{\gd,\gb}(f) \Big\} \geq - U_3(f).
\end{equation}
\end{lemma}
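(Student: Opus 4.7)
The statement is equivalent to the upper bound $\limsup_{\gb\downarrow 0}\gb^{-2/3} I^2_{\gd,\gb}(f) \leq U_3(f)$ for each fixed $f\in\cC$, complementing the lower bound of Lemma~\ref{lem:epi.two.sup} (specialized to the constant sequence $f_\gb\equiv f$). Recalling from \eqref{I1I2def} that $I^2_{\gd,\gb}(f)$ is built from the kernel $\gb^{-2/3}A_{\mu,\gd,\gb}(i,j) = \gb^{-2/3}\,e^{G^*_{\gd,\gb}(i+j+1)-\mu(i+j+1)}\,Q(i+1,j)$, the strategy is to mirror the proof of Lemma~\ref{lem:epi.two.sup}, but with a uniform \emph{upper} bound replacing the lower bound on $e^{G^*_{\gd,\gb}(\ell)-\mu\ell}$ on the scale $\ell\asymp\gb^{-2/3}$.

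First I would fix $0<\gep<N<\infty$ and decompose the double integral into a bulk piece over $\{(x,y)\colon x\in(\gep,N),\,|y-x|\leq \gb^{1/3}N\}$, an off-diagonal piece over $\{(x,y)\colon x\in(\gep,N),\,|y-x|>\gb^{1/3}N\}$, and a tail piece over $\{x\in(0,\gep)\cup(N,\infty)\}$. I would then pass to the limit $\gb\downarrow 0$, next $N\to\infty$, and finally $\gep\downarrow 0$. On the bulk piece I would substitute $\go=(y-x)\gb^{-1/3}$. Proposition~\ref{pr:G.conv-to-Brownian} in Appendix~\ref{AppA} provides, uniformly for $(x,y)$ in the compact box $[\gep,N]^2$, the asymptotic $e^{G^*_{\gd,\gb}(\lfloor(x+y)\gb^{-2/3}\rfloor)-\mu\lfloor(x+y)\gb^{-2/3}\rfloor} = 1+o(1)$ as $\gb\downarrow 0$ (with the scaling of $\gb$ and $\mu$ fixed in the lemma), while the local central limit theorem applied to $Q$ via \eqref{eq:QandP} yields $\gb^{-1/3}Q(\lfloor x/\gb^{2/3}\rfloor+1,\lfloor y/\gb^{2/3}\rfloor) = [1+o(1)]\,(2\pi(2x))^{-1/2}e^{-\go^2/(2(2x))}$. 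Since $f\in C^\infty((0,\infty))$, a Taylor expansion gives $\gb^{-2/3}[f(x)-f(x+\gb^{1/3}\go)]^2\to \go^2[f'(x)]^2$, dominated on $[\gep,N]\times[-N,N]$ by a constant multiple of $\go^2$ through $\sup_{[\gep/2,2N]}|f'|^2$. Dominated convergence then yields that the bulk contribution is bounded above by
\begin{equation*}
\tfrac12\int_\gep^N \dd x\,[f'(x)]^2\int_{-N}^N \dd\go\,\frac{\go^2\,e^{-\go^2/(2(2x))}}{\sqrt{2\pi(2x)}} \;\leq\; \int_0^\infty x\,[f'(x)]^2\,\dd x \;=\; U_3(f),
\end{equation*}
in the limit $\gb\downarrow 0$ followed by $N\to\infty$.

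The off-diagonal piece contributes $O(e^{-cN^2})$ via the Gaussian tail of the local central limit theorem for $Q$ combined with the pointwise bound $[f(x)-f(y)]^2\leq 2f(x)^2+2f(y)^2$; it therefore vanishes as $N\to\infty$. For the tail piece I would use the integrability conditions $\int_0^\infty x^{9/2}f(x)^2\,\dd x<\infty$ and $\int_0^\infty x\,[f'(x)]^2\,\dd x<\infty$ built into the definition of $\cC$ in \eqref{Csetdef} to dominate the $x>N$ contribution uniformly in $\gb$ by a quantity vanishing as $N\to\infty$, and the $x<\gep$ contribution by one vanishing as $\gep\downarrow 0$. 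The hard part will be twofold: (a) upgrading Proposition~\ref{pr:G.conv-to-Brownian} to a uniform-in-$\ell$ upper bound on $e^{G^*_{\gd,\gb}(\ell)-\mu\ell}$ valid on the entire scale $\ell\asymp\gb^{-2/3}$ (not merely on the compact box), which is needed to justify dominated convergence; and (b) controlling the large-$x$ tail, where one must trade the quadratic growth weight arising from the expansion of $G^*_{\gd,\gb}$ against the polynomial integrability built into the definition of $\cC$.
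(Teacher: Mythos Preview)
Your strategy parallels the paper's in spirit but uses different cutoffs: you work with a fixed window $x\in(\gep,N)$ and off-diagonal threshold $|y-x|>\gb^{1/3}N$, whereas the paper follows \cite{vdHdH95} with the $\gb$-dependent cutoffs $x,y\le c\gb^{-1/3}$ and $|x-y|>\gb^{1/12}$. Your treatment of the factor $e^{G^{*}}$ on the bulk via Proposition~\ref{pr:G.conv-to-Brownian} is in fact cleaner than the paper's route: the paper only uses the crude uniform bound $e^{G^{*}_{\gd,\gb}(\ell)}\le e^{\gd^{2}/4\gb}\to\sqrt{e}$ everywhere, and then recovers the correct constant $1$ on the main part by a further splitting on $\{\gO_\ell\lessgtr\kappa/\sqrt{\gb}\}$ followed by the limits $c\downarrow 0$ and $\kappa\downarrow 0$.

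However, your off-diagonal estimate has a genuine gap. Using the pointwise bound $[f(x)-f(y)]^{2}\le 2f(x)^{2}+2f(y)^{2}$ you recover no factor of $\gb^{2/3}$, so the off-diagonal contribution to $\gb^{-2/3}I^2_{\gd,\gb}(f)$ is of order $\gb^{-2/3}$ times the $Q$-tail. For $x\in(\gep,N)$ and $|\go|>N$ with $\go=(y-x)\gb^{-1/3}$, the kernel $Q$ behaves like a Gaussian with variance $\sim 2x\le 2N$, so the tail is only $O(e^{-cN})$, not $O(e^{-cN^2})$; and $\gb^{-2/3}e^{-cN}\to\infty$ as $\gb\downarrow 0$ for each fixed $N$. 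This is exactly why the paper takes the off-diagonal cutoff at $|x-y|>\gb^{1/12}$, i.e., $|\go|>\gb^{-1/4}$: then the Gaussian tail is $\exp(-c\gb^{-1/2}/x)$, which does beat $\gb^{-2/3}$ on the range $x\le c\gb^{-1/3}$. Your scheme can be repaired by splitting the off-diagonal once more. On the near part $N<|\go|<\gep'\gb^{-1/3}$ (so that $|y-x|<\gep'$ remains small) the Taylor bound $[f(x)-f(y)]^{2}\le C\gb^{2/3}\go^{2}$ still applies and yields a contribution bounded by $C\int_\gep^N\dd x\int_{|\go|>N}\go^{2}(2\pi\cdot 2x)^{-1/2}e^{-\go^{2}/(4x)}\,\dd\go$, which is independent of $\gb$ and vanishes as $N\to\infty$; on the far part $|\go|\ge\gep'\gb^{-1/3}$ the Gaussian tail is $O(e^{-c\gb^{-2/3}})$ and kills the $\gb^{-2/3}$ prefactor. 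The same issue, and the same repair, applies to your tail piece $x>N$: the bound $[f(x)-f(y)]^{2}\le 2f(x)^{2}+2f(y)^{2}$ alone cannot absorb $\gb^{-2/3}$, so smoothness of $f$ near the diagonal must be used first, after which the integrability conditions in $\cC$ control what remains. This is the content of your acknowledged ``hard part (b)''; the paper outsources it to \cite[Eqs.~(2.28)--(2.43)]{vdHdH95} with their choice of cutoffs.
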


\begin{proof}
Since $\mu>0$, a first upper bound on $\gb^{-2/3} I^2_{\gd,\gb}(f)$ is 
\begin{equation}
\begin{aligned}
&\gb^{-2/3} I^2_{\gd,\gb}(f) \leq \tfrac12 \int_0^\infty \dd x \int_0^\infty \dd y\, 
\gb^{-4/3}\, [f(x) - f(y)]^2\\ 
&\qquad \qquad \times e^{G^*_{\gd,\gb}((x+y)\gb^{-2/3})}\, 
Q\left(\left\lfloor x\gb^{-2/3}\right\rfloor +1,\left\lfloor y\gb^{-2/3}\right\rfloor\right).
\end{aligned}
\end{equation}
Recall that
\begin{equation}
\label{eq:expGupperbound}
e^{G^*_{\gd,\gb}(\ell)} = \bbE\Big[ e^{-\gb \gO_\ell^2 + \gd \gO_\ell}\Big] 
\leq e^{\gd^2/4\gb} = \sqrt{e}[1+o(1)],\quad \mbox{as } \gb \downarrow 0,
\end{equation}
and that the maximum of $z\mapsto -\gb z^2 + \gd z$ is achieved at $z = \gd/2\gb \sim 
1/\sqrt{2\gb}$, as $\beta\downarrow 0$. As in the proof of \cite[Lemma 8]{vdHdH95}, we split the integral into three 
parts (note that $\gb$ there is $\gb^2$ here). Fix $c>0$. Part 1 corresponds to $x > c\gb^{-1/3}$ 
or $y > c\gb^{-1/3}$. We may use \eqref{eq:expGupperbound} and \cite[Eqs.\ (2.28)--(2.29)]{vdHdH95} 
to show that this part is negligible. In Part 2 we integrate over $x,y\leq c \gb^{-1/3}$ and 
$|x-y|>\gb^{1/12}$. Again, \eqref{eq:expGupperbound} and \cite[Eq.\ (2.32)]{vdHdH95} are 
enough to conclude. Finally, in Part 3 we integrate over $x,y\leq c \gb^{-1/3}$ and $|x-y|\leq \gb^{1/12}$. 
We only need to prove that the factor $\exp(G^*_{\gd,\gb}(\lfloor(x+y)\gb^{-2/3})\rfloor)$ 
is harmless. Indeed, let $\kappa < 1/\sqrt{2}$. Abbreviating $\ell = (x+y)\gb^{-2/3}$, we get
\begin{equation}
\begin{aligned}
& \bbE\Big[e^{-\gb \gO_\ell^2 + \gd \gO_\ell} 
\ind_{\left\{\gO_\ell \leq \kappa/\sqrt{\gb}\right\}} \Big] 
\leq e^{-\kappa^2 + \kappa\sqrt{2}},\\
& \bbE\Big[e^{-\gb \gO_\ell^2 + \gd \gO_\ell} 
\ind_{\left\{\gO_\ell > \kappa/\sqrt{\gb}\right\}} \Big] 
\leq [\sqrt{e} + o(1)]\, \bbP\Big(\gO_\ell > (\kappa/\sqrt{2c}) \sqrt{\ell} \Big).
\end{aligned}
\end{equation}
Therefore we get the result by first letting $\gb\downarrow 0$ (see \cite[Eq.\ (2.34)--(2.43)]{vdHdH95}), 
then $c\downarrow 0$, and finally $\kappa\downarrow 0$.
\end{proof}

\medskip\noindent
{\bf 9.} 
The analogues of Lemmas~\ref{lem:epi.one.sup}--\ref{lem:epi.two.inf} for $\gb \downarrow 0$
and $\gd \in (0,\infty)$ fixed are proved in the same manner as in Steps 8 and 9. The details are 
left to the reader. Thus we have completed the proof of Lemmas~\ref{lem:epiconvergence}(1,2). 

\medskip\noindent
{\bf 10.}
The proof of Lemma~\ref{lem:epiconvergence}(3), which is requirement (III) in Proposition~\ref{prop:epi}, 
is given in Appendix~\ref{AppC}.

\appendix


\section{Properties of the weight function}
\label{AppA}

We prove three properties of the function $G^*_{\delta,\beta}$ defined in \eqref{eq:rel.G.star} 
that were used in Sections~\ref{subsec:eigenvalue},  \ref{ss:critcurve} and \ref{ss:WIL}.
 
\begin{proposition}
\label{pr:asymptGstar}
For $(\gd,\gb)\in \cQ$,
\begin{equation}
G^*_{\delta,\beta}(\ell) = - \tfrac12 \log \ell + O(1), \qquad \ell \to \infty.
\end{equation}
\end{proposition}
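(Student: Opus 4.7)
The plan is to complete the square to isolate the Gaussian optimum, and then reduce to estimating a smoothed density of $\Omega_\ell$ for which a local central limit theorem is available. Writing $\delta y - \beta y^2 = \frac{\delta^2}{4\beta} - \beta(y - x^*)^2$ with $x^* := \delta/(2\beta)$, we have
\begin{equation*}
\bbE\bigl[e^{\delta\Omega_\ell - \beta\Omega_\ell^2}\bigr]
= e^{\delta^2/(4\beta)}\,\bbE\bigl[e^{-\beta(\Omega_\ell - x^*)^2}\bigr].
\end{equation*}
The prefactor is a positive constant depending only on $(\delta,\beta)$, so it contributes an $O(1)$ term to $G^*_{\delta,\beta}(\ell)$; the proposition therefore reduces to the two-sided estimate $\bbE[e^{-\beta(\Omega_\ell - x^*)^2}] \asymp \ell^{-1/2}$ as $\ell\to\infty$.

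For the upper bound, I would slice $\R$ into unit intervals and exploit the pointwise bound $e^{-\beta y^2} \leq e^{-\beta(|k|-1)_+^2}$ on $y \in [k,k+1)$ to obtain
\begin{equation*}
\bbE\bigl[e^{-\beta(\Omega_\ell - x^*)^2}\bigr]
\leq \sum_{k\in\Z} e^{-\beta(|k|-1)_+^2}\,\bbP\bigl(\Omega_\ell - x^* \in [k,k+1)\bigr).
\end{equation*}
The Kolmogorov--Rogozin concentration inequality (applicable since $\bbVar(\omega_1)=1>0$) yields the uniform bound $\sup_{y\in\R}\bbP(\Omega_\ell \in [y,y+1)) \leq C/\sqrt\ell$ for all $\ell \geq 1$. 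Since the series $\sum_k e^{-\beta(|k|-1)_+^2}$ converges, this produces $\bbE[e^{-\beta(\Omega_\ell - x^*)^2}] \leq C'/\sqrt\ell$.

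For the lower bound, I would recognize $\sqrt{\beta/\pi}\,e^{-\beta y^2}$ as the density at $y$ of $Z \sim N(0,1/(2\beta))$ taken independently of $\omega$, so that
\begin{equation*}
\bbE\bigl[e^{-\beta(\Omega_\ell - x^*)^2}\bigr] = \sqrt{\pi/\beta}\,p_\ell(x^*),
\end{equation*}
where $p_\ell$ denotes the density of $\Omega_\ell + Z$, well-defined thanks to the Gaussian convolution. By Fourier inversion, $p_\ell(y) = \frac{1}{2\pi}\int_\R e^{-i\xi y}\hat f(\xi)^\ell e^{-\xi^2/(4\beta)}\,d\xi$ with $\hat f$ the characteristic function of $\omega_1$. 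A standard smoothed local CLT argument---localizing to $|\xi| \leq \ell^{-1/2+\epsilon}$, where the expansion $|\hat f(\xi)|^2 = 1 - \xi^2 + o(\xi^2)$ near $0$ forces $|\hat f(\xi)|^\ell \leq e^{-c\ell\xi^2}$, and using the $\ell$-independent Gaussian factor $e^{-\xi^2/(4\beta)}$ together with $|\hat f| \leq 1$ to control the complementary range---then shows $\liminf_{\ell\to\infty}\sqrt\ell\,p_\ell(x^*) > 0$. This yields the matching bound $\bbE[e^{-\beta(\Omega_\ell - x^*)^2}] \geq c/\sqrt\ell$ for all $\ell$ large enough; taking logarithms completes the proof.

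The main delicacy lies in the Fourier tail bound when $\omega_1$ is lattice-distributed: $|\hat f(\xi)|$ then returns to $1$ periodically, so $|\hat f(\xi)|^\ell$ is not uniformly small on $|\xi| > \epsilon$. This is not a genuine obstacle: the $\ell$-independent weight $e^{-\xi^2/(4\beta)}$ decays rapidly, so contributions from distant ``lattice peaks'' of $|\hat f|$ are geometrically damped and merely replace the standard constant $(2\pi)^{-1/2}$ by a different (possibly $x^*$-dependent) positive constant---irrelevant for the $O(1)$ claim. Under the blanket assumption $M(t)<\infty$ for all $t\in\R$, which is vastly stronger than what the argument requires, all moment and integrability conditions implicit in the above estimates are trivially satisfied.
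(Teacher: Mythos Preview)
Your argument is correct in substance, and your upper bound via unit-interval slicing plus the Kolmogorov--Rogozin concentration bound is exactly the paper's argument (the paper writes it as $\sum_m e^{\delta m-\beta m^2}\bbP(\Omega_\ell\in(m,m+1])$ without completing the square, but this is cosmetic). Where you differ is the lower bound: the paper dispenses with Fourier analysis entirely and simply restricts to the event $\{\Omega_\ell\in[0,1]\}$, on which $\delta\Omega_\ell-\beta\Omega_\ell^2$ is bounded below by a constant, and then invokes the local limit theorem to get $\bbP(\Omega_\ell\in[0,1])\asymp\ell^{-1/2}$. This is a one-line proof. Your Gaussian-convolution route is valid but roundabout, and the Fourier sketch as written has a small gap: on the complementary range $|\xi|>\ell^{-1/2+\epsilon}$, using only $|\hat f|\le 1$ together with the Gaussian factor $e^{-\xi^2/(4\beta)}$ yields a contribution that is $O(1)$, not $o(\ell^{-1/2})$, so you cannot read off $\liminf\sqrt{\ell}\,p_\ell(x^*)>0$ from that alone. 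To close the gap you still need, on any compact $[\epsilon_0,A]$, the bound $\sup|\hat f|<1$ in the non-lattice case (and the Poisson-summation-type treatment of the lattice peaks that you sketch), which is precisely the local-limit-theorem input the paper invokes directly. In short: both approaches ultimately rest on the same LLT ingredient, but the paper's restriction-to-an-interval argument gets there with far less machinery.
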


\begin{proof}
Recall that $\Omega_\ell = \sum_{k=1}^\ell \omega_k$. With the help of the local limit theorem
we can estimate
\begin{equation}
\label{eq:app.estG1}
\begin{aligned}
G^*_{\delta,\beta}(\ell) &\geq \log\bbE\left[e^{\delta \Omega_\ell-\beta \Omega_\ell^2} 
\,\ind_{\{\Omega_\ell \in [0,1]\}}\right] \\
&\geq C(\delta,\beta) + \log \bbP\big(\Omega_\ell \in [0,1]\big) \sim -\tfrac12 \log \ell,
\qquad \ell\to\infty,
\end{aligned}
\end{equation}
and 
\begin{equation}
\label{eq:app.estG2}
G^*_{\delta,\beta}(\ell) \leq \log \left[ \sum_{m\in \bbZ} e^{\delta m-\beta m^2} 
\bbP\big(\Omega_\ell \in (m, m+1]\big)\right] \leq \log(C \ell^{-1/2}),
\end{equation}
from which the claim follows.
\end{proof}

\begin{proposition}
\label{pr:G.conv-to-Brownian}
Let $D_{\gd,\gb}(\ell) = \exp\{G^*_{\gd,\gb}(\ell)\}-1$. For $\gb = \tfrac12\gd^2 
- C(\tfrac12 \gd^2)^{4/3}$ and $\ell = \lfloor u \gb^{-2/3} \rfloor$,
\begin{equation}
D_{\gd,\gb}(\ell) \sim (Cu - u^2) \gb^{2/3}, \qquad \gb \downarrow 0.
\end{equation}
\end{proposition}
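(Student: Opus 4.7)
The plan is to Taylor-expand $e^{\gd\Omega_\ell - \gb\Omega_\ell^2}$ to fourth order and identify contributions of order $\gb^{2/3}$ explicitly. Under the hypothesis $\tfrac12\gd^2 - \gb = C\gb^{4/3}[1+o(1)]$ (so $\gd^2 = 2\gb[1+o(1)]$) and $\ell = u\gb^{-2/3}[1+o(1)]$, the typical size of $\Omega_\ell$ is $\sqrt{\ell}\asymp \gb^{-1/3}$, so $\gd\Omega_\ell = O(\gb^{1/6})$ and $\gb\Omega_\ell^2 = O(\gb^{1/3})$ in every $L^p$-norm (using the finite moment-generating function assumption \eqref{Mdeltacond}). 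Only finitely many terms of the Taylor series therefore contribute to order $\gb^{2/3}$, and I would write
\begin{equation*}
D_{\gd,\gb}(\ell) = \sum_{k=1}^{4}\frac{1}{k!}\,\bbE\bigl[(\gd\Omega_\ell - \gb\Omega_\ell^2)^k\bigr] + R_5,
\end{equation*}
expanding each binomial and applying the moment identities $\bbE[\Omega_\ell^2] = \ell$, $\bbE[\Omega_\ell^4] = 3\ell^2 + O(\ell)$, together with $\bbE[\Omega_\ell^{2m+1}] = O(\ell^m)$ for $m\ge 1$ (a consequence of $\bbE[\omega_1] = 0$, proved as in Lemma~\ref{lem:lim_moment}).

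Two cancellation mechanisms produce the announced limit. The $\Omega_\ell^2$ contributions from $k=1,2$ combine into $\bigl(\tfrac{\gd^2}{2} - \gb\bigr)\ell = Cu\gb^{2/3}[1+o(1)]$ by the definition of $C$, so the naive $\gb^{1/3}$ terms are promoted to $\gb^{2/3}$. The $\Omega_\ell^4$ contributions from $k=2,3,4$ combine into
\begin{equation*}
\Bigl(\tfrac{\gb^2}{2} - \tfrac{\gd^2\gb}{2} + \tfrac{\gd^4}{24}\Bigr)\bbE[\Omega_\ell^4] \sim -\tfrac{\gb^2}{3}\cdot 3\ell^2 = -u^2\gb^{2/3}[1+o(1)],
\end{equation*}
after simplifying the coefficient via $\gd^2 = 2\gb[1+o(1)]$. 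Summing these gives exactly $D_{\gd,\gb}(\ell) = (Cu-u^2)\gb^{2/3}[1+o(1)]$.

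It remains to show that all other contributions are $o(\gb^{2/3})$. Odd-moment terms $\gd^a\gb^b\bbE[\Omega_\ell^{a+2b}]$ with $a$ odd have an extra factor $\ell^{-1/2}\asymp\gb^{1/3}$ over the naive even-moment estimate, yielding $O(\gb^{5/6})$. The sub-leading correction from $\bbE[\Omega_\ell^4] = 3\ell^2 + O(\ell)$ in the computation above is $O(\gb^2\ell) = O(\gb^{4/3})$. Terms of the expansion involving $\Omega_\ell^k$ with $k \in \{5,6,7,8\}$ can be checked one by one to be $O(\gb^{\alpha})$ with $\alpha > \tfrac{2}{3}$. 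Finally, for the remainder we use Lagrange's form: since $\gd\Omega_\ell - \gb\Omega_\ell^2 \leq \gd^2/(4\gb) = \tfrac12 + o(1)$ pathwise, the intermediate exponential in the remainder is bounded by $e$, and a crude bound gives $\bbE|R_5| \leq \tfrac{e}{120}\bbE|\gd\Omega_\ell - \gb\Omega_\ell^2|^5 = O(\gd^5\ell^{5/2}) + O(\gb^5\ell^5) = O(\gb^{5/6}) + O(\gb^{5/3}) = o(\gb^{2/3})$.

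The main obstacle is careful bookkeeping: sorting the roughly twenty monomials of the fourth-order expansion by their order in $\gb$ and confirming that the two cancellation mechanisms produce precisely $(Cu - u^2)\gb^{2/3}$. This is forced by the explicit value of $\tfrac12\gd^2 - \gb$ and the near-equality $\gd^2 \approx 2\gb$; no deeper technique is needed beyond Taylor expansion and the standard moment bounds.
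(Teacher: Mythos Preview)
Your proof is correct and follows essentially the same approach as the paper: a fourth-order Taylor expansion of $e^{\gd\Omega_\ell-\gb\Omega_\ell^2}$ combined with the moment asymptotics $\bbE[\Omega_\ell^{2m}]\sim c_m\ell^m$ and $\bbE[\Omega_\ell^{2m+1}]=O(\ell^m)$. The paper streamlines the bookkeeping by normalizing first, writing $Z_\ell=\Omega_\ell/\sqrt{\ell}$, $a=-\gb\ell$, $b=\gd\sqrt{\ell}$, so that the moments of $Z_\ell$ converge to constants and the surviving terms collapse directly to $(a+\tfrac12 b^2)+\tfrac32 a^2+\tfrac32 ab^2+\tfrac18 b^4 = C\gb^{4/3}\ell-\gb^2\ell^2+o(\gb^{2/3})$; your version carries the same computation in the unnormalized variables, which is equivalent but requires tracking the $\ell$-powers by hand.
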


\begin{proof}
Abbreviate $Z_\ell = \gO_\ell / \sqrt{\ell}$, $a=-\gb \ell$ and $b=\gd \sqrt{\ell}$. 
Note that $ab = O(\sqrt{\beta})$, while
$\bbE[(Z_\ell)^3] = O(\ell^{-1/2}) \to 0$ and $\bbE[(Z_\ell)^4]\to 3$. 
By expanding 
the exponential below to fourth order and keeping only the terms that are of order lower than 
or equal to $\gb^{2/3}$, we obtain
\begin{equation}
\begin{aligned}
D_{\gd,\gb}(\ell) &= \bbE\Big( e^{-\gb\ell Z_\ell^2 + \gd \sqrt{\ell} Z_\ell} \Big) - 1\\
&= (a+ \tfrac12 b^2) + \tfrac32 a^2 + \tfrac32 a b^2 + \tfrac18 b^4 + o(\gb^{2/3})\\
& = C\gb^{4/3}\ell - \gb^2 \ell^2 + o(\gb^{2/3}),
\end{aligned} 
\end{equation}
which is the desired result.
\end{proof}

\begin{proposition}
\label{pr:Gbetadelta.monotonicity}
For $\ell\in\N$, $\beta \mapsto G^*_{\delta,\beta}(\ell)$ is strictly decreasing on $[0,\infty)$ 
and $\delta \mapsto G^*_{\delta,\beta}(\ell)$ is strictly convex on $[0,\infty)$.
\end{proposition}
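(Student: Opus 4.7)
The plan is to prove both monotonicity and convexity by direct computation of derivatives of $G^*_{\delta,\beta}(\ell)$ in $\beta$ and $\delta$, and to interpret them probabilistically as an expectation and a variance under the tilted probability measure
\begin{equation*}
\dd\bbQ^{\ell}_{\delta,\beta}(\omega) = \frac{e^{\delta \Omega_\ell - \beta \Omega_\ell^2}}{\bbE[e^{\delta \Omega_\ell - \beta \Omega_\ell^2}]}\,\dd\bbP(\omega).
\end{equation*}
Note that $\bbQ^{\ell}_{\delta,\beta}$ is well-defined and equivalent to $\bbP$ thanks to \eqref{Mdeltacond}.

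For the first assertion, a direct differentiation under the expectation (justified by \eqref{Mdeltacond}) gives
\begin{equation*}
\frac{\partial}{\partial\beta} G^*_{\delta,\beta}(\ell)
= -\,\frac{\bbE\bigl[\Omega_\ell^2\,e^{\delta \Omega_\ell - \beta \Omega_\ell^2}\bigr]}
{\bbE\bigl[e^{\delta \Omega_\ell - \beta \Omega_\ell^2}\bigr]}
= -\,\bbE_{\bbQ^{\ell}_{\delta,\beta}}\!\bigl[\Omega_\ell^2\bigr].
\end{equation*}
Since $\bbVar(\omega_1)=1$ by \eqref{eq:omegacond}, $\Omega_\ell$ is not $\bbP$-a.s.\ zero for $\ell\in\N$, and because $\bbQ^{\ell}_{\delta,\beta}\sim\bbP$ it is not $\bbQ^{\ell}_{\delta,\beta}$-a.s.\ zero either. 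Hence the right-hand side is strictly negative, giving strict monotonicity on $[0,\infty)$.

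For the second assertion, the same differentiation yields
\begin{equation*}
\frac{\partial}{\partial\delta} G^*_{\delta,\beta}(\ell) = \bbE_{\bbQ^{\ell}_{\delta,\beta}}[\Omega_\ell],
\qquad
\frac{\partial^2}{\partial\delta^2} G^*_{\delta,\beta}(\ell) = \bbVar_{\bbQ^{\ell}_{\delta,\beta}}(\Omega_\ell),
\end{equation*}
the latter being the standard cumulant-generating-function identity. Since $\bbVar_{\bbP}(\Omega_\ell)=\ell\geq 1$, $\Omega_\ell$ is not $\bbP$-a.s.\ constant, hence (again by equivalence of measures) not $\bbQ^{\ell}_{\delta,\beta}$-a.s.\ constant either, so $\bbVar_{\bbQ^{\ell}_{\delta,\beta}}(\Omega_\ell)>0$. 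This proves strict convexity of $\delta\mapsto G^*_{\delta,\beta}(\ell)$ on $[0,\infty)$.

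There is essentially no obstacle: the only point requiring a brief comment is the interchange of differentiation and expectation, which is immediate from \eqref{Mdeltacond} (the moment generating function $M$ is finite on all of $\R$, so the integrand and its derivatives are dominated uniformly in $(\delta,\beta)$ on compact sets by integrable functions). The strictness of both inequalities follows from the non-degeneracy of $\omega_1$ under \eqref{eq:omegacond} together with the equivalence of $\bbQ^{\ell}_{\delta,\beta}$ and $\bbP$.
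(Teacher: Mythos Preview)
Your proof is correct and follows essentially the same approach as the paper: compute $\partial_\beta G^*_{\delta,\beta}(\ell)$ and $\partial_\delta^2 G^*_{\delta,\beta}(\ell)$ as (minus) an expectation and a variance under the tilted measure with density proportional to $e^{\delta\Omega_\ell-\beta\Omega_\ell^2}$, and conclude strictness from the non-degeneracy of $\omega_1$. Your explicit justification of the interchange of differentiation and expectation and of strictness via equivalence of measures is a welcome addition, but the argument is otherwise identical.
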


\begin{proof}
For $\beta \in (0,\infty)$, compute
\begin{equation}
\frac{\partial}{\partial\beta} G^*_{\delta,\beta}(\ell) 
= -\frac{\bbE\Big[ \Omega_\ell^2 e^{\delta \Omega_\ell-\beta \Omega_\ell^2} \Big]}
{\bbE\Big[ e^{\delta \Omega_\ell-\beta \Omega_\ell^2} \Big]},
\end{equation}
which is strictly negative because $\omega_0$ is non-degenerate. For $\delta \in (0,\infty)$, 
compute
\begin{equation}
\frac{\partial^2}{\partial\delta^2} G^*_{\delta,\beta}(\ell) 
= \langle\Omega_\ell^2\rangle - \langle \Omega_\ell \rangle^2 > 0,
\qquad \langle \cdot \rangle = \frac{\bbE[\,(\cdot)\, e^{\delta \Omega_\ell-\beta \Omega_\ell^2}]}
{\bbE[e^{\delta \Omega_\ell-\beta \Omega_\ell^2}]}.
\end{equation}
Therefore $\delta \mapsto G^*_{\delta,\beta}(\ell)$ is stricly convex on $(0,\infty)$. \end{proof}

\begin{proposition} 
{\rm (van der Hofstad and den Hollander~\cite[Lemma 3]{vdHdH95})}
\label{lem:asymptP}
\begin{equation}
\label{eq:Qasymp}
Q(i+1,j) = \frac{1}{\sqrt{2\pi (i+j)}} \exp\left\{ -\frac{(i-j)^2}{2(i+j)} \right\}\,
\left(1 + O\left( \frac{1}{(i+j)^{1/3}} \right) \right)
\end{equation}
for $i,j\to\infty$ with $i-j = O((i+j)^{2/3})$.
\end{proposition}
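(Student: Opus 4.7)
The plan is to reduce the claim to a classical local central limit theorem (LCLT) for the simple symmetric random walk on $\Z$, and then control the error under the scaling regime $i-j = O((i+j)^{2/3})$. By \eqref{eq:QandP}, we may rewrite
\begin{equation*}
Q(i+1,j) = \tfrac{1}{2}\, \bP(S_{n} = k), \qquad n := i+j, \ k := i-j,
\end{equation*}
so the statement reduces to showing that for $n\to\infty$ with $|k| = O(n^{2/3})$ and $n\equiv k \pmod 2$,
\begin{equation*}
\bP(S_n = k) = \sqrt{\tfrac{2}{\pi n}}\, \exp\!\Big\{-\tfrac{k^2}{2n}\Big\}\, \Big(1 + O(n^{-1/3})\Big).
\end{equation*}

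First, I would apply Stirling's formula with the second order correction, $\log m! = m\log m - m + \tfrac{1}{2}\log(2\pi m) + O(1/m)$, to the explicit binomial representation $\bP(S_n=k) = \binom{n}{(n+k)/2} 2^{-n}$ (valid when $n$ and $k$ have the same parity, which is forced by $k=i-j$ and $n=i+j$). Setting $p = (n+k)/(2n) = \tfrac12 + \tfrac{k}{2n}$ and $q = 1-p$, and expanding $p\log(2p) + q\log(2q)$ to fourth order in $k/n$ (using $|k|/n = O(n^{-1/3})$), I obtain
\begin{equation*}
\log \bP(S_n = k) = \tfrac12 \log\!\Big(\tfrac{2}{\pi n}\Big) - \tfrac{k^2}{2n} + O\!\Big(\tfrac{k^4}{n^3}\Big) + O\!\Big(\tfrac{1}{n}\Big).
\end{equation*}
Under the assumption $|k| = O(n^{2/3})$, we have $k^4/n^3 = O(n^{-1/3})$, which dominates the $O(1/n)$ from Stirling. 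Exponentiating and using $e^x = 1 + O(x)$ for small $x$ yields the claim.

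The only delicate step is making sure the Taylor expansion of the entropy term $p\log(2p) + q\log(2q) = \tfrac{k^2}{2n^2} + O((k/n)^4)$ is truly $O((k/n)^4)$ with a constant uniform in the regime $|k|/n \le n^{-1/3}$; this is straightforward because the function $x\mapsto (\tfrac12+x)\log(1+2x) + (\tfrac12-x)\log(1-2x)$ is analytic at $0$ with vanishing first and third derivatives, so the Lagrange form of the remainder gives the desired bound on any interval $[-\tfrac14,\tfrac14]$, which contains $k/(2n)$ for $n$ large. No step presents any real obstacle; the entire result is essentially a bookkeeping exercise on the LCLT with the specific error exponent tailored to the $n^{2/3}$-window.
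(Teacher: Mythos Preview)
Your proof is correct. The paper does not actually give its own proof of this proposition: it is stated in Appendix~\ref{AppA} with the attribution ``van der Hofstad and den Hollander~\cite[Lemma 3]{vdHdH95}'' and no further argument, so there is nothing to compare against beyond noting that your Stirling-plus-entropy-expansion route is the standard one and is presumably what appears in \cite{vdHdH95}. One minor remark: you could mention that the $O(k^2/n^2)$ correction coming from $\tfrac12\log(2\pi npq)=\tfrac12\log(\pi n/2)+O(k^2/n^2)$ is $O(n^{-2/3})$ under the assumed regime and hence also absorbed into the $O(n^{-1/3})$ error, but this is implicit in your bookkeeping.
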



\section{Key ingredients for the charge central limit theorem}
\label{AppB}

In Section~\ref{CLTproof} we gave the proof of the central limit theorem for the speed.
In this appendix we list the key ingredients necessary to extend the argument to the 
charge. This comes in 3 Steps.

\medskip\noindent
{\bf 1.} 
Write, for $\lambda \in \R$ (recall \eqref{eq:def.polmeasure}--\eqref{eq:def.annpartfunc}),
\begin{equation}
\begin{aligned}
&\bE_n^{\gd,\gb}\Big[e^{\gl[\gO_n - n\rho(\gd,\gb)]/\sqrt{n}} \Big] 
= e^{-\gl\rho(\gd,\gb)\sqrt{n}}\,\frac{\bbZ_n^{*,\gd+ (\gl/\sqrt{n}),\beta}}{\bbZ_n^{*,\gd,\beta}}\\
&\qquad = e^{-\gl\rho(\gd,\gb)\sqrt{n}+\big[\mu(\gd+(\gl/\sqrt{n}),\gb) - \mu(\gd,\gb)\big]n}
\, \frac{e^{-\mu(\gd+(\gl/\sqrt{n}),\gb)n}
\,\bbZ_n^{*,\gd+(\gl/\sqrt{n}),\beta}}{e^{-\mu(\gd,\gb)n}\bbZ_n^{*,\gd,\beta}}.
\end{aligned}
\end{equation}
By \eqref{eq:chargespec} and the analyticity of $\delta \mapsto \mu(\gd,\gb)$, as 
$n\to\infty$ 
the first factor converges to
\begin{equation}
\exp\Big[\tfrac12\gl^2 \frac{\partial^2}{\partial\delta^2}\,\mu(\gd,\gb)\Big],
\end{equation} 
which is the desired limit, see \eqref{eq:formula.variance}. Therefore we need to prove that the ratio in the second factor 
converges to $1$. An adaptation of Lemma~\ref{lem:CLT3} gives 
\begin{equation}
e^{-\mu(\gd,\gb)n} \bbZ_n^{*,\gd,\gb} = \sum_{a,b,n_1,n_2} u(n_1,n_2,a,b)\, 
\bP^{\gd,\gb}\big(\exists\, j\colon\, Y_j = n-n_1-n_2, M^+_{j-1}=b ~\big|~
M_0^+ = a \big).
\end{equation}
Recalling points \eqref{it:1}-\eqref{it:2} on page \pageref{it:1}, it is enough to show that 
the probabilities
\begin{equation}
\bP^{\gd,\gb}\big(\exists\, j\colon\, Y_j = n, M^+_{j-1}=b ~\big|~ M_0^+ = a \big),
\qquad  
\bP^{\gd+(\gl/\sqrt{n}),\gb}\big(\exists\, j\colon Y_j = n, M^+_{j-1}=b ~\big|~
M_0^+ = a\big),
\end{equation}
both converge to $m(b)$ as $n\to\infty$ (recall \eqref{eq:CLTstep5}). The difficulty is that in the 
second probability one of the parameters depends on $n$. To handle this, we need the following 
uniform renewal theorem.

\begin{lemma} 
\label{lem:appr.ren.thm}
Suppose that $(P^{(n)})_{n\in\N}$ is a sequence of inter-arrival time distributions, each with 
finite mean, converging to an inter-arrival time distribution $P$, also with finite mean. Suppose 
further that there exists a constant $c \in (0,\infty)$ such that
\begin{equation}
\label{tailass}
\sup_{n\in\N} P^{(n)}(\tau_1 = k) \leq e^{-ck}, \qquad k \in \N.
\end{equation}
Then, $\lim_{n\to\infty} 
|P^{(n)}(n\in\tau) - 1/E[\tau_1]| = 0$, where $\tau=(\tau_i)_{i\in\N}$ denotes
the sequence of arrival times.
\end{lemma}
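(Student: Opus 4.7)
My plan is to use a complex-analytic approach via probability generating functions, which is well-suited to uniform renewal theorems under exponential tail assumptions. Introduce
\begin{equation*}
\hat p^{(n)}(z) = \sum_{k\in\N} P^{(n)}(\tau_1=k)\, z^k, \qquad \hat p(z) = \sum_{k\in\N} P(\tau_1=k)\, z^k.
\end{equation*}
The uniform tail bound \eqref{tailass} ensures that both series extend to holomorphic functions on $\{|z|<e^c\}$, with $|\hat p^{(n)}(z)|$ bounded uniformly in $n$ on each disk $\{|z|\le r\}$ for $r<e^c$, and $\hat p^{(n)}\to \hat p$ uniformly on compact subsets (by dominated convergence, since the coefficients converge pointwise and are uniformly dominated by $e^{-ck}$). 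Setting $u_k^{(n)}=P^{(n)}(k\in\tau)$, the renewal equation gives $\sum_k u_k^{(n)} z^k = 1/(1-\hat p^{(n)}(z))$, and Cauchy's formula yields
\begin{equation*}
P^{(n)}(n\in\tau) = \frac{1}{2\pi i}\oint_{|z|=r}\frac{dz}{z^{n+1}(1-\hat p^{(n)}(z))}
\end{equation*}
for any $r\in(0,1)$.

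The strategy will be to deform this contour to $|z|=1+\eta$, picking up the residue at the simple pole $z=1$. Since $\hat p^{(n)}(1)=1$ with derivative $(\hat p^{(n)})'(1)=E_n[\tau_1]$, this residue contributes exactly $1/E_n[\tau_1]$, and the tail bound with dominated convergence yields $E_n[\tau_1]\to E[\tau_1]$ (using that $k\, P^{(n)}(\tau_1=k)\le k e^{-ck}$ is summable uniformly in $n$). The remaining contour integral on $|z|=1+\eta$ is bounded by $C(1+\eta)^{-n}$, provided one can exhibit $\eta>0$ and $c_0>0$ with $|1-\hat p^{(n)}(z)|\ge c_0$ for all $|z|=1+\eta$ and all $n$ large enough.

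The main obstacle will be establishing this uniform zero-free region. In the intended application the limit $P$ is aperiodic (its support generates $\Z$), so that $|\hat p(e^{i\theta})|<1$ for $\theta\in[-\pi,\pi]\setminus\{0\}$, and by compactness $|\hat p(z)|\le 1-\delta$ on $\{|z|=1,\ |z-1|\ge\epsilon\}$ for any $\epsilon>0$. Uniform convergence of $\hat p^{(n)}$ on the unit circle transfers this bound (with $\delta/2$) to all large $n$. In a small disk around $z=1$, I would use a second-order Taylor expansion of $\hat p^{(n)}$ at $z=1$, with remainder controlled by the uniformly bounded second moment $E_n[\tau_1^2]\le\sum_k k^2 e^{-ck}$, to show that $z=1$ is the unique zero of $1-\hat p^{(n)}(z)$ in the disk, uniformly in $n$. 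The extension to the annulus $\{1-\eta\le|z|\le 1+\eta\}$ is then controlled by the uniform bound on $\hat p^{(n)}$ in a fixed compact subset of $\{|z|<e^c\}$, producing the required $\eta>0$ and $c_0>0$. Combining these ingredients,
\begin{equation*}
P^{(n)}(n\in\tau) = \frac{1}{E_n[\tau_1]} + O\bigl((1+\eta)^{-n}\bigr) \longrightarrow \frac{1}{E[\tau_1]},\qquad n\to\infty,
\end{equation*}
which is the desired conclusion.
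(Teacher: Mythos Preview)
Your approach is correct and genuinely different from the paper's. The paper's proof is essentially a one-line citation: it invokes Ney's coupling refinement of the renewal theorem, which gives a bound of the form
\[
|P^{(n)}(n\in\tau) - 1/E^{(n)}[\tau_1]| \le c_0\, P^{(n)}\Big(\sum_{i=1}^{N} Z_i > n\Big),
\]
where the $Z_i$ are i.i.d.\ with exponential tails and $N$ is a geometric-like random variable, and then observes that the uniform tail bound \eqref{tailass} makes all the constants in Ney's construction uniform in $n$. Your route, by contrast, is the classical analytic one: write the renewal generating function as $1/(1-\hat p^{(n)}(z))$, deform a Cauchy contour past the simple pole at $z=1$, and control the remaining integral on $|z|=1+\eta$ via a uniform zero-free annulus for $1-\hat p^{(n)}$. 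The exponential tail hypothesis is exactly what makes this work: it gives analyticity in a fixed disk $\{|z|<e^c\}$, uniform convergence $\hat p^{(n)}\to\hat p$ on compacts, and uniformly bounded higher moments for the Taylor expansion near $z=1$.

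What each buys: the paper's proof is shorter because it outsources the work to Ney, but it is a black box. Yours is self-contained and yields an explicit exponential remainder $O((1+\eta)^{-n})$, which is more informative. Both proofs tacitly require aperiodicity of the limit $P$ (you say so explicitly; Ney's coupling needs it too), so neither proves the lemma in the full generality of its stated hypotheses---but both are adequate for the application in the paper, where the relevant inter-arrival distributions are aperiodic. One small point worth making explicit in your write-up: inside the open unit disk there are no zeros of $1-\hat p^{(n)}$ because $|\hat p^{(n)}(z)|\le \hat p^{(n)}(|z|)<1$, so the contour deformation only has to contend with the region $\{1\le|z|\le 1+\eta\}$; your sketch of the zero-free argument there is standard and correct.
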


\begin{proof}
By \eqref{tailass} and dominated convergence, we have $\lim_{n\to\infty} E^{(n)}[\tau_1] = E[\tau_1]$. 
According to Ney~\cite{N81}, there exists a constant $c_0$ such that
\begin{equation}
|P^{(n)}(n\in\tau) - E^{(n)}[\tau_1]^{-1}| \leq c_0\, P^{(n)}\left( \sum_{i=1}^N Z_i > n \right),
\end{equation}
where the $Z_i$'s are i.i.d.\ with 
$P^{(n)}(Z_1 >k) \leq c_1 e^{-c k}$, and $P^{(n)}(N > k) \leq c_2 (1-\gd)^k$ 
for all $k\in\N$ with $\gd \in (0,1)$. Moreover,
condition \eqref{tailass} ensures that the 
constants $c_0, c_1, c_2$ are uniform in $n\in\N$, and that $\gd$ is bounded away from $0$
uniformly in $n\in\N$.
\end{proof}

\medskip\noindent
{\bf 2.}
Abbreviate $\bP^{(n)} = \bP^{\gd+\gl/\sqrt{n},\gb}$. Apply Lemma~\ref{lem:appr.ren.thm} to 
the renewal process whose inter-arrival times have law
\begin{equation}
\cK^b_n(\ell) = \sum_{i\in\N} \bP^{(n)} (M_1^+ \neq b, \ldots,  M_{i-1}^+ \neq b, M_i^+ = b, 
Y_i = \ell\, |\, M_0^+ = b), \qquad \ell\in\N,
\end{equation}
delayed by a first inter-arrival time with law 
\begin{equation}
\cK^{a,b}_n(\ell) = \sum_{i\in \N} \bP^{(n)} (M_1^+ \neq b, \ldots,  
M_{i-1}^+ \neq b, M_i^+ = b, Y_i = \ell\, |\, M_0^+ = a), \qquad \ell\in\N.
\end{equation}
We can now explain why \eqref{tailass} holds. For simplicity, we restrict to the case without delay, i.e., 
$a=b$. Let $K\in\N$ be such that $b \leq K$, and put $\sigma = \inf\{i\in\N\colon\, M_i^+ \leq K\}$. 
Define $\tilde M_1 = \sum_{1 \leq i \leq \sigma} 2M_i^+ + 1$. We rely on the following lemma, whose
proof is given in Step 3 below.

\begin{lemma}
\label{lem:RFSS}
There exists a $c>0$ (depending on $K$) such that
\begin{equation}
\sup_{1\leq i \leq K} \sup_{n\in\N} \bE^{(n)}\Big(e^{c \tilde M_1} \left| M_0^+ = i \right.\Big) 
< \infty.
\end{equation}
This implies that there exists a $C >0$ such that 
\begin{equation}
\sup_{1\leq i \leq K} \bP^{(n)}(\tilde M_1 \geq n\, |\, M_0^+ = i) 
\leq C\, e^{-c n}, \qquad n\in\N.
\end{equation}
\end{lemma}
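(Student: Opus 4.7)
The second assertion follows immediately from the first by Markov's inequality, so I focus on the uniform exponential moment bound. The strategy is a Foster--Lyapunov drift argument for the family of Markov chains $(M_i^+)_{i\in\N_0}$ under $\bP^{(n)} = \bP^{\gd_n,\gb}$ with $\gd_n = \gd + \gl/\sqrt{n} \to \gd$, carried out uniformly in $n$; the finitely many small $n$ for which $\gd_n$ is far from $\gd$ are handled individually by standard positive-recurrence arguments.

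\textbf{Step 1: uniform spectral control.} Recall that $Q^{(n)}(i,j) = A^{(n)}(i,j)\,\nu^{(n)}(j)/\nu^{(n)}(i)$, where $A^{(n)} = A_{\mu^{(n)},\gd_n,\gb}$ has spectral radius $1$ with normalised Perron--Frobenius eigenvector $\nu^{(n)} \in \ell_2(\N_0)$, and $\mu^{(n)} = \mu(\gd_n,\gb)$. By the analyticity of $\gd \mapsto \mu(\gd,\gb)$ on the ballistic phase (Proposition~\ref{prop:regl}) and analytic perturbation theory for isolated simple eigenvalues of compact operators, $\mu^{(n)} \to \mu(\gd,\gb) > 0$ and $\nu^{(n)} \to \nu$ in $\ell_2(\N_0)$. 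Combining this with Lemma~\ref{lem:nurate}, the rates $r^{(n)}$ (defined by $\mu^{(n)} = \log\cosh(r^{(n)})$) converge to a strictly positive limit $r$, so there exist $r_*, C_1, C_2 > 0$ such that for all $n$ large,
\begin{equation}
C_1\,e^{-r_* k} \leq \nu^{(n)}(k) \leq C_2\,e^{-r_* k}, \qquad k \in \N_0.
\end{equation}

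\textbf{Step 2: Lyapunov supersolution.} Set $v^{(n)}(i) := \bE^{(n)}[e^{c\tilde M_1} \mid M_0^+ = i]$. A one-step decomposition via the strong Markov property gives
\begin{equation}
v^{(n)}(i) = e^c \sum_{k\in\N_0} Q^{(n)}(i,k)\,e^{2ck}\,\bigl[\ind_{\{k \leq K\}} + v^{(n)}(k)\,\ind_{\{k > K\}}\bigr],
\end{equation}
so any positive $\psi \geq 1$ satisfying $e^c\sum_k Q^{(n)}(i,k)\,e^{2ck}\,\psi(k) \leq \psi(i)$ for $i > K$ dominates $v^{(n)}$ on $\{i > K\}$. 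I try $\psi(j) = e^{aj}$ for small $a, c > 0$. Using the bound $A^{(n)}(i,k) \leq \bar C\,e^{-\mu^{(n)}(i+k+1)} Q(i+1,k)$ (Proposition~\ref{pr:asymptGstar}), the spectral bounds of Step~1, and the generating-function identity $\sum_k e^{bk} Q(i+1,k) = (2-e^b)^{-(i+1)}$ (valid for $b < \log 2$), the supersolution condition reduces, for $i$ large, to the strict inequality
\begin{equation}
e^{r_*-a-\mu^{(n)}} + e^{2c+a-r_*-\mu^{(n)}} \leq 2 - \eta
\end{equation}
for some $\eta > 0$. At $(a,c)=(0,0)$ the boundary relation $\mu^{(n)} \approx \log\cosh(r_*)$ makes the left-hand side equal to $2$; the derivatives at the origin are $-2\tanh(r_*) < 0$ in $a$ and $2e^{-r_*-\mu^{(n)}} > 0$ in $c$, so choosing $a > 0$ small and $c$ sufficiently smaller (say $c \leq \tfrac12 a\,\sinh(r_*)\,e^{r_*}$) yields the inequality uniformly in $n$ large. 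For the finitely many residual states not covered by the asymptotic reduction, the supersolution is modified by multiplication by a large constant together with an additive correction supported on $\{i \leq K\}$, preserving the drift.

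\textbf{Conclusion and main obstacle.} Combining the supersolution bound with a direct first-step estimate for the initial states $i \in \{1,\ldots,K\}$ (bounded uniformly by the same generating-function calculation), one obtains $\bE^{(n)}[e^{c\tilde M_1} \mid M_0^+ = i] \leq C$ uniformly in $n$ large and $1 \leq i \leq K$, and Markov's inequality yields the tail estimate. The principal obstacle is Step~1: transferring the exponential decay of $\nu^{(n)}$ uniformly across the family, with a rate converging to the limiting rate $r$. This rests on analytic perturbation theory for the Perron data together with the strict positivity $\mu^{(n)} > 0$ provided by the ballistic phase; without this uniformity, the Lyapunov inequality of Step~2 would degrade as $n$ varies.
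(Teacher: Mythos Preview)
Your Lyapunov approach is close in spirit to the paper's martingale argument (Step~II there can be read as the drift inequality $\bE(e^{R M_1^+}\mid M_0^+=\ell)\le e^{R a_0^+\ell}$ with a contraction factor $a_0^+<1$), but Step~1 of your sketch overclaims, and the overclaim is exactly what your Step~2 calculation rests on. Lemma~\ref{lem:nurate} only gives $k^{-1}\log\nu^{(n)}(k)\to -r^{(n)}$, hence for each $\epsilon>0$ a sandwich $e^{-(r^{(n)}+\epsilon)k}\le \nu^{(n)}(k)\le e^{-(r^{(n)}-\epsilon)k}$ for $k\ge K_0(\epsilon)$; it does \emph{not} yield $C_1e^{-r_*k}\le\nu^{(n)}(k)\le C_2e^{-r_*k}$ with a single rate and bounded prefactors. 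With the correct bounds your generating-function reduction becomes, at $(a,c)=(0,0)$,
\[
e^{(r+\epsilon)-\mu^{(n)}}+e^{-(r-\epsilon)-\mu^{(n)}}=2e^{\epsilon}\,\frac{\cosh(r)}{\cosh(r^{(n)})}\approx 2e^{\epsilon}>2,
\]
so the perturbative derivative argument from the boundary value $2$ breaks. One can rescue the scheme by taking $a$ of order $r$ rather than small (since $2e^{\epsilon}\cosh(r-a)/\cosh(r)<2$ for $a$ near $r$ and $\epsilon<\mu$), but that is a different computation from the one you sketched, and the constants must still be tracked uniformly in $n$.

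The paper sidesteps sharp eigenvector asymptotics altogether. Its Step~I works directly with the $\epsilon$-sandwiched bounds to compute the large-deviations rate $\phi(a)=\lim_k k^{-1}\log Q_{\gd,\gb}(k,ak)$, and the key structural observation is that $\phi$ is uniquely maximised at $a_0=e^{-2r}<1$ with $\phi(a_0)=0$: from a large state $\ell$ the chain typically lands near $a_0\ell$, i.e.\ it \emph{contracts}. The strict inequality $a_0<1$ is precisely the slack that absorbs the $\epsilon$-losses; this is made quantitative in \eqref{eq:xi}--\eqref{eq:eheh}, giving $Q_{\gd,\gb}(\ell,k)\le Ce^{-\xi k}$ for $k\ge a_0^+\ell$. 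Step~II then splits $\bE(e^{R M_i^+}\mid M_{i-1}^+=\ell)$ on $\{M_i^+\gtrless a_0^+\ell\}$ and obtains $\le e^{R a_0^+(1+\gep)\ell}$ directly, after which an optional-stopping martingale argument closes the bound. The mechanism you are missing is this geometric contraction; once $a_0<1$ is in hand, the drift follows without any single-rate control of $\nu^{(n)}$, and uniformity in $n$ reduces to continuity of $\gd\mapsto(\mu,r)$.
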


\noindent
Define, recursively, $\sigma_0 = 0$ and $\sigma_i = \inf\{k > \sigma_{i-1}\colon\, M^+_k \leq K\}$, 
$i\in\N$, as well as the random processes
\begin{equation}
\tilde M_k = \sum_{\sigma_{k-1} < i \leq \sigma_k} (2 M_i^+ + 1),\qquad \gL_k 
= M_{\sigma_k}^+, \qquad k\in\N.
\end{equation}
Note that the pair $(\tilde M_k, \gL_k)_{k\in\N}$ constitutes a Markov renewal process, 
and that
\begin{equation} 
\eta = \inf_{1\leq i, j \leq K}
\, \inf_{n\in\N} \, \bP^{(n)}( \gL_1 = i \,|\, \gL_0 = j) > 0.
\end{equation}
We can now derive an exponential upper bound on $\cK_n^b(\ell)$. Indeed, write
\begin{equation}
\cK^b_n(\ell) = \sum_{i\in\N} \bP^{(n)}\Big(\tilde M_1 + \ldots + \tilde M_i = \ell,\, 
\gL_1 \neq b,\ldots, \gL_{i-1} \neq b, \gL_i = b ~\Big|~ M_0^+ = b\Big).
\end{equation}
Split the sum according to $i \leq \gamma \ell$ and $i> \gamma \ell$, with $\gamma \in (0,1)$ 
a constant to be determined later. We have
\begin{equation}
\cK^b_n(\ell) \leq \bP^{(n)}\Big( \sum_{i=1}^{\gamma \ell} \tilde M_i \ge \ell \Big) 
+ \bP^{(n)}(\gL_1 \neq b, \ldots, \gL_{\gamma \ell} \neq b) \leq e^{-\ga \ell} 
\bE^{(n)}\Big(e^{\ga \sum_{i=1}^{\gamma\ell} \tilde M_i} \Big) + (1-\eta)^{\gamma \ell},
\end{equation}
where $\ga>0$. Using Lemma~\ref{lem:RFSS}, we know that 
\begin{equation}
\bE^{(n)}\Big(e^{\ga \sum_{i=1}^{\gamma\ell} \tilde M_i} \Big)  
\leq \exp\Big( \gamma \ell \log \sup_{1\leq i\leq K} \sup_{n\in\N} 
\bE^{(n)}[e^{\ga \tilde M_1} | M_0^+ = i]  \Big),
\end{equation}
which is finite for $\ga$ small enough. Therefore, choosing $\gamma$ small enough, we 
find a $c>0$ such that
\begin{equation}
\sup_{n\in\N} \cK_n^b(\ell) \leq e^{-c\ell}, \quad \ell \in \N.
\end{equation}

\medskip\noindent
{\bf 3.} 
We conclude by giving the proof of Lemma~\ref{lem:RFSS}.

\begin{proof}
For the moment, ignore the dependence on $n$ and write $\bP$, $\bE$ instead of $\bP^{(n)}$, $\bE^{(n)}$ (see the comment at the end of the proof). 
As we will see, the difficulty lies in the lack of uniform exponential decay for the one-step 
transition probability of $(M^+_n)_{n\in\N_0}$ w.r.t.\ the initial state. Instead, we will analyse 
the exponential decay of $Q_{\gd,\gb}(k, ak)$ as $k\to\infty$, and prove that the maximum 
is attained at $a_0 <1$ (Step I below). Consequently, even when the Markov chain starts 
at a large initial state, it {\it quickly} returns to a predetermined finite subset of its state space
(Step II below).

\medskip\noindent
{\bf Step I.} 
By \eqref{eq:Qgdgb} and \eqref{eq:def.Abeta.p.r} (recall 
also \eqref{eq:QandP})
\begin{equation}
\label{eq:hatQestim}
\begin{split}
Q_{\gd,\gb}(i,j) & = e^{G^*(i+j+1) - \mu(i+j+1)} Q(i+1,j)\, \frac{\nu(j)}{\nu(i)} \\
& = e^{G^*(i+j+1) - \mu(i+j+1)}\,\frac{\bP(S_{i+j}=i-j)}{2}\, \frac{\nu(j)}{\nu(i)},
\end{split}
\end{equation}
where $Q(\cdot,\cdot)$ was defined in \eqref{eq:def.Qij}, we denote by $(S_n)_{n\ge 0}$ the 
simple symmetric random walk on $\Z$, and we suppress the dependence on $\gd$ and $\gb$ 
on $G^*$ and $\nu$. We first need to control the exponential decay of $\nu$. This is the content 
of the following lemma.

\begin{lemma}
\label{lem:nurate}
The limit
\begin{equation}
r = -\lim_{n\to\infty} \frac{1}{n} \log \nu(n),
\end{equation}
exists and is the positive solution of $\log \cosh(r) = \mu$. The same holds when $\nu$ is 
replaced by $\tilde\nu$.
\end{lemma}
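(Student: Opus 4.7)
On $\intr(\cB)$ we have $\mu := \mu(\delta,\beta) > 0$ by \eqref{mudeltabetadef}, so $\cosh r = e^\mu$ has a unique positive solution $r$. The plan is to identify the decay rate $r^* := -\lim_n n^{-1}\log \nu(n)$ with $r$ by relating $r^*$ to the radius of convergence of the Laplace transform $\hat\nu(s) := \sum_n s^n \nu(n)$.

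\textbf{Key generating-function identity.} Writing $W(u) := e^{-\mu u + G^*_{\delta,\beta}(u)}$ and using $A(i,j) = W(i+j+1)\cdot\tfrac12\bP(S_{i+j}=i-j)$ from \eqref{eq:def.Abeta.p.r} and \eqref{eq:QandP}, the observation that $i$ and $j$ are the numbers of up- and down-steps of the walk gives $\sum_{i+j=u}\bP(S_u=i-j)s^i t^j = ((s+t)/2)^u$, whence
\[
\sum_{i,j\ge 0} s^i t^j A(i,j) \;=\; \tfrac12\sum_{u\ge 0}W(u+1)\Big(\tfrac{s+t}{2}\Big)^u.
\]
By Proposition~\ref{pr:asymptGstar}, $W(u)\asymp u^{-1/2}e^{-\mu u}$, so the right-hand side converges precisely when $s+t < 2e^\mu = 2\cosh r$. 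Specialising to fixed $j$ via the negative-binomial identity $\sum_u\binom{u}{j}\rho^u = \rho^j/(1-\rho)^{j+1}$ (with $\rho = s/(2\cosh r)$) together with a local-CLT correction for the $u^{-1/2}$ factor yields
\[
\hat A(s,j) := \sum_i A(i,j)\,s^i \;\asymp\; j^{-1/2}\Big(\frac{1}{2\cosh r - s}\Big)^j, \qquad j\to\infty,\ s\in(0,2\cosh r).
\]

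\textbf{Upper bound via bootstrap.} Multiplying $\lambda\nu(i)=\sum_j A(i,j)\nu(j)$ by $s^i$ and summing gives $\lambda\hat\nu(s) = \sum_j\nu(j)\hat A(s,j)$. Heuristically, if $\nu(n)\asymp e^{-r^* n}$ then the left side converges for $s<e^{r^*}$ and the right for $s<2\cosh r - e^{-r^*}$, and matching the two thresholds forces $e^{r^*}+e^{-r^*}=2\cosh r$, i.e.\ $r^*=r$. To promote the direction $r^* \ge r$ to a rigorous statement, I will run a bootstrap. The Hilbert-Schmidt bound (Proposition~\ref{pr:prop.A.lambda.mu}) combined with Cauchy-Schwarz applied to $\lambda\nu=A\nu$ yields a crude initial estimate $\nu(n)\lesssim e^{-\mu n}/n$, so $r_0:=\mu > 0$. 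Inserting $\nu(j)\le C e^{-r_k j}$ into the eigenvalue equation, using the symmetry $A(n,j) = A(j,n)$ together with the $\hat A$-asymptotic above at $s = e^{-r_k}$, improves this to $\nu(n)\lesssim e^{-r_{k+1}n}$ with $e^{r_{k+1}} = 2\cosh r - e^{-r_k}$. The map $\Phi(x) := \log(2\cosh r - e^{-x})$ is increasing, has $r$ as its unique positive fixed point with $\Phi'(r) = e^{-2r}\in(0,1)$, and satisfies $\Phi(\mu) > \mu$ (because $e^r = \cosh r + \sinh r > \cosh r = e^\mu$ implies $r > \mu$); hence the iterates $r_k$ increase monotonically to $r$, giving $\nu(n)\le e^{-(r-\epsilon)n}$ for every $\epsilon > 0$.

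\textbf{Lower bound and main obstacle.} For $r^* \le r$, the identity $\nu = A^k\nu/\lambda^k$ together with positivity gives $\nu(n)\ge \lambda^{-k}A^k(0,n)\nu(0)$; since $\lambda=1$ at $\mu = \mu(\delta,\beta)$, it suffices to lower-bound $A^k(0,n)$. I will restrict the $k$-fold sum to a family of Ray-Knight trajectories $0 = i_0, i_1, \ldots, i_k = n$ and estimate each factor via the walk's LDP. A Lagrange-multiplier computation on the resulting cost functional $\sum_\ell (i_{\ell-1}+i_\ell)[\mu + \Lambda^*((i_{\ell-1}-i_\ell)/(i_{\ell-1}+i_\ell))]$ (with $\Lambda^*$ the Cram\'er transform of the simple random walk, for which $(\Lambda^*)' = \tanh^{-1}$) shows that the minimum is attained along paths with $v_\ell/u_\ell \equiv -\tanh r$, which forces $u_{\ell+1}/u_\ell = e^{2r}$ and produces total cost exactly $rn$ with $k\asymp(\log n)/(2r)$. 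The principal obstacle is the bootstrap: $\hat A(s,j)$ grows exponentially in $j$ for $s > 2\cosh r - 1$, so the iteration must be seeded with genuine exponential decay of $\nu$; the HS bound supplies this, and the positivity $\mu > 0$ on $\intr(\cB)$ is essential. The claim for $\tilde\nu$ follows by applying the same analysis to $\tilde A_\N$, which agrees with the restriction of $A$ to $\N\times\N$, so that the generating-function identity changes only by a harmless one-step shift with no effect on the asymptotics in $u$.
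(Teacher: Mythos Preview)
Your approach is correct and reaches the same conclusion as the paper, but by a genuinely different route. The paper's argument is more direct on both sides. For the upper bound on $\nu(n)$, the paper sets $r = -\limsup_n n^{-1}\log\nu(n)$ and feeds this \emph{directly} back into the single-step relation $\nu(n)=\sum_m A(n,m)\nu(m)$, then evaluates the generating function $\sum_n e^{xn}\nu(n)$ via the change of variables $k=n+m$, $\ell=n-m$ to obtain $\log\cosh(r)\ge\mu$ from its divergence just past $x=r$. This is exactly your fixed-point equation $\Phi(r)=r$, but reached in one shot rather than through the iteration $r_{k+1}=\Phi(r_k)$ from $r_0=\mu$. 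For the lower bound, the paper sets $s=-\liminf_n n^{-1}\log\nu(n)$ and keeps only the single term $\nu(n)\ge c\,A(n,an)\,\nu(an)$ for a free parameter $a>0$; optimizing over $a$ via the Legendre transform of the simple-random-walk rate function $I$ yields $\log\cosh(s)\le\mu$, whence $s=r$. This replaces your $k$-step path construction (with $k\asymp(\log n)/2r$) by a single step with a tunable ratio $a$, and the optimiser $a=e^{-2r}$ is precisely the reciprocal of your geometric growth factor. What your approach buys is constructiveness: the bootstrap and the explicit path in $A^k(0,n)$ do not require the device of using the unknown $\limsup$/$\liminf$ as inputs, and the same machinery would adapt to perturbations of the kernel. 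The price is the extra bookkeeping (controlling constants through the bootstrap, and handling the subexponential corrections $\prod_\ell u_\ell^{-1/2}\sim e^{-O((\log n)^2)}$ along the path), which the paper's one-step arguments sidestep entirely.
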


\begin{proof}
Let
\begin{equation}
r = -\limsup_{n\to\infty} \frac{1}{n} \log \nu(n), \qquad 
\cV(x) = \sum_{n\in\N} e^{xn} \nu(n), \qquad x\in\R.
\end{equation}
Let $\gep>0$. Since $G^*$ is bounded, by Proposition~\ref{pr:asymptGstar},
by \eqref{eq:hatQestim} we get, for a constant $c_\gep>0$,
\begin{equation}
\nu(n) \leq c_\gep \sum_{m\in\N} e^{-\mu (m+n)} \bP(S_{n+m} = n-m) e^{-(r-\gep) m}.
\end{equation}
Therefore, using the change of variables $k = n+m$ and $\ell = n-m$, we obtain
\begin{equation}
\begin{aligned}
\cV(x) &\leq c_\gep \sum_{k\in\N,\ell\in\Z} e^{x(\frac{k+\ell}{2}) 
- \mu k - (r-\gep)(\frac{k-\ell}{2})} \bP(S_k = \ell),\\
& \leq c_\gep \sum_{k\in\N} e^{k(\frac{x}{2} -\mu - \frac{r-\gep}{2})} 
\bE[e^{S_k(\frac{x}{2} + \frac{r-\gep}{2})}],\\
& \leq c_\gep \sum_{k\in\N} \exp\Big\{k\Big[\frac{x}{2}
- \mu - \frac{r-\gep}{2} + \log \cosh \Big(\frac{x+r-\gep}{2}\Big)\Big]\Big\},
\end{aligned}
\end{equation}
from which we deduce, by evaluating at $x=r+\gep$, that $\log \cosh(r) \geq \mu - \gep$. 
Letting $\gep\downarrow 0$, we get
\begin{equation}
\label{eq:logchr}
\log \cosh(r) \geq \mu.
\end{equation}
Let us next prove that $r$ is a limit. Define
\begin{equation}
s = - \liminf_{n\to\infty} \frac{1}{n} \log \nu(n).
\end{equation}
By a standard large deviations estimate,
\begin{equation}\label{eq:LDCr}
\lim_{n\to\infty} \frac{1}{n} \log \bP(S_n = 
\lfloor xn \rfloor) = - I(x),
\end{equation}
where
\begin{equation}
I(x) = \frac{1+x}{2} \log(1+x) + \frac{1-x}{2} \log(1-x), \qquad |x| \leq 1.
\end{equation}
From \eqref{eq:hatQestim} we get (henceforth we assume that $an := \lfloor an \rfloor$, etc.\ 
for notational convenience)
\begin{equation}
\nu(n) \geq c\ e^{G^*(n+an)} e^{-\mu(n+an)} \bP(S_{(1+a)n} = (1-a)n) \nu(an), \qquad a
 \ge 0,
\end{equation}
where $c$ is a constant. Therefore, using Proposition~\ref{pr:asymptGstar}, we get
\begin{equation}
-s \geq -\mu(1+a)  - (1+a)I\Big(\frac{1-a}{1+a}\Big) - as, \qquad a \ge  0.
\end{equation}
Setting $a=0$ we obtain, in particular, that $s < \infty$. We can rewrite the previous relation 
as
\begin{equation}
\label{eq:phira}
\begin{split}
\sup_{a \ge 0} \phi(s,a) \leq 0, \qquad \phi(s,a) 
& = -\mu (1+a) + (1-a)s - (1+a)I\Big( \frac{1-a}{1+a} \Big) \\
& = (1+a) \left( - \mu + 
\left\{ s \frac{1-a}{1+a} - I\Big( \frac{1-a}{1+a} \Big) \right\} \right).
\end{split}
\end{equation}
Let us recall that the Fenchel-Legendre transform of the rate function $I(x)$ is the
log-moment generating function of the the simple random walk: $\sup_{|x| \le 1}\{sx-I(x)\} 
= \log \cosh(s)$. By direct computation, the $\sup$ in \eqref{eq:phira} is uniquely attained at $x = (1-e^{-2s})
/(1+e^{-2s})$, so that
\begin{equation}
\label{eq:lookat}
\phi(s,e^{-2s}) = (1+e^{-2s}) \big( -\mu + \log\cosh(s) \big),
\end{equation}
and in order to have $\phi(s,e^{-2s}) \le 0$ we must have $\mu \ge \log\cosh(s)$. Since 
by definition $s \ge r$, and hence $\log\cosh(s) \ge \log\cosh(r)$, it follows from 
\eqref{eq:logchr} that $s=r = \log\cosh^{-1}(\mu)$.

Only a few minor modifications lead to the same result for $\tilde \nu$.
\end{proof}

Note that 
\begin{equation}
\label{eq:LDPhatQ}
\phi(a) = \phi(s,a) = \lim_{k\to \infty} \frac{1}{k} \log Q_{\gd,\gb}(k,ak), \qquad a>0,
\end{equation}
$\phi(a) = \phi(s,a)$ is defined in \eqref{eq:phira} with $s = r = \log\cosh^{-1}(\mu)$. 
Setting $a_0 = e^{-2s} < 1$, we know that $\phi(a_0) = 0$, while $\phi(a) < 0$ for 
$a \ne a_0$. Henceforth we fix $\epsilon > 0$ so that $a_0^+ = a_0 + \epsilon < 1$.
Again by \eqref{eq:phira}, we have $\lim_{a\to\infty}\phi(a)/a = - \mu - s - I(-1) < 0$,
hence $\sup_{x\ge a_0^+} \phi(x) / x < 0$.

We next choose $\eta > 0$ small, so that
\begin{equation} 
\label{eq:xi}
\xi = - \sup_{x \ge a_0^+} \Big\{\eta \Big(1+\frac{1}{x}\Big) + \frac{\phi(x)}{x} \Big\} > 0,
\end{equation}
Let us reinforce \eqref{eq:LDPhatQ}. We fix $K_0 < \infty$ such that $e^{-(r+\eta)i} 
\le \nu(i) \le e^{-(r-\eta)i}$ for $i \ge K_0$. Since $G^*$ is bounded, and we can 
replace $\lim_{n\to\infty}$ by $\sup_{n\in\N}$ in \eqref{eq:LDCr} by super-additivity, 
we get the following upper bound from \eqref{eq:hatQestim}, for $\ell, k \ge K_0$:
\begin{equation}
Q_{\gd,\gb}(\ell,k) \le C \, e^{\eta(\ell+k)} \, e^{-\mu(k+\ell)
-r(k-\ell) - (k+\ell)\, I(\frac{\ell-k}{\ell+k})} 
= C \, e^{\eta(\ell+k)} \, e^{\ell\phi(\frac{k}{\ell})},
\end{equation}
where $C$ is an absolute constant. In particular, recalling \eqref{eq:xi} and setting 
$K = K_0/a_0^+$, we get
\begin{equation} 
\label{eq:eheh}
Q_{\gd,\gb}(\ell,k) \le C \, e^{-\xi k} \,, \qquad \ell \ge K, \qquad k \ge a_0^+ \ell.
\end{equation}
In the next step we complete the proof of Lemma~\ref{lem:RFSS}, choosing $c < \xi$.

\medskip\noindent
{\bf Step II.} Define 
\begin{equation}
\cM_N = e^{R \sum_{i=1}^N M_i^+}, \qquad N\in\N,
\end{equation}
for some $R \in (0,\mu)$ to be determined later. Define the filtration $\cF_N 
= \sigma(M_i^+, i\leq N)$, $N\in\N_0$. Then
\begin{equation}
\bE(\cM_{N+1} | \cF_N) = \cM_N  \bE(e^{RM_{N+1}^+}| M_N^+),
\end{equation}
so the process defined by
\begin{equation}
\hat \cM_{N} = \exp\Big\{ R\sum_{i=1}^N M_i^+ - \sum_{i=1}^N \log 
\bE(e^{R M_i^+} | M_{i-1}^+) \Big\}, \qquad N\in\N,
\end{equation}
is a martingale with respect to $(\cF_N)_{N\in\N}$. (Even though we do not know yet
if $\cM_N$ is integrable, we know by \eqref{eq:hatQestim} that $\log \bE(e^{R M_i^+} 
| M_{i-1}^+)$ is a.s.\ finite for $R > 0$ small, and so $\hat \cM_N$ is well-defined with integral 
$1$.) Since $\sigma = \inf\{i\in\bbN\colon M_i^+\leq K\}$ is a stopping time, we get 
$\bE(\hat \cM_{N\wedge \gs}) = 1$.

We next provide an upper bound on $\bE(e^{RM_i^+} | M_{i-1}^+)$ for $1<i\leq \sigma$. 
Let $\gep >0$ be such that $a_0^+ = a_0 + \gep <1$. We know from Step I that typically 
$M_i^+$ is at most $a_0^+ M_{i-1}^+$. We split accordingly:
\begin{equation}
\label{eq:step2upperbound}
\begin{aligned}
&\bE(e^{R_\gep M_i^+} | M_{i-1}^+)\\ 
&= \bE\big(e^{R_\gep M_i^+} \ind_{\{M^+_i > a_0^+ M_{i-1}^+\}}\big| M_{i-1}^+\big)  
+ \bE\big(e^{R_\gep M_i^+} \ind_{\{M_i^+ \leq a_0^+ M_{i-1}^+\}} \big| M_{i-1}^+\big)\\
&\leq \bE\big(e^{R_\gep M_i^+} \ind_{\{M_i^+ > a_0^+ M_{i-1}^+\}}\big| M_{i-1}^+\big) + e^{R_\gep a_0^+ M_{i-1}^+}.
\end{aligned}
\end{equation}
Recalling \eqref{eq:eheh}, for $\ell \ge K$,
\begin{equation} 
\label{eq:step2upperbound2}
\begin{aligned}
\bE\big(e^{R_\gep M^+_i} \ind_{\{M^+_i > a_0^+ M^+_{i-1}\}}\big| M^+_{i-1} = \ell\big) 
&\leq \sum_{k \geq a_0^+ \ell} e^{R_\gep k} Q_{\gd,\gb}(\ell,k)
\le \sum_{k \geq a_0^+ \ell} C e^{-(\xi - R_\gep)k},
\end{aligned}
\end{equation}
which is finite for $R_\gep < \xi$. Combining \eqref{eq:step2upperbound2} with
\eqref{eq:step2upperbound}, we have (possibly enlarging $K$)
\begin{equation}
\bE(e^{R_{\gep} M_i^+} | M_{i-1}^+) \leq \exp(R_\gep a_0^+(1+\gep) M_{i-1}^+),
\qquad 1<i \leq \sigma.
\end{equation}
In the sequel, we redefine $a_0^+$ as $a_0^+(1+\gep)$, which we may safely assume 
to be $<1$. We get, since $M_0^+ = b \leq K$ a.s.,
\begin{equation}
1 = \bE[\hat \cM_{N \wedge \sigma}] \geq \bE(e^{R_\gep(1-a_0^+) 
\sum_{i=1}^{N \wedge \sigma} M^+_i - R_\gep a_0^+ K}).
\end{equation}
Note that $\sigma$ is a.s.\ finite, because the Markov chain $(M_N^+)_{N\in\N_0}$ is positive 
recurrent. Therefore, by Fatou's lemma,
\begin{equation}
\bE(e^{ R_\gep(1-a_0^+) \sum_{i=1}^\sigma M_i^+ }) \leq e^{R_\gep a_0^+ K} < \infty.
\end{equation}
which is the desired result.

Recall that $\bP$ is originally $\bP^{(n)} = \bP^{\gd + \gl/\sqrt{n}, \gb}$. To deal with 
uniformity in $n$, it is enough to note that $\gd\mapsto \mu(\gd,\gb)$ is continuous, so the 
limit $r$ in Lemma~\ref{lem:nurate} and the function $\phi$ in \eqref{eq:LDPhatQ} are continuous 
in $\gd$, and hence in $n$. The details are left to the reader.
\end{proof}


\section{Tail estimate for the eigenvector}
\label{AppC}

In this appendix we prove Lemma~\ref{lem:epiconvergence}(3). The proof strategy is that 
of \cite[Proposition 4]{vdHdH95}. For the sake of conciseness, we only write the proof in 
the regime
\begin{equation} 
\label{eq:regime}
\beta \downarrow 0, \qquad \tfrac12\delta^2 = \beta + C \beta^{4/3}, 
\qquad \mu = B \beta^{4/3}.
\end{equation}
The regime $\beta \downarrow 0$ with $\gd$ fixed follows the same line of argument and 
is left to the reader. In what follows we use $A_\gb$ and $\gl(\gb)$ as shorthand notation 
for $A_{\mu,\gd,\gb}$ and $\gl_{\gd,\gb}(\mu)$ with $\mu,\gd$ as in \eqref{eq:regime}.

We choose for $f_\gb$ the linear and renormalised interpolation of the solution of the discrete 
variational problem. Namely, if $\tau_\gb$ is the normed eigenvector of $A_\gb$ associated 
with $\gl(\gb)$, then we define
\begin{equation}
\hat\tau_\gb(u) = \gb^{-1/3} \Big\{ \tau_\gb(i) + (u\gb^{-2/3} - i)[\tau_\gb(i) - \tau_\gb(i-1)] \Big\},
\quad (i-1)\gb^{2/3} <u \leq i\gb^{2/3}, \quad i\in\bbN,
\end{equation}
and we pick $f_\gb = \hat\tau_\gb / \|\hat\tau_\gb\|_2$ as an approximate maximiser. Mimicking 
\cite[Lemmas 9--10]{vdHdH95}, we see that it is enough to prove the following adaptation of 
\cite[Lemma 11]{vdHdH95}: for $\gb$ small enough,
\begin{equation}
\label{eq:i-iv}
\begin{array}{lll}
&{\rm (i)}\quad \sum_{i\in\N} i^2 \tau_\gb^2(i) \leq C_1 \gb^{-4/3},
&{\rm(ii)}\quad \sum_{i\in\N} i \gD  \tau_\gb^2(i) \leq C_2 \gb^{2/3},\\
&{\rm(iii)}\quad  \tau_\gb^2(0) \leq C_3 \gb^{2/3} \log(1/\gb),
&{\rm(iv)}\quad \|\gD\tau_\gb\|_2^2 \leq C_4 \gb^{4/3} \log(1/\gb),
\end{array}
\end{equation}
where $\gD\tau_\gb(i) = \tau_\gb(i) - \tau_\gb(i-1)$, $i\in\N$, and the $C_i$'s are constants 
whose precise values are irrelevant. The estimates in \eqref{eq:i-iv} are proved in two steps: 
Gaussian disorder (Section~\ref{Gaussdisorder} below) and General disorder 
(Section~\ref{Generaldisorder} below). The first allows for explicit formulas, the second 
makes use of Gaussian approximations and Taylor expansions. To avoid a lengthy proof, 
we only indicate the necessary modifications to the proof in \cite{vdHdH95}.

Recall \eqref{eq:def.Qij} and \eqref{eq:def.Abeta.p.r}. In what follows we use the short-hand 
notation
\begin{equation}
A(i,j) = e^{h(i+j+1)}\,P(i,j), \qquad i,j\in\N_0,
\end{equation}
with
\begin{equation}
\label{eq:def_fh}
P(i,j)= Q(i+1,j) = \binom{i+j}{i}\,\frac{1}{2^{i+j+1}}, \qquad h(x) = G^*(x) - \mu x.
\end{equation}

\subsection{Gaussian disorder}
\label{Gaussdisorder}
 
\begin{proof}[Proof of (i) in \eqref{eq:i-iv}]

\medskip\noindent
{\bf 1.}
Recall \eqref{eq:Gstar.gaussian}. We have
\begin{equation}
\label{eq:f}
\begin{split}
h(x) & = \frac{\tfrac12\delta^2x}{1+2\beta x}
-\tfrac12 \log (1+2\beta x) - \mu x\\
& = \left(\tfrac12\delta^2-\beta-\mu\right) x
- \tfrac12\delta^2 x\, \frac{2\beta x}{1+2\beta x}
+ \tfrac12 \big[2\beta x - \log(1+2\beta x)\big].
\end{split}
\end{equation}
Recall \eqref{eq:regime}. A Taylor expansion as $x \downarrow 0$ 
(uniformly over $\beta$, as long as $\beta x \to 0$)
yields
\begin{equation} 
\label{eq:Taylor}
h(x) = \left(\tfrac12\delta^2-\beta-\mu\right) x
- \left(\gd^2 - \beta \right) \beta x^2 
+ O\left( \left(\max\{\tfrac12\delta^2, \beta\} x \right)^3 \right),
\end{equation}
and hence
\begin{equation} 
\label{eq:Taylor2}
h(x) = (C-B) \beta^{4/3} x - \beta^2 x^2 -2 C \beta^{2+1/3} x^2 + O((\beta x)^3).
\end{equation}
Henceforth we fix $\epsilon > 0$ such that $O((\gb x)^3) \leq \frac{1}{2} \gb^2x^2$ for all $x \leq \epsilon/\gb$.
In this way we get
\begin{equation} 
\label{eq:f<0}
h(x) \leq (C-B) \beta^{4/3} x - \tfrac12 \beta^2 x^2 \qquad 
\forall\,\,x \leq \frac{\epsilon}{\beta}.
\end{equation}
Also note that $h(\frac{\epsilon}{4\beta}) \sim -\frac{1}{32}\epsilon^2$ as $\beta \downarrow 0$,
and hence $h(\frac{\epsilon}{4\beta}) \leq -\frac{1}{40}\epsilon^2$ for $\beta > 0$ small enough.
We show in the proof of Lemma~\ref{th:sup} below that $h$ attains its global maximum at 
$\bar x = O(\beta^{-2/3})$, and that $x \mapsto h(x)$ is decreasing for $x \geq \bar x$. Since 
$\bar x \leq \frac{\epsilon}{8\beta}$ for $\beta > 0$ small, we have shown that
\begin{equation}
\label{eq:f<}
h(x) \leq -\frac{1}{40} \epsilon^2 \qquad \forall\,\,x \geq \frac{\epsilon}{ 8\beta}.
\end{equation}
Note that \eqref{eq:f<0} is the analogue of \cite[eq. (2.5) (i)]{vdHdH95} in our context. The 
analogue of \cite[eq. (2.5) (ii)]{vdHdH95} is given by the next lemma, where we estimate 
the global maximum of $f$.

\begin{lemma}
\label{th:sup}
\begin{equation}
\label{eq:sup}
\sup_{x \geq 0} h(x) = \begin{cases}
0, & \text{if } B > C, \\
\beta^{2/3} \left(\frac{C-B}{2}\right)^2 + O(\beta),
& \text{if } B \le C.
\end{cases}
\end{equation}
\end{lemma}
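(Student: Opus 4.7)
The plan is to exploit the explicit Gaussian form of $h$ in \eqref{eq:f} to locate the unique critical point of $h$ on $(0,\infty)$ via a quadratic equation, and then to compute the value at that critical point by a short Taylor expansion. A direct computation gives
\begin{equation*}
h'(x) = \frac{\tfrac12\delta^2}{(1+2\beta x)^2} - \frac{\beta}{1+2\beta x} - \mu,
\end{equation*}
so setting $y = 1+2\beta x$ the stationary equation $h'(x)=0$ becomes the quadratic
\begin{equation*}
\mu y^2 + \beta y - \tfrac12\delta^2 = 0,
\end{equation*}
which has exactly one positive root $\bar y$ since the product of the roots is $-\delta^2/(2\mu) < 0$. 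Writing $\bar y = 1 + z$ and substituting $\mu = B\beta^{4/3}$, $\tfrac12\delta^2 = \beta + C\beta^{4/3}$, the equation reduces to
\begin{equation*}
\beta z\bigl(1 + O(\beta^{1/3})\bigr) = (C-B)\beta^{4/3} + O(\beta^{5/3}),
\end{equation*}
so $z = (C-B)\beta^{1/3} + O(\beta^{2/3})$. In particular the critical point $\bar x = (\bar y-1)/(2\beta)$ has the sign of $C-B$.

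First I would dispose of the case $B > C$. Then $\bar x < 0$, so $h$ has no critical point in $(0,\infty)$. Since $h'(0) = \tfrac12\delta^2 - \beta - \mu = (C-B)\beta^{4/3} < 0$ and $h(x) \to -\infty$ as $x\to\infty$ (because $-\mu x$ dominates), $h$ is bounded above by $h(0) = 0$ on $[0,\infty)$, and the supremum equals $0$.

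Now suppose $B \leq C$. Then $\bar x > 0$ and by the sign analysis of $h'$ above (using that $h'$ is a rational function of $y$ that crosses zero only once for $y>0$) the unique maximum of $h$ on $[0,\infty)$ is attained at $\bar x$. To evaluate $h(\bar x)$, I would substitute the critical-point relation $\tfrac12\delta^2 = \mu\bar y^2 + \beta\bar y$ into the definition of $h$ and use $\bar x = z/(2\beta)$ to obtain the clean identity
\begin{equation*}
h(\bar x) = \frac{\mu z^2}{2\beta} + \frac{z}{2} - \tfrac12\log(1+z).
\end{equation*}
Expanding $\log(1+z) = z - \tfrac12 z^2 + O(z^3)$ and recalling $z = (C-B)\beta^{1/3} + O(\beta^{2/3})$, the linear terms in $z$ cancel exactly, so
\begin{equation*}
h(\bar x) = \frac{\mu z^2}{2\beta} + \tfrac14 z^2 + O(z^3)
= \tfrac14 (C-B)^2 \beta^{2/3} + O(\beta),
\end{equation*}
where the first term used $\mu z^2/(2\beta) = \tfrac12 B(C-B)^2\beta + O(\beta^{4/3}) = O(\beta)$. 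This gives the second line of \eqref{eq:sup}.

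The only delicate point is bookkeeping of error terms in the expansion of $z$ and of $h(\bar x)$, so that the $O(\beta)$ remainder is genuinely of the claimed order uniformly in $\beta$ small; this is straightforward once one notes that the leading-order cancellation of the $z/2$ terms is forced by the critical-point equation.
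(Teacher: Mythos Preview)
Your proof is correct and follows essentially the same route as the paper: compute $h'$, reduce the stationary equation to a quadratic via the substitution $y=1+2\beta x$ (the paper uses the reciprocal $z=1/(1+2\beta x)$), expand the root to order $\beta^{2/3}$, and evaluate $h$ at the critical point. Your evaluation of $h(\bar x)$ via the identity $h(\bar x)=\tfrac{\mu z^2}{2\beta}+\tfrac{z}{2}-\tfrac12\log(1+z)$, obtained by feeding the critical-point relation back into $h$, is a slightly cleaner variant of the paper's approach, which instead plugs $\bar x$ into the Taylor expansion \eqref{eq:Taylor2}; you also handle the case $B>C$ more explicitly than the paper does.
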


\begin{proof}
Note that
\begin{equation}
h'(x) = \frac{\tfrac12\delta^2}{(1+2\beta x)^2} - \frac{\beta}{1+2\beta x} - \mu.
\end{equation}
Setting $z = \frac{1}{1+2\beta \bar x}$, we have $h'(\bar x)=0$ if and only if
$\tfrac12\delta^2 z^2 - \beta z - \mu = 0$, whose positive solution (if any) is
\begin{equation}
z = \tfrac12 \left\{\frac{\beta}{\tfrac12\delta^2} 
+ \sqrt{\left( \frac{\beta}{\tfrac12\delta^2} \right)^2 + \frac{4 \mu}{\tfrac12\delta^2}}\right\}.
\end{equation}
Recalling \eqref{eq:regime}, we have $\frac{\beta}{\tfrac12\delta^2} - 1 = -C\beta^{1/3} + O(\gb^{2/3})$, 
and hence
\begin{equation}
z = \tfrac12 \left\{1-C\beta^{1/3} + \sqrt{1 + 2(2B-C) \beta^{1/3} + O(\beta^{2/3}) }\right\} 
= 1 - (C-B) \beta^{1/3} + O(\beta^{2/3}).
\end{equation}

Since $z = \frac{1}{1+2\beta \bar x} = 1 - 2\beta \bar x + O((\beta \bar x)^2)$, we get
\begin{equation}
\bar x = \tfrac{C-B}{2} \beta^{-2/3} + O(\beta^{-1/3}).
\end{equation}
Recalling \eqref{eq:Taylor2}, we get
\begin{equation}
\begin{split}
\sup_{x \geq 0} h(x) 
& = h(\bar x) = (C-B) \beta^{2/3} \tfrac{C-B}{2} - \beta^{2/3}
\left(\tfrac{C-B}{2}\right)^2 - 2C \beta \left(\tfrac{C-B}{2}\right)^2 + O\left(\beta \right) \\
& = \left(\tfrac{C-B}{2}\right)^2 \beta^{2/3} + O(\beta)
\end{split}
\end{equation}
which is the claim.
\end{proof}

Before continuing with the main line of the proof, we provide two estimates that are the analogue 
of \cite[Lemma 13]{vdHdH95} (see \eqref{eq:lambdaUB}-\eqref{eq:lambdaLB} below). Note that 
$A(i,j) = e^{h(i+j+1)}P(i,j) \leq e^{c \beta^{2/3}} P(i,j)$ by Lemma~\ref{th:sup}. Arguing as in 
\cite[eq. (4.16)]{vdHdH95}, we get
\begin{equation}
\lambda(\beta) = \sum_{i,j\in\N_0} \tau_\beta(i) A(i,j) \tau_\beta(j)
\le e^{c \beta^{2/3}} \sum_{i,j\in\N_0} \tau_\beta(i) P(i,j) \tau_\beta(j)
\le e^{c \beta^{2/3}},
\end{equation}
because $\|P\|_{\rm op} \leq 1$ and $\sum_{i\in\N_0} \tau_\beta(i)^2 =1$. Consequently,
\begin{equation} 
\label{eq:lambdaUB}
\limsup_{\beta \downarrow 0} \frac{\lambda(\beta)-1}{\beta^{2/3}} \le c < \infty.
\end{equation}
For an analogous lower bound, arguing as in \cite[eq. (4.17)-(4.20)]{vdHdH95}, we get
\begin{equation} 
\label{eq:lambdaLB}
\limsup_{\beta \downarrow 0} \frac{1-\lambda(\beta)}{\beta^{2/3}} < \infty.
\end{equation}
Recall that we want to prove (i) in \eqref{eq:i-iv}. We start by proving the analogue of 
\cite[Steps 1 and 2, pages 419-420]{vdHdH95}.

\begin{lemma}
\label{th:12}
The following relations hold (where $\epsilon > 0$ is fixed so that \eqref{eq:f<0} is in force):
\begin{equation}
\label{eq:12}
\sum_{i \leq \frac{\epsilon}{4\beta}} i^2 \tau_\beta(i)^2 \le C_5 \, \beta^{-4/3},
\qquad \sum_{i > \frac{\epsilon}{ 8\beta}} \tau_\beta(i)^2 \le C_7 \, \beta^{2/3}.
\end{equation}
\end{lemma}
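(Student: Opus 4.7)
The proof will follow the two-step strategy of Steps~1--2 in \cite[Proposition 4]{vdHdH95}, adapted to the present weight function $h(x)=G^*(x)-\mu x$. The toolkit assembled just before the statement, namely \eqref{eq:f<0}, \eqref{eq:f<}, Lemma~\ref{th:sup} and the two-sided bound $|1-\lambda(\beta)|=O(\beta^{2/3})$ from \eqref{eq:lambdaUB}--\eqref{eq:lambdaLB}, plays exactly the role of equations (2.5)(i)--(ii) and (4.16)--(4.20) of \cite{vdHdH95}.

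For the first estimate, the plan is to multiply the eigenvalue equation $A\tau_\beta=\lambda(\beta)\tau_\beta$ by $\psi(i)\tau_\beta(i)$ with $\psi(i)=i\wedge(\epsilon/(4\beta))$ and sum over $i$. Using the symmetrisation
\begin{equation*}
\sum_{i,j}\psi(i)\tau_\beta(i)\tau_\beta(j)A(i,j)=\tfrac12\sum_{i,j}[\psi(i)+\psi(j)]\,\tau_\beta(i)\tau_\beta(j)A(i,j),
\end{equation*}
the factorisation $A(i,j)=e^{h(i+j+1)}P(i,j)$, and the Dirichlet-form identity $\langle v,(I-P)v\rangle=\tfrac12\sum_{i,j}P(i,j)(v(i)-v(j))^2$ (valid since $P$ is symmetric and stochastic, $\sum_jP(i,j)=1$), together with the pointwise bound \eqref{eq:f<0}, yields an inequality of the Gr\"onwall type of the form
\begin{equation*}
(\lambda(\beta)-1)\sum_i\psi(i)\tau_\beta(i)^2\le (C-B)\beta^{4/3}\sum_i i\,\psi(i)\tau_\beta(i)^2-\tfrac14\beta^2\sum_i i^2\psi(i)\tau_\beta(i)^2+R_\beta,
\end{equation*}
where $R_\beta$ collects boundary terms produced by the cutoff and by the random-walk factors (controlled via the Gaussian-type asymptotics for $P$). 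Combining this with $|\lambda(\beta)-1|=O(\beta^{2/3})$ and matching the order of the quadratic in $\beta^2$, the curvature $\beta^2$ balances the energy $\beta^{2/3}$ at scale $i^2\sim\beta^{-4/3}$, producing the desired bound $O(\beta^{-4/3})$.

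For the second estimate, the plan is to project the eigenvalue equation onto the tail. Multiplying $\lambda(\beta)\tau_\beta(i)=\sum_jA(i,j)\tau_\beta(j)$ by $\mathbf{1}_{\{i>T\}}\tau_\beta(i)$ with $T=\epsilon/(8\beta)$, summing, and using \eqref{eq:f<} gives
\begin{equation*}
\lambda(\beta)\sum_{i>T}\tau_\beta(i)^2\le e^{-\epsilon^2/40}\,\langle\tau_\beta\mathbf 1_{\{\cdot>T\}},P\tau_\beta\rangle.
\end{equation*}
Splitting $P\tau_\beta=P(\tau_\beta\mathbf 1_{\{\cdot\le T\}})+P(\tau_\beta\mathbf 1_{\{\cdot>T\}})$ and applying Cauchy--Schwarz to each piece reduces matters to controlling the cross term between bulk and tail, which is bootstrapped through the first estimate and the Dirichlet-form identity. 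The extra $\beta^{2/3}$ factor is extracted by exploiting that $1-\lambda(\beta)=O(\beta^{2/3})$ forces the Dirichlet energy $\tfrac12\sum_{i,j}P(i,j)(\tau_\beta(i)-\tau_\beta(j))^2$ to be of the same order, which quantitatively limits the tail-to-bulk coupling.

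The principal obstacle is this very extraction of $\beta^{2/3}$ in Step~2: a direct Cauchy--Schwarz produces only a constant bound, because $\|P\|_{\rm op}\le 1$. To improve this to $O(\beta^{2/3})$, one must combine the exponential suppression $e^{-\epsilon^2/40}$ from the tail potential with the sharp energy budget $|\lambda(\beta)-1|=O(\beta^{2/3})$ and the first estimate, using the Dirichlet form as a bridge between these pieces of information.
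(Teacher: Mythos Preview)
Your plan deviates from the paper's argument in a way that creates a real gap. The paper does \emph{not} introduce a weight $\psi$, nor does it treat the two inequalities separately. Instead it uses the single \emph{unweighted} positivity inequality
\[
\sum_{i,j}[\tau_\beta(i)-\tau_\beta(j)]^2 A(i,j)\ge 0,
\]
which after expanding, replacing $e^{h}-1$ by $h+h^2$, and splitting the $i$-sum at $\epsilon/(4\beta)$, produces one inequality of the form
\[
c_2\beta^2 I \;+\; c_4 J \;\le\; c_3\beta^{2/3} + c_1\beta^{4/3}\sqrt{I}+o(\beta^{2/3}),
\]
with $I=\sum_{i\le \epsilon/(4\beta)} i^2\tau_\beta(i)^2$ and $J=\sum_{i>\epsilon/(4\beta)}\tau_\beta(i)^2$. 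The crucial point is that $I$ and $J$ appear \emph{together} on the left with the same sign: dropping $J$ gives a quadratic inequality in $\sqrt{\beta^{4/3}I}$ that bounds $I$, and then dropping $I$ (now bounded) gives $J\le C\beta^{2/3}$ directly from the $\beta^{2/3}$ energy budget on the right.

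Your decoupled scheme misses this mechanism. For the first bound, testing against $\psi(i)=i\wedge T$ shifts the moments: the curvature term becomes $-\tfrac14\beta^2\sum_i i^2\psi(i)\tau_\beta(i)^2\approx -\tfrac14\beta^2\sum_{i\le T} i^3\tau_\beta(i)^2$, which controls the third moment rather than the second; and the weighted Dirichlet identity you invoke does not in fact produce a sign-definite expression, since $\psi(i)v(i)^2+\psi(j)v(j)^2-(\psi(i)+\psi(j))v(i)v(j)=(v(i)-v(j))(\psi(i)v(i)-\psi(j)v(j))$ has no fixed sign. For the second bound you correctly identify that projection-plus-Cauchy--Schwarz gives only $J=O(1)$, and your proposed repair through the Dirichlet energy $\tfrac12\sum P(i,j)(\tau_\beta(i)-\tau_\beta(j))^2=O(\beta^{2/3})$ does not control the cross term $\sum_{i>T,\,j\le T}P(i,j)\tau_\beta(i)\tau_\beta(j)$: writing $\tau_\beta(i)\tau_\beta(j)=\tfrac12[\tau_\beta(i)^2+\tau_\beta(j)^2-(\tau_\beta(i)-\tau_\beta(j))^2]$ only extracts a Dirichlet piece while leaving an uncontrolled $\tfrac12\sum_{i>T,j\le T}P(i,j)[\tau_\beta(i)^2+\tau_\beta(j)^2]$ of order $J+O(1)$. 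The missing idea is precisely the coupling: the tail mass $J$ must be fed by the \emph{potential} term $\sum_j h(i+j+1)P(i,j)\le -\epsilon^2/40$ for $i>\epsilon/(4\beta)$ inside the same inequality that carries the $\beta^{2/3}$ budget, not recovered a posteriori from operator-norm or Dirichlet estimates.
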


\begin{proof}
In analogy with \cite[eq. (3.15)]{vdHdH95}, from the trivial inequality 
\begin{equation}
\sum_{i,j\in\N_0} [\tau_\beta(i) - \tau_\beta(j)]^2 A(i,j) \geq 0
\end{equation} 
we get
\begin{equation}
1-\lambda(\beta) + \sum_{i\in\N_0} \tau_\beta(i)^2 \sum_{j\in\N_0} 
[e^{h(i+j+1)}-1] P(i,j) \geq 0.
\end{equation}
Note that there exists a $t_0 > 0$ such that $e^t \le 1+t+t^2$ for all $t \in (-\infty, t_0]$. By 
Lemma~\ref{th:sup}, $\sup_{x \geq 0} h(x) \leq t_0$ provided $\beta$ is small enough.
Therefore
\begin{equation} 
\label{eq:plug}
\begin{split}
0 & \le 1-\lambda(\beta) + \sum_{i \in \N_0} \tau_\beta(i)^2 
\sum_{j\in\N_0} [h(i+j+1) + h(i+j+1)^2] P(i,j)  \\
& \le 1-\lambda(\beta) + \sum_{i\in\N_0} \tau_\beta(i)^2 
\sum_{j\in\N_0} h(i+j+1) P(i,j) + O(\beta^{4/3}),
\end{split}
\end{equation}
where the second inequality follows again by Lemma~\ref{th:sup} and the fact that 
$\sum_{j\in\N_0} P(i,j) = 1$. By \eqref{eq:f<}, we can write
\begin{equation}
\label{eq:if1}
\sum_{j\in\N_0} h(i+j+1) P(i,j) \leq -\frac{\epsilon^2}{40}
\sum_{j\in\N_0} P(i,j) = -\frac{\epsilon^2}{40}, \qquad i > \frac{\epsilon}{4\beta}.
\end{equation}
On the other hand, for $i \leq \frac{\epsilon}{4\beta}$ we can bound
\begin{equation}
\sum_{j\in\N_0} h(i+j+1) P(i,j) \leq \sum_{j \le \frac{\epsilon}{2\beta}} h(i+j+1) P(i,j),
\end{equation}
because for $j > \frac{\epsilon}{2\beta}$ we have $i+j+1 > \frac{\epsilon}{2\beta} \geq 
\frac{\epsilon}{8\beta}$, and consequently $h(i+j+1) < 0$ again by \eqref{eq:f<}. Having 
thus restricted the range of $j$, we have $i+j+1 \leq \frac{\epsilon}{\beta}$ and we can 
apply \eqref{eq:f<0}:
\begin{equation} 
\label{eq:bala}
i \leq \frac{\epsilon}{4\beta}\colon
\qquad \sum_{j\in\N_0} h(i+j+1) P(i,j) \le \sum_{j \le \frac{\epsilon}{2\beta}}
\left\{c\beta^{4/3}(i+j+1) - \frac{1}{2}\beta^2 (i+j+1)^2\right\} P(i,j)
\end{equation}
with $c = C-B$. The sums have been evaluated in \cite[eq. (1.17)]{vdHdH95} when $j$ runs 
over all of $\N_0$. Here we have the restriction $j \le \frac{\epsilon}{2\beta}$, which is 
harmless. In fact, since $i \leq \frac{\epsilon}{4\beta}$, the range of summation 
for $j$ includes $j \le 2i$, and we know that the mass of $P(i,j)$ is concentrated around the 
diagonal, and decays exponentially in $i,j$ when $|j-i| > \delta i$ for any $\delta > 0$. Consequently,
for some $c_1, c_2 \in (0,\infty)$ we have
\begin{equation}
\label{eq:if2}
i \leq \frac{\epsilon}{4\beta}\colon \qquad 
\sum_{j\in\N_0} h(i+j+1) P(i,j) \leq c_1 \beta^{4/3} i - c_2 \beta^2 i^2.
\end{equation}
Substituting \eqref{eq:if1} and \eqref{eq:if2} into \eqref{eq:plug}, we get
\begin{equation} 
\label{eq:bo}
0 \leq 1-\lambda(\beta) + O(\beta^{4/3})
+ c_1 \beta^{4/3} \sum_{i \le \frac{\epsilon}{4\beta}} i \tau_\beta(i)^2
- c_2 \beta^{2} \sum_{i \le \frac{\epsilon}{4\beta}} i^2 \tau_\beta(i)^2
-\frac{\epsilon^2}{10} \sum_{i > \frac{\epsilon}{4\beta}} \tau_\beta(i)^2.
\end{equation}
Let us abbreviate $I = \sum_{i \le \frac{\epsilon}{4\beta}} i^2 \tau_\beta(i)^2$ and 
$J = \sum_{i > \frac{\epsilon}{4\beta}} \tau_\beta(i)^2$. Note that $1-\lambda(\beta) 
\leq c_3 \beta^{2/3}$ by \eqref{eq:lambdaLB}. By Cauchy-Schwarz, $\sum_{i \le 
\frac{\epsilon}{4\beta}} i \tau_\beta(i)^2 \leq \sqrt{I}$, since $\sum_i \tau_\beta(i)^2 = 1$. 
Hence \eqref{eq:bo} yields
\begin{equation}
c_2 \beta^2 I + \frac{\epsilon^2}{10} J \le c_3 \beta^{2/3}
+ O(\beta^{4/3}) + c_1 \beta^{4/3} \sqrt{I}.
\end{equation}
Setting $c_4 = \frac{\epsilon^2}{10}$ and dividing by $\beta^{2/3}$, we obtain
\begin{equation} 
\label{eq:drop}
c_2(\beta^{4/3}I) + c_4 (\beta^{-2/3} J) \leq c_3 + o(1) + c_1 \sqrt{\beta^{4/3} I}.
\end{equation}
Since $J \geq 0$, we can drop it from the left-hand side. Setting $x = \beta^{4/3} I$, se see
that the inequality becomes $c_2 x \le c_3 + o(1) + c_1 \sqrt{x}$, which can only hold when 
$x$ is bounded from above, say $x \le C_5$. We have thus shown that $\beta^{4/3} I \le C_5$, 
i.e., the first relation in \eqref{eq:12}. Next, we can drop $I$ from the left-hand side of 
\eqref{eq:drop}, getting $c_4 (\beta^{-2/3} J) \le c_3 + o(1) + c_1 \sqrt{C_5}$, which shows 
that also $\beta^{-2/3} J$ is bounded from above, say $\beta^{-2/3} J \le C_7$. This 
\emph{almost} proves the second relation in \eqref{eq:12}, in the sense that we have proved it 
with $\frac{\epsilon}{4\beta}$ instead of $\frac{\epsilon}{8\beta}$. However, recalling \eqref{eq:f<},
we see that \eqref{eq:if1}, \eqref{eq:bala} and \eqref{eq:drop} still hold when $\frac{\epsilon}{4\beta}$ 
is replaced by $\frac{\epsilon}{8\beta}$. Consequently, writing \eqref{eq:bo} with $\frac{\epsilon}{4\beta}$ 
replaced by $\frac{\epsilon}{8\beta}$, we complete the proof of \eqref{eq:12}.
\end{proof}

We can now complete the proof of (i). Arguing as in \cite[eq. (3.24)]{vdHdH95}, by \eqref{eq:f<}, 
for every $\delta > 0$ (which will be fixed later) there is $C(\delta) > 0$ such that
\begin{equation}
\begin{split}
i > \frac{\epsilon}{4\beta}\colon \qquad 
\tau_\beta(i) & = \frac{1}{\lambda(\beta)} \sum_j A(i,j) \tau_\beta(j)
\le \frac{e^{-\frac{\epsilon^2}{10}}}{\lambda(\beta)} \sum_j P(i,j) \tau_\beta(j) \\
& \le (1+\delta) e^{-\frac{\epsilon^2}{10}} 
\sum_{j > (1-\delta) i} P(i,j) \tau_\beta(j) + O(e^{-C(\delta) i}) \\
& \le (1+\delta) e^{-\frac{\epsilon^2}{10}} 
\sqrt{\sum_{j > (1-\delta) i} P(i,j) \, \tau_\beta(j)^2} + O(e^{-C(\delta) i}),
\end{split}
\end{equation}
where the last inequality holds by Cauchy-Schwarz. Indeed, $P(i,j) = \sqrt{P(i,j)} \sqrt{P(i,j)}$ 
and hence
\begin{equation}
\sum_{j > (1-\delta) i} P(i,j) \tau_\beta(j)
\le \sqrt{\sum_{j > (1-\delta) i} P(i,j)} \sqrt{\sum_{j > (1-\delta) i} P(i,j) \, \tau_\beta(j)^2},
\end{equation}
and $\sum_{j\in\N_0} P(i,j) \le 1$. Since $\sum_{i > \frac{\epsilon}{4\beta}} i^2  e^{-C(\delta) i} = o(1)$
as $\beta \downarrow  0$, we get
\begin{equation}
\begin{split}
\sum_{i > \frac{\epsilon}{4\beta}} i^2 \, \tau_\beta(i)^2
& \le (1+\delta)^2 e^{-\frac{\epsilon^2}{5}} 
\left\{ \sum_{i > \frac{\epsilon}{4\beta}} i^2 \,\sum_{j > (1-\delta) i}
P(i,j) \, \tau_\beta(j)^2 \right\} [1+ o(1)] \\
& \le \frac{(1+\delta)^2}{(1-\delta)^2} e^{-\frac{\epsilon^2}{5}} 
\left\{ \sum_{j > (1-\delta) \frac{\epsilon}{4\beta}} j^2 \, \tau_\beta(j)^2
\sum_{i < \frac{j}{1-\delta}} 
P(i,j)  \right\} [1+ o(1)]\\
& \le \frac{(1+\delta)^2}{(1-\delta)^2} e^{-\frac{\epsilon^2}{5}} 
\left\{ \sum_{j > (1-\delta) \frac{\epsilon}{4\beta}} j^2 \, \tau_\beta(j)^2
\right\} [1+ o(1)],
\end{split}
\end{equation}
because $\sum_{i\in\N_0} P(i,j) \le 1$. Consequently,
\begin{equation}
\begin{split}
\sum_{i > \frac{\epsilon}{4\beta}} i^2 \, \tau_\beta(i)^2
& \le \frac{(1+\delta)^2}{(1-\delta)^2} e^{-\frac{\epsilon^2}{5}} 
\left\{ \sum_{i > \frac{\epsilon}{4\beta}} i^2 \, \tau_\beta(i)^2
+ \sum_{(1-\delta)\frac{\epsilon}{4\beta}
< i \le \frac{\epsilon}{4\beta}} i^2 \, \tau_\beta(i)^2 \right\}  [1+ o(1)]\\
& \le \frac{(1+\delta)^2}{(1-\delta)^2} e^{-\frac{\epsilon^2}{5}} 
\left\{ \sum_{i > \frac{\epsilon}{4\beta}} i^2 \, \tau_\beta(i)^2
+ \frac{\epsilon^2}{16\beta^2} \sum_{i > (1-\delta)\frac{\epsilon}{4\beta}} 
\tau_\beta(i)^2 \right\}  [1+ o(1)]\\
& \le \frac{(1+\delta)^2}{(1-\delta)^2} e^{-\frac{\epsilon^2}{5}} 
\left\{ \sum_{i > \frac{\epsilon}{4\beta}} i^2 \, \tau_\beta(i)^2
+ \frac{\epsilon^2}{16\beta^2} C_7 \, \beta^{2/3} \right\}  [1+ o(1)],
\end{split}
\end{equation}
where in the last inequality we use the second relation in \eqref{eq:12}, provided we 
choose $\delta < \tfrac12$ so that  $(1-\delta)\frac{\epsilon}{4\beta} > \frac{\epsilon}{8\beta}$.
Fixing $\delta > 0$ small enough so that $(1+o(1))\frac{(1+\delta)^2}{(1-\delta)^2} e^{-\frac{\epsilon^2}{5}} 
\le e^{-\frac{\epsilon^2}{10}}$, we get
\begin{equation}
(1-e^{-\frac{\epsilon^2}{10}}) 
\sum_{i > \frac{\epsilon}{4\beta}} i^2 \, \tau_\beta(i)^2
\le \frac{C_7 \, \epsilon^2 \, e^{-\frac{\epsilon^2}{10}}}{16}
\, \beta^{-4/3} + o(\beta^{-4/3}) = O(\beta^{-4/3}).
\end{equation}
Recalling the first relation in \eqref{eq:12}, we have completed the proof of (i) in \eqref{eq:i-iv}.
\end{proof}

\begin{proof}[Proof of (ii) in \eqref{eq:i-iv}]
The proof works similarly as in \cite{vdHdH95}. The only estimate we need is an upper bound on 
$h(\ell) - h(\ell+1)$ (see \cite[Eq. (3.32)]{vdHdH95}). Recall \eqref{eq:f}. Explicit calculation gives
\begin{equation}
\label{eq:deltaf}
h(\ell+1) - h(\ell) = \tfrac12\gd^2\frac{1}{(1+2\gb\ell)(1+2\gb(\ell+1))} 
- \tfrac12 \log\Big( 1 + \frac{2\gb}{1+2\gb\ell} \Big) - B \gb^{4/3}.
\end{equation}
Using that $\log(1+x) \leq x$ and \eqref{eq:regime}, we get, for some constant $c$,
\begin{equation}\label{eq:deltahell}
\begin{aligned}
h(\ell+1) - h(\ell) &\geq \frac{1}{(1+2\gb\ell)(1+2\gb(\ell+1))}\Big[ \tfrac12\gd^2 
- \gb - 2\gb^2(\ell+1) \Big]- B \gb^{4/3}\\
&\geq c\gb^{4/3} - 2\gb^2\ell.
\end{aligned}
\end{equation}
Inserting this estimate into the analogue of \cite[Eq. (3.32)]{vdHdH95}, we get
\begin{equation}\label{eq:mom.delta.tau}
\sum_{i\in\N_0} i \Delta\tau_\gb^2(i) \leq C \max\Big\{ \gb^{4/3} 
\sum_{i\in\N_0} i \tau_\gb^2(i), \, \gb^2 \sum_{i\in\N_0} i^2 \tau_\gb^2(i) \Big\}
\end{equation}
for some constant $C<\infty$. An application of Cauchy-Schwarz and (i) in \eqref{eq:i-iv} 
gives the result.
\end{proof}

\begin{proof}[Proof of (iii) in \eqref{eq:i-iv}]
The proof is the same as in \cite{vdHdH95}.
\end{proof}

\begin{proof}[Proof of (iv) in \eqref{eq:i-iv}]
Again, retracing the proof in \cite{vdHdH95}, we see that an upper bound on $h(\ell+1) - h(\ell)$ 
is needed. Recall \eqref{eq:deltaf}. Using that $\log(1+x) \geq x-x^2$ for $x$ small enough, we get
\begin{equation}
\begin{aligned}
h(\ell+1) - h(\ell) &\leq \tfrac12 \gd^2\frac{1}{(1+2\gb\ell)^2} 
-\tfrac12 \Big(\frac{2\gb}{1+2\gb\ell} - \frac{4\gb^2}{(1+2\gb\ell)^2} \Big) - B\gb^{4/3}\\
&\leq \frac{1}{(1+2\gb\ell)^2} \Big(\tfrac12\gd^2 -\gb + 2\gb^2 \Big), 
\end{aligned}
\end{equation}
which is less than a constant times $\gb^{4/3}$, by \eqref{eq:regime}.
\end{proof}

\subsection{General disorder}
\label{Generaldisorder} 

Here, the function $h$ defined in \eqref{eq:def_fh} is denoted by $h_g$ when $\go_1 \sim \cN(0,1)$. 
We use the proofs of (i)-(iv) for Gaussian disorder as a reference frame and focus on the necessary 
modifications only. Below $c$ denotes a positive and finite constant that may change from line to line.

\begin{proof}[Proof of (i) in \eqref{eq:i-iv}]
The first step is to prove \eqref{eq:f<0} for general disorder via a Taylor expansion. Since $\cH_\ell 
= -\gb \gO_\ell^2 + \gd \gO_\ell$ is bounded from above, we may write
\begin{equation}
e^{\cH_\ell} \leq 1+\cH_\ell +\frac12 \cH_\ell^2 + \frac16 \cH_\ell^3 
+ \frac{1}{24}\cH_\ell^4 + c|\cH_\ell|^5.
\end{equation}
We expand and take the expectation, keeping in mind that $\gb\ell$ will be chosen small and that 
$\gd\sqrt{\ell} \sim (2\gb\ell)^{1/2}$, by \eqref{eq:regime}. This gives
\begin{equation}
\begin{split}
&\bbE(e^{\cH_\ell}) \leq 1-\gb\bbE(\gO_\ell^2)\\
& + \tfrac12\gb^2 \bbE(\gO_\ell^4) - \gb\gd \bbE(\gO_\ell^3) + \tfrac12\gd^2 \bbE(\gO_\ell^2)\\
& -\tfrac16 \gb^3 \bbE(\gO_\ell^6) + \tfrac12\gb^2\gd \bbE(\gO_\ell^5) 
- \tfrac12 \gb\gd^2\bbE(\gO_\ell^4) + \tfrac16 \gd^3\bbE(\gO_\ell^3)\\
&+\tfrac{1}{24} \gb^4\bbE(\gO_\ell^8) - \tfrac16\gb^3\gd \bbE(\gO_\ell^7) 
+ \tfrac14\gb^2\gd^2 \bbE(\gO_\ell^6) - \tfrac16 \gb\gd^3\bbE(\gO_\ell^5) 
+ \tfrac{1}{24} \gd^4 \bbE(\gO_\ell^4)\\
& + c\Big\{ \gb^5 \bbE(\gO_\ell^{10}) + \gb^4 \gd \bbE(|\gO_\ell|^9) 
+ \gb^3 \gd^2 \bbE(\gO_\ell^8) +\gb^2 \gd^3 \bbE(|\gO_\ell|^7) + \gb \gd^4 \bbE(\gO_\ell^6) 
+ \gd^5 \bbE(|\gO_\ell|^5) \Big\}.
\end{split}
\end{equation}
By discarding all the terms that are $o(\{\gb\ell\}^2)$, we obtain
\begin{equation}
\bbE(e^{\cH_\ell}) \leq 1 + C \gb^{4/3}\ell - [1+o(1)](\gb\ell)^2 + o(\{\gb\ell\}^2), \qquad \ell\to\infty.
\end{equation}
Therefore there exists $\ell_0\in\N_0$ and $\gep\in(0,\infty)$ such that, for $\gb$ small enough,
\begin{equation}
h(\ell) = \log \bbE(e^{\cH_\ell}) - \mu\ell \leq (C-B)\gb^{4/3}\ell 
- \tfrac12 (\gb\ell)^2, \qquad \ell_0 \leq \ell\leq \gep/ \gb. 
\end{equation}

The second step is to extend \eqref{eq:f<} to general disorder. This will be a consequence of 
inequality \eqref{firstineq} in Lemma \ref{lem:Gapprox} below.

Finally, $\sup_{\ell\in\bbN} h(\ell) \leq c\gb^{2/3}$. Indeed, we have that $h(\ell)<0$ when 
$\ell >\gep/\gb$, whereas $h(\ell) \leq (C-B)\gb^{4/3}\ell - \tfrac12 (\gb\ell)^2$ when $\ell_0<\ell\leq 
\gep/\gb$ and $\sup_{z\in\bbR} \{(C-B)\gb^{1/3}z - \tfrac12 z^2\} = \tfrac12 (C-B)^2 \gb^{2/3}$. 
Moreover, the inequality is clearly satisfied for the finite set of values $\ell\in\{1,\ldots,\ell_0\}$.
\end{proof}

\begin{lemma}
\label{lem:Gapprox}
For every $\gep>0$ there exists a $C_\gep>0$ such that, for $\beta$ small enough and for 
$\beta \ell \geq \gep$,
\begin{equation}
\label{secineq}
\big|\{h(\ell+1)- h(\ell)\} - \{h_g(\ell+1) - h_g(\ell)\}\big|\leq C_\gep \beta^{3/2},
\end{equation}
and 
\begin{equation}
\label{firstineq}
|h(\ell) - h_g(\ell)| \leq \frac{C_\gep}{\sqrt{\ell}}.
\end{equation}
\end{lemma}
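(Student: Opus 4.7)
The plan is to compare general and Gaussian disorder by means of the Edgeworth expansion for the law of $\Omega_\ell$. Under hypothesis \eqref{intcond}, a classical local central limit theorem gives, uniformly in $x\in\R$ (respectively in $x$ on the support lattice, in the lattice case),
\begin{equation*}
p_\ell(x) = \frac{1}{\sqrt\ell}\phi\!\left(\frac{x}{\sqrt\ell}\right) + \frac{\kappa_3}{6\,\ell}\phi\!\left(\frac{x}{\sqrt\ell}\right)H_3\!\left(\frac{x}{\sqrt\ell}\right) + O(\ell^{-3/2}),
\end{equation*}
where $\phi$ is the standard Gaussian density, $H_3(u)=u^3-3u$ is the third Hermite polynomial, $\kappa_3 = \bbE[\omega_1^3]$, and $p_\ell$ denotes the density (or lattice mass function) of $\Omega_\ell$. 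In the Gaussian case $\kappa_3=0$ and the remainder vanishes.

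To establish \eqref{firstineq}, I would substitute this expansion into $\bbE[e^{\gd\Omega_\ell-\gb\Omega_\ell^2}] = \int_\R e^{\gd x-\gb x^2}\, p_\ell(x)\,\dd x$ and compare with the corresponding Gaussian expression. After the change of variable $u=x/\sqrt\ell$, the leading term reproduces the closed-form Gaussian quantity from \eqref{eq:Gstar.gaussian}, while the $\kappa_3$-correction reduces, via the identity $\int(u^3-3u)e^{au-bu^2}\phi(u)\,\dd u = (\partial_a^3 - 3\partial_a)\bigl[(1+2b)^{-1/2}e^{a^2/(2(1+2b))}\bigr]$ evaluated at $a=\gd\sqrt\ell$ and $b=\gb\ell$, to an explicit rational function. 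A direct computation gives the relative error
\begin{equation*}
R_\ell := \frac{\bbE[e^{\gd\Omega_\ell-\gb\Omega_\ell^2}]}{\bbE_g[e^{\gd\Omega_\ell-\gb\Omega_\ell^2}]}-1 = \frac{\kappa_3\,\gd\,\ell\,\bigl[\gd^2 - 6\gb(1+2\gb\ell)\bigr]}{6\,(1+2\gb\ell)^3}\bigl(1+o(1)\bigr),
\end{equation*}
which in the regime $\gd=O(\sqrt\gb)$, $\gb\ell\ge\gep$ is of size $\gb^{3/2}\ell\,(1+\gb\ell)^{-3}\asymp 1/\sqrt\ell$. The Edgeworth remainder $O(\ell^{-3/2})$, integrated against the weight $e^{\gd x-\gb x^2}$, contributes only $O(1/\ell)$ to the relative error, which is smaller. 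Applying $|\log(1+r)|\le 2|r|$ for small $|r|$ then yields \eqref{firstineq}.

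For \eqref{secineq}, I would exploit the smoothness of $R_\ell$ in $\ell$. Writing
\begin{equation*}
[h(\ell+1)-h(\ell)] - [h_g(\ell+1)-h_g(\ell)] = \log\frac{1+R_{\ell+1}}{1+R_\ell} = (R_{\ell+1}-R_\ell)(1+o(1)),
\end{equation*}
the explicit formula above yields $\partial_\ell R_\ell \asymp \gb^{3/2}(1+\gb\ell)^{-4}$, hence the discrete derivative $R_{\ell+1}-R_\ell$ is $O(\gb^{3/2})$ uniformly in the regime $\gb\ell\ge\gep$, establishing \eqref{secineq}.

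The main obstacle will be to establish the Edgeworth expansion with remainder bounds that are uniform against the weight $e^{\gd x-\gb x^2}$, particularly in the tail region $|x|\gg 1/\sqrt\gb$ where the Edgeworth approximation deteriorates while the weight still carries a Gaussian profile of width $1/\sqrt\gb$. The two cases of \eqref{intcond} (continuous $L^p$ density and lattice) must be handled in a unified framework, and the dependence of the Edgeworth remainder on $\gd$ and $\gb$ must be tracked sharply enough that the cancellation in $R_{\ell+1}-R_\ell$ delivers the $\gb^{3/2}$ bound of \eqref{secineq}, rather than the naive $\gb$-bound that would follow from differencing two independent $O(1/\sqrt\ell)$ estimates for $h-h_g$.
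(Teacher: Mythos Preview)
Your approach via the Edgeworth expansion is exactly the route the paper takes, and your explicit formula for the leading $\kappa_3$-correction $R_\ell$ is correct (it coincides with the paper's computation in \eqref{rkpp2}). The argument for \eqref{firstineq} goes through essentially as you outline; the paper does precisely this, using only the $Q_3$-term.

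There is, however, a genuine gap in your argument for \eqref{secineq}. The one-term Edgeworth expansion you write down does \emph{not} have remainder $O(\ell^{-3/2})$: the next term in the expansion, involving $\kappa_4$ and $\kappa_3^2$ (the polynomial $Q_4$ in standard notation), is of order $\ell^{-1}$ in the density, and after integration contributes a piece of order $\ell^{-1}$ to the relative error $R_\ell$. Since you have no explicit formula for this piece, differencing it between $\ell$ and $\ell+1$ only yields $O(\ell^{-1})=O(\beta)$, which falls short of the required $O(\beta^{3/2})$. You correctly flag in your final paragraph that the remainder must be tracked sharply enough to produce cancellation in the difference, but the proposal does not actually say how to achieve this.

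The paper's resolution is to push the Edgeworth expansion two orders further, including $Q_4$ and $Q_5$, so that the genuine remainder becomes $o(\ell^{-3/2})$ and is therefore harmless even after naive differencing (since $\ell^{-3/2}\le C_\gep\,\beta^{3/2}$ for $\beta\ell\ge\gep$). Each of the finitely many explicit terms $\mathfrak{T}_{k,s,j}(\beta,t)$ arising from $Q_3,Q_4,Q_5$ is then a smooth rational function of $t$, and one bounds its $t$-derivative by $c_\gep\,\beta^{3/2}$ uniformly in $t\ge\gep/\beta$ via direct computation. This is exactly your differentiation idea, but applied to all terms up to order five rather than only to the $\kappa_3$-term.
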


\begin{proof}
In the proof we use the additional assumption on the charge distribution stated in \eqref{intcond}. 
We give the proof only for case \eqref{intcond}(b), namely, $\omega_1$ is continuous with a 
density that is in $L^p$ for some $p>1$. The proof requires that the fifth moment of $\omega_1$ 
is finite (which is amply guaranteed by \eqref{Mdeltacond}) and that there exists a $\gamma 
\geq 1$ such that the function $t \mapsto |\bbE(e^{i t \omega_1})|^{\gamma}$ is integrable 
on $\R$ (which is guaranteed by \eqref{intcond}(b)). The proof for case \eqref{intcond}(a) is
analogous and is omitted. We refer to Petrov~\cite[Theorem 13, Chapter VII]{P75} for the necessary details on local 
limit theorems.

We recall \eqref{eq:def_fh} and note that in the proof we may remove the term $-\mu x$ from 
the definition of $h$ because we are only considering differences of $h$-functions. We will use 
$c_\gep$ to denote a strictly positive constant that depends on $\gep$ only and whose value 
may change from line to line. We begin with the proof of \eqref{secineq}.

To prove \eqref{secineq}, we use an Edgeworth expansion for the density $\mathfrak{m}_\ell$ 
of $\Omega_{\ell}/\sqrt{\ell}$ (see \cite[Theorem 2, Chapter XVI, Section 2]{Fe66}). Let  
$\mathfrak{n}(x)$ denote the standard normal density. Then there are three polynomials 
$Q_3,Q_4,Q_5$ such that 
\begin{equation}
\label{Edge}
{\textstyle \sup_{x\in \R} \Big| \mathfrak{m}_\ell(x)-\mathfrak{n}(x) 
\Big[1+\sum_{k=3}^5 \ell^{-(\frac{k}{2}-1)} Q_k(x)\Big]\Big|
= o\big(\ell^{-3/2}\big)}, \qquad \ell \to \infty,
\end{equation}
where, for $k\in\{3,4,5\}$,  $Q_k(X)=\sum_{s=0}^{k} \alpha_{k,s} X^s$ with $\alpha_{k,s}\in \R$ for 
each $0\leq s\leq k$. We recall \eqref{eq:regime}, and from \eqref{eq:Gstar.gaussian} deduce 
that  
\begin{equation}
\label{tryu}
e^{h_g(\ell)}=\frac{e^{\frac{\mathfrak{U_{\beta,\ell}}}{2}}}{(1+2\beta \ell)^{1/2}},
\qquad \mathfrak{U}_{\beta,\ell} = \frac{\delta^2 \ell}{1+2\beta \ell}
= \frac{1+C \beta^{1/3}}{1+1/(2\beta \ell)},\quad \ell\in \N.
\end{equation}
A similar computation allows us to write, for $s\in \N$ and $Z$ a standard normal random variable,   
\begin{equation}
\label{momgauss}
\bbE\big(Z^{s} e^{-\beta \ell Z^2+\delta \sqrt{\ell} Z}\big)
=\frac{e^{\frac{ \mathfrak{U}_{\beta,\ell}}{2}}}{(1+2\beta \ell)^{\frac{s+1}{2}}}
\sum_{j=0}^{s} \binom{s}{j}\,\bbE(Z^j)\,\big(\mathfrak{U}_{\beta,\ell}\big)^{\frac{s-j}{2}}.
\end{equation}
Combining \eqref{Edge} and \eqref{momgauss}, we get 
\begin{equation}
\label{rkpp}
e^{h(\ell)}=e^{h_g(\ell)}\, \bigg[1+\sum_{k=3}^5\sum_{s=0}^k\sum_{j=0}^{s} \tilde\alpha_{k,s,j} 
\frac{\big(\mathfrak{U}_{\beta,\ell}\big)^{\frac{s-j}{2}}}{(1+2\beta\ell)^{\frac{s}{2}}} 
\,\frac{1}{\ell^{\frac{k}{2}-1}} + o\Big(\tfrac{(1+2\beta \ell)^{1/2}}{(\gb\ell)^{1/2}\ell^{3/2}}\Big) \bigg],
\quad \ell \to \infty,\,\beta \ell \geq \gep,
\end{equation}
where 
\begin{equation}
\tilde\alpha_{k,s,j} = \binom{s}{j}\ga_{k,s,j}\,\bbE(Z^j) \in \R, 
\qquad k\in\{3,4,5\},\,0\leq s\leq k,\,0\leq j\leq s.
\end{equation}
In the remainder term in the right-hand side of \eqref{rkpp}, the factor $(\gb\ell)^{-1/2}$ comes 
from the integral $\int_\bbR e^{-\gb\ell x^2 + \gd\sqrt{\ell}x}\dd x$, while the factor 
$(1+2\beta \ell)^{1/2}$ comes from the factorisation of $e^{h_g(\ell)}$ (recall \eqref{tryu}) and 
the fact that $\mathfrak{U}_{\beta,\ell}$ is bounded from above and below by two strictly positive 
constants depending on $\gep$, uniformly in $\beta \ell \geq \gep$. The product of these 
factors is harmless when $\gb\ell\ge\gep$. Also note that the term between brackets in the 
right-hand side of \eqref{rkpp} converges to $1$ as $\ell\to\infty$, uniformly in $\gb\ell\ge \gep$. 

We next take the logarithm of \eqref{rkpp} for $\ell$ and $\ell+1$, and use the fact that $x\mapsto 
\log(1+x)$ is Lipschitz on $[-\tfrac12,\infty)$, so that there exists a $c>0$ such that 
\begin{equation}
\label{petusoli}
\begin{aligned}
&\big|h(\ell+1) - h_g(\ell+1)- h(\ell) + h_g(\ell)\big| \leq c  \sum_{k=3}^5\sum_{s=0}^k\sum_{j=0}^{s}  
\bigg |\Big[\mathfrak{T}_{k,s,j}(\beta, t)\Big]_{t=\ell}^{t=\ell+1}\bigg |
+ o\Big(\tfrac{(1+2\beta \ell)^{1/2}}{(\gb\ell)^{1/2}\ell^{3/2}}\Big),\\
&\ell \to \infty,\,\beta \ell \geq \gep,
\end{aligned}
\end{equation}
where 
\begin{equation}
\mathfrak{T}_{k,s,j}(\beta,t) 
= \frac{\big(\mathfrak{U}_{\beta,t}\big)^{\frac{s-j}{2}}}{(1+2\beta t)^{\frac{s}{2}}\,t^{\frac{k}{2}-1}}
= \frac{\gd^{s-j}t^{1+\frac{s-j-k}{2}}}{(1+2\gb t)^{s-\frac{j}{2}}}.
\end{equation}
Differentiating $t\mapsto \mathfrak{T}_{k,s,j}(\beta,t)$, we get

\begin{equation}
\label{eqinte}
\begin{aligned}
\Big|\frac{\partial}{\partial t} \mathfrak{T}_{k,s,j}(\beta,t)\Big| 
&= \frac{\gd^{s-j}t^{\frac{s-j-k}{2}}}{(1+2\gb t)^{1+s-\frac{j}{2}}} 
\Big| \Big(1+\frac{s-j-k}{2} \Big)(1+2\gb t) - (j-2s)\gb t \Big|\\
&\leq c_\gep \frac{\gb^{\frac{s-j}{2}}t^{\frac{s-j-k}{2}}}{(1+2\gb t)^{1+s-\frac{j}{2}}}
\leq c_\gep \gb^{-\frac{s}{2}}t^{-\frac{s+k}{2}}.
\end{aligned}
\end{equation}

Using \eqref{eqinte}, we have, for 
$k\in\{3,4,5\}$, $0\leq s\leq k$ and $0\leq j\leq s$,  
\begin{equation}
\label{petusol}
\bigg |\Big[\mathfrak{T}_{k,s,j}(\beta, t)\Big]_{\ell}^{\ell+1}\bigg |\leq \max_{t\in [\ell,\ell+1]}
\Big|\frac{\partial}{\partial t} \mathfrak{T}_{k,s,j}(\beta,t)\Big| 
\leq c_{\gep} \, \beta^{3/2}, \qquad \ell\geq \gep/\beta.
\end{equation}
Combining \eqref{petusoli} and \eqref{petusol}, we get \eqref{secineq}.

To prove \eqref{firstineq}, we again use the Edgeworth expansion introduced in \eqref{Edge} but 
until $k=3$ only, i.e., 
\begin{equation}
\label{Edge2}
{\textstyle \sup_{x\in \R} \Big| \mathfrak{m}_\ell(x)-\mathfrak{n}(x) 
\Big[1+ \tfrac{1}{\sqrt{\ell}} Q_3(x)\Big]\Big|
= o\Big(\tfrac{1}{\sqrt{\ell}}\Big)}, \qquad \ell \to \infty,
\end{equation}
where $Q_3(X)=\mu_3 \,\tfrac16(X^3-3X)$ with $\mu_3 = \bbE(\omega_1^3)$. We compute $e^{h(\ell)}$, 
and combine \eqref{momgauss} and \eqref{Edge2} with the observation that $\bbE(Z)=\bbE(Z^3)=0$, 
to obtain
\begin{equation}
\label{rkpp2}
e^{h(\ell)}=e^{h_g(\ell)}\, \bigg[1- \frac{\mu_3 \, (\mathfrak{U}_{\beta,\ell})^{1/2}\,}
{(1+2\beta \ell)^{3/2}}\, \beta \, \sqrt{\ell} \,
\Big( 1-\frac{\mathfrak{U}_{\beta,\ell}}{6 \beta \ell}\Big) 
+ o\Big(\tfrac{(1+2\beta \ell)^{1/2}}{\sqrt{\beta}\ell}\Big) \bigg],
\quad \ell \to \infty,\,\beta \ell \geq \gep.
\end{equation}
Since $\mathfrak{U}_{\beta,\ell}$ is bounded by strictly positive constants uniformly in $\beta \ell 
\geq \gep$, it follows from \eqref{tryu} that $\Big|1-\frac{\mathfrak{U}_{\beta,\ell}}{6 \beta \ell}\Big|$ 
is bounded for all $\beta \ell \geq \gep$. Thus, we can rewrite \eqref{rkpp2} in the form
\begin{equation}
\begin{aligned}
\label{rkpp3}
e^{h(\ell)}
&\leq e^{h_g(\ell)}\, \bigg[1+c_\gep \frac{ \beta \sqrt{\ell} }{(1+2\beta \ell)^{3/2}} 
+ o\Big(\tfrac{(1+2\beta \ell)^{1/2}}{\sqrt{\beta}\ell}\Big) \bigg]\\
&\leq e^{h_g(\ell)}\, \bigg[1+c_\gep \frac{1}{\sqrt{\beta} \ell} 
+ o\Big(\tfrac{1}{\sqrt{\ell}}\Big) \bigg], \qquad 
\ell \to \infty,\,\beta \ell \geq \gep.
\end{aligned}
\end{equation}
The reverse inequality holds with $c_\gep$ replaced by $-c_\gep$. Taking the logarithm, we conclude 
that 
\begin{equation}
\label{rkpp4}
h(\ell)\leq h_g(\ell) + \log  \bigg[1+c_\gep \frac{1}{\sqrt{\beta} \ell} 
+ o\Big(\tfrac{1}{\sqrt{\ell}}\Big) \bigg]
\leq h_g(\ell) + \frac{C_\gep}{\sqrt{\ell}}, \qquad \ell \to \infty,\,\beta \ell \geq \gep,
\end{equation}
and similarly $h(\ell) \ge h_g(\ell) - \frac{C_\gep}{\sqrt{\ell}}$, which completes the proof of 
\eqref{firstineq}.
\end{proof}

\begin{proof}[Proof of (ii) in \eqref{eq:i-iv}]
We need an upper bound on $1 - e^{h(\ell+1)-h(\ell)}$ (see \cite[(3.27)]{vdHdH95}). Write
\begin{equation}
1 - e^{h(\ell+1)-h(\ell)} = 1 - \frac{\bbE(e^{-\gb\gO_{\ell+1}^2 
+ \gd \gO_{\ell+1}})}{\bbE(e^{-\gb\gO_{\ell}^2 + \gd \gO_{\ell}})}.
\end{equation}

\medskip
\paragraph{Case $\ell \leq \gep/\gb$.} 
Define
\begin{equation}
\cG(t) = \frac{\bbE\Big(e^{-\gb(\gO_\ell + t\go_{\ell+1})^2+\gd(\gO_\ell + t\go_{\ell+1})}\Big)}
{\bbE\Big(e^{-\gb\gO_\ell^2+\gd\gO_\ell}\Big)} - 1, \qquad 0\leq t \leq 1,
\end{equation}
which is a function interpolating between $\cG(0) = 0$ and $\cG(1) = e^{h(\ell+1)-h(\ell)}-1$. 
Rewrite
\begin{equation}
\cG(t) = \tilde\bbE (e^{H_t}) - 1,\qquad H_t 
= -2\gb t \gO_\ell \go_{\ell+1} + \gd t \go_{\ell+1} - \gb t^2 \go_{\ell+1}^2,
\end{equation}
where
\begin{equation}
\tilde\bbP(\cdot) = \frac{\bbE\big[\, (\cdot) \, e^{-\gb\gO_\ell^2+\gd\gO_\ell}\big]}
{\bbE\big[e^{-\gb\gO_\ell^2+\gd\gO_\ell}\big]}.
\end{equation}
Note that (the derivatives are w.r.t.\ the parameter $t$)
\begin{equation}
H'_t = -2\gb\gO_\ell\go_{\ell+1} + \gd\go_{\ell+1} - 2\gb t \go_{\ell+1}^2, \quad H''_t 
= -2\gb\go_{\ell+1}^2,\quad H^{(k)}_t = 0, \qquad k \geq 3.
\end{equation}
A Taylor expansion to fifth order gives
\begin{equation}
\label{eq:cG1}
\cG(1) = \cG'(0) + \tfrac12 \cG''(0) + \tfrac16 \cG^{(3)}(0) + \tfrac{1}{24}\cG^{(4)}(0) 
+ \int_0^1 \frac{(1-t)^4}{24} \cG^{(5)}(t)\dd t.
\end{equation}
Moreover,
\begin{equation}
\begin{aligned}
&\cG'(t) = \tilde\bbE\Big( H'_t e^{H_t} \Big),\quad
\cG''(t) = \tilde\bbE\Big( [H''_t + H'^2_t] e^{H_t} \Big),\quad
\cG^{(3)}(t) = \tilde\bbE\Big( [3H'_tH''_t + H'^3_t] e^{H_t} \Big),\\
&\cG^{(4)}(t) = \tilde\bbE\Big( [3H''^2_t + 6 H'^2_t H''_t + H'^4_t] e^{H_t} \Big),\quad
\cG^{(5)}(t) = \tilde\bbE\Big( [15 H'_t H''^2_t +10 H'^3_t H''_t + H'^5_t] e^{H_t} \Big)
\end{aligned}
\end{equation}
and
\begin{equation}
\begin{aligned}
&\cG'(0) = 0,\qquad
\cG''(0) = \gd^2 - 2\gb + 4\gb^2 \tilde\bbE(\gO_\ell^2) - 4\gb\gd \tilde\bbE(\gO_\ell), \\
&\cG^{(3)}(0) = \bbE(\go_0^3) \Big\{ 12\gb^2\tilde\bbE(\gO_\ell) - 6 \gb\gd 
- 8\gb^3 \tilde\bbE(\gO_\ell^3) + 12 \gd\gb^2 \tilde\bbE(\gO_\ell^2) 
- 6 \gb\gd^2 \tilde\bbE(\gO_\ell) + \gd^3  \Big\},\\
&\cG^{(4)}(0) 
= \bbE(\go_0^4)\left\{ \begin{array}{l}
12\gb^2 -48 \gb^3 \tilde\bbE(\gO_\ell^2) + 48 \gb^2 \gd \tilde\bbE(\gO_\ell) - 12\gb\gd^2 
+ 16 \gb^4 \tilde\bbE(\gO_\ell^4)\\ 
-32 \gb^3\gd \tilde\bbE(\gO_\ell^3) + 24 \gb^2 \gd^2 \tilde\bbE(\gO_\ell^2) 
- 8 \gb \gd^3 \tilde\bbE(\gO_\ell) + \gd^4.
\end{array}
\right\}.
\end{aligned}
\end{equation}

The terms arising from $\cG^{(5)}$ in \eqref{eq:cG1} can be shown to be of negligible order. Therefore we obtain, for $\beta$ small enough,
\begin{equation}
\label{eq:G1geq}
\cG(1) \geq C\gb^{4/3} - c\big[\gb^{3/2}\tilde\bbE(\gO_\ell) + \gb^3\tilde\bbE(\gO_\ell^3) 
+ \gb^2\tilde\bbE(\gO_\ell^2)\big].
\end{equation}

It remains to provide upper bounds on $\tilde\bbE(\gO_\ell^k)$ for $k\in\{1,2,3\}$. To that end,
let us write
\begin{equation}
\tilde\bbE(\gO_\ell) = \frac{\bbE\Big( \gO_\ell e^{-\gb \gO_\ell^2 + \gd\gO_\ell} \Big)}
{\bbE\Big(e^{-\gb \gO_\ell^2 + \gd\gO_\ell} \Big)}.
\end{equation}
On the one hand, by a first order expansion of the exponential we get
\begin{equation}
\bbE\Big( \gO_\ell e^{-\gb \gO_\ell^2 + \gd\gO_\ell} \Big) \leq \gd\ell 
+ c[\gb^2\ell^{5/2} + \gb\ell^{3/2}] \leq \sqrt{2} \gb^{1/2}\ell (1+c\gep^{3/2} + c\gep^{1/2}).
\end{equation}
On the other hand, since $\gb\ell\leq \gep$ we get
\begin{equation}
\bbE\Big(e^{-\gb \gO_\ell^2 + \gd\gO_\ell} \Big) \geq 1-\gep.
\end{equation}
Therefore
\begin{equation}
\label{eq:tilde1}
\tilde\bbE(\gO_\ell) \leq c\gb^{1/2}\ell.
\end{equation}
With the same method of proof we obtain
\begin{equation}
\label{eq:tilde23}
\tilde\bbE(\gO_\ell^2) \leq c\ell, \qquad \tilde\bbE(\gO_\ell^3) \leq c (\ell + \gb^{1/2}\ell^2).
\end{equation}
Inserting \eqref{eq:tilde1} and \eqref{eq:tilde23} into \eqref{eq:G1geq}, we arrive at
\begin{equation}
\cG(1) \geq C\gb^{4/3} - c\gb^2\ell,
\end{equation}
which is the desired estimate, cf.\ \eqref{eq:deltahell}.

\medskip
\paragraph{Case $\ell > \gep/\beta$.} 
Decompose
\begin{equation}
\begin{aligned}
1 - e^{h(\ell+1)-h(\ell)} &\leq h(\ell) - h(\ell+1)\\
&=  \{h_g(\ell) - h_g(\ell+1)\} + \{h(\ell) - h_g(\ell)\} - \{h(\ell+1) - h_g(\ell+1)\}.
\end{aligned}
\end{equation}
We already know that $h_g(\ell) - h_g(\ell+1) \leq c\gb^2\ell$. Moreover, by Lemma \ref{lem:Gapprox},
\begin{equation}
\{h(\ell) - h_g(\ell)\} - \{h(\ell+1) - h_g(\ell+1)\} \leq c \beta^{3/2},
\end{equation}
from which we get
\begin{equation}
1 - e^{h(\ell+1)-h(\ell)} \leq c \gb^2\ell \Big(1 + \frac{1}{\sqrt{\beta} \ell} \Big) 
\leq c\gb^2 \ell (1+\gep^{-1}\gb^{1/2}),
\end{equation}
which is the desired estimate, cf.\ \eqref{eq:mom.delta.tau}.
\end{proof}

\begin{proof}[Proof of (iii) in \eqref{eq:i-iv}]
The proof is the same as in \cite{vdHdH95}.
\end{proof}

\begin{proof}[Proof of (iv) in \eqref{eq:i-iv}]
It is enough to prove that $1- e^{h(\ell)-h(\ell+1)} \leq c\gb^{4/3}$ (see \cite[(3.35)]{vdHdH95}). 
In the analogue of \cite[(3.35)]{vdHdH95}, we decompose the sum in two parts: (a) $\ell = i+j 
\geq \gep/\beta$; (b) $\ell = i+j \leq \gep/\beta$. 

For (a) we use a Gaussian approximation. Write
\begin{equation}
1- e^{h(\ell)-h(\ell+1)} \leq h(\ell+1) - h(\ell) \leq h_g(\ell+1) - h_g(\ell) 
+ \{h(\ell+1) - h_g(\ell+1)\} - \{h(\ell) - h_g(\ell)\}.
\end{equation}
We already know that $h_g(\ell+1) - h_g(\ell) \leq c\gb^{4/3}$. Moreover, by Lemma \ref{lem:Gapprox},
\begin{equation}
\{h_g(\ell+1) - h(\ell+1)\} - \{h(\ell) - h_g(\ell)\} \leq c \beta ^{3/2}
\ll \gb^{4/3},
\end{equation}
which is the desired estimate.

For (b) we write
\begin{equation}
1- e^{h(\ell)-h(\ell+1)} \leq h(\ell+1) - h(\ell) \leq e^{h(\ell+1) - h(\ell)} -1 
= \frac{\bbE(e^{\cH_{\ell+1}})}{\bbE(e^{\cH_\ell})} - 1,
\end{equation}
where $\cH_\ell = -\gb \gO_\ell^2 + \gd  \gO_\ell$. Using
\begin{equation}
1 + \cH_\ell + \tfrac12 \cH_\ell^2 + \tfrac16 \cH_\ell^3 \leq e^{\cH_\ell} 
\leq 1 + \cH_\ell + \tfrac12 \cH_\ell^2 + c|\cH_\ell|^3,
\end{equation}
we get
\begin{equation}
\frac{\bbE(e^{\cH_{\ell+1}})}{\bbE(e^{\cH_\ell})} -1 
\leq \frac{1+C\gb^{4/3}(\ell+1) + c\gb^2\ell^2}{1+C\gb^{4/3}\ell - c\gb^{3/2}\ell - c\gb^{7/3}\ell^2} -1 
\leq C\gb^{4/3} + c\gb^{8/3}\ell^2.
\end{equation}
Inserting this estimate into the analogue of \cite[(3.35)]{vdHdH95}, and noting that
\begin{equation}
\gb^{8/3}\sum_{i\in\N_0} i^2 \tau_\gb(i)^2 \leq c\gb^{4/3},
\end{equation}
which we know from (i) in \eqref{eq:i-iv}, we get the claim.
\end{proof}


\section{Quenched model}
\label{AppD}

As promised in Section~\ref{ss:motivation}, in this appendix we prove two modest results 
for the quenched version of the model, which has path measure $\bP_n^{\omega,\beta}$
defined by (recall (\ref{eq:def.polmeasure}--\ref{eq:def.hamiltonian}))
\begin{equation}
\frac{\dd\bP_n^{\omega,\beta}}{\dd\bP}(S) = \frac{1}{Z_n^{\omega,\beta}}\,
e^{-\beta H_n^\omega(S)}, \qquad S \in \Pi,
\end{equation}
where $\Pi$ is the set of nearest-neighbour paths starting at $0$ and $Z_n^{\omega,\beta}$ 
is the \emph{quenched partition function of length} $n$. 

Recall \eqref{eq:deflocaltimes}. The range of $S$ up to time $n$ is 
\begin{equation}
R_n(S) = |\{x\in \Z \colon\, L_n(S,x)>0\}|.
\end{equation}
We first show that $R_n(S)$ grows linearly in $n$ when the average charge is non-zero.

\begin{proposition}
\label{pr:1}
Suppose that $\delta,\beta \in (0,\infty)$. Then there exist $c_1, c_2>0$ 
(depending on $\delta,\beta$) such that, for $\bbP^\delta$-a.e.\ $\omega$,
\begin{equation}
\label{eq:1}
\bP_n^{\omega,\beta}(R_n(S) \leq c_1 n) \leq e^{-c_2 n+o(n)}.
\end{equation}
\end{proposition}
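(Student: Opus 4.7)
The plan is to compare an upper bound on the restricted partition function $Z_n^{\omega,\beta}(\{R_n\le c_1 n\})$ with a simple lower bound on the full partition function $Z_n^{\omega,\beta}$, exploiting the fact that $\delta>0$ forces $\Omega_n$ to be of order $\rho_\delta n$ with $\rho_\delta := \bbE^\delta(\omega_1) = M'(\delta)/M(\delta) > 0$.

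The \emph{upper bound} on the restricted partition function will come from a one-line Cauchy--Schwarz estimate. For any path $S \in \Pi$ with $R_n(S) \leq c_1 n$, the Hamiltonian \eqref{eq:def.hamiltonian} satisfies
\begin{equation}
H_n^\omega(S) = \sum_{x \in \Z^d} \left(\sum_{i=1}^n \omega_i \ind_{\{S_i = x\}}\right)^2
\geq \frac{1}{R_n(S)}\left(\sum_{i=1}^n \omega_i \right)^{\!2}
= \frac{\Omega_n^2}{R_n(S)} \geq \frac{\Omega_n^2}{c_1 n},
\end{equation}
since the inner sum is supported on the $R_n(S)$ visited sites. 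Therefore
\begin{equation}
\label{eq:plan.UB}
Z_n^{\omega,\beta}(\{R_n\le c_1 n\}) \le \exp\!\left[-\beta \,\Omega_n^2/(c_1 n)\right].
\end{equation}

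The \emph{lower bound} on $Z_n^{\omega,\beta}$ will come from restricting to the monotone path $s_i = i\, e_1$ (with $e_1$ a fixed unit vector in $\Z^d$), which has probability $(2d)^{-n}$ under $\bP$. Along this path every site is visited at most once, so the Hamiltonian reduces to $H_n^\omega(s) = \sum_{i=1}^n \omega_i^2$, and hence
\begin{equation}
\label{eq:plan.LB}
Z_n^{\omega,\beta} \geq (2d)^{-n} \exp\!\left[-\beta \sum_{i=1}^n \omega_i^2\right].
\end{equation}

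Taking the quotient of \eqref{eq:plan.UB} and \eqref{eq:plan.LB}, and applying the strong law of large numbers under $\bbP^\delta$ (which is legitimate since \eqref{Mdeltacond} gives all exponential moments, and hence $n^{-1}\Omega_n \to \rho_\delta$ and $n^{-1}\sum_{i=1}^n \omega_i^2 \to \bbE^\delta(\omega_1^2) =: m_2(\delta)$ a.s.), one obtains, for $\bbP^\delta$-a.e.\ $\omega$,
\begin{equation}
\limsup_{n\to\infty} \frac{1}{n}\log \bP_n^{\omega,\beta}(R_n(S) \le c_1 n)
\le \log(2d) - \frac{\beta \rho_\delta^2}{c_1} + \beta\, m_2(\delta).
\end{equation}
Since $\rho_\delta > 0$, choosing $c_1 > 0$ small enough (depending on $\delta, \beta, d$) makes the right-hand side strictly negative, which yields \eqref{eq:1} with some $c_2 > 0$.

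I do not expect any genuine obstacle: the key input is simply that a non-trivial average charge forces $\Omega_n^2$ to grow like $n^2$, which through Cauchy--Schwarz forces $H_n^\omega$ to be at least linear in $n$ with a coefficient that blows up as $c_1\downarrow 0$, while the trivial ballistic strategy gives a competing cost that is only linear in $n$ with a \emph{fixed} coefficient. The only mild care is to verify that $\omega_1 \in L^2(\bbP^\delta)$, which is immediate from \eqref{Mdeltacond}, and to note that the argument works in every dimension $d\ge 1$ (so the statement of Proposition~\ref{pr:1} need not restrict to $d=1$).
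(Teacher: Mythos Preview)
Your proof is correct and essentially identical to the paper's own argument: both bound $H_n^\omega(S)$ below by $\Omega_n^2/R_n(S)$ via convexity (you call it Cauchy--Schwarz, the paper calls it Jensen), bound $Z_n^{\omega,\beta}$ below via the monotone path, and then invoke the SLLN under $\bbP^\delta$ to conclude. Your version is slightly cleaner in that you correctly track $m_2(\delta)=\bbE^\delta(\omega_1^2)$ (the paper writes $1$ there, which is the $\bbP$-moment rather than the $\bbP^\delta$-moment, though this does not affect the conclusion), and you observe that the argument works in any dimension.
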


\begin{proof}
Let $\pi$ be the one-sided path that takes right-steps only, i.e., $\pi_i = i$ for $i\in\N_0$. 
Recall \eqref{eq:omegacond} and estimate
\begin{equation}
\label{eq:2}
Z_n^{\omega,\beta} \geq (\tfrac{1}{2})^n\, \bE\Big[e^{-\beta H_n^{\omega,\beta}(S)}\,
\ind_{\{S_i=\pi_i\,\forall\,1\leq i\leq n\}}\Big]
= (\tfrac{1}{2})^n\,e^{-\beta\sum_{i=1}^n \omega_i^2}
= (\tfrac{1}{2})^n\,e^{-\beta n + o(n)}.
\end{equation}
Moreover, by Jensen's inequality we have (recall \eqref{eq:def.hamiltonian})
\begin{equation}
\begin{aligned}
H_n^\omega(S) 
&= \sum_{ {x\in\Z\colon} \atop {L_n(S,x)>0} } 
\left(\sum_{i=1}^n \omega_i \ind_{\{S_i=x\}}\right)^2
= R_n(S) \left[\frac{1}{R_n(S)} \sum_{ {x\in\Z\colon} \atop {L_n(S,x)>0}} 
\left(\sum_{i=1}^n \omega_i \ind_{\{S_i=x\}}\right)^2\right]\\
\label{eq:3}
&\geq R_n(S) \left(\frac{1}{R_n(S)} \sum_{ {x\in\Z\colon} \atop {L_n(S,x)>0}}    
\sum_{i=1}^n \omega_i \ind_{\{S_i=x\}}\right)^2
= \frac{1}{R_n(S)}\,\Omega_n^2.
\end{aligned}
\end{equation}
Combining (\ref{eq:2}--\ref{eq:3}), we obtain 
\begin{equation}
\label{eq:4}
\begin{aligned}
\bP_n^{\omega,\beta}(R_n(S) \leq c_1n)
&\leq e^{\beta n}\,2^n\, \bE\left[\exp\left\{-\frac{\beta}{R_n(S)}\,\Omega_n^2\right\}\, 
\ind_{\{R_n(S) \leq c_1n\}}\right]\\
&\leq \exp\left\{-\beta n \left[\frac{1}{c_1n^2}\,\Omega_n^2 -1-\tfrac{\log 2}{\beta}\right]\right\}.
\end{aligned}
\end{equation}
By the strong law of large numbers for $\omega$, we have $\lim_{n\to \infty} n^{-1} \Omega_n
= (\partial/\partial\delta)\log M(\delta) = m(\delta)>0$ for $\bbP^\delta$-a.a.\ $\omega$, and so 
the term between square brackets equals $c_3[1+o(1)]$ with $c_3=\tfrac{1}{c_1} m(\delta)^2-1
-\tfrac{\log 2}{\beta}$. Therefore, by choosing $c_1>0$ small enough so that $c_3>0$, we get 
(\ref{eq:1}) with $c_2=\beta c_3$.
\end{proof}

We next show that the polymer chain is ballistic when the charges are sufficiently biased.

\begin{proposition}
\label{pr:2}
For every $\beta \in (0,\infty)$ there exists a $\delta_0=\delta_0(\beta) \in (0,\infty)$ 
such that 
\begin{equation}
\forall\,\delta>\delta_0\,\,\exists\,\epsilon=\epsilon(\delta)>0\colon \qquad 
\lim_{n\to\infty} \bP_n^{\omega,\beta}\big(n^{-1}S_n>\epsilon \mid S_n>0\big) = 1.
\end{equation}
\end{proposition}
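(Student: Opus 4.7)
The plan is to bound the conditional probability $\bP_n^{\omega,\beta}(0<S_n\leq \epsilon n\mid S_n>0)$ and show that it decays exponentially for a suitably chosen $\epsilon=\epsilon(\delta)>0$, which immediately yields the claim. As in Proposition~\ref{pr:1}, I would write this ratio
\[
\bP_n^{\omega,\beta}(0<S_n\leq \epsilon n\mid S_n>0)
\;=\;
\frac{\bE[e^{-\beta H_n^\omega(S)}\ind_{\{0<S_n\leq \epsilon n\}}]}
{\bE[e^{-\beta H_n^\omega(S)}\ind_{\{S_n>0\}}]}
\]
and estimate numerator and denominator separately. The denominator I would lower-bound by restricting to the single straight right-going path $\pi_i=i$ (which satisfies $S_n=n>0$) and invoking the strong law of large numbers for the i.i.d.\ sequence $(\omega_i^2)_{i\in\N}$, which gives, $\bbP^\delta$-a.s.,
\[
\bE[e^{-\beta H_n^\omega(S)}\ind_{\{S_n>0\}}]
\;\geq\; 2^{-n}\,e^{-\beta\sum_{i=1}^n \omega_i^2}
\;=\; 2^{-n}\,e^{-\beta\, s(\delta)^2\, n + o(n)},
\]
with $s(\delta)^2:=\bbE^\delta[\omega_1^2]=m(\delta)^2+\bbVar^\delta(\omega_1)$ and $m(\delta):=\bbE^\delta[\omega_1]$.

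For the numerator, the key geometric ingredient is a deterministic range bound: for any nearest-neighbour path $S$ with $S_n=x\geq 0$,
\[
R_n(S) \;\leq\; \tfrac12(n+x)+1,
\]
since reaching $M_n\geq x$ and $m_n\leq 0$ and returning to $x$ consumes at least $2(M_n+|m_n|)-x$ of the $n$ steps. On $\{0<S_n\leq \epsilon n\}$ this forces $R_n(S)\leq \tfrac12 n(1+\epsilon)+1$. Combining with the Jensen bound $H_n^\omega(S)\geq \Omega_n^2/R_n(S)$ used in Proposition~\ref{pr:1}, and with the strong LLN $\Omega_n = m(\delta)\,n + o(n)$, I obtain, uniformly over such paths and $\bbP^\delta$-a.s.,
\[
H_n^\omega(S) \;\geq\; \frac{2\,m(\delta)^2}{1+\epsilon}\,n + o(n).
\]
Bounding the number of admissible paths trivially by $2^n$ then yields
\[
\bE[e^{-\beta H_n^\omega(S)}\ind_{\{0<S_n\leq \epsilon n\}}]
\;\leq\; \exp\!\Big[-\frac{2\beta\,m(\delta)^2}{1+\epsilon}\,n + o(n)\Big].
\]

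Taking the ratio, the conditional probability is at most
\[
\exp\!\Big[n\Big(\log 2 + \beta\, s(\delta)^2 - \frac{2\beta\, m(\delta)^2}{1+\epsilon}\Big) + o(n)\Big],
\]
which decays exponentially as soon as $m(\delta)^2 - \bbVar^\delta(\omega_1) > (\log 2)/\beta + O(\epsilon)$. Under \eqref{Mdeltacond}, $m(\delta)=M'(\delta)/M(\delta)$ is strictly increasing in $\delta$ and tends to $\mathrm{ess\,sup}(\omega_1)$ as $\delta\to\infty$; when this essential supremum is infinite (e.g.\ for Gaussian charges), for any $\beta>0$ one can fix $\delta_0(\beta)$ so that the inequality holds strictly for $\delta>\delta_0$, and then pick $\epsilon=\epsilon(\delta)>0$ small enough to preserve the strict inequality. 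The main obstacle is quantitative and concerns verifying this gap for the particular charge law: for bounded-support charges, where $m(\delta)$ is itself bounded, the crude Jensen estimate only yields the conclusion when $\beta$ is also large, and a sharper argument — for instance exploiting the concentration of the tilted charges near their essential supremum and comparing the polymer against a weakly self-avoiding walk — would be needed to cover every $\beta>0$ in that setting.
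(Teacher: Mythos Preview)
Your argument is correct and follows essentially the same route as the paper's. Both proofs rest on three ingredients: the lower bound from the straight path, the Jensen inequality $H_n^\omega(S)\geq\Omega_n^2/R_n(S)$, and the range--endpoint relation $R_n(S)\leq\tfrac12(n+|S_n|)+1$. The paper packages the first two as Proposition~\ref{pr:1} (showing $R_n>c_1 n$ with $c_1>\tfrac12$ w.h.p.) and then converts this into a lower bound on $|S_n|$ via the inequality $|\{x:L_n(x)=1\}|\geq 2R_n-n$; your version simply runs the contrapositive directly on the event $\{0<S_n\leq\epsilon n\}$. One point where you are in fact more careful: under $\bbP^\delta$ the SLLN gives $\sum_{i=1}^n\omega_i^2 = s(\delta)^2 n + o(n)$ with $s(\delta)^2=\bbE^\delta[\omega_1^2]$, not $n+o(n)$ as the paper's display suggests. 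With this correction both arguments yield the same sufficient condition $m(\delta)^2-\bbVar^\delta(\omega_1)>\tfrac{\log 2}{\beta}$, and hence share exactly the limitation you flag for charge laws with bounded support.
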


\begin{proof}
Fix $\beta \in (0,\infty)$. Pick $\delta_0$ such that
\begin{equation}
m(\delta_0) = \sqrt{\tfrac{1}{2}\left(1 + \tfrac{\log 2}{\beta} \right)}.
\end{equation}
If $\delta>\delta_0$, then we can choose $c_1 > \tfrac12$ in Proposition~{\rm \ref{pr:1}} 
and use the inequality
\begin{equation}
\frac{\left|\{x\in \bbZ\colon\, L_n(S,x) = 1\}\right|}{n} \geq \frac{2R_n(S)}{n} -1
\end{equation}
to conclude that a positive fraction of the sites is visited precisely once. Consequently, if the 
polymer chain chooses to go to the right, then $S_n/n$ has a strictly positive $\liminf$.
\end{proof}



\end{document}